\newtheorem{theorem}{Theorem}[section]
\newtheorem{lemma}[theorem]{Lemma}
\newtheorem{proposition}[theorem]{Proposition}
\theoremstyle{definition}
\newtheorem{definition}[theorem]{Definition}
\theoremstyle{remark}
\newtheorem{remark}[theorem]{Remark}
\numberwithin{equation}{section}
\newcommand{\nrm}[1]{\Vert#1\Vert}
\newcommand{\abs}[1]{\vert#1\vert}
\newcommand{\brk}[1]{\langle#1\rangle}
\newcommand{\set}[1]{\{#1\}}
\newcommand{\dist}{\mathrm{dist}}
\newcommand{\supp}{{\mathrm{supp}} \, }
\renewcommand{\Im}{\mathrm{Im}}
\newcommand{\aleq}{\lesssim}
\newcommand{\ageq}{\gtrsim}
\newcommand{\lap}{\Dlt}
\newcommand{\ud}{\mathrm{d}}
\newcommand{\rd}{\partial}
\newcommand{\nb}{\nabla}
\newcommand{\bb}{\Big}
\newcommand{\0}{\emptyset}
\newcommand{\alp}{\alpha}
\newcommand{\bt}{\beta}
\newcommand{\gmm}{\gamma}
\newcommand{\dlt}{\delta}
\newcommand{\Dlt}{\Delta}
\newcommand{\eps}{\epsilon}
\newcommand{\lmb}{\lambda}
\newcommand{\sgm}{\sigma}
\newcommand{\tht}{\theta}
\newcommand{\omg}{\omega}
\newcommand{\zt}{\zeta}
\newcommand{\bfj}{{\bf j}}
\newcommand{\bfk}{{\bf k}}
\newcommand{\bfp}{{\bf p}}
\newcommand{\bfA}{{\bf A}}
\newcommand{\bfD}{{\bf D}}
\newcommand{\bfI}{{\bf I}}
\newcommand{\bbC}{\mathbb C}
\newcommand{\bbR}{\mathbb R}
\newcommand{\bbS}{\mathbb S}
\newcommand{\bbZ}{\mathbb Z}
\newcommand{\calA}{\mathcal A}
\newcommand{\calB}{\mathcal B}
\newcommand{\calC}{\mathcal C}
\newcommand{\calE}{\mathcal E}
\newcommand{\calH}{\mathcal H}
\newcommand{\calJ}{\mathcal J}
\newcommand{\calL}{\mathcal L}
\newcommand{\calM}{\mathcal M}
\newcommand{\calN}{\mathcal N}
\newcommand{\calP}{\mathcal P}
\newcommand{\calR}{\mathcal R}
\newcommand{\calS}{\mathcal S}
\newcommand{\calT}{\mathcal T}
\newcommand{\sbeq}{\subseteq}
\newcommand{\pfstep}[1]{\vskip.5em \noindent {\bf #1.}}
\newcommand{\Id}{\bfI}
\newcommand{\covD}{\bfD}				% covariant derivative
\newcommand{\mR}{\mathcal{R}}			% modified Riesz transform
\newcommand{\Diff}{\pi}					% Paradifferential part of nonlinearity
\newcommand{\NM}{\calM}				% Nonliearity for Maxwell
\newcommand{\ND}{\calN}				% Nonliearity for Dirac
\newcommand{\NR}{\widetilde{\ND}}			% Remainder of nonlinearity for Dirac
\newcommand{\NF}{\calN}			% (applied) abstract null form
\newcommand{\ANF}{N}			% basic abstract null form
\newcommand{\BL}{L}				% general translation-invariant bilinear form
\newcommand{\med}{\mathrm{med}}
\newcommand{\Vol}{\mathrm{Vol} }
\newcommand{\mb}{\mathbb}
\newcommand{\be}{\begin{equation}}
\newcommand{\ee}{ \end{equation}}
\newcommand{\ep}{\varepsilon}
\newcommand{\al}{\alpha}
\newcommand{\dd}{\,\mathrm{d}}
\newcommand{\vn}[1]{\|#1\|}
\newcommand{\vm}[1]{\left|#1\right|}
\newcommand{\lpr}{ \left( }
\newcommand{\rpr}{ \right) }
\newcommand{\lpp}{\left[}
\newcommand{\rpp}{\right]}
\newcommand{\la}{\Delta}
\newcommand{\Hc}{\dot{H}^1\times L^2}
\newcommand{\Hcr}{\dot{H}^{1/2}}
\newcommand{\hw}{(i\partial_t+\vm{D})}
\newcommand{\hwm}{(i\partial_t-\vm{D})}
\newcommand{\phw}{\hw_A^p}
\newcommand{\pt}{\partial}
\newcommand{\defeq}{\vcentcolon=}
\begin{document}

\title[Small critical data GWP for Maxwell--Dirac]{Global well-posedness of high dimensional Maxwell--Dirac for small critical data}%: Title of the article
\author{Cristian Gavrus}%
\address{Department of Mathematics, UC Berkeley, Berkeley, CA, 94720}%
\email{cristian@berkeley.edu}%

\author{Sung-Jin Oh}%
\address{Department of Mathematics, UC Berkeley, Berkeley, CA, 94720}%
\email{sjoh@math.berkeley.edu}%

\thanks{The authors thank Daniel Tataru for many fruitful conversations. C. Gavrus was supported in part by the NSF grant DMS-1266182. S.-J. Oh is a Miller Research Fellow, and acknowledges support from the Miller Institute. Part of this work was carried out during the trimester program 'Harmonic Analysis and PDEs' at the Hausdorff Institute of Mathematics in Bonn.}%
%\subjclass{}%
%\keywords{}%

%\date{\today}%
%\dedicatory{}%
%\commby{}%
% ----------------------------------------------------------------
\begin{abstract}
In this paper, we prove global well-posedness of the massless Maxwell-Dirac equation in Coulomb gauge on $\bbR^{1+d}$ $(d \geq 4)$ for data with small scale-critical Sobolev norm, as well as modified scattering of the solutions.
Main components of our proof are A) uncovering null structure of Maxwell-Dirac in the Coulomb gauge, and B) proving solvability of the underlying covariant Dirac equation. A key step for achieving both is to exploit (and justify) a deep analogy between Maxwell-Dirac and Maxwell-Klein-Gordon (for which an analogous result was proved earlier by Krieger-Sterbenz-Tataru \cite{KST}), which says that the most difficult part of Maxwell-Dirac takes essentially the same form as Maxwell-Klein-Gordon.
\end{abstract}
\maketitle

\setcounter{tocdepth}{1}
\tableofcontents
% ----------------------------------------------------------------
\section{Introduction} \label{sec:intro}

Let $\bbR^{1+d}$ be the $(d+1)$-dimensional Minkowski space with the metric 
\begin{equation*}
\eta = \mathrm{diag} (-1, +1, \ldots, +1)
\end{equation*}
in the rectilinear coordinates $(x^{0}, x^{1}, \ldots, x^{d})$. Associated to the Minkowski metric $\eta$ are the \emph{gamma matrices}, which are $N \times N$ complex-valued matrices $\gmm^{\mu}$ ($\mu=0, 1, \ldots, d$) satisfying the anti-commutation relations
\begin{equation} \label{eq:gmmRel}
	\frac{1}{2} (\gmm^{\mu} \gmm^{\nu} + \gmm^{\nu} \gmm^{\mu}) = - \eta^{\mu \nu} \, \Id,
\end{equation}
where $\Id$ is the $N \times N$ identity matrix, and also the conjugation relations
\begin{equation} \label{eq:gmmRel:conj}
	(\gmm^{\mu})^{\dagger} = \gmm^{0} \gmm^{\mu} \gmm^{0}.
\end{equation}
On $\bbR^{1+d}$, the rank of the gamma matrices $\gmm^{\mu}$ in the standard representation is $N = 2^{\lfloor \frac{d+1}{2} \rfloor}$ \cite[Appendix~E]{dW}. A \emph{spinor field} $\psi$ is a function on $\bbR^{1+d}$ (more generally, open subsets of $\bbR^{1+d}$) that takes values in $\bbC^{N}$, on which $\gmm^{\mu}$ acts as multiplication. 
Given a real-valued 1-form $A_{\mu}$ (connection 1-form), we introduce the \emph{gauge covariant derivative} on spinors
\begin{equation*}
	\covD_{\mu} \psi := \rd_{\mu} \psi + i A_{\mu} \psi,
\end{equation*}
(which acts componentwisely on $\psi$) and the associated \emph{curvature 2-form} 
\begin{equation*}
	F_{\mu \nu} := (\ud A)_{\mu \nu} = \rd_{\mu} A_{\nu} - \rd_{\nu} A_{\mu}.
\end{equation*}

The Maxwell--Dirac system is a relativistic Lagrangian field theory describing the interaction between a connection 1-form $A_{\mu}$, representing an electromagnetic potential, and a spinor field $\psi$, modeling a charged fermionic field (e.g., an electron). Its action (i.e., the space-time integral of the Lagrangian) takes the form
\begin{equation*}
	\calS[A_{\mu}, \psi] = \iint_{\bbR^{1+d}} - \frac{1}{4} F_{\mu \nu} F^{\mu \nu} + i \brk{\gmm^{\mu} \covD_{\mu} \psi, \gmm^{0} \psi} - m \brk{\psi, \psi} \, \ud t \ud x.
\end{equation*}
Here $\brk{\psi^{1}, \psi^{2}} := (\psi^{2})^{\dagger} \psi^{1}$ is the usual inner product on $\bbC^{N}$, where $\psi^{\dagger}$ denotes the hermitian transpose. Furthermore, we use the standard convention of raising and lowering indices using the Minkowski metric $\eta$, and the Einstein summation convention of summing repeated upper and lower indices. The Euler--Lagrange equations for $\calS[A_{\mu}, \psi]$ take the form
\begin{equation} \label{eq:MD} \tag{MD}
\left\{
\begin{aligned}
	\rd^{\nu} F_{\mu \nu} =& - \brk{\psi, \alp_{\mu} \psi}, \\
	i \alp^{\mu} \covD_{\mu} \psi =& m \bt \psi.
\end{aligned}
\right.
\end{equation}
where $\alp^{\mu} = \gmm^{0} \gmm^{\mu}$ and $\bt = \gmm^{0}$. Henceforth, we will refer to \eqref{eq:MD} as the \emph{Maxwell--Dirac equations}. 

%The Maxwell-Dirac equations describe the formal critical points of the coupled Lagrangian
%\begin{align*}
%	\calL_{\mathrm{MD}} [A, \psi]
%%	= \calL_{\mathrm{M}} + \calL_{\mathrm{D}}
%	= \frac{1}{4} F_{\mu \nu} F^{\mu \nu} + \overline{\psi} \gmm^{\mu} \covD_{\mu} \psi + m \overline{\psi} \psi.
%\end{align*}
%The real number $m$ is called the \emph{mass} of the spinor field $\psi$. In the present paper, we will , i.e., $m = 0$. 

%\begin{remark} 
%When the spinor field possesses a mass $m^{2} > 0$, the second equation in \eqref{eq:MD} is replaced by $i \gmm^{\mu} \covD_{\mu} \psi = m \psi$.
%\end{remark}

A basic feature of \eqref{eq:MD} is invariance under gauge transformations. That is, given any solution $(A, \psi)$ of \eqref{eq:MD} and a real-valued function $\chi$ (gauge transformation) on $I \times \bbR^{d}$, the gauge transform $(\tilde{A}, \tilde{\psi}) = (A - \ud \chi, e^{i \chi} \psi)$ of $(A, \psi)$ is also a solution to \eqref{eq:MD}. In order to make \eqref{eq:MD} a (formally) well-posed system, we need to remove the ambiguity arising from this invariance. To this end, we impose in our paper the (global) \emph{Coulomb gauge} condition, which reads
\begin{equation} \label{eq:coulomb-0}
	\mathrm{div}_{x} A = \sum_{j=1}^{d} \rd_{j} A_{j} = 0.
\end{equation}

In this paper, we show global well-posedness and scattering for \emph{massless} \eqref{eq:MD} (i.e., $m = 0$) on the Minkowski space $\bbR^{1+d}$ with $d \geq 4$ under the Coulomb gauge condition, for initial data which are small in the scale-critical Sobolev space. When restricted to the massless case, \eqref{eq:MD} is invariant under the scaling $(\lmb > 0)$
\begin{equation*}
	(A_{\mu}, \psi) \mapsto \bb( \lmb^{-1} A( \lmb^{-1}t, \lmb^{-1} x), \lmb^{-\frac{3}{2}} \psi(\lmb^{-1} t, \lmb^{-1} x) \bb).
\end{equation*}
For the sake of concreteness, we focus on the case $d = 4$, which is the most difficult. 
\begin{theorem}[Critical small data global well-posedness and scattering on $\bbR^{1+4}$] \label{thm:main}
Consider \eqref{eq:MD} on $\bbR^{1+4}$ with $m = 0$. There exists a universal constant $\eps_{\ast} > 0$ such that the following statements hold.
\begin{enumerate} [leftmargin=*]
\item Let $(\psi(0), A_{j}(0), \rd_{t} A_{j}(0))$ be a smooth initial data set satisfying the Coulomb condition \eqref{eq:coulomb-0} and the smallness condition
\begin{equation} \label{eq:main:smalldata}
	\nrm{\psi(0)}_{\dot{H}^{1/2}(\bbR^{4})} 
	+ \sup_{j =1, \ldots, 4} \nrm{(A_{j}, \rd_{t} A_{j})(0)}_{\dot{H}^{1} \times L^{2}(\bbR^{4})} < \eps_{\ast}.
\end{equation}
Then there exists a unique global smooth solution $(\psi, A)$ to the system \eqref{eq:MD} under the Coulomb gauge condition \eqref{eq:coulomb-0} on $\bbR^{1+4}$ with these data. 
\item For any $T > 0$, the data-to-solution map $(\psi, A_{j}, \rd_{t} A_{j})(0) \mapsto (\psi, A_{j}, \rd_{t} A_{j})$ extends continuously to
\begin{align*}
	\dot{H}^{1/2} \times \dot{H}^{1} \times L^{2} (\bbR^{4}) \cap \set{\hbox{\eqref{eq:main:smalldata} holds}} \to  C([0, T]; \dot{H}^{1/2} \times \dot{H}^{1} \times L^{2} (\bbR^{4})).
\end{align*}
The same statement holds on the interval $[-T, 0]$.
\item For each sign $\pm$, the solution $(\psi, A)$ exhibits \emph{modified scattering} as $t \to \pm \infty$, in the sense that there exist a solution $(\psi^{\pm \infty}, A^{\pm \infty}_{j})$ to the linear system
\begin{equation*}
\left\{
\begin{aligned}
	\Box A_{j}^{\pm \infty} =& 0, \\
	\alp^{\mu} \covD^{B}_{\mu} \psi^{\pm \infty} =& 0, \\
\end{aligned}
\right.
\end{equation*}
such that
\begin{equation*}
	\nrm{(\psi - \psi^{\pm \infty}_{j})(t)}_{\dot{H}^{1/2}(\bbR^{4})}
	+ \nrm{(A_{j} - A^{\pm \infty}_{j})[t]}_{\dot{H}^{1} \times L^{2}(\bbR^{4})} 
	 \to 0 \quad \hbox{ as } t \to \pm \infty.
\end{equation*}
Here, $B_{0} = 0$ and $B_{j}$ can be taken to be either the solution $A^{free}$ to $\Box A^{free} = 0$ with data $A^{free}_{j}[0] = A_{j}[0]$, or $B_{j} = A^{\pm \infty}_{j}$. 
\end{enumerate}
\end{theorem}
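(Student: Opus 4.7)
The plan is to adapt the critical well-posedness framework developed by Krieger--Sterbenz--Tataru for Maxwell--Klein--Gordon (MKG) to the Maxwell--Dirac setting, exploiting the analogy emphasized in the abstract. First, I would recast \eqref{eq:MD} in Coulomb gauge as an evolutionary system: the temporal component $A_{0}$ is determined elliptically from the spinor by $-\Dlt A_{0} = \brk{\psi, \alp^{0} \psi}$, while the spatial components $A_{j}$ satisfy $\Box A_{j} = \calP_{j} \brk{\psi, \alp_{k} \psi}$, with $\calP_{j}$ the Leray projector onto divergence-free fields enforcing the Coulomb condition. The spinor would then be split via the half-wave projections
\begin{equation*}
 \Pi_{\pm}(D) := \tfrac{1}{2} \bb( \Id \mp \abs{D}^{-1} \alp^{j} (-i \rd_{j}) \bb),
\end{equation*}
which diagonalize the free massless Dirac operator, so that each piece $\psi_{\pm}$ satisfies a half-wave equation structurally analogous to the scalar field equation in MKG.

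Next I would extract the bilinear null structure of the system. The cubic interaction $A^{j} \alp_{j} \psi$, after applying $\Pi_{\pm}$ and using the Coulomb relation $\rd^{j} A_{j} = 0$, should decompose into a sum of null forms in which the resonant (parallel) frequency interaction vanishes; a mirror null structure appears in the Maxwell source $\brk{\psi, \alp_{j} \psi}$. With these identities in hand, I would introduce the function-space framework from the MKG work: $S^{1/2}$-type spaces for the half-wave spinor components $\psi_{\pm}$, $S^{1}$-type spaces for $A_{j}$, and companion $N$-type spaces with null-frame atoms for measuring nonlinearities. I would then establish the required bilinear and trilinear estimates; modulo gamma-matrix bookkeeping, these should largely reduce to their MKG counterparts, which is the quantitative content of the claim that ``the most difficult part of Maxwell--Dirac takes essentially the same form as Maxwell--Klein--Gordon.''

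The principal obstacle is that at scale-critical regularity the magnetic connection $A_{j}$ cannot be treated perturbatively in the covariant Dirac equation $\alp^{\mu} \covD_{\mu} \psi = 0$. I would therefore construct a renormalized parametrix for this equation by conjugating the Dirac flow at frequency $2^{k}$ by a pseudodifferential multiplier $e^{-i \Psi_{<k}(t,x,D)}$, where $\Psi$ is built from the low-frequency part of $A$ and is chosen to cancel the leading (high--low parallel) paradifferential interaction, leaving errors amenable to $X^{s,b}$-type perturbative treatment. The new algebraic feature relative to MKG is that the renormalization must intertwine with the gamma matrices, so the symbol $\Psi$ must carry spinorial structure; proving its boundedness on $S^{1/2}$ and on $N$, together with the disposability of associated error terms, is the main technical step.

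With the parametrix in place, the proof is completed by a continuity argument on $[-T,T]$ for the norm $\nrm{\psi}_{S^{1/2}} + \sup_{j} \nrm{A_{j}}_{S^{1}}$, using the multilinear estimates to close under the smallness hypothesis \eqref{eq:main:smalldata}, together with persistence of regularity to promote the bound to a smooth global solution and continuous dependence at the critical level. Modified scattering is then obtained by comparing the nonlinear solution to a solution of the covariant linear system with background $B_{j}$ equal to either $A^{free}_{j}$ or $A^{\pm \infty}_{j}$, and showing that the difference tends to zero in $\dot{H}^{1/2} \times (\dot{H}^{1} \times L^{2})$ as $t \to \pm \infty$ by exploiting the $S$-space bounds, which encode both Strichartz and null-frame decay.
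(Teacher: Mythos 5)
Your overall architecture matches the paper's: Coulomb-gauge reduction with elliptic $A_{0}$ and Leray-projected wave equation for $A_{j}$, half-wave diagonalization via $\Pi_{\pm}$, KST-style $S$/$N$ spaces, a renormalized parametrix for the nonperturbative paradifferential interaction, iteration, and a propagator-comparison argument for modified scattering. However, there is one concrete point where your plan goes astray, and it happens to be the central structural insight of the paper: you assert that ``the symbol $\Psi$ must carry spinorial structure'' and that the renormalization ``must intertwine with the gamma matrices.'' This is not what happens, and attempting it would create genuine trouble. The whole KST conjugation $e^{i\Psi}(t,x,D)\,\Box\, e^{-i\Psi}(D,y,s)$ relies on the phase being scalar (real-valued), so that differentiating $e^{-i\Psi}$ produces clean multiplicative factors; a matrix-valued, non-commuting $\Psi(t,x,\xi)$ would destroy the conjugation identities and the $L^2$/null-frame bounds for the renormalization operators.

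The paper avoids this entirely via the identity $\alp^{j}\Pi_{s} = -s\,\mR^{j} + \Pi_{-s}\alp^{j}$ (Lemma~\ref{lem:commAlpPi}): the interaction $A_{j}\alp^{j}\psi_{s}$ splits into a \emph{scalar} Riesz-transform part $-s\,\ND^{R}(A_{x},\psi_{s})$, which contains no gamma matrices and mirrors MKG-CG, and a \emph{spinorial} part $\ND^{S}_{s}(A_{x},\psi_{s})$. The decisive observation (Remark~\ref{rem:spin-nf}) is that the spinorial part carries a null form in the angle between the \emph{output} and the high-frequency input, which in the low-high regime is smaller by a factor $2^{k'-k}$ than the usual angle; this extra off-diagonal gain makes $\pi^{S}_{s}[A_{x}]$ entirely perturbative (estimate \eqref{eq:diffs}). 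Only the scalar Riesz part of the free wave, $\Diff^{R}[A^{free}_{x}]$, is nonperturbative, and it is renormalized by the \emph{unchanged, componentwise-scalar} KST parametrix used as a black box. You would also need, for the non-free part $\Diff^{E}[\bfA_{0}(\psi,\psi)] - s\,\Diff^{R}[\bfA_{x}(\psi,\psi)]$, the secondary Machedon--Sterbenz trilinear null structure inherited verbatim from MKG; your passing mention of ``trilinear estimates'' gestures at this but does not identify that it applies only to the scalar remainder after the spinorial part has been discarded as perturbative. Without the scalar/spinorial splitting your parametrix step does not close as described.
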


In the general case $d \geq 4$, the same theorem holds with the spaces $\dot{H}^{1/2}(\bbR^{4})$ and $\dot{H}^{1} \times L^{2}(\bbR^{4})$ are replaced by and $\dot{H}^{\frac{d-3}{2}}(\bbR^{d})$ and $\dot{H}^{\frac{d-2}{2}} \times \dot{H}^{\frac{d-4}{2}}(\bbR^{d})$, respectively. We refer to Remarks~\ref{rem:hi-d-1}, \ref{rem:hi-d-2}, \ref{rem:hi-d-3}, \ref{rem:hi-d-4} and \ref{rem:hi-d-5} for the necessary modifications in the argument when $d \geq 5$.

\begin{remark} 
Although the theorem is stated only for Coulomb initial data sets, it may be applied to an arbitrary smooth initial data set satisfying the smallness condition \eqref{eq:main:smalldata} by performing a gauge transform.
Indeed, given an arbitrary connection 1-form $A_{j}(0)$ on $\bbR^{d}$, there exists a gauge transformation $\chi \in \dot{H}^{\frac{d}{2}} \cap \dot{W}^{1,d} \cap BMO(\bbR^{d})$ such that the gauge transform $\tilde{A}(0) = A(0) - \ud \chi$ obeys the Coulomb gauge condition \eqref{eq:coulomb}. Moreover, the small data condition \eqref{eq:main:smalldata} is preserved up to multiplication by a universal constant for $\eps_{\ast}$ small enough. Such a gauge transformation can be found by solving the Poisson equation $\lap \chi = \mathrm{div}_{x} A_{j}(0)$.
\end{remark}

We remark that our method do not apply to the case of nonzero mass $m \neq 0$, although the observations made in this paper suggest that it would likely follow from a corresponding result for the massive Maxwell--Klein--Gordon equations; see `Parallelism with Maxwell--Klein--Gordon' in Section~\ref{subsec:main-idea}. The physically interesting case of $d = 3$, with or without mass, remains open.

\subsection{Previous work}
A brief survey of previous results on \eqref{eq:MD} and related equations is in order.
After early work on local well-posedness of \eqref{eq:MD} on $\bbR^{1+3}$ by Gross \cite{Gr} and Bournaveas \cite{Bou}, D'Ancona--Foschi--Selberg \cite{DFS2} established local well-posedness of \eqref{eq:MD} on $\bbR^{1+3}$ in the Lorenz gauge $\rd^{\mu} A_{\mu} = 0$ for data $\psi(0) \in H^{\eps}, A_{\mu}[0] \in H^{1/2 + \eps} \times H^{-1/2+\eps}$, which is almost optimal. In the course of their proof, a deep system null structure of \eqref{eq:MD} in the Lorenz gauge was uncovered. Although we work in a different gauge, our work develop upon many ideas from \cite{DFS2}. D'Ancona--Selberg \cite{DS} extended this approach to \eqref{eq:MD} on $\bbR^{1+2}$ and proved global well-posedness in the charge class. 

Regarding \eqref{eq:MD} on $\bbR^{1+3}$, we also mention \cite{CBC, Geo, FST, Psa} on global well-posedness for small, smooth and localized data, \cite{BMS, NM1} on the non-relativistic limit and \cite{NM2} on unconditional uniqueness at regularity $\psi \in C_{t} H^{1/2}, \ A_{x}[\cdot] \in C_{t} (H^{1} \times L^{2})$ in the Coulomb gauge.

A scalar counterpart of \eqref{eq:MD} is the \emph{Maxwell--Klein--Gordon equations} (MKG). An analogue of Theorem~\ref{thm:main} for (MKG) was proved by Krieger--Sterbenz--Tataru \cite{KST}. As we will explain in Section~\ref{subsec:main-idea}, \cite{KST} may be regarded as one of the direct predecessors of the present work. In the energy critical case $d =4$, global well-posedness of (MKG) for arbitrary finite energy data was recently established by the second author and Tataru \cite{OT2, OT3, OT1}, and independently by Krieger--L\"uhrmann \cite{KL}. In contrast, although \eqref{eq:MD} is also energy critical on $\bbR^{1+4}$, the energy for \eqref{eq:MD} is \emph{not} coercive; whether our Theorem~\ref{thm:main} may be extended to the large data case is therefore unclear.

Finally, we note that optimal small data global well-posedness was proved recently for the \emph{cubic Dirac equation} in $\bbR^{1+2}$ and $\bbR^{1+3}$ by Bejenaru--Herr \cite{BH1, BH2} (massive) and Bournaveas--Candy \cite{BC} (massless). This equation features a spinorial null structure similar to what is considered in this work.

%%
% IDEAS
%%
%\begin{itemize}
%\item Maxwell-Dirac describes interaction between an electron and an electromagnetic field. Folland's QFT book?
%%\item Motivation: To understand the large data theory of the Maxwell-Dirac equations on $\bbR^{1+3}$. This paper is a step towards this direction.
%\item Discuss previous works. Include in particular: Choquet-Bruhat--Christodoulou \cite{CBC} (small data global well-posedness and decay on $\bbR^{1+3}$ by conformal method) Georgiev \cite{Geo}, Flato--Simon--Taflin \cite{FST}, Psarelli \cite{Psa} (small data global well-posedness and decay in the massive case on $\bbR^{1+3}$), d'Ancona--Foschi--Selberg \cite{DFS2} (almost optimal local well-posedness on $\bbR^{1+3}$), d'Ancona--Selberg \cite{DS} (global well-posedness on $\bbR^{1+2}$).
%
%\item Mention also: Bournaveas--Candy \cite{BC} (small critical data global well-posedness of massless cubic Dirac), Bejenaru--Herr \cite{BH1, BH2} (small critical data global well-posedness of massive cubic Dirac).
%
%\item Things that are not proved: In our paper, we show global well-posedness for small critical Sobolev data in the \emph{massless case} for dimensions $d \geq 4$; the case of non-zero mass remains open. The physically important case of $d = 3$ with mass remains open. 
%\end{itemize}

\subsection{Main ideas} \label{subsec:main-idea}
We now provide an outline of the main ideas of this paper.
%The first main component of our proof is uncovering the null structures of the Maxwell-Dirac system in the Coulomb gauge, which involve both classical and spinorial null forms. Secondly, we construct a parametrix for the covariant half-wave operators that arise out of the Dirac equation. The parametrix estimates involve the functional framework developed in \cite{KST} for the purpose of estimating bilinear and trilinear terms, and we are able to apply this framework to the Maxwell-Dirac equation.

\subsubsection*{Null structure of \eqref{eq:MD} in Coulomb gauge} 
Null structure arises in equations from mathematical physics which exhibit covariance properties. It manifests through the vanishing of resonant components of the nonlinearities of such equations, and its presence is fundamental in obtaining well-posedness at critical regularity. An important component of our proof is uncovering the null structure of \eqref{eq:MD} in the Coulomb gauge (MD-CG), which involves both \emph{classical} (i.e., scalar) and \emph{spinorial} null forms.

A classical null form for scalar inputs refers to a linear combination of 
\begin{align*}
Q_{\al \beta}(\phi,\psi) &=\pt_{\al} \phi \pt_{\beta} \psi- \pt_{\beta}  \phi \pt_{\al} \psi, \qquad 0 \leq \al < \beta \leq d \\
Q_{0}(\phi,\psi) &=\pt_{\al} \phi \cdot \pt^{\al} \psi. 
\end{align*} 
These null forms initially arose in the study of global-in-time behavior of nonlinear wave equations with small, smooth and localized data \cite{Kla}. Remarkably, in the work \cite{KlainermanMacHedon} of Klainerman and Machedon, it was realized that the same structure is essential for establishing low regularity well-posedness as well. 

Among the first applications of this idea was the proof of global well-posedness at energy regularity of the Maxwell--Klein--Gordon equations on $\bbR^{1+3}$ \cite{klainerman1994}, which is a scalar analogue of \eqref{eq:MD}. A key observation in \cite{klainerman1994} was that quadratic nonlinearities of Maxwell--Klein--Gordon in the Coulomb gauge (MKG-CG) consist of null forms of the type $Q_{\al \beta}$. Furthermore, in the proof of essentially optimal local well-posedness of MKG-CG in $\bbR^{1+3}$ by Machedon and Sterbenz \cite{MachedonSterbenz}, a secondary trilinear null structure involving $Q_{0}$ was identified in MKG-CG after one iteration. As we explain further below, both of these structures arise in MD-CG, and play an important role in our problem.
 
%In the work of Klainerman and Machedon \cite{klainerman1994} the null forms $ Q_{\al \beta} $ were first identified in the structure of the Maxwell--Klein--Gordon (in the Coulomb gauge) and used to prove local well-posedness in $ 3+1$ dimensions by applying the null-form estimates proved in \cite{KlainermanMacHedon}.
%In \cite{MachedonSterbenz}, the authors found further cancelation properties involving $ Q_{0} $ in the trilinear structure of the MKG-CG which allowed them to prove essentially optimal local-wellposedness for $ d=3 $. These structures played a key role in \cite{KST}, as well as in the present paper.

Another type of null structure that arise in our work is spinorial null forms. 
These are bilinear forms with the symbol
$$ \Pi_{\pm}(\xi) \Pi_{\mp}(\eta), $$
which were first uncovered by D'Ancona, Foschi, Selberg for the Dirac--Klein--Gordon system in \cite{DFS1}. These authors further investigated the spinorial null forms in the study of the Maxwell--Dirac equation on $\bbR^{1+3}$ in the Lorenz gauge (in \cite{DFS2}; see also \cite{DS}). 
In the work of Bejenaru--Herr \cite{BH1, BH2} and Bournaveas--Candy \cite{BC}, these null forms were used in the proof of global well-posedness of the cubic Dirac equation for small critical data. 

A more detailed exposition of the null structure of MD-CG is given in Section~\ref{subsec:MD-nf} below. 
At this point we simply note that the null structure alone is \emph{insufficient} to close the proof of Theorem~\ref{thm:main} due to the presence of \emph{nonperturbative} nonlinearity, which is the next topic of discussion.

\subsubsection*{Presence of nonperturbative nonlinearity}
As in many previous works on low regularity well-posedness, we take a paradifferential approach in treating the nonlinear terms, exploiting the fact that the high-high to low interactions are weaker and that terms where the derivative falls on low frequencies are weaker as well. 

From this point of view, the worst interaction in the Dirac part of MD-CG occurs in the frequency-localized components
$$ \sum_{k} \alp^{\mu} P_{<k-C} A_{\mu} P_k \psi. $$
At the critical Sobolev \footnote{It is worthwhile to note that the issue of nonperturbative nonlinearity does \emph{not} arise if the initial data have $\ell^{1}$ summability in dyadic frequencies (i.e., they belong to the critical $\ell^{1}$-Besov space); see \cite{Tat, Ster}.} regularity this term is \emph{nonperturbative}, in the sense that even after utilizing all available null structure, it cannot be treated with multilinear estimates for the usual wave and Dirac equations. Instead, following the model set in the work of Rodnianski--Tao \cite{RT} and Krieger--Sterbenz--Tataru \cite{KST} on MKG-CG, this term must be viewed as a part of the underlying linear operator, and we must prove its solvability in appropriate function spaces. In fact, we directly establish solvability of the \emph{covariant Dirac operator} $\alp^{\mu} \covD_{\mu}$; see Proposition~\ref{prop:parasys-dirac} below. We note that this is the reason why MD-CG exhibits modified scattering, as opposed to scattering to a free Dirac field, in Theorem~\ref{thm:main}.

The presence of a nonperturbative term is characteristic of geometric wave equations with derivative nonlinearity, whose examples include wave maps, Maxwell--Klein--Gordon, Yang--Mills. This point distinguishes our problem from other nonlinear Dirac equations for which critical well-posedness was previously proved \cite{BH1, BH2, BC}.

\subsubsection*{Parallelism with Maxwell--Klein--Gordon}
In proving solvability of the covariant Dirac operator, as well as uncovering the null structure of MD-CG, we exploit a deep parallelism between the Maxwell--Dirac and the Maxwell--Klein--Gordon equations. On one hand, it provides a clear guiding principle that we hope would be useful in the future study of other Dirac equations. On the other hand, it allows us to borrow some key bounds directly from the Maxwell--Klein--Gordon case \cite{KST}, which simplifies our proof.

Historically, the Dirac equation emerged in an attempt to take the `square root' of the Klein--Gordon equation in order to obtain an equation that is first order in time. Thus `squaring' the Dirac component of the system leads to an equation that looks like the Klein--Gordon part of MKG. Unfortunately, as noted in \cite{DFS2}, this idea seems to be of limited use, since squaring the Dirac equation destroys most of the spinorial null structure. 

An alternative, more fruitful approach was put forth in \cite{DFS2}, which we follow in this paper. The idea is to first take the spatial Fourier transform and diagonalize the Dirac operator $\alp^{\mu} \rd_{\mu}$, decomposing the spinor as $ \psi=\psi_{+}+\psi_{-} $ where $ \psi_{\pm} $ obey appropriate half-wave equations. Splitting $\psi$ in the nonlinearity $\alp^{\mu} A_{\mu} \psi$ into $\psi_{+} + \psi_{-}$ as well, we can divide the equation into two parts: the \emph{scalar part}, which consists of contribution of $\psi_{\pm}$ without multiplication by $\alp^{\mu}$, and the remaining \emph{spinorial part}. A similar decomposition can be performed for the nonlinearity of the Maxwell equations.

One of the key observations of this paper is that the spinorial part enjoys a more favorable null structure compared to the scalar part. In particular, it is entirely perturbative, and furthermore the secondary null structure \`a la Machedon--Sterbenz \cite{MachedonSterbenz} is unnecessary. We refer to Remark~\ref{rem:spin-nf} for a more detailed explanation, after proper notation is set up.

For the remaining scalar part, we observe that its structure closely parallels that of MKG-CG; see Remark~\ref{rem:nonlin} for the detailed statement. As a consequence of this parallelism, we show that MD-CG exhibits nearly identical secondary null structure as MKG-CG uncovered in \cite{MachedonSterbenz}; see Section~\ref{subsec:tri-tri}. Furthermore, the microlocal parametrix construction in \cite{KST} can be borrowed as a black box to establish key estimates in the proof of solvability of the covariant Dirac equation, which handles the nonperturbative nonlinearity; see Section~\ref{sec:para}.

%
%After appropriate manipulations we are left with non-perturbative and perturbative terms. The perturbative nonlinearities are of two types, for each of the Dirac and Maxwell components: a part which mirrors the nonlinearity of M-K-G (and in particular has a secondary null form $ Q_{0} $ if viewed as a trilinear form) and a part containing spinorial null forms which are more favorable.  This is explained in detail in Section 4.3 and Remark \ref{rem:nonlin}. 
%
%However, we also identify the terms
%\be \sum_{k \in \mathbb{Z}} P_{k-C} A^{free,j} \frac{\partial_j}{\vm{D}}P_k \psi_{\pm}  \ee
%which at critical regularity are not perturbative and, following the model set in \cite{RT}, \cite{KST}, must be embedded in the main operators on the left side. In fact, we are able to reduce most of the parametrix estimates for the paradifferential covariant half-wave operators to the corresponding construction developed for the M-K-G; see section 10 for details. 
%\newline

\subsubsection*{Parametrix construction for paradifferential covariant wave equation}
We end this subsection with a brief discussion on the parametrix construction in \cite{RT} and \cite{KST} for the paradifferential covariant wave equation in the context of MKG-CG. As explained above, it provides the basis for our proof of solvability of the covariant Dirac equation.

A key breakthrough of Rodnianski and Tao \cite{RT} was proving Strichartz estimates for the covariant wave equation by introducing a microlocal parametrix construction, motivated by the gauge covariance of $\Box_{A} = (\partial_{\alpha}+i A_{\alpha})(\partial^{\alpha}+i A^{\alpha})$ under gauge transforms $  \phi'=e^{i \Psi} \phi, \  A'=A- \nabla \Psi $, i.e., $e^{-i \Psi} \Box_{A'} (e^{i \Psi} \phi)= \Box_A \phi$.
The idea was to approximately conjugate (or renormalize) the modified d'Alembertian $ \Box+ 2i A_{<k-c} \cdot \nabla_x P_k $ to $ \Box $ by means of a carefully constructed pseudodifferential gauge transformation
$$ \Box_A^p \approx e^{i \Psi_{\pm}}(t,x,D) \Box e^{-i \Psi_{\pm}} (D,s,y), $$
where $e^{i \Psi_{\pm}}(t, x, D)$ and $e^{-i \Psi_{\pm}} (D,s,y)$ refer to the left- and right-quantization, respectively.
These Strichartz estimates were sufficient to prove global regularity of the Maxwell-Klein-Gordon equation at small critical Sobolev data in dimensions $ d \geq 6 $.

In \cite{KST}, Krieger, Sterbenz and Tataru further advanced the parametrix idea, showing that it interacts well with the function spaces needed to estimate the nonlinearity of the Maxwell--Klein--Gordon equation in dimension $ d=4 $. In particular, the resulting solution obeys similar bounds as ordinary waves, thus yielding control of an $X^{s, b}$-type norm, null-frame norms and square summed frequency and angular-localized Strichartz norms. A short technical summary of the construction in \cite{KST} can be found in Section~\ref{subsec:rn-box} below.

\section{Preliminaries}
\subsection{Notation and conventions} \label{subsec:notation}
We reserve the letter $C$ to denote a positive constant which may vary from expression to expression.
We write $A \aleq B$ and $A = B + O(1)$ for $A \leq CB$ and $\abs{A - B} \leq C$, respectively. We also introduce the shorthand $A \simeq B$ for $A \aleq B$ and $B \aleq A$. We use a subscript to indicate dependence of the implicit constant, e.g., $A \aleq_{\dlt} B$ if $C = C_{\dlt}$ depends on $\dlt$.

Given $\calC, \calC' \subseteq \bbR^{d}$, we use the notation $- \calC = \set{ - \xi : \xi \in \calC}$ and $\calC + \calC' = \set{\xi + \eta : \xi \in \calC, \ \eta \in \calC'}$. Moreover, we define the \emph{angular distance} between $\calC$ and $\calC'$ as
\begin{equation*}
	\abs{\angle(\calC, \calC')} := \inf \set{\abs{\angle(\xi, \eta)} : \xi \in \calC, \ \eta \in \calC'}.
\end{equation*}

We denote by $\calL$ a translation-invariant bilinear operator on $\bbR^{d}$ whose kernel has bounded mass, i.e.,
\begin{equation*} \label{transinvop}
	\calL(f, g) (x) = \int K(x-y_{1}, x - y_{2})  f(y_{1}) g(y_{2}) \, \ud y_{1} \ud y_{2}
\end{equation*}
where $K$ is a measure on $\bbR^{d} \times \bbR^{d}$ with bounded mass. As a consequence, $\calL(f, g)$ obeys a H\"older-type inequality
\begin{equation} \label{eq:L-atom}
	\nrm{\calL(f, g)}_{L^{p}} \aleq \nrm{f}_{L^{q_{1}}} \nrm{g}_{L^{q_{2}}}
\end{equation}
for any exponents $1 \leq p, q_{1}, q_{2} \leq \infty$ such that $p^{-1} = q_{1}^{-1} + q_{2}^{-2}$. 

\subsection{Frequency projections}
Let $ \chi $ be a smooth non-negative bump function supported on $ [2^{-2},2^2] $ which satisfies the partition of unity property
$$ \sum_{k \in \mathbb{Z}} \chi \lpr \frac{\vm{\xi}}{2^k} \rpr=1 $$
for $ \xi \neq 0 $. We define the Littlewood-Paley operators $ P_k $ by 
$$ \widehat{P_k f} (\xi)=\chi \lpr \frac{\vm{\xi}}{2^k} \rpr \hat{f} (\xi). $$
The modulation operators $ Q_j, Q_j^{\pm} $ are defined by 
$$  \mathcal{F} (Q_j f) (\tau,\xi)= \chi \lpr \frac{\vm{\vm{\tau}-\vm{\xi}} }{2^j} \rpr  \mathcal{F}f (\tau,\xi), \quad  \mathcal{F} (Q_j^{\pm}f) (\tau,\xi)= \chi \lpr \frac{\vm{\pm \tau-\vm{\xi}} }{2^j} \rpr  \mathcal{F}f (\tau,\xi). $$ 
for $ j \in \mathbb{Z} $, where $ \mathcal{F} $ denotes the space-time Fourier transform.

Given  $ \ell \leq 0 $ we consider a collection of directions $ \omega $ on the unit sphere which is maximally $ 2^\ell $-separated. To each $ \omega $ we associate a smooth cutoff function $ m_{\omega} $ supported on a cap $ \subset \bbS^{d-1} $ of radius $ \simeq 2^\ell $ around $ \omega $, with the property that $ \sum_{\omega}  m_{\omega}=1 $. We define $ P_\ell^{\omega} $ to be the spatial Fourier multiplier with symbol $ m_{\omega}(\xi/\vm{\xi}) $. In a similar vein, we consider rectangular boxes $ \mathcal{C}_{k'}(\ell') $ of dimensions $ 2^{k'} \times (2^{k'+\ell'})^{d-1} $, where the $ 2^{k'} $ side lies in the radial direction, which cover $\bbR^{d}$ and have finite overlap with each other. We then define $P_{\mathcal{C}_{k'}(\ell')}$ to be the associated smooth spatial frequency localization to $\calC_{k'}(\ell')$. For convenience, we choose the blocks so that $P_{k} P_{\ell}^{\omg} = P_{\calC_{k}(\ell)}$.

%we consider the operators $ P_{\mathcal{C}_{k'}(\ell')} $ which localize the spatial frequencies to blocks $ \mathcal{C}_{k'}(\ell') $ of dimensions $ 2^{k'} \times (2^{k'+\ell'})^{d-1} $, where the $ 2^{k'} $ side lies in the radial direction.

%\Red{
%CHOOSE $\calC_{k}(\ell)$ SO THAT
%\begin{equation*}
%	P_{k} P_{\ell}^{\omg} = P_{\calC_{k}(\ell)}
%\end{equation*}
%AND FOR $\ell > 0$,
%\begin{equation*}
%	P_{k} P_{\ell}^{\omg} = P_{0}^{\omg}
%\end{equation*}
%}

We will often abbreviate $f_{k} = P_{k} f$. We will sometimes use the operators  $ \tilde{P}_k,  \tilde{Q}_{j/<j},  \tilde{P}^{\omg}_{\ell} $ with symbols given by bump functions which equal $ 1 $ on the support of the multipliers $ P_k, Q_{j/<j} $ and $ P^{\omg}_{\ell} $ respectively and which are adapted to an enlargement of these supports. Thus,
$$ \tilde{P}_k P_k=P_k, \qquad \tilde{Q}_{j/<j} Q_{j/<j}=Q_{j/<j}, \qquad \tilde{P}^{\omg}_{\ell} P^{\omg}_{\ell}=P^{\omg}_{\ell}. $$ 

Given a sign $s \in \set{+, -}$, define $T_{s}$ as
\begin{equation*}
	\widetilde{T_{+} f} (\tau, \xi) = 1_{\set{\tau > 0}} \tilde{f}(\tau, \xi), \quad
	\widetilde{T_{-} f} (\tau, \xi) = 1_{\set{\tau \leq 0}} \tilde{f}(\tau, \xi).
\end{equation*}
For all $j$, we have $Q_{j / <j} T_{s} = Q_{j / <j}^{s} T_{s}$. Moreover, for $j \leq k - 3$, we have 
\begin{equation*}
	P_{k} Q_{j / <j} T_{s} = P_{k} Q_{j / <j}^{s}, \quad
	P_{k} Q_{j / <j} = \sum_{s \in \set{+, -}} P_{k} Q_{j / <j}^{s}.
\end{equation*}

We now discuss boundedness of the frequency projections. Following the terminology in \cite{Tao2}, we say that a spacetime Fourier multiplier is \emph{disposable} if its (distributional) convolution kernel is a measure with mass $O(1)$; clearly, a disposable multiplier is bounded on any translation-invariant Banach spaces (e.g., $L^{q} L^{r}$). 

For any $k \in \bbZ$, $\ell \leq 0$, angular sector $\omg$ of size $\simeq 2^{\ell}$ and a rectangular box $\calC$, the following operators are disposable:
\begin{equation*}
	P_{k}, \quad P_{k} P_{\ell}^{\omg}, \quad P_{\calC}.
\end{equation*}
For any $Q_{j / <j}^{\Box} \in \set{Q^{s}_{j}, Q^{s}_{<j}, Q_{j}, Q_{<j}}$ with $j \in \bbZ$, the operator $P_{k} Q_{j / <j}^{\Box}$ is disposable if $j \geq k - C$ \cite[Lemma~3]{Tao2}. In general, we have
\begin{equation} \label{eq:q-disp}
	\nrm{P_{k} Q_{j / <j}^{\Box} f}_{L^{q} L^{r}} \aleq 2^{d (k - j)_{+}}\nrm{f}_{L^{q} L^{r}} \qquad (1 \leq q, r \leq \infty)
\end{equation}
%PROOF:
%Normalize $k = 0$. If $j < 0$, then each $\rd_{\tau}, \rd_{\xi}$ derivative costs $2^{-j}$....
In the case $r = 2$, we have an unconditional estimate \cite[Lemma~4]{Tao2}:
\begin{equation} \label{eq:q-LqL2}
	\nrm{P_{k} Q_{j / <j}^{\Box} f}_{L^{q} L^{2}} \aleq \nrm{f}_{L^{q} L^{2}} \qquad (1 \leq q \leq \infty).
\end{equation}
For $j \geq k + 2\ell - C$, the operator $P_{k} P_{\ell}^{\omg} Q_{j / <j}^{\Box}$ is disposable \cite[Lemma~6]{Tao2}.

\subsection{Dyadic function spaces} \label{subsec:dyadic-fs}
Many function spaces we use will be defined \emph{dyadically}, i.e., the norm\footnote{Throughout this paper, we abuse the terminology and refer to semi-norms as simply norms.}  of $f$ will be some summation of \emph{dyadic norms} of $P_{k} f = f_{k}$. Formally, given a sequence of norms $(X_{k})_{k \in \bbZ}$, $1 \leq p \leq \infty$ and $\sgm \in \bbR$, we denote by $\ell^{p} X^{\sgm}$ the norm
\begin{equation*}
\nrm{f}_{\ell^{p} X^{\sgm}} = \bb(\sum_{k} (2^{\sgm k}\nrm{P_{k} f}_{X_{k}})^p \bb)^{1/p},
\end{equation*}
with the usual modification when $p = \infty$. 

An important class of examples is the $L^{2}$-Sobolev spaces on $\bbR^{d}$, i.e., 
\begin{equation*}
	\nrm{f}_{\dot{H}^{\sgm}} = \bb( \sum_{k} (2^{\sgm k} \nrm{P_{k} f}_{L^{2}} )^{2} \bb)^{1/2}.
\end{equation*}
Other examples include $X^{s, b}_{1}$ and $X^{s, b}_{\infty}$ spaces on $\bbR^{1+d}$, which are logarithmic of refinements of the usual $X^{s, b}$ space. Their dyadic norms (independent of $k$) are
\begin{equation} \label{xnorms-1}
\nrm{f}_{X^{b}_{1}} = \sum_{j \in \bbZ} 2^{b j} \nrm{Q_{j} F}_{L^{2} L^{2}}, \qquad
\nrm{f}_{X^{b}_{\infty}} = \sum_{j \in \bbZ} 2^{b j} \nrm{Q_{j} F}_{L^{2} L^{2}}.
\end{equation}
Then we define the $X^{s, b}_{1} = \ell^{2} (X^{b}_{1})^{s}$ and $X^{s, b}_{\infty} = \ell^{2} (X^{b}_{\infty})^{s}$, i.e.,
\begin{equation} \label{xnorms-2}
	\nrm{f}_{X^{s, b}_{1}} = \bb( \sum_{k} (2^{s k} \nrm{f}_{X^{b}_{1}})^{2} \bb)^{1/2}, \qquad
	\nrm{f}_{X^{s, b}_{\infty}} = \bb( \sum_{k} (2^{s k} \nrm{f}_{X^{b}_{\infty}})^{2} \bb)^{1/2}.
\end{equation}
Our main function spaces (cf. Section~\ref{sec:fs}) will be dyadically defined as well.

\subsection{Frequency envelopes} \label{subsec:freqenv}
We borrow from \cite{Tao2} the notion of \emph{frequency envelopes}, which is a convenient means to keep track of dyadic frequency profiles. Given $\dlt > 0$, we say that a sequence $c = (c_{k})_{k \in \bbZ}$ of positive numbers is a \emph{$\dlt$-admissible frequency envelope} if there exists $C_{c} > 0$ such that for every $k, k' \in \bbZ$, we have
\begin{equation*}
	\abs{c_{k} / c_{k'}} \leq C_{c} \, 2^{\dlt \abs{k - k'}}.
\end{equation*}
Given a sequence $(X_{k})_{k \in \bbZ}$ of dyadic norms, we define the $X_{c}$-norm as
\begin{equation*}
	\nrm{f}_{X_{c}} = \sup_{k \in \bbZ} c_{k}^{-1}\nrm{P_{k} f}_{X_{k}}.
\end{equation*}

Dyadically defined norms (cf. Section~\ref{subsec:dyadic-fs}) are controlled in terms of $c$ and $\nrm{f}_{X_{c}}$ in the obvious manner:
\begin{equation*}
	\nrm{f}_{\ell^{p} X^{\sgm}} \leq \bb( \sum_{k} (2^{\sgm k} c_{k})^{p} \bb)^{1/p} \nrm{f}_{X_{c}}.
\end{equation*}
In the converse direction, we say that $c$ is a frequency envelope for $\nrm{f}_{\ell^{p} X^{0}}$ if
$$ \vn{f}_{\ell^{p} X^{0}} \simeq \bb( \sum_{k} c_{k}^{p} \bb)^{1/p}, \qquad \vn{P_k f}_{X_k} \leq c_k. $$ 
Given any $f \in \ell^{p} X^{0}$, we can construct a $\dlt$-admissible frequency envelope $c$ for $\nrm{f}_{\ell^{p} X^{0}}$ by defining
\begin{equation} \label{eq:fe-construct-0}
c_k=\sum_{k'} 2^{-\delta \vm{k-k'}} \vn{P_{k'} f}_{X_{k'}}. 
\end{equation}
By Young's inequality, this frequency envelope inherits any additional $\ell^{p'} X^{\sgm}$ regularity of $f$ for $1 \leq p' \leq \infty$ and $\sgm \in (-\dlt, \dlt)$, i.e.,
\begin{equation*}
	\nrm{2^{\sgm k} c_{k}}_{\ell^{p'}} \aleq \nrm{f}_{\ell^{p'} X^{\sgm}}.
\end{equation*}

%\Red{
%For instance, given an initial data set $\psi(0), A_{j}[0] = (A_{j}, \rd_{t} A_{j})(0)$ for \eqref{eq:MD} in the Coulomb gauge, we may construct a $\dlt$-admissible frequency envelope $c$ for such data by
%\begin{equation*}
%	c_{k} = \sum_{k'} 2^{\dlt_{1} \abs{k - k'}} \bb( \nrm{P_{k} \psi(0)}_{\dot{H}^{1/2}} + \nrm{P_{k} A_{x}[0]}_{\dot{H}^{1} \times L^{2}}  \bb).
%\end{equation*}
%For $\dlt' \in (-\dlt, \dlt)$, note that 
%\begin{equation*}
%	\nrm{2^{\dlt' k} c_{k}}_{\ell^{2}} \aleq \nrm{\psi}_{\dot{H}^{1/2+\dlt'}} + \nrm{A_{x}[0]}_{\dot{H}^{1+\dlt'} \times \dot{H}^{\dlt'}}
%\end{equation*}
%In words, extra regularity of the initial data (more precisely, a $\dot{H}^{1/2+\dlt'} \times \dot{H}^{1 + \dlt'} \times \dot{H}^{\dlt'}$ bound for $\abs{\sgm} < \dlt$) is inherited by the frequency envelope $c$.
%}

%If
%$$ \vn{f}_{X}^2 \simeq \sum_{k} \vn{P_k f}_{X_k}^2, $$
%we can construct a frequency envelope $ c \in l^2 $ such that 
%$$ \vn{f}_{X} \simeq \vn{c}_{l^2}, \qquad \vn{P_k f}_{X_k} \leq c_k $$ 
%by defining
%$$ c_k=\sum_{k'} 2^{-\delta \vm{k-k'}} \vn{P_{k'} f}_{X_{k'}}. $$

%This notation is useful for expressing weak interaction between dyadic frequencies. For instance, let $(X_{k})_{k \in \bbZ}, (Y_{k})_{k \in \bbZ}, (Z_{k})_{k \in \bbZ}$ be dyadic norms, and suppose we have
%\begin{equation*}
%	\nrm{P_{k_{0}}\calL(f_{k_{1}}, g_{k_{2}})}_{Z_{k_{0}}} \aleq 2^{\dlt_{0}(k_{\max} - k_{\min})}\nrm{f_{k_{1}}}_{X_{k_{1}}} \nrm{g_{k_{2}}}_{Y_{k_{2}}}
%\end{equation*}

We conclude this subsection with a discussion on simple operations on frequency envelopes.
Given a $\dlt$-admissible frequency envelope $c \in \ell^{p}$ $(1 \leq p \leq \infty)$, we may construct a new frequency envelope $\tilde{c}$ by taking $\tilde{c}_{k} = (\sum_{k' < k} c_{k'}^{p})^{1/p}$. For any $\ell \geq 0$, we see (by shifting indices) that
\begin{equation*}
	\abs{\tilde{c}_{k+\ell} / \tilde{c}_{k}}
	 \leq C_{c} 2^{\dlt \ell}, \qquad
	\abs{\tilde{c}_{k-\ell} / \tilde{c}_{k}}
	 \leq C_{c} 2^{\dlt \ell}.
\end{equation*}
In other words, $\tilde{c}$ is also $\dlt$-admissible. 

For $\dlt'$- and $\dlt$-admissible frequency envelopes $b$ and $c$, we denote by $bc = (b_{k} c_{k})_{k \in \bbZ}$ the product frequency envelope, which is clearly $(\dlt + \dlt')$-admissible. 

By Cauchy-Schwarz inequality, note that the frequency envelopes $(\sum_{k' < k} b_{k'} c_{k'})_{k \in \bbZ}$ is dominated by $((\sum_{k' < k} b_{k'}^{2})^{1/2} (\sum_{k' < k} c_{k'}^{2})^{1/2})_{k \in \bbZ}$, i.e.,
\begin{equation*}
	\sum_{k' < k} b_{k'} c_{k'} \leq \bb( \sum_{k' < k} b_{k'}^{2} \bb)^{1/2} \bb( \sum_{k' < k} c_{k'}^{2} \bb)^{1/2}.
\end{equation*}
In particular, if $b, c \in \ell^{2}$, then $bc \in \ell^{1}$.

\subsection{Small parameters}
We use two global small parameters $0 < \dlt_{1} < \dlt_{0}$, which are fixed from right to left. The number $\dlt_{0}$ denotes the exponent in the dyadic estimates proved in Sections~\ref{sec:bi} and \ref{sec:tri}. The number $\dlt_{1}$ is the admissibility constant for frequency envelopes, which is chosen to be much smaller than $\dlt_{0}$ (say $< \frac{1}{1000} \dlt_{0}$).

\section{Function spaces} \label{sec:fs}
In this section, we introduce the main function spaces we use to prove Theorem~\ref{thm:main}.
We build on the spaces defined in \cite{KST}, which in turn build on the prior work \cite{Tat, Tao2} on wave maps.
\subsection{Spaces $S_{k}$, $N_{k}$ and $Y_{k}$}
Let $d \geq 4$ and $\sgm \in \bbR$. We start with the main function spaces for the Maxwell component of \eqref{eq:MD}, which is essentially borrowed from \cite{KST}. In all cases, the scale-critical exponent is $\sgm = \frac{d-2}{2}$.

To define the nonlinearity space $N^{\sgm-1}$, we first let
\begin{equation*}
	N_{k} = L^{1} L^{2} + X^{0,-1/2}_{1},
\end{equation*}
and define $N^{\sgm-1}$ dyadically as
$$ \vn{F}_{N^{\sgm-1}}^2 \defeq \sum_{k} (2^{(\sgm-1) k} \vn{P_k F}_{N_k})^2. $$

Next, we introduce the solution space $S^{\sgm}$. We first define
\begin{align*}
	\nrm{f}_{S^{\mathrm{str}}_{k}}
	= & \sup_{(q, r) : \frac{1}{q} + \frac{d-1}{2 r} \leq \frac{d-1}{4}} 2^{(2 - \frac{1}{q} - \frac{d}{r}) k} \nrm{f}_{L^{q} L^{r}} \\
	\nrm{f}_{S^{\mathrm{box}}_{k}}^{2}
	= &\sup_{\ell \leq 0} \sum_{\omg} \nrm{P_{\ell}^{\omg} Q_{< k + 2 \ell} f}_{S^{\mathrm{box}}_{k}(\ell)}^{2}\\
	\nrm{f}_{S^{\mathrm{box}}_{k}(\ell)}^{2} 
	= & \nrm{f}_{S^{\mathrm{str}}_{k}}^{2} + \sup_{\substack{k' \leq k, \ \ell' \leq 0 \\ k + 2 \ell \leq k' + \ell' \leq k+ \ell}} \sum_{\calC_{k'}(\ell')} 2^{-(d-2) k'} 2^{-(d-3) \ell'} 2^{- k} \nrm{P_{\calC_{k'}(\ell')} f}_{L^{2} L^{\infty}}^{2}.
\end{align*}
When $d = 4$, let 
\begin{equation*}
	S_{k} = S^{\mathrm{str}}_{k} \cap X^{0, 1/2}_{\infty} \cap S^{\mathrm{ang}}_{k} \cap S^{\mathrm{box}}_{k}.
\end{equation*}
where $ S^{\mathrm{ang}}_{k}$ is as in \cite[Eqs.~(6)--(8)]{KST}:
\begin{align*}
\vn{f}_{S_k^{ang}}^2=& \sup_{l<0} \sum_{\omega} \vn{P_l^{\omega} Q_{k+2l} f}_{S_k^{\omega}(l)}^2,  \\
\vn{f}_{S_k^{\omega}(l)}^2= & \vn{f}_{S_k^{str}}^2+ 2^{-2k} \vn{f}_{NE}^2+2^{-3k} \sum_{\pm} \vn{T_{\pm} f}_{PW_{\omega}^{\mp}(l)}^2+ \\
 & + \sup_{\substack{k' \leq k, \ \ell' \leq 0 \\ k + 2 \ell \leq k' + \ell' \leq k+ \ell}}  \sum_{\calC_{k'}(\ell')} \bb( \nrm{P_{\calC_{k'}(\ell')} f}_{S_k^{str}}^{2}  + 2^{-2k}  \nrm{P_{\calC_{k'}(\ell')} f}_{NE}^{2} \\
 & + 2^{-2k'-k} \nrm{P_{\calC_{k'}(\ell')} f}_{L^{2} L^{\infty}}^{2}+ 2^{-3(k'+l')} \sum_{\pm} \nrm{T_{\pm} P_{\calC_{k'}(\ell')} f}_{PW_{\omega}^{\mp}(l) }^{2}  \bb)
\end{align*}
Here, the $NE$ and $PW_{\omg}^{\mp}(\ell)$ are the \emph{null frame spaces} \cite{Tat, Tao2} given by
\begin{align*}
	\nrm{f}_{PW^{\mp}_{\omg}(\ell)}
	= &\inf_{f = \int f^{\omg'}} \int_{\abs{\omg - \omg'} \leq 2^{\ell}} \nrm{f^{\omg'}}_{L^{2}_{\pm \omg'} L^{\infty}_{(\pm \omg')^{\perp}}} \, \ud \omg', \\
	\nrm{f}_{NE}
	= & \sup_{\omg} \nrm{\! \not \! \nb_{\omg} \phi}_{L^{\infty}_{\omg} L^{2}_{\omg^{\perp}}},
\end{align*}
where the $L^{q}_{\omg}$ norm is with respect to the variable $\ell^{\pm} = t \pm \omg \cdot x$, the $L^{r}_{\omg^{\perp}}$ norm is defined on each $\set{\ell^{\pm}_{\omg} = const}$, and $\displaystyle{\! \not \!  \nb_{\omg}}$ denotes derivatives tangent to $\set{\ell^{\pm}_{\omg} = const}$.
The null frame spaces allow one to exploit transversality in frequency space, and play an important role in the proof of the multilinear null form estimate; see \cite[Eqs.~(136)--(138)]{KST} and Proposition~\ref{prop:tri-MKG} below. These norms are used as follows. If we have the angular separation condition
\be \label{angle:caps}
\angle(\pm \calC_{k'}(\ell'), \calC'_{k'}(\ell') ) \simeq 2^{\tilde{\ell}}  \gg 2^{\ell'+k'-\min(k_1,k_2)} \ee
then by using the Minkowski and H\" older $ L^{\infty}_{\omg'} L^{2}_{\omg'^{\perp}} \times L^{2}_{\omg'} L^{\infty}_{\omg'^{\perp}} \to L^2_{t,x}  $ inequalities, one obtains
\be \label{bilL2est}
\vn{  P_{\calC_{k'}(\ell')} \phi_{k_1} \cdot P_{\calC'_{k'}(\ell')}  \psi_{k_2} }_{L^{2} L^{2}} \lesssim 2^{- \tilde{\ell}} 2^{-k_2}\vn{ P_{\calC_{k'}(\ell')} \phi_{k_1}}_{PW_{\omega}^{\mp}(l) } \vn{P_{\calC'_{k'}(\ell')}  \psi_{k_2}}_{NE}
\ee
and the latter two norms are controlled through the $ S_k^{\omega}(l) $ norms. This strategy is motivated by the inequality \eqref{L2:freesol} for free solutions as well as by the wave packet analysis in \ref{spaces:discussion}.

\begin{remark} \label{sboxrk} We note here that the $ S_k^{box} $ component was not used in \cite{KST}, but the factor of $ 2^{-\ell'/2} $ in this norm compared to $ S_k^{ang} $ was actually obtained there in Subsection 11.3 for the main parametrix estimate, which we review below (Theorem \ref{covariantparametrix}). As opposed to the Maxwell-Klein-Gordon case, it turns out that this angular gain is essential here in order to estimate the nonlinear terms of the Maxwell-Dirac system.
\end{remark}

In the higher dimensional case $d \geq 5$, we simply define
\begin{equation*}
	S_{k} = S^{\mathrm{str}}_{k} \cap X^{0, 1/2}_{\infty} \cap S^{\mathrm{box}}_{k}.
\end{equation*}
\begin{remark} \label{rem:fs-hi-d}
In the case $d \geq 5$, note the omission of the component $S^{\mathrm{ang}}_{k}$, which contain the null frame spaces.
In $d= 4$, $S^{\mathrm{ang}}_{k}$ is necessary for the proof of a multilinear null form estimate (Proposition~\ref{prop:tri-MKG}), but in higher dimensions this estimate is unnecessary; see Remark~\ref{rem:hi-d-4} below.
\end{remark}

The spaces follow the following inclusions:
\be X^{0, \frac{1}{2}}_{1} \subset S_k \subset N_k^{*}.   \label{spaceembed} \ee

The main iteration space $S^{\sgm}$ for the hyperbolic part $A_{x}$ of the Maxwell equations consists of an $S^{\sgm}$ norm and an extra term $\nrm{\Box A}_{L^{2} \dot{H}^{\sgm - \frac{3}{2}}}$ for high modulation:
\begin{equation*}
	\nrm{A}_{S^{\sgm}}^{2} = \sum_{k} \bb( 2^{2 \sgm k} \nrm{P_{k} A}_{S_{k}}^{2} + 2^{(2\sgm-3)k} \nrm{\Box P_{k} A}_{L^{2} L^{2}}^{2} \bb).
\end{equation*}
In \cite{KST}, it was proved that
\begin{equation} \label{eq:en-box}
	\nrm{A}_{S^{\sgm}} \aleq \nrm{(A, \rd_{t} A)(0)}_{\dot{H}^{\sgm} \times \dot{H}^{\sgm-1}} + \nrm{\Box A}_{N^{\sgm-1}}.
\end{equation}
%%
% OLD VERSION
%%
%The main function space $S^{\frac{d-2}{2}}$ for the hyperbolic part of the Maxwell equations is essentially the same as in \cite{KST}. It consists of an $S$-norm with a derivative and an extra term $\nrm{\Box \phi}_{L^{2} \dot{H}^{-\frac{d-5}{2}}}$ for high modulation:

The dyadic function space $Y_{k}$ for the elliptic variable $A_{0}$ is defined as
\begin{equation*}
	\nrm{B}_{Y_{k}}^{2} = 2^{k} \nrm{B}_{L^{2} L^{2}}^{2} + 2^{-k} \nrm{\rd_{t} B}_{L^{2} L^{2}}^{2}.
\end{equation*}
Then we define
\begin{equation*}
	\nrm{B}_{Y^{\sgm}}^{2} = \sum_{k} 2^{2 \sgm k} \nrm{P_{k} B}_{Y_{k}}^{2}.
\end{equation*}

\subsection{Spaces $S_{\pm, k}$, $N_{\pm, k}$ and $\tilde{Z}_{\pm, k}$}
Let $d \geq 4$ and $\sgm \in \bbR$; in what follows, the scale-critical exponent is $\sgm = \frac{d-3}{2}$. For the Dirac equation, we need to define analogous spaces adapted to each characteristic cone $\set{\tau = \pm \abs{\xi}}$. Let 
\begin{align*} 
	N_{k}^{+} = & L^{1} L^{2} + X_{+, 1}^{0, -1/2}, 
	& N_{k}^{-} = & L^{1} L^{2} + X_{-, 1}^{0, -1/2}, \\
	S_{k}^{+} = & S^{\mathrm{str}}_{k} \cap X^{0, 1/2}_{+, \infty}\cap Q^{+}_{<k-3} S_{k} , 
	&S_{ k}^{-} = & S^{\mathrm{str}}_{k} \cap X^{0, 1/2}_{-, \infty}\cap Q^{-}_{<k-3} S_{k}.
\end{align*}
The $ X_{\pm, 1}^{0, -1/2} $ and $ X^{0, 1/2}_{\pm, \infty}$  norms are defined by \eqref{xnorms-1}--\eqref{xnorms-2}, with $ Q_j $ replaced by $ Q^{\pm}_j $.
Note that $S_{k} = S_{k}^{+} + S_{k}^{-}$, $N_{k} = N_{k}^{+} \cap N_{k}^{-} $ and 
\begin{equation*}
	Q_{<k-1}^{s} N_{k}^s \subseteq N_{k}, \qquad Q_{<k + O(1)}^{s} S_{k} \subseteq S_{k}^s.
\end{equation*}
We define
\begin{align*}
	\nrm{\psi}_{S^{\sgm}_{\pm}}^{2} =& \sum_{k} \bb(2^{2 \sgm k} \nrm{P_{k} \psi}_{S_{k}^{\pm}}^{2} + 2^{(2 \sgm - 2) k} \nrm{(i \partial_t \pm \vm{D}) P_{k} \psi}_{L^{2} L^{2}}^{2} \bb), \\
	\vn{F}_{N_{\pm}^\sgm}^2 =& \sum_{k} 2^{2 \sgm k} \vn{P_k F}_{N_k^{\pm}}^2.
\end{align*}
An analogue of \eqref{eq:en-box} holds for $S^{\sgm}_{\pm}$, $N^{\sgm}_{\pm}$ and $i \rd_{t} \pm \abs{D}$. For our purpose, however, we need an extension which is valid for a paradifferential covariant half-wave operator; see Theorem~\ref{thm:paradiff} below. 

%\Red{
%Borrow $S$, $S_{\pm}$, $N$, $N_{\pm}$, $S^{\sharp}$, $S^{\sharp}_{\pm}$ from \cite{KST}. REMARK THAT THE NULL FRAME BIT IS NOT NEEDED IN THE CASE $d \geq 5$. REFER TO REMARK~\ref{rem:hi-d}. 
%
%\Red{State linear energy estimate for $S^{1}$. Define $E = \dot{H}^{1} \times L^{2}$. }
%
%
%%\be N_k^{}=L^1_t L^2_x + X_1^{+,0,-\frac{1}{2}}, \qquad N_k^{-}=L^1_t L^2_x + X_1^{-,0,-\frac{1}{2}}. \ee
%\begin{equation*}
%	N_{k} = N_{+, k} \cap N_{-, k}
%\end{equation*}
%
%
%The full spaces are defined by the two norms
%\be \vn{F}_{N_{\pm}^{\hlf}}^2=\sum_{k \in \mb{Z}} 2^k \vn{P_k F}_{N_k^{\pm}}^2 \ee
%
%The spaces $ S_k^{\pm} $ are defined by the norms
%\be \vn{\psi}_{S_{\pm, k}}^2=\vn{\psi}_{S_k^{str}}^2+\color{red}{\vn{Q^{\pm}_{<k-1}\psi}^2_{S_k^{ang,\pm}}}+\vn{\psi}_{X_{\infty}^{\pm,0,\hlf}}^2 \ee
%and also
%\begin{equation*}
%	\nrm{\psi}_{S^{1/2}_{\pm}}^{2} = \sum_{k} \bb( 2^{k} \nrm{P_{k} \psi}_{S_k^{\pm}}^{2} + \nrm{(i \rd_{t} \pm \abs{D}) P_{k} \psi}_{L^{2} L^{2}}^{2} \bb) .
%\end{equation*}
%
%\Red{CG: I'm not entirely sure the definition of $ S_{\pm, k} $ from KST is the one that we want here, cf. the $ \pm $ construction in the parametrix.}
%
%\Red{Say that a similar linear energy estimates for $S^{1/2}$; we will however need to solve a paradifferential version.}
%
%\Red{Make clear that: $S_{k} = S_{+, k} + S_{-, k}$, $N_{k} = N_{+, k} \cap N_{-, k}$.}

%\begin{equation*}
%\bb(\sum_{\calC_{k'}(\ell')} \nrm{P_{\calC_{k'}(\ell')} Q_{<k - 2 \ell} \psi}_{L^{2} L^{\infty}}^{2} \bb)^{1/2}	
%	\aleq 2^{\frac{d-2}{2} k'} 2^{\frac{d-3}{2} \ell'} 2^{\frac{1}{2} k} \nrm{\psi}_{S_{k}}
%\end{equation*}

The $S^{\sgm}_{\pm}$ norm must be augmented with an $L^{1} L^{\infty}$ control for high modulations. To this end, consider the dyadic norm
\begin{equation} 
	\nrm{\psi}_{\tilde{Z}^{\pm}_{k}}
	= 2^{-2k} \nrm{(i \rd_{t} \pm \abs{D}) \psi}_{L^{1} L^{\infty}} \ 
\end{equation}
and the corresponding $\ell^{2}$-summed norm, given by
\begin{equation*}
	\nrm{\psi}_{\tilde{Z}_{\pm}^{\sgm}}^{2} = \sum_{k} 2^{2\sgm k} \nrm{\psi_{k}}_{\tilde{Z}_{\pm, k}}^{2}.
\end{equation*}
Define also
\begin{equation*}
	\nrm{F}_{G_{k}} = 2^{-2k} \nrm{F}_{L^{1} L^{\infty}}, \quad
	\nrm{F}_{G^{\sgm}}^{2} = \sum_{k} 2^{2 \sgm k} \nrm{P_{k} F}_{G_{k}}.
\end{equation*}
For $\psi$ localized at frequency $\set{\abs{\xi} \simeq 2^{k}}$ and $s \in  \set{+, -}$, we have
\begin{equation*}
	\nrm{\psi}_{\tilde{Z}^{\pm}_{k}} \aleq \nrm{(i \rd_{t} \pm \abs{D}) \psi}_{G_{k}}.
\end{equation*}
The main iteration space $\tilde{S}^{\sgm}_{s}$ for the $s$-components of $\psi$ $(s \in \set{+, -})$ is defined as 
\begin{equation} 
	\nrm{\psi}_{\tilde{S}^{\sgm}_{s}}^{2} = \nrm{\psi}_{S^{\sgm}_{s}}^{2} + \nrm{\psi}_{\tilde{Z}_{s}^{\sgm}}^{2}. 
\end{equation}

\begin{remark}
Notice the following simple inequalities:
\be \vn{P_k F}_{N_k^s} \lesssim \vn{P_k F}_{N_s}. \ee 
If the functions $ f_{k'} $ have Fourier support in the regions $ \{ \vm{\xi} \simeq 2^{k'} \} $ and 
$ f=\sum_{k'} f_{k'} $
then
\be \vn{P_k f}_{N_s^0} \lesssim \sum_{k'=k+O(1)} \vn{f_{k'}}_{N_{k'}^s}  \ee
\be \vn{P_k f}_{S_s^\sgm} \lesssim 2^{\sgm k} \sum_{k'=k+O(1)} \lpr \vn{f_{k'}}_{S_{k'}^s}+ \vn{(i \partial_t +s \vm{D}) f_{k'}}_{L^2  \dot{H}^{-1/2}}  \rpr .\ee
\end{remark}

\begin{lemma} \label{lemmadivsymb}
Suppose $ f $ is localized at frequency $ \{ \vm{\xi} \simeq 2^k \} $ and $ s \in \{ +,- \} $.
\begin{enumerate}[leftmargin=*]
\item If $ f $ is localized at $ Q^{s} $-modulation $ \lesssim 2^k $ then
\be \label{simpleembedding} \vn{f}_{L^{2} L^{2}} \lesssim 2^{\frac{k}{2}} \vn{f}_{N_{k}^{s}}. \ee
\item If $ f $ is localized at $ Q^{s} $-modulation $ \gtrsim 2^k $ and $ u $ is defined by
\be \label{operdiv} \mathcal{F} u (\tau,\xi) = \frac{1}{\tau- s\vm{\xi}} \mathcal{F} f (\tau,\xi) \ee
then 
\begin{align}
 \vn{u}_{S_k^s} & \lesssim \vn{f}_{\dot{H}^{-1/2}}  \label{embdivsymb} \\
 \vn{u}_{L^{\infty}L^2} & \lesssim  \vn{f}_{N_k^s} \label{embeasy} 
\end{align}  
\end{enumerate}
\end{lemma}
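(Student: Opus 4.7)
The plan is to read off each bound from the definitions of $N_k^s$ and $S_k^s$, using the frequency and modulation localizations of $f$ (and of $u$) to convert unit-norm costs into explicit dyadic gains.

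For Part (1), given an admissible decomposition $f = f_1 + f_2$ with $f_1 \in L^1 L^2$ and $f_2 \in X_{s,1}^{0,-1/2}$, I would first apply the multiplier $P_k \tilde{Q}_{\lesssim k}^s$ to both pieces; it acts as the identity on $f$ and is disposable at frequency $\simeq 2^k$, so it reduces matters to the case where $f_1, f_2$ inherit the localizations of $f$. For $f_1$, the fact that $\widehat{f_1}(\cdot,\xi)$ has $\tau$-support of length $\simeq 2^k$ for each $\xi$ gives, via Cauchy--Schwarz in $\tau$ and Plancherel, $\|f_1\|_{L^2 L^2} \lesssim 2^{k/2} \|\widehat{f_1}\|_{L^\infty_\tau L^2_\xi} \leq 2^{k/2} \|f_1\|_{L^1 L^2}$. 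For $f_2$, the dyadic decomposition $f_2 = \sum_{j \leq k+O(1)} Q_j^s f_2$ combined with $2^{j/2} \leq 2^{k/2}$ on this range yields
\begin{equation*}
\|f_2\|_{L^2 L^2} \leq \sum_j 2^{j/2}\bigl(2^{-j/2}\|Q_j^s f_2\|_{L^2 L^2}\bigr) \lesssim 2^{k/2} \|f_2\|_{X_{s,1}^{0,-1/2}}.
\end{equation*}
Taking the infimum over decompositions establishes \eqref{simpleembedding}.

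For Part (2), $u$ inherits the frequency and $Q^s$-modulation localizations of $f$, so the $Q_{<k-3}^s S_k$ component of $S_k^s$ vanishes and it remains to control $\|u\|_{S_k^{\mathrm{str}}}$ and $\|u\|_{X_{s,\infty}^{0,1/2}}$. From \eqref{operdiv} I directly read off $\|Q_j^s u\|_{L^2 L^2} \lesssim 2^{-j}\|Q_j^s f\|_{L^2 L^2}$ for each $j \gtrsim k$, which immediately gives $\|u\|_{X_{s,\infty}^{0,1/2}} \lesssim 2^{-k/2}\|f\|_{L^2 L^2} \simeq \|f\|_{\dot{H}^{-1/2}}$. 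For the Strichartz part, I would represent each $Q_j^s u$ as a $\sigma$-integrated family of modulated free half-waves on $|\sigma| \simeq 2^j$, apply the standard bound $\|V\|_{S_k^{\mathrm{str}}} \lesssim \|V(0)\|_{L^2}$ to each, and use Cauchy--Schwarz with Plancherel in $\sigma$ to obtain $\|Q_j^s u\|_{S_k^{\mathrm{str}}} \lesssim 2^{j/2}\|Q_j^s u\|_{L^2 L^2} \lesssim 2^{-j/2}\|Q_j^s f\|_{L^2 L^2}$. Cauchy--Schwarz in $j \gtrsim k$ then sums this to $\|u\|_{S_k^{\mathrm{str}}} \lesssim 2^{-k/2}\|f\|_{L^2 L^2} \simeq \|f\|_{\dot{H}^{-1/2}}$, completing \eqref{embdivsymb}.

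Finally, for \eqref{embeasy}, I would decompose $f = f_1 + f_2$ as in Part (1) and study the corresponding $u_1, u_2$ via \eqref{operdiv} separately. For $u_2$, Bernstein in time together with \eqref{operdiv} gives $\|Q_j^s u_2\|_{L^\infty L^2} \lesssim 2^{j/2}\|Q_j^s u_2\|_{L^2 L^2} \lesssim 2^{-j/2}\|Q_j^s f_2\|_{L^2 L^2}$, which $\ell^1$-sums over $j \gtrsim k$ to $\|f_2\|_{X_{s,1}^{0,-1/2}}$. For $u_1$, conjugating by $e^{-its|D|}$ reduces the operator $f_1 \mapsto u_1$ to $t$-convolution against the scalar kernel $m^\vee$, where $m(\sigma) = \chi_{\gtrsim k}(\sigma)/\sigma$; by scaling, matters reduce to proving $\|M^\vee\|_{L^\infty_t} \lesssim 1$ for $M(\rho) = \chi_{\gtrsim 0}(\rho)/\rho$. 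I expect this to be the main technical point, since $M \notin L^1$ and the trivial bound $\|M^\vee\|_\infty \leq \|M\|_1$ fails. The workaround is to exploit the oddness of each dyadic piece $M_j(\rho) = \chi(\rho/2^j)/\rho$: oddness produces an extra $\sin(t\rho)$ factor in the Fourier integral, and together with integration by parts in $\rho$ it yields $|\widehat{M_j}(t)| \lesssim \min(|t|2^j,\,(|t|2^j)^{-1})$, which sums geometrically in $j \geq 0$ to $\|M^\vee\|_{L^\infty} \lesssim 1$. Minkowski's inequality then concludes $\|u_1\|_{L^\infty L^2} \lesssim \|f_1\|_{L^1 L^2}$, and combining with the $u_2$ bound proves \eqref{embeasy}.
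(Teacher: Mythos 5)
Your proof is correct in substance but takes a more hands-on route than the paper at several points. For \eqref{simpleembedding} the paper simply dualizes the embedding \eqref{spaceembed} (i.e.\ $X^{0,1/2}_1 \subset S_k \subset N_k^{*}$), whereas you compute directly on the two types of atoms; both work, and your version is self-contained. For \eqref{embdivsymb} the paper runs the one-line chain $\vn{u}_{S_k^s} \lesssim \vn{u}_{X^{0,1/2}_{s,1}} \lesssim \vn{f}_{X^{0,-1/2}_{s,1}} \lesssim \vn{f}_{\dot H^{-1/2}}$, the last step by Cauchy--Schwarz over $j \gtrsim k$; you instead re-derive the $S^{\mathrm{str}}_k$ bound by superposing modulated free waves, which is precisely the content of the first embedding in \eqref{spaceembed}, so you are reproving a tool the paper treats as known. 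For \eqref{embeasy} your treatment of the $X^{0,-1/2}_{s,1}$ atom coincides with the paper's foliation of $u$ into modulated free waves $\int e^{it\rho}e^{ist\vm{D}}\phi_\rho\,\ud\rho$; for the $L^1L^2$ atom the paper writes $u$ as a truncated Duhamel superposition $\int e^{i(t-t')s\vm{D}}f(t')1_{t>t'}\,\ud t'$ and invokes $\vn{e^{ist\vm{D}}\phi}_{L^\infty L^2}\lesssim \vn{\phi}_{L^2}$, while you conjugate by $e^{-its\vm{D}}$ and show the truncated multiplier $\chi_{\gtrsim k}(\sigma)/\sigma$ has a uniformly bounded inverse Fourier transform via oddness and integration by parts. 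These are two faces of the same fact (the kernel $1_{t>t'}$ is the inverse transform of a regularization of $1/\sigma$), and your oscillatory-integral estimate is correct.

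One small imprecision: in \eqref{embdivsymb} you assert that the $Q^{s}_{<k-3}S_k$ component of the $S_k^s$ norm vanishes. That holds only if the implicit constant in ``modulation $\gtrsim 2^k$'' exceeds $2^{-3}$; as the lemma is actually applied (e.g.\ to $Q^{+}_{>k-6}P_kF$) there may be finitely many dyadic modulations $2^{k-C}\leq 2^{j}<2^{k-3}$ left over, and for those the full $S_k$ norm (including $S_k^{ang}$ and $S_k^{box}$) is not controlled by $S^{\mathrm{str}}_k\cap X^{0,1/2}_{s,\infty}$ alone. The fix is one line via \eqref{spaceembed}: $\vn{Q^s_ju}_{S_k}\lesssim 2^{j/2}\vn{Q^s_ju}_{L^2L^2}\lesssim 2^{-j/2}\vn{Q^s_jf}_{L^2L^2}\lesssim 2^{-k/2}\vn{f}_{L^2L^2}$, summed over the $O(1)$ remaining values of $j$. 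With that patch the argument is complete.
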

\begin{proof}
In view of the low modulation, \eqref{simpleembedding} follows by duality from the embedding \eqref{spaceembed}. Similarly, \eqref{embdivsymb} follows from the inequalities
$$ \vn{u}_{S_k^s} \lesssim \vn{u}_{X_{s,1}^{0,\frac{1}{2}}} \lesssim \vn{f}_{X^{0,-\frac{1}{2}}_{s,1}} \lesssim \vn{f}_{\dot{H}^{-1/2}}. $$
Now we prove \eqref{embeasy}.  Since $ N_{k}^{s} $ is an atomic space we consider two cases. First, if $ f $ is an $ X^{0,-1/2}_{s,1} $-atom then we write
$$ u(t)=\int e^{i t \rho} e^{i s t \vm{D}} \phi_{\rho} \, \ud \rho $$
where $ \phi_{\rho} $ satisfies  
$$  \widehat{ \phi_{\rho}}  (\xi)=  \mathcal{F} u (\rho+s \vm{\xi},\xi), \qquad \int \vn{ \phi_{\rho}}_{L^2} \, \ud \rho \lesssim \vn{f}_{X^{0,-1/2}_{s,1}}. $$
If $ f $ is an $ L^1L^2 $-atom we write $ u $ as a superposition of truncated homogenous waves 
$$ u(t)=\int e^{i (t-t') s \vm{D} } f(t) 1_{t>t'} \dd t'. $$
In both cases \eqref{embeasy} follows from the basic inequality for free waves
\begin{equation*}
\vn{e^{i st \vm{D}} \phi}_{L^{\infty} L^2} \lesssim \vn{\phi}_{L^2}. \qedhere 
\end{equation*}
\end{proof}

\subsection{Time interval localized norms}
In a few places in the paper (in particular, Section~\ref{sec:iter}), we need to consider time interval localization of the function spaces. Given an interval $I \subseteq \bbR$ and a distribution $f$ on $I \times \bbR^{d}$, we define\footnote{We use the convention $\inf \emptyset = \infty$.}
\begin{equation*}
	\nrm{f}_{X[I]} = \inf \set{\nrm{\tilde{f}}_{X} : \tilde{f} \in X, \ \tilde{f} = f \hbox{ on } I},
\end{equation*}
where $X$ may denote any norm, e.g., $S^{r}$, $N^{r}$, $\tilde{S}_{s}^{r}$ or $N_{s}^{r}$. 

Let $f \in N^{r}[I]$. Up to equivalent norms, we may take $\tilde{f}$ above in $N^{r}$ to be simply the extension by zero outside $I$. Moreover, for $f \in N^{r}$, we have 
\begin{equation} \label{eq:N-fungibility}
\lim_{T \to 0} \nrm{f}_{N^{r}[0, T]} = 0, \qquad \lim_{T \to \infty} \nrm{f}_{N^{r}[T, \infty)} = 0.
\end{equation}
Similar properties holds for $N^{r}_{s}$. These statements are justified by the following lemma, whose proof can be read off from \cite[Proposition~3.3]{OT2}.
\begin{lemma} \label{lem:int-loc}
Let $f \in N^{r}$ ($r \in \bbR$). For any interval $I \subseteq \bbR$, denote by $1_{I}(t)$ its characteristic function. Then we have $\nrm{1_{I}(t) f}_{N^{r}} \aleq \nrm{f}_{N^{r}}$. Moreover, we have $\lim_{T \to 0+} \nrm{1_{[0, T]}(t) f}_{N^{r}} = 0$ and $\lim_{T \to \infty} \nrm{1_{[T, \infty]}(t) f}_{N^{r}} = 0$. 

Same statements hold with $N^{r}$ replaced by $N_{s}^{r}$ $(s \in \set{+, -}, \ r \in \bbR)$.
\end{lemma}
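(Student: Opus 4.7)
The plan is to reduce the statement to the dyadic level and prove the claim for each $P_{k} f \in N_{k}$ with constants uniform in $k$; the full $\ell^{2}_{k}$-summed statement then follows by a finite-truncation argument in the $k$-sum, exploiting the $\ell^{2}$-summability of $\nrm{P_{k} f}_{N_{k}}$ against the weight $2^{rk}$.

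For the uniform bound $\nrm{1_{I} F}_{N_{k}} \aleq \nrm{F}_{N_{k}}$ at a fixed frequency $2^{k}$, I would decompose $F = g + h$ with $g \in L^{1} L^{2}$ and $h \in X^{0, -1/2}_{1}$ nearly realizing the infimum defining the $N_{k}$-norm. The $L^{1} L^{2}$ piece is immediate, since multiplication by $1_{I}$ is a contraction on $L^{1} L^{2}$. For the $X^{0, -1/2}_{1}$ piece I would expand $h = \sum_{j} Q_{j} h$ and estimate each $\nrm{1_{I} Q_{j} h}_{N_{k}}$ uniformly in $|I|$ using the kernel bound $|\widehat{1_{I}}(\tau)| \aleq \min(|I|, |\tau|^{-1})$. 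The product $1_{I} Q_{j} h$ would be decomposed by the output modulation $j'$: for $|j' - j| \leq C$ the piece is controlled by the trivial $L^{2}$-bound (contributing to $X^{0, -1/2}_{1}$); for $j' > j + C$, Fourier-side Cauchy--Schwarz together with $|\widehat{1_{I}}(\tau - \tau')| \aleq 2^{-j'}$ yields a gain of $2^{(j-j')/2}$ in $L^{2} L^{2}$; and the low-modulation tail $j' < j - C$ would be absorbed into the $L^{1} L^{2}$ component of $N_{k}$ by a direct Fourier estimate using $|\widehat{1_{I}}(\tau - \tau')| \aleq 2^{-j}$ together with the frequency localization at scale $2^{k}$. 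Summing in $j$ with the weight $2^{-j/2}$ then reconstructs the $X^{0, -1/2}_{1}$-norm of $h$ and closes this step.

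For the smallness statements, I would first approximate $h$ by $\tilde{h}$ supported in finitely many modulation shells so that $\nrm{h - \tilde{h}}_{X^{0, -1/2}_{1}} < \eps$; then $\tilde{h} \in L^{2} L^{2}$ with a possibly large but finite norm, and $\nrm{1_{[0, T]} \tilde{h}}_{L^{1} L^{2}} \leq T^{1/2} \nrm{\tilde{h}}_{L^{2} L^{2}} \to 0$ as $T \to 0+$. Combining this with dominated convergence applied to $g \in L^{1} L^{2}$ and the uniform bound applied to $h - \tilde{h}$ gives $\limsup_{T \to 0+} \nrm{1_{[0, T]} P_{k} f}_{N_{k}} \aleq \eps$. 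Sending $\eps \to 0$ handles each dyadic piece, after which a finite-sum approximation in $k$ (plus the uniform bound on the tail) transfers the statement to the full $\ell^{2}_{k}$-sum. The case $T \to \infty$ on $[T, \infty)$ is analogous, using absolute continuity of the $L^{1} L^{2}$-piece in time. The corresponding statements for $N_{s}^{r}$ are obtained by the identical argument with $Q_{j}$ replaced by $Q_{j}^{s}$.

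The hard part is the uniform-in-$|I|$ bound for the $X^{0, -1/2}_{1}$ piece, in particular the low-output-modulation regime $j' < j - C$: naively placing every $Q_{j'}(1_{I} Q_{j} h)$ into $X^{0, -1/2}_{1}$ leads to a logarithmic divergence in the $j'$-sum, and the resolution requires carefully distributing $1_{I} Q_{j} h$ between the $L^{1} L^{2}$ and $X^{0, -1/2}_{1}$ summands of $N_{k}$. This is a Fourier-analytic analogue of the standard sharp time-cutoff estimate for $X^{s, b}$ spaces, and it is precisely the content one extracts from \cite[Proposition~3.3]{OT2}.
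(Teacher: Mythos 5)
Your architecture is the right one, and it is essentially the argument behind the citation: the paper itself offers no proof of this lemma beyond deferring to \cite[Proposition~3.3]{OT2}, so what you have written is a reconstruction of the standard sharp time-cutoff estimate for $L^{1}L^{2} + X^{0,-1/2}_{1}$. The reduction to fixed $k$ (legitimate since $1_{I}(t)$ commutes with $P_{k}$), the triviality of the $L^{1}L^{2}$ component, the output-modulation decomposition of $1_{I}Q_{j}h$, the gain $2^{(j-j')/2}$ for $j' > j+C$ via $|\widehat{1_{I}}(\tau-\tau')| \aleq 2^{-j'}$ and Cauchy--Schwarz over the $\tau'$-support of measure $\simeq 2^{j}$, and the identification of $j' < j - C$ as the regime that must go into $L^{1}L^{2}$ (since placing it in $X^{0,-1/2}_{1}$ loses the $j'$-sum) are all correct.

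The one step whose justification does not work as written is precisely the one you flag: you cannot put $Q_{<j-C}(1_{I}Q_{j}h)$ into $L^{1}_{t}L^{2}_{x}$ ``by a direct Fourier estimate.'' Pointwise bounds on $\widehat{1_{I}}$ together with Plancherel only control $L^{2}_{t}$- or $L^{\infty}_{t}$-type quantities; no amount of Fourier-support or Fourier-magnitude information bounds an $L^{1}_{t}$ norm (a free wave has one-point temporal Fourier support for each $\xi$ and is not in $L^{1}_{t}L^{2}_{x}$). The actual mechanism is physical-space concentration near $\partial I$: after splitting $Q_{j} = \sum_{\pm} Q_{j}^{\pm}$ and conjugating by $e^{\pm it|D|}$, the operator $Q^{\pm}_{<j-C}$ becomes a temporal low-pass filter at scale $2^{j}$ whose kernel decays rapidly on scale $2^{-j}$; applied to a sharply truncated function of temporal frequency $\simeq 2^{j}$ it produces a function supported (up to rapidly decaying tails) in a $2^{-j}$-neighborhood of $\partial I$, of height $\aleq \nrm{Q_{j}h}_{L^{\infty}_{t}L^{2}_{x}} \aleq 2^{j/2}\nrm{Q_{j}h}_{L^{2}L^{2}}$ by temporal Bernstein, whence the $L^{1}_{t}L^{2}_{x}$ norm is $\aleq 2^{-j}\cdot 2^{j/2}\nrm{Q_{j}h}_{L^{2}L^{2}} = 2^{-j/2}\nrm{Q_{j}h}_{L^{2}L^{2}}$, exactly the $X^{0,-1/2}_{1}$ atom size. (The relevant localization is the modulation scale $2^{j}$, not the spatial frequency scale $2^{k}$ you invoke.) With that step repaired the uniform bound closes; your $T\to 0$ argument is fine, and the $T\to\infty$ statement additionally needs a density argument reducing to inputs compactly supported in time, since $|I|^{1/2}\nrm{\tilde h}_{L^{2}L^{2}}$ is useless for $I = [T,\infty)$.
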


\subsection{Spaces $Z_{k}$ and $Z_{ell, k}$}
We now introduce $L^{1} L^{\infty}$-type auxiliary norms for the Maxwell components $A_{0}, A_{x}$ of \eqref{eq:MD}, which will be used in our proof of the trilinear estimate (Proposition~\ref{prop:trilinear}) in Section~\ref{sec:tri}. Let $C_{1} > 0$ be a constant to be fixed later in \eqref{eq:C012}. Let
\begin{align}
	\nrm{A}_{Z_{k}}
	:=  & \sup_{\ell < \frac{1}{2} C_{1}} \, 2^{-\frac{d-2}{2} k} 2^{\frac{1}{2} \ell} \bb( \sum_{\omg} \nrm{P^{\omg}_{\ell_{-}} Q_{k + 2 \ell}A}_{L^{1} L^{\infty}}^{2} \bb)^{\frac{1}{2}}, 	\label{eq:Z-def} \\
	\nrm{B}_{Z_{ell, k}}
	:=  & \sup_{\ell < \frac{1}{2} C_{1}} 2^{-\frac{d-2}{2} k} 2^{- \frac{1}{2} \ell} \bb( \sum_{\omg} \nrm{P^{\omg}_{\ell_{-}} Q_{k + 2 \ell}B}_{L^{1} L^{\infty}}^{2} \bb)^{\frac{1}{2}},	\label{eq:Z-ell-def}
\end{align}
where $\ell$ runs over the half integers $\frac{1}{2} \bbZ$. Note that $Z_{k}$ and $Z_{ell, k}$ scale like $L^{\infty} L^{2}$. For $\sgm \in \bbR$, we define the $\ell^{1}$-summed norms
\begin{equation*}
	\nrm{A}_{Z^{\sgm}} = \sum_{k} 2^{\sgm k} \nrm{P_{k} A}_{Z_{k}}, \quad
	\nrm{B}_{Z^{\sgm}_{ell}} = \sum_{k} 2^{\sgm k} \nrm{P_{k} B}_{Z_{ell, k}}.
\end{equation*}
Moreover, we define the norms $\Box Z^{\sgm}$ and $\lap Z^{\sgm}_{ell}$ so that $\nrm{A}_{Z^{\sgm}} = \nrm{\Box A}_{\Box Z^{\sgm}}$ and $\nrm{B}_{Z^{\sgm}_{ell}} = \nrm{\lap B}_{\lap Z^{\sgm}_{ell}}$. As before, the scale-critical exponent is $\sgm = \frac{d-2}{2}$.

The following lemma will be useful for estimating the norms we just defined.
\begin{lemma} \label{lem:Z-bdd}
For $F$ with frequency support in $\set{\abs{\xi} \simeq 2^{k}}$, we have 
\begin{align}
	\nrm{F}_{\Box Z^{\frac{d-2}{2}}}
	\aleq &   \sup_{\ell < \frac{1}{2} C_{1}} \, 2^{-2k} 2^{-\frac{3}{2} \ell} \bb( \sum_{\omg} \nrm{P^{\omg}_{\ell_{-}} Q_{k + 2 \ell} P_k F}_{L^{1} L^{\infty}}^{2} \bb)^{\frac{1}{2}}, \label{eq:Z-L1Linfty}	\\
	\nrm{F}_{\Box Z^{\frac{d-2}{2}}}
	\aleq  &  \nrm{Q_{< k + C_{1}} P_k F}_{L^{1} \dot{H}^{\frac{d-4}{2}}}, \label{eq:Z-L1L2} \\
	\nrm{F}_{\lap Z_{ell}^{\frac{d-2}{2}}}
	\aleq &  \sup_{\ell < \frac{1}{2} C_{1}} \, 2^{-2k} 2^{-\frac{1}{2} \ell} \bb( \sum_{\omg} \nrm{P^{\omg}_{\ell_{-}} Q_{k + 2 \ell} P_k F}_{L^{1} L^{\infty}}^{2} \bb)^{\frac{1}{2}}, \label{eq:Zell-L1Linfty}	\\
	\nrm{F}_{\lap Z_{ell}^{\frac{d-2}{2}}}
	\aleq  &  \nrm{Q_{< k + C_{1}} P_k F}_{L^{1} \dot{H}^{\frac{d-4}{2}}}, \label{eq:Zell-L1L2} \\
	\nrm{F}_{\lap Z_{ell}^{\frac{d-2}{2}}}
	\aleq  & \sup_{\ell < \frac{1}{2} C_{1}}2^{\ell} \nrm{Q_{k+2\ell} P_k F}_{L^{1} \dot{H}^{\frac{d-4}{2}}}. \label{eq:Zell-L1L2-l}
\end{align}

%For $\psi$ with frequency support in $\set{\abs{\xi} \aeq 2^{k}}$,
%\begin{align*}
%	2^{- \frac{1}{2} k} \nrm{Q_{j} \psi}_{L^{1} L^{\infty}} \aleq 2^{C(k-j)} \, 2^{\frac{1}{2} k} \nrm{\psi}_{\tilde{Z}_{\pm, k}}
%\end{align*}
\end{lemma}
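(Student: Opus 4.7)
The plan is to reduce each of the five estimates to a direct symbol-level identity followed by a Bernstein step in space. First, I would exploit the fact that when a distribution is frequency-localized to $\{\abs{\xi}\simeq 2^k\}$ and modulation-localized to $\{\bm\abs{\tau}-\abs{\xi}\bm\simeq 2^{k+2\ell}\}$ with $\ell<\tfrac{1}{2}C_1$, the symbol of $\Box$ has size $\simeq 2^{2k+2\ell}$ and that of $\lap$ has size $\simeq 2^{2k}$. Choosing $C_1$ appropriately, the localized inverse multipliers $P^{\omg}_{\ell_-}Q_{k+2\ell}\Box^{-1}$ and $P^{\omg}_{\ell_-}Q_{k+2\ell}\lap^{-1}$ are disposable up to these scalars, giving
\begin{equation*}
\nrm{P^{\omg}_{\ell_-}Q_{k+2\ell}\Box^{-1}F}_{L^1L^\infty}\simeq 2^{-2k-2\ell}\nrm{P^{\omg}_{\ell_-}Q_{k+2\ell}F}_{L^1L^\infty},
\end{equation*}
and analogously for $\lap^{-1}$ with only the factor $2^{-2k}$.

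Next I would unpack the definitions. Since $F=P_kF$ forces $\Box^{-1}F$ to be frequency-localized at $2^k$, one has $\nrm{F}_{\Box Z^{(d-2)/2}}=2^{(d-2)k/2}\nrm{\Box^{-1}F}_{Z_k}$; inserting the identity above into \eqref{eq:Z-def} makes the $2^{-(d-2)k/2}$ weight cancel, while the $2^{\ell/2}$ in $Z_k$ combines with the $2^{-2\ell}$ from $\Box^{-1}$ to produce $2^{-3\ell/2}$, with a global $2^{-2k}$ floating out. This is exactly \eqref{eq:Z-L1Linfty}. The analogous unpacking for $\lap Z_{ell}^{(d-2)/2}$ uses the $2^{-\ell/2}$ weight in \eqref{eq:Z-ell-def} together with the modulation-independent factor $2^{-2k}$ from $\lap^{-1}$, giving \eqref{eq:Zell-L1Linfty}.

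For the three $L^1\dot H^{(d-4)/2}$-type bounds, I would start from \eqref{eq:Z-L1Linfty}, \eqref{eq:Zell-L1Linfty} and convert $L^1L^\infty$ into $L^1L^2$ via Bernstein in space. A sector $P^{\omg}_{\ell_-}P_k$ has Fourier support in a tube of dimensions $2^k\times (2^{k+\ell})^{d-1}$ when $\ell\leq 0$, losing a factor $2^{k/2}(2^{k+\ell})^{(d-1)/2}$; for $\ell>0$ the angular cap degenerates and full-ball Bernstein loses $2^{dk/2}$. Square-summing over the almost-orthogonal directions $\omg$ passes through $L^1L^2$ by Minkowski to give
\begin{equation*}
\Bigl(\sum_\omg\nrm{P^{\omg}_{\ell_-}Q_{k+2\ell}P_kF}_{L^1L^2}^2\Bigr)^{1/2}\lesssim\nrm{Q_{k+2\ell}P_kF}_{L^1L^2}.
\end{equation*}
A short exponent count then shows that the $k$-power collapses to $2^{(d-4)k/2}$ in every case (matching the Sobolev index on the right), while the residual $\ell$-factor is $2^{(d-4)\ell/2}$ or $2^{-3\ell/2}$ in the $\Box Z$ case and $2^{(d-2)\ell/2}$ or $2^{-\ell/2}$ in the $\lap Z_{ell}$ case (for $\ell\leq 0$ and $\ell>0$ respectively); all of these are bounded uniformly in $\ell<\tfrac{1}{2}C_1$. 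Taking a supremum over $\ell$ and noting that $Q_{k+2\ell}P_kF$ is a piece of $Q_{<k+C_1}P_kF$ yields \eqref{eq:Z-L1L2} and \eqref{eq:Zell-L1L2}. For \eqref{eq:Zell-L1L2-l} the same computation shows that the $\lap Z_{ell}$ residual is even dominated by $2^\ell$ (since $2^{(d-2)\ell/2}\leq 2^\ell$ for $\ell\leq 0$, $d\geq 4$, and $2^{-\ell/2}\leq 2^\ell$ for $\ell>0$), so the sup-$\ell$ bound weighted by $2^\ell$ suffices.

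The main technical point I expect to be mildly delicate is the disposability claim in the first step at very negative $\ell$, where $Q_{k+2\ell}$ itself ceases to be disposable on $L^1L^\infty$. I would handle this by rescaling the frequency-modulation box to unit size and verifying that the symbol of $\Box^{-1}$ times the smooth cutoff is then a product of bump functions in coordinates adapted to the cone, with inverse Fourier transform of $L^1$ mass $O(1)$; the same applies to $\lap^{-1}$. Everything else is routine bookkeeping of dyadic exponents.
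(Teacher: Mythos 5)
Your argument is correct and is essentially the paper's own proof: the key step in both is that the rescaled multipliers $(2^{2k+2\ell}/\Box)\tilde P^{\omg}_{\ell_-}\tilde Q_{k+2\ell}\tilde P_k$ and $(2^{2k}/\lap)\tilde P_k$ are disposable (verified exactly as you propose, by checking bump-function bounds on the $(2^{k+\ell})^{d-1}\times 2^{k+2\ell}\times 2^k$ box), after which the $L^1\dot H^{\frac{d-4}{2}}$ bounds follow from Bernstein on the sectors and almost-orthogonality of the $P^{\omg}_{\ell_-}$, with the same exponent count (in particular your observation that the residual $\ell$-powers are uniformly bounded, and sharp only when $d=4$, matches the paper's remark). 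No gaps.
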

\begin{proof}
To prove \eqref{eq:Z-L1Linfty}, note that the symbol of the operator
$ (2^{2k+2 \ell} / \Box)  \tilde{P}^{\omg}_{\ell_{-}} \tilde{Q}_{k + 2 \ell} \tilde{P}_k $
obeys the same bump function estimates as the symbol of $ P^{\omg}_{\ell_{-}} Q_{k + 2 \ell} P_k  $ on the 
rectangular region of size $  (2^{k+\ell})^{d-1} \times 2^{k+2 \ell} \times 2^k $ where it is supported. Thus, this operator is disposable. 
Similarly, the operator $ (2^{2k}/ \la) \tilde{P}_k $ is disposable, which implies \eqref{eq:Zell-L1Linfty}. 
The bound \eqref{eq:Z-L1L2} [resp. \eqref{eq:Zell-L1L2}, \eqref{eq:Zell-L1L2-l}] follows from \eqref{eq:Z-L1Linfty} [resp. \eqref{eq:Zell-L1Linfty}] by applying Bernstein's inequality and using the orthogonality property of the sectors associated to $ (P^{\omg}_{\ell_{-}})_{\omega} $. We note that the proof of \eqref{eq:Z-L1L2}, \eqref{eq:Zell-L1L2} are sharp only in $d =4$.
\end{proof}

\subsection{Motivation of the norms} \label{spaces:discussion}

We end this section with a discussion about the choice of norms in the definition of the $ S_k $ space. For solutions $ \phi $ of the free wave equation $ \Box \phi=0 $ we have $ \vn{\phi}_{S_k} \simeq \vn{\phi[0]}_{L^2 \times \dot{H}^{-1}} $.  
The $ X^{0,1/2}_{\infty} $ space provides control of $ L^{2} L^{2} $ norms that are useful with components of high modulation. 

Additionally, one looks for norms that are both useful in proving bilinear estimates and which are controlled for free wave solutions. In fact, by expressing arbitrary functions $ \phi $ as superpositions of free waves, one can obtain boundedness of $ \vn{\phi}_{S_k} $ in terms of $ \vn{\Box \phi}_{N_k} $. An example of this argument appears in Lemma  \ref{lemmadivsymb}. The $ S_k^{str} $ component corresponds to well-known Strichartz estimates. 

Regarding $ S_k^{ang} $ and $ S_k^{box} $, the $ \ell^2 $ summation in $ P_l^{\omega} $ and $ P_{\calC_{k'}(\ell')} $ is inherited from the initial data. The square summed $ L^2 L^{\infty} $ norms play a particularly important role in the estimates. To motivate the choice of dyadic exponents, let us check that these exponents are sharp. We claim that an inequality
\be \label{example:ineq}
\vn{P_{\calC_{k'}(\ell')} P_k e^{i t \vm{D}} u_0}_{L^2 L^{\infty}} \lesssim C_{k,k',\ell'} \vn{u_0}_{L^2_{x}}
\ee
can be true (uniformly in $ k,k',\ell' $)  only for $ C_{k,k',\ell'}^2 \geq 2^{(d-2)k'} 2^{(d-3) \ell'} 2^{k} $, and is optimal when the latter is an equality. 

We consider the following version of the Knapp example: let $ u(t,x) $ be a solution to $ \Box u=0 $ with Fourier support in $ S=\{ \tau=\vm{\xi} \simeq 2^k, \ \xi \in \calC_{k'}(\ell') \} $ such that for any $\vm{t} \leq T \defeq \frac{1}{C} 2^k 2^{-2(k'+\ell')} $ one has $ \vm{u(t,x)} \simeq 1 $ for $ x $ in a rectangle of sides $ \simeq 2^{-k'} \times (2^{-k'-\ell'})^{d-1} $, dual to $ \calC_{k'}(\ell') $. The uncertainty principle suggests that $ u(t,\cdot) $ becomes dispersed after $ \vm{t} \gg T $ because the smallest rectangular box encompassing $ S $ has sides $ \simeq T^{-1} \times 2^{k'} \times (2^{k'+\ell'})^{d-1} $ (where $ T^{-1} $ and $ 2^{k'} $ are measured in the null directions). In fact, for 
$$ \calC_{k'}(\ell')=\calC \defeq \{ \vm{\xi_1} \simeq 2^k, \ \vm{\xi_1- \xi_1^0} \ll 2^{k'} , \ \vm{\xi_i} \ll 2^{k'+\ell'}, \ i=2,d  \} $$
one can define 
$$ u(t,x) = \text{vol}(\calC)^{-1} \int_{\calC} e^{i x \cdot \xi} e^{i t \vm{\xi}} \dd \xi
$$
and check that $ \vm{u(t,x)} \simeq 1 $ for $ \vm{t} \lesssim T $,  $ \vm{x_1+t} \lesssim 2^{-k'} $, $ \vm{x_i} \lesssim 2^{-k'-\ell'} $. 

Plugging this example into \eqref{example:ineq} gives $ T^{\frac{1}{2}} \lesssim C_{k,k',\ell'}  \text{vol}(\calC)^{-\frac{1}{2}} $, which provides the optimal choice of $ C_{k,k',\ell'} $ in the definition of $ S_k $.

Similar arguments apply to the norms $ PW $ and $ NE $ which are used for $ d=4 $. For instance, plugging the same $ u(t,x) $ in the inequality
$$  \vn{P_{\calC_{k'}(\ell')} e^{i t \vm{D}} u_0}_{L^2_{\omega} L^{\infty}_{\omega^{\perp}}} \lesssim \tilde{C}_{k',\ell'} \vn{u_0}_{L^2_{x}}, \qquad \text{for}  \ \omega=e_1 
$$ 
gives $ (2^{-k'})^{\frac{1}{2}} \lesssim \tilde{C}_{k',\ell'}  \text{vol}(\calC)^{-\frac{1}{2}} $, thus $ \tilde{C}_{k',\ell'} = 2^{\frac{3}{2}(k'+\ell')} $ is the optimal choice for $ PW $. 

Finally, concerning \eqref{bilL2est} we note that when $ \phi_{k_1},  \psi_{k_2} $ are free solutions with Fourier support in $ \calC_{k'}(\ell'), \calC'_{k'}(\ell') $, obeying \eqref{angle:caps}, one has
\be \label{L2:freesol}
\vn{ \phi_{k_1} \psi_{k_2} }_{L^{2} L^{2}} \lesssim 2^{- \tilde{\ell}} 2^{\frac{3}{2}(k'+\ell')} \vn{\phi_{k_1}[0]}_{L^2 \times \dot{H}^{-1}} \vn{\psi_{k_2}[0]}_{L^2 \times \dot{H}^{-1}}.
\ee
While this inequality can be proved by convolution estimates in Fourier space, \eqref{bilL2est} serves as a more general substitute for \eqref{L2:freesol} which applies to arbitrary functions.

\section{Decomposition of the nonlinearity}
In this section we describe the structure of the Maxwell-Dirac equation under the Coulomb gauge condition.

\subsection{Diagonalization of the Dirac equation} 
Our first goal is to rewrite the Dirac operator $\alp^{\mu} \rd_{\mu}$ in a diagonal form. We follow the approach of D'Ancona, Foschi and Selberg \cite{DFS1, DFS2}.

For $\mu = 0, \ldots, d$, recall the definition
\begin{align*}
\alp^{\mu} = \gmm^{0} \gmm^{\mu}.
\end{align*}
Hence $\alp^{0} = \Id$, whereas $\alp^{j}$ are hermitian matrices satisfying
\begin{equation} \label{eq:alp-rel}
	\frac{1}{2} (\alp^{j} \alp^{k} + \alp^{k} \alp^{j}) = \dlt^{jk} \Id,
\end{equation}
thanks to \eqref{eq:gmmRel} and \eqref{eq:gmmRel:conj}. Note that the Dirac operator $ \alp^{\mu} \rd_{\mu} $ then takes the form
\begin{equation*}
	\alp^{\mu} \rd_{\mu} = -i (i \rd_{t} - \alp^{j} D_{j}).
\end{equation*}
where we use the notation $D_{\mu} = \frac{1}{i} \rd_{\mu}$. To diagonalize the operator $\alp^{j} D_{j}$, whose symbol is $\alp^{j} \xi_{j}$, we introduce the multiplier $\Pi(D)$ with symbol
\begin{equation*}
	\Pi(\xi) := \frac{1}{2} \bb( \Id - \frac{\alp^{j} \xi_{j}}{\abs{\xi}} \bb).
\end{equation*}
Note that $\Pi(\xi)$ obeys the identities
\begin{equation*}
	\Pi(\xi)^{\dagger} = \Pi(\xi), \quad \Pi(\xi)^{2} = \Pi(\xi), \quad \Pi(\xi) \Pi(-\xi) = 0.
\end{equation*}
For each sign $s \in \set{+, -}$, we define the multipliers $\Pi_{s}$ with symbols $\Pi_{s}(\xi) := \Pi(s \xi)$. By the preceding identities, $\Pi_{+}$ and $\Pi_{-}$ form orthogonal projections (i.e., $\Pi_{s}^{\dagger} = \Pi_{s}$, $\Pi_{s}^{2} = \Pi_{s}$ and $\Pi_{+} \Pi_{-} = 0$). Moreover, we have
\begin{align*}
	\Id = \Pi_{+}(\xi) + \Pi_{-}(\xi), \quad 
	- \frac{\alp^{j} \xi_{j}}{\abs{\xi}} = \Pi_{+}(\xi) - \Pi_{-}(\xi)
\end{align*}
Thus the Dirac operator can now be written in the form
\begin{equation} \label{eq:dirac-HW}
	\alp^{\mu} \rd_{\mu} 
%	= \gmm^{0} \gmm^{\mu} \rd_{\mu}
	= -i \bb( (i \rd_{t} + \abs{D}) \Pi_{+}(D) +(i \rd_{t} - \abs{D}) \Pi_{-}(D) \bb).
\end{equation}

We now present the key identities for revealing the null structure of \eqref{eq:MD}, which are essentially due to D'Ancona, Foschi and Selberg \cite{DFS1, DFS2}. 
We define the self-adjoint operators $\mR_{\mu}$ as
\begin{equation*}
	\mR_{\mu} := \frac{D_{\mu}}{\abs{D}} \quad \hbox{ for } \mu = 0, \ldots, d.
\end{equation*}
For $\mu = j \in \set{1, \ldots, d}$, the operators $\mR_{j}$ are precisely the (self-adjoint) Riesz transforms on $\bbR^{d}$. 
%We define the self-adjoint Riesz transforms $\mR_{j}\emph{$\pm$-modified Riesz transforms} $\mR_{\pm, \mu}$, which are self-adjoint operators defined as follows:
%\begin{align*}
% \mR_{\pm,\mu}  := \pm \frac{D_{\mu}}{\abs{D}}, \qquad 
% \mR_{\pm}^{\mu} := \eta^{\mu \nu} \mR_{\pm, \nu}.
%\end{align*}
%
%We then define the \emph{whole modified Riesz transforms} $\mR^{\mu}$ as
%\begin{equation*}
%	\mR^{\mu} := \mR^{\mu}_{+} \Pi_{+} + \mR^{\mu}_{-} \Pi_{-}.
%\end{equation*}

\begin{lemma} \label{lem:commAlpPi}
For each $\mu = j \in \set{1, \ldots, d}$ and sign $s \in \set{+, -}$, we have 
\begin{equation} \label{eq:commAlpPi:j}
	\alp^{j} \Pi_{s} =  -s \mR^{j}  + \Pi_{-s}  \alp^{j} .
\end{equation}
\end{lemma}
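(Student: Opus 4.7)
The identity is purely algebraic at the symbol level, so the plan is to verify it by a direct computation with the symbols $\alpha^j$, $\Pi_s(\xi)$, and $\mathcal{R}^j(\xi) = \xi^j/|\xi|$, relying only on the anticommutation relation \eqref{eq:alp-rel}.

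First I would expand the left-hand side symbol, namely
\begin{equation*}
\alpha^j \Pi_s(\xi) = \tfrac{1}{2}\alpha^j - \tfrac{s}{2|\xi|}\alpha^j \alpha^k \xi_k.
\end{equation*}
The key step is to move $\alpha^j$ past $\alpha^k$ using \eqref{eq:alp-rel} in the form $\alpha^j \alpha^k = 2\delta^{jk}\,\bfI - \alpha^k \alpha^j$, which gives $\alpha^j \alpha^k \xi_k = 2\xi^j\,\bfI - \alpha^k\alpha^j \xi_k$. Substituting yields
\begin{equation*}
\alpha^j \Pi_s(\xi) = -\frac{s\,\xi^j}{|\xi|}\,\bfI + \tfrac{1}{2}\alpha^j + \frac{s}{2|\xi|}\alpha^k \xi_k\, \alpha^j.
\end{equation*}
The last two terms recombine as $\tfrac{1}{2}(\bfI + s\alpha^k\xi_k/|\xi|)\alpha^j = \Pi_{-s}(\xi)\alpha^j$, and the first term is exactly $-s\,\mathcal{R}^j(\xi)$ times the identity, giving \eqref{eq:commAlpPi:j} at the level of symbols. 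Since the operator identity is equivalent to the symbol identity for Fourier multipliers (with $\alpha^j$ acting as a constant matrix), this concludes the proof.

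There is no real obstacle here; the only subtlety is bookkeeping with the raising/lowering of spatial indices (done with $\delta^{jk}$ on the spatial part) and the choice of sign convention in the definition of $\Pi_s$. An alternative route — which I would mention only as a sanity check — is to use the already-derived identities $\bfI = \Pi_+ + \Pi_-$ and $-\alpha^k\xi_k/|\xi| = \Pi_+ - \Pi_-$, rewrite $\alpha^j$ as $\alpha^j(\Pi_+ + \Pi_-)$, and then exploit $\Pi_s^2 = \Pi_s$, $\Pi_+\Pi_- = 0$ together with the symbol computation above to cross-check \eqref{eq:commAlpPi:j}.
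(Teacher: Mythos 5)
Your proof is correct and is essentially the same argument as the paper's: both verify the identity at the symbol level using only the anticommutation relation \eqref{eq:alp-rel}, the paper simply phrasing it as the computation of $\alp^{j}\Pi_{s}(\xi)-\Pi_{-s}(\xi)\alp^{j}=-s\xi^{j}/\abs{\xi}$ rather than expanding and regrouping the left-hand side.
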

\begin{proof}
We compute
\begin{equation*}
	\alp^{j} \Pi_{s}(\xi) - \Pi_{-s}(\xi) \alp^{j}
	= -s \frac{1}{2} \frac{\xi_{k}}{\abs{\xi}} (\alp^{j} \alp^{k} + \alp^{k} \alp^{j})
	= -s \frac{\xi^{j}}{\abs{\xi}}. \qedhere
\end{equation*}
\end{proof}

\begin{remark} \label{rem:commAlpPi:0}
For $\mu = 0$, the analogue of \eqref{eq:commAlpPi:j} is
\begin{equation} \label{eq:commAlpPi:0}
	\alp^{0}  
	= -s \mR^{0}  + s \frac{i \rd_{t} + s \abs{D}}{\abs{D}},
\end{equation}
which can be easily justified.
\end{remark}
%%
%% PROOF
%%%
%\begin{align*}
%	\alp^{0} \Pi_{\pm} 
%	= \pm \frac{D_{0}}{\abs{D}} \Pi_{\pm} + \frac{ \mp D_{0} + \abs{D}}{\abs{D}} \Pi_{\pm}
%	= \mp \mR^{0} \pm \frac{i \rd_{t} \pm \abs{D}}{\abs{D}} \Pi_{\pm}
%\end{align*}

The Riesz transform term $\mR^{\mu}$ is \emph{scalar} in the sense that it does not involve multiplication by $\alp^{j}$. Its contribution in \eqref{eq:MD} resembles the Maxwell-Klein-Gordon system; see Section~\ref{subsec:nonlin} for details. Remarkably, the other terms in \eqref{eq:commAlpPi:j} and \eqref{eq:commAlpPi:0} turn out to contribute parts with more favorable structure. Indeed, in the case of \eqref{eq:commAlpPi:0}, the presence of the half-wave operator $i \rd_{t} + s\abs{D}$ (with an appropriate sign $s$) makes this term effectively higher order. In the case of \eqref{eq:commAlpPi:j}, the following lemma can be used to uncover a null structure.
\begin{lemma} \label{lem:spin-nf}
For $z \in \bbC^{N}$, $\xi, \eta \in \bbR^{d}$ and $\tht := \abs{\angle(\xi, \eta)}$, we have
\begin{equation}
	\abs{\Pi(\xi) \Pi(-\eta)}
	\leq C \tht.
\end{equation}
%More generally, for any $N_{1}, N_{2} \geq 0$ we have the symbol bound
%\begin{equation} 
%	\abs{\rd_{\xi^{1}}^{N_{1}} \rd_{\xi^{2}}^{N_{2}} \, \big( \Pi(\xi^{1}) \Pi(-\xi^{2}) \big)}
%	\aleq_{N_{1}, N_{2}} \tht \, \abs{\xi^{1}}^{-N_{1}} \abs{\xi^{2}}^{-N_{2}} \, .
%\end{equation}
\end{lemma}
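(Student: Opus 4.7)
The plan is to exploit the vanishing identity $\Pi(\xi) \Pi(-\xi) = 0$ recorded just before the lemma, which already shows that the symbol vanishes in the limit $\theta \to 0$. Concretely, I would write
\begin{equation*}
	\Pi(\xi) \Pi(-\eta) = \Pi(\xi) \bigl[ \Pi(-\eta) - \Pi(-\xi) \bigr] + \Pi(\xi) \Pi(-\xi) = \Pi(\xi) \bigl[ \Pi(-\eta) - \Pi(-\xi) \bigr],
\end{equation*}
reducing the task to estimating the second factor. Since $\Pi(\xi)$ is an orthogonal projection ($\Pi(\xi)^{\dagger} = \Pi(\xi)$ and $\Pi(\xi)^{2} = \Pi(\xi)$), it has operator norm at most one, so it suffices to bound $|\Pi(-\eta) - \Pi(-\xi)|$ by $C\theta$.

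For this I would use the explicit formula $\Pi(-\zeta) = \tfrac{1}{2}(\Id + \alpha^{j} \zeta_{j}/|\zeta|)$ to compute
\begin{equation*}
	\Pi(-\eta) - \Pi(-\xi) = \tfrac{1}{2} \alpha^{j} \bigl( \hat{\eta}_{j} - \hat{\xi}_{j} \bigr), \qquad \hat{\xi} := \xi/|\xi|, \quad \hat{\eta} := \eta/|\eta|.
\end{equation*}
The Clifford relation \eqref{eq:alp-rel} gives $(\alpha^{j} v_{j})^{2} = |v|^{2} \Id$ for any $v \in \bbR^{d}$, so the hermitian matrix $\alpha^{j} v_{j}$ has eigenvalues $\pm |v|$ and hence operator norm exactly $|v|$. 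Applied with $v = \hat{\eta} - \hat{\xi}$, this yields
\begin{equation*}
	|\Pi(-\eta) - \Pi(-\xi)| = \tfrac{1}{2} |\hat{\eta} - \hat{\xi}|.
\end{equation*}
Finally, $|\hat{\eta} - \hat{\xi}|$ is the chord length between two unit vectors separated by angle $\theta = |\angle(\xi, \eta)|$, hence bounded by $2 \sin(\theta/2) \leq \theta$, completing the proof.

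There is no real obstacle here: the proof is a two-line algebraic manipulation once one recognizes the right telescoping. The only thing to watch is that the bound by $\theta$ (rather than $\sin\theta$) is what is claimed, which is immediate from the chord vs.\ arc comparison on the unit sphere and does not require any restriction on $\theta$. I would also remark that the argument gives the slightly stronger statement $|\Pi(\xi)\Pi(-\eta)| \leq \tfrac{1}{2}|\hat{\xi}-\hat{\eta}|$, which may be useful when tracking constants later, and that a symmetric version of the identity $\Pi(\xi)\Pi(-\eta) = [\Pi(\xi)-\Pi(\eta)]\Pi(-\eta)$ gives the same bound with the roles of $\xi$ and $\eta$ reversed.
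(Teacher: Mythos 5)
Your proof is correct, and it takes a genuinely different route from the paper. The paper expands the product $\Pi(\xi)\Pi(-\eta) = \frac{1}{4}(\Id - \alp^{j}\xi_{j}/|\xi|)(\Id + \alp^{k}\eta_{k}/|\eta|)$ directly, using the anticommutation relation \eqref{eq:alp-rel} to split $\alp^{j}\alp^{k}\xi_{j}\eta_{k}$ into its symmetric part (which combines with $\Id$ to produce $1 - \cos\angle(\xi,\eta)$) and its antisymmetric part (proportional to $\xi_{j}\eta_{k} - \xi_{k}\eta_{j}$), and then bounds each of the three resulting terms by $C\tht$ separately. You instead exploit the algebraic identity $\Pi(\xi)\Pi(-\xi) = 0$ to telescope, reducing the claim to the Lipschitz bound $|\Pi(-\eta) - \Pi(-\xi)| \leq \frac{1}{2}|\hat\eta - \hat\xi|$, which you obtain in one stroke from the fact that $\alp^{j}v_{j}$ is hermitian with $(\alp^{j}v_{j})^{2} = |v|^{2}\Id$. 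Your argument is slightly more conceptual (projections onto nearby directions are close, and the exact orthogonality kills the zeroth-order term), yields an explicit constant, and as you note gives the sharper bound $\frac{1}{2}|\hat\xi - \hat\eta|$. The paper's computation, on the other hand, exhibits the explicit decomposition of the symbol into a difference term, a $Q_{jk}$-type antisymmetric term, and a $1-\cos\tht$ term, which is closer in spirit to how null forms are catalogued elsewhere in the paper; but for the purposes of this lemma the two proofs are equally valid.
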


\begin{proof} 
Using \eqref{eq:alp-rel} and the definition of $\Pi(\xi)$, we compute
\begin{align*}
	\Pi(\xi) \Pi(-\eta)  
	= & \frac{1}{4} \bb( \Id - \frac{\alp^{j} \xi_{j}}{\abs{\xi}} \bb) \bb( \Id + \frac{\alp^{k} \eta_{k}}{\abs{\eta}} \bb)  
	= \frac{1}{4} \bb( \Id - \frac{\alp^{j} \xi_{j}}{\abs{\xi}} + \frac{\alp^{k} \eta_{k}}{\abs{\eta}} - \frac{\alp^{j} \alp^{k} \xi_{j} \eta_{k}}{\abs{\xi} \abs{\eta} } \bb)  \\
	=& - \frac{\alp^{j}}{4} \bb(\frac{\xi_{j}}{\abs{\xi}} - \frac{\eta_{j}}{\abs{\eta}} \bb) 
		- \frac{\alp^{j} \alp^{k}}{8} \bb( \frac{\xi_{j} \eta_{k} - \xi_{k} \eta_{j}}{\abs{\xi} \abs{\eta}} \bb)
		+ \frac{\Id}{4} \bb( \frac{\abs{\xi} \abs{\eta} - \xi \cdot \eta}{\abs{\xi} \abs{\eta}} \bb).
\end{align*} 
Then the lemma follows. \qedhere
\end{proof}
We remark that the identity \eqref{eq:commAlpPi:0} must be applied judiciously, since $\mR^{0}$ is well-behaved on $\psi_{\pm}$ only when the modulation does not exceed the spatial frequency.

\subsection{Maxwell equations in Coulomb gauge}
Here we describe the Maxwell equations under the Coulomb gauge condition $\rd^{\ell} A_{\ell} = 0$.

Let $J_{\mu}$ be a 1-form (called the \emph{charge-current 1-form}) on $\bbR^{1+4}$ such that $\rd^{\mu} J_{\mu} = 0$. Consider the Maxwell equations
\begin{equation} \label{eq:M}
	\rd^{\mu} F_{\nu \mu} = - J_{\nu}.
\end{equation}
Under the Coulomb gauge condition
\begin{equation} \label{eq:coulomb}
	\rd^{\ell} A_{\ell} = 0,
\end{equation}
the Maxwell equations \eqref{eq:M} reduce to
\begin{equation} \label{eq:M-a}
 \lap A_{0} = J_{0}, \quad
\Box A_{j} = \calP_{j} J_{x}
\end{equation}
where $\lap := \rd^{\ell} \rd_{\ell}$ is the \emph{Laplacian}, $\Box := \rd^{\mu} \rd_{\mu}$ is the \emph{d'Alembertian} and $\calP$ denotes the Leray projection to the divergence-free vector fields, i.e.,
\begin{equation} \label{eq:Pj}
	\calP_{j} J := J_{j} - \lap^{-1} \rd^{\ell} \rd_{j} J_{\ell}.
\end{equation}
Moreover, thanks to $\rd^{\mu} J_{\mu} =0$, we also obtain the following elliptic equation for $\rd_{t} A_{0}$:
\begin{equation}
	\lap(\rd_{t} A_{0}) = \rd^{\ell} J_{\ell}.
\end{equation}
%\begin{equation*}
%	\lap (\rd_{t} A_{0}) = \rd_{t} J_{0} = \rd^{\ell} J_{\ell}
%\end{equation*}

\subsection{Decomposition of the nonlinearity} \label{subsec:nonlin}
We are now ready to describe in detail the nonlinearity of the Maxwell-Dirac equation in the Coulomb gauge (MD-CG).

As explained in the introduction, our overall philosophy is that MD-CG can be split into two parts: The scalar part, which does not involve multiplication by the matrix $\alp^{j}$, and the spinorial part arising from the spinorial nature of the Dirac equation. The latter part turns out to possess a more favorable null structure; in particular, there is no need to perform a paradifferential renormalization, nor to use a secondary null structure. On the other hand, the former part is deeply related to the Maxwell-Klein-Gordon equation in the Coulomb gauge (MKG-CG), whose small Sobolev critical global well-posedness was proved in \cite{KST}. We refer to Remarks~\ref{rem:nonlin} and \ref{rem:spin-nf} for a further discussion after the nonlinearity of MD-CG is completely described.

\subsubsection*{Nonlinearity for Maxwell}
Let $(A, \psi)$ be a solution to MD-CG. The charge-current 1-form $J$ reads
\begin{equation*}
	J^{\mu} = \brk{\gmm^{\mu} \psi, \gmm^{0} \psi} = \brk{\psi, \alp^{\mu} \psi}.
\end{equation*}
where we used \eqref{eq:gmmRel}, \eqref{eq:gmmRel:conj} and the definition of $\alp^{\mu}$ in the second identity.
By \eqref{eq:M-a}, $A_{\mu}$ solves the following equations:
\begin{align}
	\lap A_{0} =& \brk{\psi, \alp_{0} \psi} = - \brk{\psi, \alp^{0} \psi} = - \brk{\psi, \psi}, 	\label{eq:MD-a0}\\
	\Box A_{j} =& \calP_{j} \brk{\psi, \alp_{x} \psi}.			\label{eq:MD-ax}
\end{align}
Moreover, thanks to $\rd^{\mu} J_{\mu} = 0$ (which holds since $\psi$ solves a covariant Dirac equation, see remark \ref{rem:d0a0}), we have
\begin{equation} \label{eq:MD-d0a0}
	\lap (\rd_{t} A_{0}) =  \rd^{\ell} \brk{\psi, \alp_{\ell} \psi}.
\end{equation}

We now introduce bilinear version of the nonlinearities in \eqref{eq:MD-a0}, \eqref{eq:MD-ax} and \eqref{eq:MD-d0a0}, in order to set up an iteration scheme for solving MD-CG. Let $\varphi^{1}, \varphi^{2}$ be any spinor fields. For \eqref{eq:MD-a0}, we introduce
\begin{equation} \label{eq:me-def}
	\NM^{E}(\varphi^{1}, \varphi^{2}) := - \brk{\varphi^{1}, \varphi^{2}}.
\end{equation}
We also define
\begin{equation*}
	\bfA_{0}(\varphi^{1}, \varphi^{2}) := \lap^{-1} \NM^{E}(\varphi^{1}, \varphi^{2}),
\end{equation*}
so that $A_{0} = \bfA_{0}(\psi, \psi)$ for a solution $(A_{\mu}, \psi)$ to MD-CG. 

For \eqref{eq:MD-ax}, we use \eqref{eq:commAlpPi:j} to decompose the nonlinearity as
\begin{equation*}
\calP_{j} \brk{\psi, \alp_{x} \psi} = \sum_{s} \calP_{j} \brk{\psi, \alp_{x} \Pi_{s} \psi} = \sum_{s} \bb( -s \NM_{j}^{R} (\psi, \psi) + \NM^{S}_{j, s}(\psi, \psi) \bb),
\end{equation*}
where
\begin{align}
	\NM_{j}^{R} (\varphi^{1}, \varphi^{2}) :=& \calP_{j}\brk{\varphi^{1}, \mR_{x} \varphi^{2}},				\label{eq:mr-def}\\
	\NM_{j, s}^{S} (\varphi^{1}, \varphi^{2}) := & \calP_{j}\brk{\varphi^{1}, \Pi_{-s} \alp_{x} \varphi^{2}}.		\label{eq:ms-def}
\end{align}
We refer to $\NM_{j}^{R}$ and $\NM^{S}_{j, s}$ as the \emph{scalar} and \emph{spinorial} parts, respectively, of the Maxwell nonlinearity; observe that the scalar part does not involve the matrix $\alp^{j}$.
We also introduce
\begin{align*}
	\bfA_{j} (\varphi^{1}, \varphi^{2}) :=& \Box^{-1} \calP_{j} \brk{\varphi^{1}, \alp_{x} \varphi^{2}}, \\
	\bfA^{R}_{j} (\varphi^{1}, \varphi^{2}) :=& \Box^{-1} \NM^{R}_{j} (\varphi^{1}, \varphi^{2}), \\
	\bfA^{S}_{j, s} (\varphi^{1}, \varphi^{2}) :=& \Box^{-1} \NM^{S}_{j, s} (\varphi^{1}, \varphi^{2})
\end{align*}
where $ \Box^{-1} f$ here denotes the solution $\phi$ to the inhomogeneous wave equation $\Box \phi = f$ with $\phi[0] = 0$. For a solution $(A_{\mu}, \psi)$ to MD-CG, we have
\begin{align*}
A_{j} 
=A_{j}^{free} + \bfA_{j}(\psi, \psi) 
=A_{j}^{free} + \sum_{s} \bb( -s \bfA_{j}^{R}(\psi, \Pi_{s} \psi) + \bfA_{j, s}^{S}(\psi, \Pi_{s} \psi) \bb)
\end{align*}
where $A_{j}^{free}$ is the free wave with data $A_{j}^{free}[0] = A_{j}[0]$.

Finally, corresponding to \eqref{eq:MD-d0a0} we define
\begin{equation} \label{eq:d0me-def}
	\rd_{t} \NM^{E} (\varphi^{1}, \varphi^{2}) := \rd^{\ell} \brk{\varphi^{1}, \alp_{\ell} \varphi^{2}}, 
\end{equation}
so that $\lap(\rd_{t} A_{0}) = \rd_{t} \NM^{E}(\psi, \psi)$ for a solution $(A_{\mu}, \psi)$ to MD-CG.
\begin{remark} \label{rem:d0a0}
The notation $\rd_{t}$ in $\rd_{t} \NM$ is merely formal; the actual $\rd_{t}$ derivative of $\NM^{E}(\varphi^{1}, \varphi^{2})$ agrees with $\rd_{t} \NM^{E}(\varphi^{1}, \varphi^{2})$ only if 
\begin{equation*}
	\rd_{\mu} \brk{\varphi^{1}, \alp^{\mu} \varphi^{2}} = 0.
\end{equation*}
Such an identity holds if, for instance, $\varphi^{1}$ and $\varphi^{2}$ obey a (single) covariant Dirac equation $\alp^{\mu} (\rd_{\mu} + i \tilde{A}_{\mu}) \varphi = 0$ for some connection 1-form $\tilde{A}$, which is not necessarily equal to $A$. We will be careful to ensure that this is the case in our iteration scheme; see Sections~\ref{subsec:main-iter} and \ref{subsec:iter-dirac}. 
\end{remark}

\subsubsection*{Nonlinearity for Dirac}
We now turn to the covariant Dirac equation
\begin{equation} \label{eq:cov-dirac}
	\alp^{\mu} \covD_{\mu} \psi = 0.
\end{equation}
Expanding $\covD_{\mu} = \rd_{\mu} + i A_{\mu}$ and using \eqref{eq:dirac-HW}, we may rewrite the above equation as
%\begin{align*}
%%	- i \gmm^{0} \bb( (i \rd_{t} + \abs{D}) \Pi_{+}(D) + (i \rd_{t} - \abs{D}) \Pi_{-}(D) \bb) \psi = - i \gmm^{\mu} A_{\mu} \psi \\
%	(i \rd_{t} + \abs{D}) \psi_{+} + (i \rd_{t} - \abs{D}) \psi_{-} = \alp^{\mu} A_{\mu} \psi
%\end{align*}
\begin{equation} \label{eq:cov-dirac-HW-pre}
	(i \rd_{t} + s \abs{D}) \psi_{s} = \Pi_{s} (\alp^{\mu} A_{\mu} \psi) .
\end{equation}
%\begin{equation} \label{eq:cov-dirac-HW}
%	(i \rd_{t} \pm \abs{D}) \Pi_{\pm} \psi = \Pi_{\pm} \sum_{s_{1}} \calN_{s_{1}}(A, \psi), 
%\end{equation}
%where $\calN_{s_{1}}(A, \psi)$ is the bilinear form in $A, \psi$ defined as
%\begin{equation*}
%	\calN_{s_{1}}(A, \psi) := \alp^{\mu} A_{\mu} \Pi_{s_{1}} \psi.
%\end{equation*}
where $s \in \set{+, -}$ and $\psi_{s}$ is the abbreviation $\psi_{s} := \Pi_{s} \psi$.
In view of the half-wave decomposition, it is natural to expand $\psi = \psi_{+} + \psi_{-}$ on the RHS of \eqref{eq:cov-dirac-HW-pre}.
Using Lemma~\ref{lem:commAlpPi}, as well as the formulae $A_{j} = \calP_{j} A_{x}$ and $\psi_{s} = \Pi_{s} \psi_{s}$, we further decompose each of the nonlinearity $\alp^{\mu} A_{\mu} \psi_{s}$ as
\begin{align*}
	\alp^{\mu} A_{\mu} \psi_{s}
	= & A_{0} \Pi_{s} \psi_{s} + A_{j} \alp^{j} \Pi_{s} \psi_{s} \\
%	= & A_{0} \Pi_{s} \psi_{s} - s A_{j} \mR^{j} \psi_{s} +A_{j} \Pi_{-s} (\alp^{j} \psi_{s}) \\
	= & \ND^{E}(A_{0}, \Pi_{s} \psi_{s}) - s \ND^{R}(A_{x}, \psi_{s}) + \ND^{S}_{s}(A_{x}, \psi_{s}),
\end{align*}
where $\ND^{E}$, $\ND^{R}$ and $\ND_{s}^{S}$ are bilinear forms defined as follows: 
\begin{align}
	\ND^{E}(A_{0}, \varphi) := & A_{0} \varphi, 	\label{eq:ne-def} \\
	\ND^{R}(A_{x}, \varphi) := & ( \calP_{j} A_{x})( \mR^{j} \varphi ), \label{eq:nr-def} \\
	\ND^{S}_{s}(A_{x}, \varphi) := & A_{j} \Pi_{-s} (\alp^{j} \varphi). \label{eq:ns-def}
\end{align}
We refer to $\ND^{E}, \ND^{R}$ as the \emph{scalar} part of the Dirac nonlinearity, as it does not involve multiplication by $\alp^{\mu}$. The remainder $\ND^{S}_{s}$ is called the \emph{spinorial} part.

%Indeed, by Lemma~\ref{lem:commAlpPi} we have
%\begin{equation} \label{eq:nonlin:commAlpPi}
%	A_{j} \alp^{j} \Pi_{s} \varphi = -s \ND^{R}(A_{x}, \varphi) + \ND^{S}_{s}(A_{x}, \varphi).
%\end{equation}

We summarize the result of our decomposition so far as follows.
\begin{lemma} \label{lem:cov-dirac-HW}
Let $\psi$ be a spinor field on $\bbR^{1+d}$ and $A_{\mu}$ be a real-valued 1-form obeying $A_{j} = \calP_{j} A_{x}$. If $\psi$ is a solution\footnote{To be pedantic, one may take the $A, \psi, \psi_{s}$ to satisfy \eqref{eq:cov-dirac} and \eqref{eq:cov-dirac-HW} in the sense of distributions, where $A_{\mu}, \psi, \psi_{s}$ are assumed to be in $L^{2}_{loc}(\bbR^{1+d})$.} to \eqref{eq:cov-dirac}, then each of $\psi_{s} = \Pi_{s} \psi$ $(s \in \set{+, -})$ solves
\begin{equation}\label{eq:cov-dirac-HW}
\begin{aligned} 
& \hskip-2em
	\Pi_{s} (i \rd_{t} + s \abs{D}) \psi_{s} \\
	= & \Pi_{s} \sum_{s'} \bb(  \calN^{E}(A_{0}, \Pi_{s'} \psi_{s'})  
				 - s' \ND^{R}(A_{x}, \psi_{s'})
				+ \ND^{S}_{s'}(A_{x}, \psi_{s'}) \bb).
\end{aligned}
\end{equation}
Conversely, if $(\psi_{+}, \psi_{-})$ is a pair of spinor fields solving \eqref{eq:cov-dirac-HW}, then $\psi := \Pi_{+} \psi_{+} + \Pi_{-} \psi_{-}$ is a solution to \eqref{eq:cov-dirac}.
\end{lemma}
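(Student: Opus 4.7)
My plan is to prove Lemma~\ref{lem:cov-dirac-HW} by a direct algebraic computation based on the two tools already developed in this section: the half-wave diagonalization formula \eqref{eq:dirac-HW} and the commutation identity of Lemma~\ref{lem:commAlpPi}. Since every step is reversible, both the forward and converse implications will follow from the same chain of equalities, read in opposite orders. There is no analytic subtlety here; only the algebraic facts that $\Pi_{\pm}$, $(i\rd_{t} \pm \abs{D})$, and $\mR_{\mu}$ are commuting Fourier multipliers and that $\Pi_{s} \Pi_{s'} = \dlt_{ss'} \Pi_{s}$.

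For the forward direction, I would start from $\alp^{\mu} \covD_{\mu} \psi = 0$, expand $\covD_{\mu} = \rd_{\mu} + i A_{\mu}$, and use \eqref{eq:dirac-HW} to rewrite the equation as
\begin{equation*}
(i\rd_{t} + \abs{D})\Pi_{+}\psi + (i\rd_{t} - \abs{D})\Pi_{-}\psi = \alp^{\mu} A_{\mu} \psi.
\end{equation*}
On the right I would split $\alp^{\mu} A_{\mu} \psi = A_{0} \psi + A_{j} \alp^{j} \psi$, insert the decomposition $\psi = \psi_{+} + \psi_{-}$ with $\psi_{s'} := \Pi_{s'} \psi$, and handle each piece separately: the $A_{0}$ contribution is $\sum_{s'} \ND^{E}(A_{0}, \Pi_{s'}\psi_{s'})$ by \eqref{eq:ne-def}, while for the spatial piece Lemma~\ref{lem:commAlpPi} together with $\psi_{s'} = \Pi_{s'}\psi_{s'}$ gives
\begin{align*}
A_{j} \alp^{j} \psi_{s'}
	&= A_{j} \alp^{j} \Pi_{s'} \psi_{s'} = -s' A_{j} \mR^{j} \psi_{s'} + A_{j} \Pi_{-s'} \alp^{j} \psi_{s'} \\
	&= -s' \ND^{R}(A_{x}, \psi_{s'}) + \ND^{S}_{s'}(A_{x}, \psi_{s'}),
\end{align*}
after invoking the Coulomb identity $A_{j} = \calP_{j} A_{x}$ to match the definitions \eqref{eq:nr-def}, \eqref{eq:ns-def}. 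Finally, I would apply $\Pi_{s}$ to both sides: the opposite-sign half-wave term on the left is killed by $\Pi_{s}\Pi_{-s} = 0$, yielding exactly \eqref{eq:cov-dirac-HW} for each $s \in \set{+, -}$.

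For the converse, the same chain of equalities runs in reverse. Given $(\psi_{+}, \psi_{-})$ solving \eqref{eq:cov-dirac-HW}, with the standing convention $\psi_{s} = \Pi_{s}\psi_{s}$ built into the notation, I set $\psi := \Pi_{+}\psi_{+} + \Pi_{-}\psi_{-}$, sum the two equations over $s$ to remove the outer $\Pi_{s}$ via $\sum_{s}\Pi_{s} = \Id$, reassemble the right-hand side as $\alp^{\mu} A_{\mu} \psi$ using Lemma~\ref{lem:commAlpPi}, and identify the left-hand side with $-i \alp^{\mu} \rd_{\mu} \psi$ via \eqref{eq:dirac-HW}. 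This yields $\alp^{\mu} \covD_{\mu} \psi = 0$.

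I do not anticipate any genuine obstacle. The only point of bookkeeping is keeping track of when to use the identities $\psi_{s'} = \Pi_{s'}\psi_{s'}$ and $A_{j} = \calP_{j} A_{x}$ to align the expressions with the definitions \eqref{eq:ne-def}--\eqref{eq:ns-def}; after that, everything reduces to manipulation of commuting Fourier multipliers.
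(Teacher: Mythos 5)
Your forward direction is correct and is exactly the computation the paper carries out before the lemma (expand $\covD_\mu$, use \eqref{eq:dirac-HW}, split $\psi=\psi_++\psi_-$, apply Lemma~\ref{lem:commAlpPi}, then hit with $\Pi_s$). The converse, however, contains a genuine flaw: you declare ``the standing convention $\psi_s=\Pi_s\psi_s$'' as part of the hypotheses. It is not. The converse assumes only that $(\psi_+,\psi_-)$ is an arbitrary pair of spinor fields solving \eqref{eq:cov-dirac-HW}, and the remark immediately following the lemma explicitly warns that $\Pi_s\psi_s$ need not equal $\psi_s$. This generality is not cosmetic: in Section~\ref{subsec:iter-dirac} the converse is applied to the solution $(\varphi_+,\varphi_-)$ of the auxiliary system \eqref{eq:projsys}, whose components are not in the ranges of $\Pi_\pm$. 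With your extra assumption you prove a strictly weaker statement than the one needed.

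The fix is short, and it is what the paper does. First, by Lemma~\ref{lem:commAlpPi} alone (no hypothesis on $\psi_{s'}$ required),
\begin{equation*}
-s'\,\ND^{R}(A_x,\psi_{s'})+\ND^{S}_{s'}(A_x,\psi_{s'})=A_j\bigl(-s'\mR^j+\Pi_{-s'}\alp^j\bigr)\psi_{s'}=A_j\,\alp^j\,\Pi_{s'}\psi_{s'},
\end{equation*}
so the right-hand side of \eqref{eq:cov-dirac-HW} is $\Pi_s\,\alp^\mu A_\mu\psi$ with $\psi:=\Pi_+\psi_++\Pi_-\psi_-$; the projection $\Pi_{s'}$ is supplied by the commutation identity itself, not by an assumption on $\psi_{s'}$. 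Second, on the left-hand side use that $\Pi_s$ and $(i\rd_t+s\abs{D})$ commute together with $\Pi_s\Pi_{s'}=\dlt_{ss'}\Pi_s$, which gives $\Pi_s(i\rd_t+s\abs{D})\psi_s=(i\rd_t+s\abs{D})\Pi_s\psi$. Summing over $s$ and invoking \eqref{eq:dirac-HW} then yields $i\,\alp^\mu\rd_\mu\psi=\alp^\mu A_\mu\psi$, i.e.\ $\alp^\mu\covD_\mu\psi=0$. (Note also the sign: the sum of the left-hand sides is $+i\,\alp^\mu\rd_\mu\psi$, not $-i\,\alp^\mu\rd_\mu\psi$; and on the left one cannot simply ``remove the outer $\Pi_s$ via $\sum_s\Pi_s=\Id$,'' since the two half-wave operators differ --- the correct step is the commutation just described.)
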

\begin{remark} 
In the converse statement, $\psi_{s}$ need not belong to the image of $\Pi_{s}$, i.e., $\Pi_{s} \psi_{s}$ need not equal $\psi_{s}$ for $s \in \set{+, -}$.
\end{remark}

\begin{proof} 
The direct statement has already been proved. To prove the converse statement, we begin by noticing that
\begin{equation*}
	- s' \ND^{R}(A_{x}, \psi_{s'}) + \ND^{S}_{s'}(A_{x}, \psi_{s'})
	= A_{j} \alp^{j} \Pi_{s'} \psi_{s'}
\end{equation*}
by Lemma~\ref{lem:commAlpPi} and $A_{j} = \calP_{j} A_{x}$. Therefore, \eqref{eq:cov-dirac-HW} implies
 \begin{equation*}
	(i \rd_{t} + s \abs{D}) \Pi_{s} \psi_{s}
	= \Pi_{s} \bb( A_{0} \alp^{0} (\Pi_{+} \psi_{+} + \Pi_{-} \psi_{-}) + A_{j} \alp^{j} (\Pi_{+} \psi_{+} + \Pi_{-} \psi_{-}) \bb).
\end{equation*}
Defining $\psi := \Pi_{+} \psi_{+} + \Pi_{-} \psi_{-}$, adding up the preceding equation for $s \in \set{+, -}$ and using \eqref{eq:dirac-HW}, the desired statement follows.
\end{proof}

As discussed in the introduction, the most difficult interaction is when $A_{0}$ and $A_{x}$ have frequencies lower than $\psi_{s}$.
To isolate this part, we introduce the low-high paradifferential operators 
\begin{align*}
	\pi^{E}[A_{0}] \varphi :=& \sum_{k} \calN^{E}(P_{< k-10} A_{0}, P_{k} \varphi) = \sum_{k} P_{< k-10} A_{0} \, P_{k} \varphi, \\
	\pi^{R}[A_{x}] \varphi :=& \sum_{k} \calN^{R}(P_{< k-10} A_{x}, P_{k} \varphi) = \sum_{k} \calP_{j} P_{< k-10} A_{x} \, \mR^{j} P_{k} \varphi, \\
	\pi_{s}^{S}[A_{x}] \varphi :=& \sum_{k} \calN_{s}^{S}(P_{< k-10} A_{x}, P_{k} \varphi) = \sum_{k} P_{< k-10} A_{j} \, \Pi_{\mp} (\alp^{j} P_{k} \varphi).
\end{align*}
and the remainders $\NR^{E}$, $\NR^{R}$ and $\NR_{s}^{S}$ consisting of
\begin{align*}
	\NR^{E}(A_{0}, \varphi) 
%	= \calN^{E}(A_{0}, \varphi) - \pi^{E}[A_{0}] \varphi 
	:= & \sum_{k} \calN^{E}(P_{\geq k-10} A_{0}, P_{k} \varphi)
	= \sum_{k} P_{\geq k-10} A_{0} \, P_{k} \varphi, \\
	\NR^{R}(A_{x}, \varphi) 
%	= \calN^{R}(A_{x}, \varphi) - \pi^{R}[A_{x}] \varphi
	:= & \sum_{k} \calN^{R}(P_{\geq k-10} A_{x}, P_{k} \varphi)  
	= \sum_{k} \calP_{j} P_{\geq k-10} A_{x} \, \mR^{j} P_{k} \varphi, \\
	\NR_{s}^{S}(A_{x}, \varphi) 
%	=  \calN_{s}^{S}(A_{x}, \varphi) - \pi_{s}^{S}[A_{x}] \varphi
	:= & \sum_{k} \calN_{s}^{S}(P_{\geq k-10} A_{x}, P_{k} \varphi)  
	= \sum_{k} P_{\geq k-10} A_{j} \, \Pi_{\mp} (\alp^{j} P_{k} \varphi).
\end{align*}
 We also define the paradifferential covariant half-wave operator by
\begin{equation} \label{eq:paradiff-def}
	(i \rd_{t} + s \abs{D})^{p}_{A^{free}}
	:= (i \rd_{t} + s \abs{D})  + s \sum_{k} \calP_{j} P_{<k-5} A^{free}_{x} \mR^{j} P_{k}.
\end{equation}
so that we have
\begin{equation*}
	(i \rd_{t} + s \abs{D})^{p}_{A^{free}} 
	= (i \rd_{t} + s \abs{D})  + s \Diff^{R}[A_{x}^{free}].
\end{equation*}
%and the remainder term $\calM^{free}_{\pm}(A^{free}, \psi)$ by
%\begin{equation} \label{eq:Mfree-def}
%\calM^{free}_{\pm}(A^{free}, \psi) 
%= \Diff_{\pm}[A^{free}] \Pi_{\mp} \psi .
%\end{equation}

%These definitions allow us to rewrite \eqref{eq:cov-dirac-HW} as
%\begin{equation} \label{eq:cov-dirac-para}
%\begin{aligned}
%	\Pi_{\pm} (i \rd_{t} \pm \abs{D})^{p}_{A^{free}} \psi_{\pm}
%	= & \Pi_{\pm} \Diff^{R}_{\mp}[A^{free}] \psi_{\mp} + \Pi_{\pm} \sum_{s_{1}} \Diff^{S}_{s_{1}}[A^{free}] \psi \\
%	& + \Pi_{\pm} \sum_{s_{1}} \Diff^{R}_{s_{1}}[\bfA(\psi, \psi)] \psi_{s_{1}} 
%	+ \Pi_{\pm} \sum_{s_{1}} \Diff^{S}_{s_{1}}[\bfA(\psi, \psi)] \psi_{s_{1}}  \\
%	& + \Pi_{\pm} \sum_{s_{1}} \calM^{R}_{s_{1}}(A, \psi_{s_{1}}) + \Pi_{\pm} \sum_{s_{1}} \calM^{S}_{s_{1}}(A, \psi_{s_{1}})
%\end{aligned}\end{equation}

\begin{remark}[Parallelism with MKG-CG] \label{rem:nonlin}
We are now ready to exhibit more concretely the parallelism between Maxwell--Klein--Gordon in Coulomb gauge (MKG-CG) and the scalar part of MD-CG.

We start with MD-CG. Applying \eqref{eq:commAlpPi:j}, \eqref{eq:commAlpPi:0} to the equations for $A_{0}$ and keeping only the Riesz transform terms, we get
\begin{equation} \label{eq:MD-R:ell}
	\lap A_{0} = - \sum_{s, s'} s' \brk{\psi_{s}, \calR_{0} \psi_{s'}} + \cdots 
\end{equation}
Furthermore, consider the equations for $A_{x}$ and $\psi$ with the spinorial parts $\bfA^{S}$ and $\calN_{\pm}^{S}$ removed. Using also \eqref{eq:commAlpPi:0} to the term $A_{0} \alp^{0} \psi$ in the Dirac equation and throwing away the second term in \eqref{eq:commAlpPi:0}, we arrive at the equations
\begin{equation} \label{eq:MD-R:wave}
\begin{aligned}
	\Box A_{j} =& - \sum_{s, s'} s' \calP_{j} \brk{\psi_{s}, \mR_{x} \psi_{s'}} + \cdots \\
	(i \rd_{t} + s \abs{D}) \psi_{s} = & - \Pi_{s} \sum_{s'} s' A_{\mu} \mR^{\mu} \psi_{s'}  + \cdots
\end{aligned}
\end{equation}

On the other hand, recall from \cite{KST} that the Maxwell--Klein--Gordon equation in the Coulomb gauge (MKG-CG) takes the form
\begin{equation*} \tag{MKG-CG}
\left\{
\begin{aligned}
	\lap A_{0} =& - \Im(\phi \overline{\covD_{0} \phi}) \\
	\Box A_{j} =& - \calP_{j} \Im(\phi \overline{\covD_{x} \phi}) \\
	\Box \phi =& - 2 i A_{\mu} \rd^{\mu} \phi + i \rd_{0} A_{0} \phi + A_{\mu} A^{\mu} \phi
\end{aligned}
\right.
\end{equation*}
Using the half-wave decomposition $\phi_{s} = \frac{1}{2} (\phi + s \frac{\rd_{t}}{i \abs{D}} \phi)$ $(s \in \set{+, -}$) and keeping only the quadratic nonlinearities (except $\rd_{0} A_{0} \phi$, which is harmless), we arrive at
%\begin{equation*}
%	i \rd_{t} \phi_{s} + s \abs{D} \phi_{s} = \frac{1}{2} \bb( i \rd_{t} \phi + s \frac{\rd_{t}^{2}}{\abs{D}} \phi + s \abs{D} \phi - i \rd_{t} \phi \bb)
%	= \frac{-s}{2 \abs{D}} F
%\end{equation*}
\begin{equation} \label{eq:MKG-CG}
\begin{aligned}
	\lap A_{0} =& - \sum_{s, s'} \Im(\phi_{s} \overline{\rd_{0} \phi_{s'}}) + \cdots \\
	\Box A_{j} =& - \sum_{s, s'} \calP_{j} \Im(\phi_{s} \overline{\rd_{x} \phi_{s'}}) + \cdots \\
	(i \rd_{t} + s \abs{D}) \phi_{s} =& \frac{s}{\abs{D}} \sum_{s'} i A_{\mu} \rd^{\mu} \phi_{s'} + \cdots
\end{aligned}
\end{equation}
Modulo constant factors and balance of derivatives, observe the similarity between \eqref{eq:MD-R:ell}--\eqref{eq:MD-R:wave} and \eqref{eq:MKG-CG}! This similarity will be exploited below to prove a crucial trilinear null form estimate (Proposition~\ref{prop:trilinear}) and solvability of covariant Dirac equation (Proposition~\ref{prop:parasys-dirac}).
\end{remark}

\section{Statement of the main estimates} \label{sec:main-est}
In this short section, we collect the ingredients needed to prove Theorem~\ref{thm:main}. For the sake of concreteness, we restrict to the case $d = 4$ unless otherwise stated. We use the language of frequency envelopes, which is a convenient way of expressing the weak interaction among different dyadic frequency pieces; see Section~\ref{subsec:freqenv} for the notation and conventions. In what follows, we omit the admissibility constant $\dlt_{1}$ of the frequency envelopes.

For the nonlinearity in the $A_{0}$ and $A_{x}$ equations, we have the following bilinear estimates.

\begin{proposition} \label{prop:a}
For any admissible frequency envelopes $b, c$ and signs $s, s' \in \set{+, -}$, we have
\begin{align} 
\nrm{\NM^{E}(\psi, \varphi)}_{(L^{2} \dot{H}^{-1/2})_{bc}}
	+ \nrm{\rd_{t} \NM^{E}(\psi, \varphi)}_{(L^{2} \dot{H}^{-3/2})_{bc}} 
\aleq&  \nrm{\psi}_{(\tilde{S}^{1/2}_{s})_{b}} \nrm{\varphi}_{(\tilde{S}^{1/2}_{s'})_{c}} .	\label{eq:a0} \\
\nrm{\NM^{R}_{j}(\psi, \varphi)}_{(N \cap L^{2} \dot{H}^{-1/2})_{bc}}
\aleq & \nrm{\psi}_{(\tilde{S}^{1/2}_{s})_{b}} \nrm{\varphi}_{(\tilde{S}^{1/2}_{s'})_{c}}, 		\label{eq:axr}\\
\nrm{\NM^{S}_{j, s'}(\Pi_{s} \psi, \varphi)}_{(N \cap L^{2} \dot{H}^{-1/2})_{bc}}
\aleq & \nrm{\psi}_{(\tilde{S}^{1/2}_{s})_{b}} \nrm{\varphi}_{(\tilde{S}^{1/2}_{s'})_{c}}.		\label{eq:axs}
\end{align}
%\begin{align} 
%	\nrm{\Box \bfA_{s_{1} s_{2}, j}^{R}(\psi, \varphi)}_{(N \cap L^{2} \dot{H}^{-1/2})_{cd}} 
%	\aleq &  \nrm{\psi}_{(\tilde{S}^{1/2}_{s_{1}})_{c}} \nrm{\varphi}_{(\tilde{S}^{1/2}_{s_{2}})_{d}} \ , \label{eq:axr}\\
%	\nrm{\Box \bfA_{s_{1} s_{2}, j}^{S}(\psi, \varphi)}_{(N \cap L^{2} \dot{H}^{-1/2})_{cd}}
%	\aleq &  \nrm{\psi}_{(\tilde{S}^{1/2}_{s_{1}})_{c}} \nrm{\varphi}_{(\tilde{S}^{1/2}_{s_{2}})_{d}} \ . \label{eq:axs}
%\end{align}
\end{proposition}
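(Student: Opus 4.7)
My plan is to prove all three estimates by a standard Littlewood--Paley/paradifferential decomposition, reducing each bound to dyadic bilinear estimates that are then summed against the frequency envelopes $b, c$. Fix the signs $s, s'$; write $\psi_{k_1} = P_{k_1}\psi$, $\varphi_{k_2} = P_{k_2}\varphi$, and localize the output to $P_k$. I would split into the usual three interaction regimes: high--low ($k_1 \gg k_2$, $k = k_1 + O(1)$), low--high ($k_2 \gg k_1$, $k = k_2 + O(1)$), and high--high ($k_1 = k_2 + O(1)$, $k \leq k_1 + O(1)$). In each regime the dyadic bound will carry a fixed gain $2^{-\delta_0(\cdots)}$; since the envelopes are $\delta_1$-admissible with $\delta_1 \ll \delta_0$, this gain enables the envelope summation via the product-of-envelopes machinery of Section~\ref{subsec:freqenv}.

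The $L^2 \dot{H}^{-1/2}$ bound on $\NM^E(\psi, \varphi) = -\brk{\psi, \varphi}$ in \eqref{eq:a0} carries no null structure; I would rely directly on the Strichartz embedding $\tilde{S}^{1/2}_{s} \hookrightarrow L^4 L^4$ delivered by $S^{\mathrm{str}}_k$, together with H\"older $L^4 L^4 \cdot L^4 L^4 \hookrightarrow L^2 L^2$ and Bernstein at small output frequencies. For $\rd_t \NM^E(\psi, \varphi) = \rd^\ell \brk{\psi, \alp_\ell \varphi}$, the matrix $\alp_\ell$ combined with $\psi = \Pi_s \psi$ and $\varphi = \Pi_{s'}\varphi$ generates a spinorial null form, and Lemma~\ref{lem:spin-nf} supplies an angular factor that more than compensates the extra spatial derivative and upgrades the bound to $L^2 \dot{H}^{-3/2}$. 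For the scalar null form \eqref{eq:axr}, I would exploit the parallelism with MKG-CG noted in Remark~\ref{rem:nonlin}: written as a Fourier multiplier, $\calP_j \brk{\psi, \mR_x \varphi}$ has essentially the same bilinear symbol as the classical $Q_{\al\beta}$ null form driving $\Box A_j$ in MKG-CG, with the spinor inner product entering as a harmless scalar pairing. Since $\tilde{S}^{1/2}_{s}$ satisfies the same Strichartz, null-frame, and $X^{s,b}$-bounds as the MKG half-wave spaces, the dyadic bounds of \cite{KST} transfer essentially verbatim.

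For the spinorial estimate \eqref{eq:axs} on $\NM^S_{j, s'}(\Pi_s \psi, \varphi) = \calP_j \brk{\Pi_s \psi, \Pi_{-s'}\alp_x \varphi}$, Lemma~\ref{lem:spin-nf} is the central tool. After dyadic and angular localization of both inputs into caps of size $2^\ell$, the pairing is controlled pointwise by the spinorial null form symbol bound $\abs{\Pi(s\xi)\Pi(-s'\eta)} \aleq 2^\ell$ at frequencies with angular separation $\simeq 2^\ell$ (signs matched appropriately through Lemma~\ref{lem:commAlpPi}). This pointwise angular gain is strictly stronger than the gain provided by any scalar $Q_{\al\beta}$ null form, so the resulting dyadic bounds are easier than in the scalar case. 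At high modulation I would use the $X^{0, 1/2}_{\pm, \infty}$ component of $\tilde{S}^{1/2}_{s}$ together with \eqref{eq:q-LqL2}; at low modulation I would use angularly localized $L^2 L^2$ bilinear bounds from $S^{\mathrm{str}}_k$, or, when parallel interactions force it, transverse bilinear $L^2 L^2$ bounds from the null frame spaces in $S^{\mathrm{ang}}_k$ via \eqref{bilL2est}. No secondary null structure or paradifferential renormalization is required here, which is consistent with the heuristic that the spinorial part is entirely perturbative.

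The principal obstacle I expect is the scalar estimate \eqref{eq:axr} at the critical $d = 4$ endpoint where high--high inputs produce a much lower output frequency: here the $L^2 \dot{H}^{-1/2}$ and $N$ bounds place the tightest demands on angular $\ell^2$-summation and modulation balance. This is handled by exploiting the full strength of the $S_k$ space, including in particular the extra angular $2^{-\ell'/2}$ gain encoded in $S^{\mathrm{box}}_k$ (cf.\ Remark~\ref{sboxrk}). Once these dyadic bounds are established with a $\delta_0$ margin, the envelope summation closes via the Cauchy--Schwarz-type bound for products of frequency envelopes recorded at the end of Section~\ref{subsec:freqenv}.
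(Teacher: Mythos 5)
Your overall architecture (dyadic decomposition, abstract null forms via Lemma~\ref{lem:spin-nf}, envelope summation with a $\dlt_{0}$ margin against $\dlt_{1}$-admissible envelopes) matches the paper's, but two of your key steps do not close. For \eqref{eq:a0}, the $L^{4}L^{4}\times L^{4}L^{4}\to L^{2}L^{2}$ H\"older argument fails precisely in the high--high regime $k_{1}=k_{2}+O(1)\gg k_{0}$, which is the dominant interaction for the Maxwell source. Under the normalization $\nrm{\psi}_{(\tilde S^{1/2}_{s})_{b}}=1$ one has $\nrm{\psi_{k}}_{L^{4}L^{4}}\aleq 2^{k/4}b_{k}$, so $2^{-k_{0}/2}\nrm{\psi_{k_{1}}\varphi_{k_{2}}}_{L^{2}L^{2}}\aleq 2^{(k_{1}-k_{0})/2}b_{k_{1}}c_{k_{1}}$ --- a \emph{loss} of $2^{(k_{\max}-k_{\min})/2}$, which makes the $k_{1}$-summation diverge; and no admissible Strichartz/Bernstein combination in $d=4$ removes this loss. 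What is needed is the orthogonality over cubes $\calC_{k_{\min}}(0)$ together with the square-summed $L^{2}L^{\infty}$ norms, i.e.\ Lemma~\ref{lem:ellip} combined with \eqref{eq:fe-L2Linfty-0}, which yields $\nrm{\psi_{k_{1}}}_{L^{\infty}L^{2}}\cdot 2^{k_{0}}c_{k_{2}}$ and hence the required gain $2^{(k_{\min}-k_{\max})/2}$. (Your appeal to the spinorial null form for $\rd_{t}\NM^{E}$ is also a red herring: the derivative $\rd^{\ell}$ falls on the output frequency and is exactly absorbed by the extra $\abs{D}^{-1}$ in $\dot H^{-3/2}$, so this term reduces to the same $L^{2}L^{2}$ estimate as $\NM^{E}$.)

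For \eqref{eq:axr}, the claim that the KST dyadic bounds transfer ``essentially verbatim'' overlooks the different derivative balance: $\mR_{x}$ is order zero, whereas the MKG nonlinearity is $\phi\,\overline{\rd_{x}\phi}$, so the high--low interaction $\brk{\psi_{k_{1}},\mR_{x}\varphi_{k_{2}}}$ with $k_{2}\ll k_{1}$ carries no gain $2^{k_{2}-k_{1}}$ from a derivative landing on the low frequency. This is exactly the case (Step~2.2 of Section~\ref{subsec:bi-a}) where the core null form estimate (Proposition~\ref{prop:nfs}) alone yields only $b_{k_{1}}c_{k_{2}}$ with no off-diagonal gain, and where the paper must invoke the $\tilde Z^{1/2}_{s'}$ component of $\tilde S^{1/2}_{s'}$ --- the $L^{1}L^{\infty}$ bound \eqref{eq:fe-L1Linfty} on $Q_{j}^{s'}\varphi_{k_{2}}$ --- to split the modulation sum and recover a gain $2^{\dlt_{0}(k_{2}-k_{1})}$. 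This ingredient has no counterpart in KST's Maxwell estimate and is absent from your argument; you single out high--high as the principal obstacle for \eqref{eq:axr}, but high--low is where the proof actually threatens to break. The same issue recurs in the low--high case of \eqref{eq:axs}.
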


For the nonlinearity in the covariant Dirac equation, we first have the following set of bilinear estimates.
\begin{proposition} \label{prop:dirac}
Let $a$ and $b$ be any admissible frequency envelopes. Then the following statements holds.
\begin{enumerate}[leftmargin=*]
\item (Remainders $\NR^{E}$, $\NR^{R}$ and $\NR^{S}$) For any signs $s, s'$, we have
\begin{align} 
	\nrm{\NR^{E}(B, \psi)}_{(N_{s'}^{1/2})_{ab}} 	
	\aleq & \nrm{B}_{Y^{1}_{a}} \nrm{\psi}_{(\tilde{S}^{1/2}_{s})_{b}}  ,			\label{eq:nre} \\
	\nrm{\NR^{R}(A_{x}, \psi)}_{(N_{s'}^{1/2})_{ab}} 	
	\aleq &  \nrm{A_{x}}_{S^{1}_{a}} \nrm{\psi}_{(\tilde{S}^{1/2}_{s})_{b}} ,			\label{eq:nrr} \\
	\nrm{\Pi_{s'} \NR^{S}_{s}(A_{x}, \psi)}_{(N_{s'}^{1/2})_{ab}} 	
	\aleq &  \nrm{A_{x}}_{S^{1}_{a}} \nrm{\psi}_{(\tilde{S}^{1/2}_{s})_{b}} .			\label{eq:nrs} 
\end{align}

\item (Paradifferential operators $\pi^{E}$ and $\pi^{R}$) For opposite signs $s' = - s$, we have 
\begin{align} 
	\nrm{\Diff^{E} [B] \psi}_{(N_{-s}^{1/2})_{ab}} 
	\aleq & \nrm{B}_{Y^{1}_{a}} \nrm{\psi}_{(\tilde{S}^{1/2}_{s})_{b}}  ,			\label{eq:diffe-opp} \\
	\nrm{\Diff^{R} [A_{x}] \psi}_{(N_{-s}^{1/2})_{ab}} 
	\aleq & \nrm{A_{x}}_{S^{1}_{a}} \nrm{\psi}_{(\tilde{S}^{1/2}_{s})_{b}}  .			\label{eq:diffr-opp} 
\end{align}

\item (Paradifferential operator $\pi^{S}$) For any signs $s, s'$, we have 
\begin{align} 
	\nrm{	\Pi_{s'} \Diff^{S}_{s}[A_{x}] \psi}_{(N_{s'}^{1/2})_{ab}} 
	\aleq & \nrm{A_{x}}_{S^{1}_{a}} \nrm{\psi}_{(\tilde{S}^{1/2}_{s})_{b}}  .			\label{eq:diffs} 
\end{align}

\item (High modulation $L^{2} L^{2}$ bounds) For any sign $s$, we have
\begin{align} 
	\nrm{\calN^{E}(B, \psi)}_{(L^{2} L^{2})_{ab}} 
	\aleq \nrm{B}_{Y^{1}_{a}} \nrm{\psi}_{(S^{1/2}_{s})_{b}}, 	\label{eq:ne-himod} \\
	\nrm{\calN^{R}(A_{x}, \psi)}_{(L^{2} L^{2})_{ab}} 
	\aleq \nrm{A_{x}}_{S^{1}_{a}} \nrm{\psi}_{(S^{1/2}_{s})_{b}}, 	\label{eq:nr-himod} \\
	\nrm{\calN^{S}_{s}(A_{x}, \psi)}_{(L^{2} L^{2})_{ab}} 
	\aleq \nrm{A_{x}}_{S^{1}_{a}} \nrm{\psi}_{(S^{1/2}_{s})_{b}}. 	\label{eq:ns-himod} 
\end{align}
\item ($\tilde{Z}_{s}^{1/2}$ bounds) For any sign $s$, we have
\begin{align}
	\nrm{\ND^{E}(B, \psi)}_{G^{1/2}_{ab}}
	\aleq &  \nrm{B}_{Y^{1}_{a}}  \nrm{\psi}_{(S^{1/2}_{s})_{b}}  ,		\label{eq:ne-z} \\
	\nrm{\NR^{R}(A_{x}, \psi)}_{G^{1/2}_{ab}}
	+ \nrm{\Diff^{R}[A_{x}] \psi}_{G^{1/2}_{ab}}
	\aleq &  \nrm{A_{x}}_{S^{1}_{a}}  \nrm{\psi}_{(S^{1/2}_{s})_{b}}  ,		\label{eq:nr-z}  \\
	\nrm{\ND^{S}_{s}(A_{x}, \psi)}_{G^{1/2}_{ab}}
	\aleq &  \nrm{A_{x}}_{S^{1}_{a}}  \nrm{\psi}_{(S^{1/2}_{s})_{b}}  .		\label{eq:ns-z}
\end{align}
\end{enumerate}
\end{proposition}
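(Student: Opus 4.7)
The plan is to prove each family of bounds in Proposition~\ref{prop:dirac} through a dyadic Littlewood--Paley decomposition of both factors, reducing matters to frequency-localized bilinear estimates (to be established in Section~\ref{sec:bi}) and summing against the frequency envelopes $a, b$. For each dyadic interaction $P_{k_{1}} (A \text{ or } B) \cdot P_{k_{2}} \psi$ one distinguishes three regimes: (LH) $k_{1} \ll k_{2}$, which is the paradifferential regime cut off by $P_{<k-10}$ in the $\Diff^{\bullet}$; (HL) $k_{2} \ll k_{1}$; and (HH) $k_{1} = k_{2} + O(1)$. Summation against the envelopes is routine once the individual dyadic estimates carry a gap $\aleq 2^{-\dlt_{0} \abs{k_{1} - k_{2}}}$ coming either from (HH) orthogonality, a modulation gain, or a spinorial angle gain.

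For the remainder estimates \eqref{eq:nre}--\eqref{eq:nrs} the $P_{\geq k-10}$ cutoff restricts us to the (HL) and (HH) regimes; here null structure is not required. Using the embeddings $S_{k}^{s} \hookrightarrow L^{\infty} L^{2} \cap S^{\mathrm{str}}_{k}$ and the atomic structure $N_{k}^{s'} = L^{1} L^{2} + X^{0, -1/2}_{s', 1}$, each dyadic piece reduces via Bernstein and H\"older inequalities to a Strichartz estimate encoded in $S^{\mathrm{str}}_{k}$, with the bounded multipliers $\calP, \mR$ in $\ND^{R}$ and the projection $\Pi_{-s}$ in $\ND^{S}_{s}$ contributing no difficulty. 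The $L^{2} L^{2}$ bounds \eqref{eq:ne-himod}--\eqref{eq:ns-himod} follow from the crude H\"older split $L^{\infty} L^{2} \times L^{2} L^{\infty}$. For the $\tilde{Z}^{1/2}_{s}$ bounds \eqref{eq:ne-z}--\eqref{eq:ns-z}, I invoke Lemma~\ref{lem:Z-bdd} to replace the $\tilde{Z}$ norm by a concrete angular-sector $L^{1} L^{\infty}$ quantity, which is then estimated by an $L^{2} L^{\infty} \times L^{2} L^{\infty}$ H\"older inequality using the $S^{\mathrm{str}}_{k}$ Strichartz exponent $(2, \infty)$ on both factors (after Bernstein on the low factor).

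The opposite-sign paradifferential bounds \eqref{eq:diffe-opp}--\eqref{eq:diffr-opp} rely crucially on the structural hypothesis $s' = -s$. With $B$ or $A_{x}$ at frequency $\ll 2^{k}$ and $\psi$ of frequency $\simeq 2^{k}$ concentrated on the $+s$ cone, the (LH) output is supported near $\tau = s \abs{\xi}$, so evaluation in the $N_{k}^{-s}$ norm forces either the low-frequency factor or $\psi$ itself to carry modulation $\gtrsim 2^{k}$. Exploiting this through the $X^{0, -1/2}_{-s, 1}$ atom of $N_{k}^{-s}$ reduces matters to an $L^{2} L^{2}$ estimate, which is immediate from $Y^{1}$ (for $B$, by the very definition of $Y_{k}$) and from $S^{1}$ (for $A_{x}$, after peeling off a disposable modulation projection, cf.~\eqref{eq:q-LqL2}). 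It is precisely because the analogous gain fails in the same-sign case $s' = s$ that the scalar paradifferential operators $\Diff^{E}, \Diff^{R}$ at $s' = s$ are not estimated here but rather absorbed into the underlying linear covariant operator, as in Proposition~\ref{prop:parasys-dirac}.

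The main obstacle is the spinorial paradifferential bound \eqref{eq:diffs} in the same-sign case $s' = s$, where no modulation gain is available. Here one must extract the full strength of Lemma~\ref{lem:spin-nf}: the symbol $\Pi_{s}(\xi) \Pi_{-s}(\eta) \alp^{j}$ of the projected product (with $\eta$ the frequency of $P_{k} \psi$ and $\xi = \xi_{A} + \eta$ the output frequency) satisfies $\abs{\Pi_{s}(\xi) \Pi_{-s}(\eta)} \aleq \abs{\angle(\xi, \eta)}$, which is controlled both by the (LH) frequency ratio $2^{k_{A} - k}$ and by the angular aperture of any cap decomposition of $A_{x}$ and $\psi$. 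Exploiting this angle factor via the square-summed angular-sector norms $S^{\mathrm{ang}}_{k}$ and $S^{\mathrm{box}}_{k}$, and in particular the extra factor of $2^{-\ell/2}$ built into $S^{\mathrm{box}}_{k}$ compared to $S^{\mathrm{ang}}_{k}$ (see Remark~\ref{sboxrk}), one closes the dyadic estimate in $N_{k}^{s}$ with the gap needed for envelope summation. This is where the $S^{\mathrm{box}}_{k}$ augmentation genuinely enters and cannot be avoided in $d = 4$. The opposite-sign case $s' = -s$ of \eqref{eq:diffs} then follows by the modulation mechanism of the previous paragraph, and the overall summation against $a, b$ proceeds as before.
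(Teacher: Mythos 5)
Your outline of the opposite-sign paradifferential bounds \eqref{eq:diffe-opp}--\eqref{eq:diffr-opp} and of the spinorial angular gain behind \eqref{eq:diffs} matches the paper's mechanism, but there is a genuine gap in your treatment of the remainder estimates \eqref{eq:nrr}--\eqref{eq:nrs}. You assert that null structure is not required there and that each dyadic piece reduces ``via Bernstein and H\"older to a Strichartz estimate.'' This cannot work: for $A_x$ one only controls $L^\infty L^2$, Strichartz and square-summed box norms, and in the low-modulation regime (all modulations $\ll 2^{k_{\min}}$) such bounds carry no positive power of the modulation parameter, so the sum over modulations $j < k_{\min}$ diverges. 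The paper closes this sum only by invoking the angular gain $2^{\ell}$, $\ell = \tfrac12(j-k_{\min})_-$, of the null forms $\calN^{R}$ and $\Pi_{s'}\calN^{S}_{s}$ through Propositions~\ref{prop:nf} and \ref{prop:nfs} (recall $\calN^{R} = \NF$ and $\Pi_{s'}\calN^{S}_{s} = \NF^{\ast}_{ss'}$ by Proposition~\ref{prop:MD-CG-nf}). Moreover, in the high-low case ($\psi$ at the minimal frequency $k_2 \ll k_1$), even the null form estimate yields no exponential gain in $k_1-k_2$ when $\psi$ carries the dominant modulation, and without such a gain the frequency-envelope summation to $a_{k_0}b_{k_0}$ breaks down. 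The paper recovers the gain by splitting the modulation sum and using the $\tilde Z^{1/2}_s$ component of $\tilde S^{1/2}_s$, i.e.\ the $L^1L^\infty$ bound \eqref{eq:fe-L1Linfty} on $Q^s_j\psi_{k_2}$ coming from $(i\rd_t + s\abs{D})\psi \in L^1L^\infty$. Your proposal never uses this component, and correspondingly does not explain why items (1)--(3) of the proposition require $\tilde S^{1/2}_s$ on the right-hand side while items (4)--(5) need only $S^{1/2}_s$.

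A second, smaller gap concerns \eqref{eq:ne-himod}--\eqref{eq:ns-z} in the high-high interaction $k_0 = k_{\min} \ll k_1 = k_2 + O(1)$. A plain $L^\infty L^2 \times L^2L^\infty$ H\"older split gives no gain in $k_1 - k_0$, so the envelope summation fails; and for the $G^{1/2}$ norm the weight $2^{-2k_0}$ makes an unlocalized $L^2L^\infty \times L^2L^\infty$ estimate diverge as $k_0 \to -\infty$. One must decompose both high-frequency inputs into cubes of side $2^{k_0}$ and exploit the orthogonality of Lemma~\ref{lem:box-orth-0} together with the square-summed $L^2L^\infty$ norms, as in Lemma~\ref{lem:ellip} and \eqref{eq:bi-d-rest-2-key}; ``Bernstein on the low factor'' is not available here since the low frequency sits on the output.
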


By \eqref{eq:nrs}, \eqref{eq:diffs}, \eqref{eq:ns-himod} and \eqref{eq:ns-z}, the spinorial nonlinearity $\ND^{S}_{s'}$ can be handled just with bilinear estimates. 
On the other hand, Proposition~\ref{prop:dirac} leaves open treatment of certain parts of $\ND^{E}$ and $\ND^{R}$, namely $\Diff^{E} [A_{0}] \psi$ and $\Diff^{R} [A_{x}] \psi$. For a solution to MD-CG, recall the decomposition $A_{0} = \bfA_{0}(\psi, \psi)$ and $A_{x} = A_{x}^{free} + \bfA_{x}(\psi, \psi)$.
For the terms $\Diff^{E} [\bfA_{0} (\psi, \psi)] \psi$ and $\Diff^{R} [\bfA_{x} (\psi, \psi)] \psi$, which resemble the MKG-CG nonlinearity (see Remark~\ref{rem:nonlin}), we use the following trilinear estimate. 
\begin{proposition} \label{prop:trilinear}
For any admissible frequency envelopes $b$, $c$ and $d$, let
\begin{equation} \label{eq:trilinear-fe}
	f_{k} = \bb( \sum_{k' < k} c_{k'}^{2} \bb)^{1/2} \bb( \sum_{k' < k} d_{k'}^{2} \bb)^{1/2} b_{k}.
\end{equation}
Then for any signs $s, s_{1}, s_{2} \in \set{+, -}$, we have
\begin{equation} \label{eq:tri-r}
\begin{aligned} 
& \hskip-2em
\nrm{\big( \Diff^{E} [\bfA_{0} (\Pi_{s_{1}} \varphi^{1}, \Pi_{s_{2}} \varphi^{2})] - s \Diff^{R} [\bfA_{x} (\Pi_{s_{1}} \varphi^{1}, \Pi_{s_{2}} \varphi^{2})] \big) \psi}_{(N_{s}^{1/2})_{f}} \\
\aleq & \nrm{\varphi^{1}}_{(\tilde{S}^{1/2}_{s_{1}})_{c}} \nrm{\varphi^{2}}_{(\tilde{S}^{1/2}_{s_{2}})_{d}} \nrm{\psi}_{(\tilde{S}^{1/2}_{s})_{b}} .
\end{aligned}
\end{equation}
\end{proposition}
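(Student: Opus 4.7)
The strategy is to split off the perturbative spinorial part of $\bfA_x$ and reduce the remaining scalar part to the paradifferential MKG--CG trilinear nonlinearity analyzed in \cite{KST}. Applying Lemma~\ref{lem:commAlpPi} to the second argument of $\bfA_x$ (and using $\Pi_{s_2}^2=\Pi_{s_2}$) yields
\[
	\bfA_x(\Pi_{s_1}\varphi^1,\Pi_{s_2}\varphi^2) = -s_2\,\bfA_x^R(\Pi_{s_1}\varphi^1,\Pi_{s_2}\varphi^2) + \bfA_{x,s_2}^S(\Pi_{s_1}\varphi^1,\Pi_{s_2}\varphi^2),
\]
so the expression in \eqref{eq:tri-r} decomposes as $\mathrm{I}+\mathrm{II}$ with
\[
	\mathrm{I} = \big(\Diff^E[\bfA_0] + s s_2\,\Diff^R[\bfA_x^R]\big)\psi, \qquad \mathrm{II} = -s\,\Diff^R[\bfA_{x,s_2}^S]\psi,
\]
where I have suppressed the arguments $(\Pi_{s_1}\varphi^1,\Pi_{s_2}\varphi^2)$.

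For $\mathrm{II}$, the matrix factor $\Pi_{-s_2}\alp^j\Pi_{s_2}$ inside $\bfA_{x,s_2}^S$ supplies an angular gain $\sim\abs{\angle(\xi_1,-\xi_2)}$ via Lemma~\ref{lem:spin-nf}. Combining the $S^1$-control \eqref{eq:axs} of $\bfA_{x,s_2}^S$ with a bilinear estimate for $\Diff^R[B]\psi$ (the same-sign analogue of \eqref{eq:diffr-opp}, in which the spinorial angular gain replaces the opposite-sign gain) yields the desired bound on $\mathrm{II}$ in $N_s^{1/2}$ with envelope $f$. For $\mathrm{I}$, I would invoke the half-wave identity $\mR_0 \Pi_s \psi_k = -s\,\Pi_s\psi_k + R_k$, with the error $R_k$ supported at modulation $\gtrsim 2^k$, to substitute $\psi_s\mapsto -s\,\mR_0\psi_s$ in the leading term. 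Modulo a high-modulation remainder, $\mathrm{I}$ then collapses to
\[
	-s \sum_k P_{<k-10}A^\mu \cdot \mR_\mu\, P_k\psi_s,
\]
where $A_\mu$ is exactly the MKG--CG connection sourced by the scalar bilinears $\brk{\Pi_{s_1}\varphi^1,\mR_\mu \Pi_{s_2}\varphi^2}$, in accordance with \eqref{eq:MD-R:ell}--\eqref{eq:MD-R:wave}.

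At this stage the parallelism of Remark~\ref{rem:nonlin} is fully realized: since $\abs{\Pi_{s_j}}\leq 1$ pointwise, the scalar inputs $\brk{\Pi_{s_1}\varphi^1,\mR_\mu \Pi_{s_2}\varphi^2}$ play the role of $\phi_{s_1}\overline{\rd_\mu \phi_{s_2}}$ in the MKG--CG trilinear nonlinearity $A^\mu\rd_\mu \phi_s/\abs{D}$, and the paradifferential trilinear null-form estimate of \cite{KST} (built on the secondary null structure of \cite{MachedonSterbenz}) applies as a black box, producing precisely the envelope $f_k$ of \eqref{eq:trilinear-fe}. The high-modulation remainder produced by the half-wave substitution is absorbed into $N_s^{1/2}$ via its $X^{0,-1/2}_{s,1}$ component, using the $L^2L^2$ bounds \eqref{eq:ne-himod}--\eqref{eq:nr-himod} applied to the bilinear factors $\bfA_0,\bfA_x^R$.

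The \textbf{main obstacle} is to make the reduction for $\mathrm{I}$ airtight: one must verify that the modulation error from $\psi_s\mapsto -s\,\mR_0\psi_s$ is genuinely perturbative in $N_s^{1/2}$ with envelope $f$, and that the angular and radial thresholds encoded in the null-frame norms $S_k^{\mathrm{ang}}$ and $S_k^{\mathrm{box}}$ match those used in the MKG--CG analysis of \cite{KST}. The secondary technical point---producing the same-sign, spinorially-gained variant of \eqref{eq:diffr-opp} required for $\mathrm{II}$---is the main new ingredient needed beyond a direct citation of \cite{KST}.
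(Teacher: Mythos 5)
Your high-level architecture (peel off the spinorial part of $\bfA_{x}$, reduce the scalar remainder to the MKG--CG trilinear null form, cite \cite{KST}) matches the paper's, but two steps conceal genuine gaps.

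First, the treatment of $\mathrm{II}=-s\,\Diff^{R}[\bfA^{S}_{x,s_{2}}]\psi$. The operator acting on $\psi$ here is the Riesz-transform null form $\calN^{R}=\NF$; it carries no spinorial structure, so there is no ``same-sign analogue of \eqref{eq:diffr-opp} in which the spinorial angular gain replaces the opposite-sign gain.'' The spinorial gain of Remark~\ref{rem:spin-nf} acts in the \emph{inner} interaction that produces $\bfA^{S}_{x,s_{2}}$, and what it buys is not smallness of $\Diff^{R}[B]\psi$ for $B\in S^{1}$ (that same-sign, fully resonant interaction is exactly the nonperturbative one: in the regime where the low-frequency factor carries the dominant modulation $j\ll k_{1}$, the core estimate only yields a divergent sum in $j$, which is why Lemma~\ref{lem:diffr-free} needs $Q_{j}A^{free}=0$). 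The correct mechanism, which you need and do not have, is that $\bfA^{S}_{x,s_{2}}$ lands in the auxiliary $\ell^{1}$-summed, angularly localized $L^{1}L^{\infty}$-type space $Z^{1}$ (estimate \eqref{eq:axs-Z}), and that the resonant part $\calH^{\ast}_{s,s}\Diff^{R}[A]\psi$ is bounded for $A\in Z^{1}$ (estimate \eqref{eq:diffr-Z}); the nonresonant part is then \eqref{eq:diffr-Hs}. Without introducing some such intermediary norm your argument for $\mathrm{II}$ does not close.

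Second, the reduction of $\mathrm{I}$ to \cite{KST}. Proposition~\ref{prop:tri-MKG} applies only to the \emph{doubly} modulation-restricted operator $\calT^{MKG}_{k,k'}=\calH^{\ast}_{k}(\calH_{k'}(\cdots))$, i.e.\ both the inner interaction producing $A_{\mu}$ and the outer interaction with $\psi$ must be restricted to modulations below the respective minimal frequencies. You identify only the $\mR_{0}$-substitution error as the obstacle, but the larger task is showing that every configuration outside this doubly resonant one is perturbative: the outer-resonant/inner-nonresonant pieces again require the $Z^{1}$ and $Z^{1}_{ell}$ intermediaries (\eqref{eq:axr-Z}, \eqref{eq:a0-Z}, \eqref{eq:a0-r}, \eqref{eq:diffe-Z}), and for $\Diff^{E}$ one must also control $\calH^{\ast}_{s,s}\Diff^{E}[A_{0}](\Id+s\mR^{0})\psi$ separately (the paper's \eqref{eq:diffe-HsR}) rather than absorbing it wholesale via \eqref{eq:ne-himod}. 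This reduction is the bulk of the proof, not a verification; as written, your argument assumes it.
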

Propositions~\ref{prop:a} and \ref{prop:dirac} will be proved in Section~\ref{sec:bi}, and Proposition~\ref{prop:trilinear} will be proved in Section~\ref{sec:tri}. 

\begin{remark} \label{rem:fe}
In the proof of the main theorem, the frequency envelopes $a, b, c$ inherit $\ell^{2}$-summability from the initial data; hence the products $ab$ and $bc$ are $\ell^{1}$-summable. The bilinear estimates in Propositions~\ref{prop:a} and \ref{prop:dirac} therefore imply that certain parts of the solution (in particular, $\bfA_{0}$ and $\bfA_{x}$) enjoy $\ell^{1}$-summability of the dyadic norms. As in the case of MKG-CG \cite{KST}, this fact allows us to cleanly separate $A$ into $\bfA$ handled by multilinear estimates (Proposition~\ref{prop:trilinear}) and $A^{free}$ handled by a parametrix construction below (Theorem~\ref{thm:paradiff}).
\end{remark}

%The trilinear estimate \eqref{eq:tri-r} cannot be entirely broken up into bilinear estimates, although this strategy works for \eqref{eq:tri-s} and the most part of \eqref{eq:tri-r}; for the remainder, we need to use a secondary null structure of MD-CG. We refer to Section~\ref{subsec:tri-bi}.

The remaining term $\Diff^{R}[A_{x}^{free}] \psi$ cannot be treated perturbatively. The optimal estimate, stated in terms of frequency envelopes, is as follows.
\begin{lemma} \label{lem:diffr-free}
Let $A^{free} = (0, A^{free}_{1}, \ldots, A^{free}_{4})$ be a real-valued 1-form obeying $\Box A^{free} = 0$ and $\rd^{\ell} A^{free}_{\ell} = 0$.
For any admissible frequency envelope $a$ and $b$, let $e_{k} = (\sum_{k' < k} a_{k'}) b_{k}$. Then for any sign $s \in \set{+, -}$, we have
\begin{equation} \label{eq:diffr-free}
	\nrm{\Diff^{R}[A^{free}_{x}] \psi}_{(N^{1/2}_{s})_{e}} \aleq \nrm{A^{free}[0]}_{(\dot{H}^{1} \times L^{2})_{a}} \nrm{\psi}_{(\tilde{S}^{1/2}_{s})_{b}}
\end{equation}
\end{lemma}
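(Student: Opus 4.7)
The plan is to reduce this bilinear estimate to the analogous bound for the Maxwell--Klein--Gordon system proved in \cite{KST}, exploiting the parallelism described in Remark~\ref{rem:nonlin}. Expanding dyadically,
\begin{equation*}
\Diff^{R}[A^{free}_{x}]\psi = \sum_{k \in \bbZ} \sum_{k_{1} < k-10} \calP_{j} P_{k_{1}} A^{free}_{x} \cdot \mR^{j} P_{k}\psi,
\end{equation*}
and the Coulomb condition $\rd^{\ell} A^{free}_{\ell} = 0$ gives $\calP_{j} P_{k_{1}} A^{free}_{x} = P_{k_{1}} A^{free}_{j}$. The bilinear symbol $\wht{A}^{free}_{j}(\eta)\cdot s\xi^{j}/\abs{\xi}$ then carries a null structure: since $\eta^{j}\wht{A}^{free}_{j}(\eta) = 0$, one may subtract $(\abs{\xi}/\abs{\eta})\eta^{j}$ from $\xi^{j}$, producing a $Q_{ij}$-type factor bounded by $\abs{\xi}\sin\angle(\xi,\eta)$. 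Apart from the harmless difference $\mR^{j} = \rd^{j}/(i\abs{D})$, the resulting expression is precisely the low-high bilinear form that drives the MKG-CG paradifferential estimate in \cite{KST}.

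With this reduction in hand, I would follow the standard case analysis. First split $A^{free}_{j} = A^{free,+}_{j} + A^{free,-}_{j}$ into half-wave components and decompose $P_{k}\psi = Q^{s}_{<k-C}P_{k}\psi + Q^{s}_{\geq k-C}P_{k}\psi$. In the dominant low-modulation regime (both inputs on the cone and $A^{free}$ transverse to $\psi$ at angle $2^{\tld{\ell}}$), one uses the Coulomb null factor $\sin\angle(\xi,\eta) \simeq 2^{\tld\ell}$ together with a square-summed angular and cube decomposition; the free wave is placed in the null-frame norm $PW_{\omg}^{\mp}(\ell)$ and $\psi$ in $NE$, and the transverse $L^{2}L^{2}$ bound \eqref{bilL2est} yields the output in $X^{0,-1/2}_{s,1} \subseteq N^{s}_{k}$. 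In the remaining modulation regimes, one of the inputs is pushed off the cone, so either $X^{0,1/2}_{\infty}$ combined with a Strichartz bound from $S^{\mathrm{str}}_{k}$ suffices, or a direct $L^{1}L^{2}$ estimate places the output in $N_{k}^{s}$. At each fixed $(k_{1}, k)$ with $k_{1} < k-10$ the resulting estimate reads $\aleq a_{k_{1}} b_{k} \nrm{A^{free}[0]}_{\dot{H}^{1}\times L^{2}} \nrm{\psi}_{\tld{S}^{1/2}_{s}}$, using $\nrm{P_{k_{1}} A^{free}}_{S^{1}_{k_{1}}} \aleq a_{k_{1}} \nrm{A^{free}[0]}_{(\dot{H}^{1}\times L^{2})_{a}}$.

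The summation over $k_{1} < k-10$ then produces the envelope $e_{k} = (\sum_{k'<k} a_{k'}) b_{k}$, reflecting the fact that the dyadic estimates do not gain smallness in $k-k_{1}$, so only $\ell^{1}$ summation in the low frequency is possible; this logarithmic growth in $k$ is the structural reason why the bound is non-perturbative and must eventually be replaced by the renormalization of Section~\ref{sec:para}. The main obstacle is essentially bookkeeping: one must verify that each case in the MKG-CG analysis of \cite{KST} survives when the scalar factor $\rd^{j}/\abs{D}$ is replaced by the componentwise Riesz transform $\mR^{j}$ acting on a $\bbC^{N}$-valued spinor, and that the half-wave splitting of $A^{free}$ introduces no interactions beyond those already present in the MKG setting. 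Because $\Diff^{R}$ is scalar (no $\alp^{j}$ matrix is involved), no spinorial null structure is needed, and the classical Coulomb null form combined with the function-space machinery of \cite{KST} closes the estimate.
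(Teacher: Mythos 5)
Your overall skeleton agrees with the paper's actual route: the paper proves this lemma (see Remark~\ref{rem:diffr-free}) by rerunning the low--high analysis of Section~\ref{subsec:bi-d-diff}, Step~2, with the null form $\calN=\NF$ and Proposition~\ref{prop:nf} in place of $\calN^{\ast}$ and Proposition~\ref{prop:nfs}, using that the extra factor $2^{k_{\min}-k_{\max}}$ is now absent (hence no off-diagonal gain, hence only the $\ell^{1}$ envelope $\sum_{k'<k}a_{k'}$) and that $Q_{j}A^{free}=0$ removes the terms where $A$ carries the dominant modulation. Your identification of the Coulomb null structure, your high-modulation cases, and your explanation of why the envelope is $\ell^{1}$ in the low frequency are all correct.

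The gap is in the resonant (low-modulation) regime, which is the heart of the matter. You propose to put the free wave in $PW^{\mp}_{\omg}(\ell)$ and $\psi$ in $NE$ and invoke \eqref{bilL2est}. That estimate is only available under the transversality condition \eqref{angle:caps}: the angular separation $2^{\tilde{\ell}}$ of the two boxes must be much larger than their rescaled size $2^{\ell'+k'-\min(k_{1},k_{2})}$. In the resonant low--high interaction the geometry of the cone (Lemma~\ref{lem:geom-cone}) gives only the \emph{upper} bound $\abs{\angle}\aleq 2^{\ell}$ with $\ell=\tfrac12(j-k_{1})_{-}$; when the spinor (rather than the output) carries the dominant modulation the inputs may be nearly parallel, so \eqref{angle:caps} fails and \eqref{bilL2est} is simply unavailable there. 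Even where it does apply, the transversality loss $2^{-\tilde{\ell}}$ exactly cancels the null-form gain $2^{\tilde{\ell}}$, and the summability over $j$ then has to be extracted from the box-localized $PW/NE$ components of $S^{\mathrm{ang}}_{k}$, which your sketch does not set up. The correct pairing --- the one Proposition~\ref{prop:nf} encodes --- is H\"older $L^{2}L^{\infty}\times L^{\infty}L^{2}\to L^{2}L^{2}$ after the box decomposition of Lemma~\ref{lem:box-orth}, with the \emph{low-frequency free wave} placed in the square-summed $L^{2}L^{\infty}$ norm over sectors $P^{\omg}_{\ell}$ (controlled through the $S^{\mathrm{ang}}/S^{\mathrm{box}}$ components of $S_{k_{1}}$, hence by $\nrm{A^{free}_{k_{1}}[0]}_{L^{2}\times\dot{H}^{-1}}$ via \eqref{eq:fe-L2Linfty}) and with $\psi_{k_{2}}$ in $L^{\infty}L^{2}$, resp.\ $Q_{j}^{s}\psi_{k_{2}}$ in $L^{2}L^{2}$ when $\psi$ has the dominant modulation; this gives $2^{\frac14(j-k_{1})}a_{k_{1}}b_{k_{2}}$ per modulation scale, which sums, and it requires no lower bound on the angle. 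Note also that if one instead keeps $A$ in $L^{\infty}L^{2}$ and $\psi$ in the square-summed $L^{2}L^{\infty}$ (the assignment used for the remainder terms $\NR$), the estimate produces a growing factor $2^{\frac12(k_{2}-k_{1})}$ and fails; so the choice of slots is not mere bookkeeping.
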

This can be seen by choosing specific frequency-localized free solutions $ \Box A^{free}_{k'}=0 $, $ (i \pt_t+s \vm{D} )\psi_k=0 $ such that
$$  \nrm{\Diff^{R}[A^{free}_{k',x}] \psi_k}_{X^{\frac{1}{2},-\frac{1}{2}}_{s,1}} \simeq \nrm{A^{free}_{k'}[0]}_{\dot{H}^{1} \times L^{2}}  \vn{\psi_k(0)}_{\dot{H}^{\frac{1}{2}}}.
$$ 
Since the assumption on the data is only that $A_{x}[0] \in \dot{H}^{1} \times L^{2}$, its frequency envelope, constructed as in \eqref{eq:fe-construct-0}, in general only obeys $a \in \ell^{2}$; thus the frequency envelope $e$ is not well-defined under the hypothesis of Theorem~\ref{thm:main}. 

Instead, $\Diff^{R}[A_{x}^{free}]$ should be treated as a part of the underlying linear operator $(i \rd_{t} + s \abs{D})^{p}_{A^{free}}$ defined in \eqref{eq:paradiff-def}. For this operator, we have the following global solvability theorem.
\begin{theorem} \label{thm:paradiff}
Let $A^{free} = (0, A^{free}_{1}, \ldots, A^{free}_{4})$ be a real-valued 1-form obeying $\Box A^{free} = 0$ and $\rd^{\ell} A^{free}_{\ell} = 0$.
Consider the initial value problem
\begin{equation*}
\left\{
\begin{aligned}
(i \rd_{t} + s \abs{D})^{p}_{A^{free}} \psi = & F, \\
\psi(0) =& \psi_{0}.
\end{aligned}
\right.
\end{equation*}
If $\nrm{A^{free}[0]}_{\dot{H}^{1} \times L^{2}}$ is sufficiently small, then for any $ F \in N_{s}^{1/2}\cap L^{2} L^{2} $ and any $ \psi_{0} \in \Hcr $ there exists a global (in time) solution $ \psi \in S_{s}^{1/2} $. Moreover, for any admissible frequency envelope $c$, we have
\begin{equation} \label{eq:paradiff-en}
	\nrm{\psi}_{(S^{1/2}_{s})_{c}} \aleq \nrm{\psi_{0}}_{\dot{H}^{1/2}_{c}} + \nrm{F}_{(N^{1/2}_{s} \cap L^{2} L^{2})_{c}} \ .
\end{equation}
In particular,
\be \label{spsestim} \vn{\psi}_{S_{s}^{1/2}} \lesssim \vn{\psi_{0}}_{\Hcr}+ \vn{F}_{N_{s}^{1/2}\cap L^{2} L^{2}}. \ee
\end{theorem}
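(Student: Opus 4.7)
The plan is to construct a parametrix for $(i\rd_t + s|D|)^p_{A^{free}}$ by microlocal renormalization, adapting the Rodnianski--Tao and Krieger--Sterbenz--Tataru pseudodifferential gauge transformation from the covariant wave operator $\Box^p_A$ to the half-wave setting, and then to upgrade approximate solvability to exact solvability by a perturbative Neumann series argument. The motivating structural fact is that, after composition with $(i\rd_t - s|D|)$, the operator $(i\rd_t + s|D|)^p_{A^{free}}$ becomes (to leading order) a half-wave restriction of $\Box^p_A$ on the cone $\tau = s|\xi|$, so the same phase function $\Psi$ as in \cite{KST} should suffice.

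First I would reduce to a frequency-localized problem. Since $P_{<k-5} A^{free}_x$ commutes with $P_k$, the operator acts on $\psi_k = P_k \psi$ as $(i\rd_t + s|D|)\psi_k + s\mathcal{P}_j P_{<k-5} A^{free}_x \mathcal{R}^j \psi_k$, and the frequency-enveloped bound \eqref{eq:paradiff-en} follows from a uniform-in-$k$ dyadic estimate
$$
\nrm{\psi_k}_{S^{1/2}_{s,k}} \aleq \nrm{P_k \psi_0}_{\dot H^{1/2}} + \nrm{P_k F}_{N^{1/2}_{s,k}\cap L^2L^2}
$$
by $\ell^2$-summation against $c$, using that each dyadic piece is frequency-localized to $|\xi|\simeq 2^k$. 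Second, for each $k$ I would invoke the half-wave parametrix (quoted as Theorem~\ref{covariantparametrix}, proved in Section~\ref{sec:para}): pseudodifferential operators $e^{\pm i\Psi_s}(t,x,D)$ built from $A^{free}$, with phase supported near the cone $\tau = s|\xi|$, such that (i) each of $e^{\pm i\Psi_s}$ is bounded on $S^{1/2}_{s,k}$ and on $N^{1/2}_{s,k}\cap L^2L^2$ uniformly in $k$, and (ii) they approximately conjugate the paradifferential half-wave operator to the free one,
$$
e^{-i\Psi_s}(t,x,D)\,(i\rd_t + s|D|)^p_{A^{free}}\,e^{i\Psi_s}(D,s,y) \;=\; (i\rd_t + s|D|) + E_{s,k},
$$
with $E_{s,k}:S^{1/2}_{s,k} \to N^{1/2}_{s,k}\cap L^2L^2$ of operator norm $\aleq \nrm{A^{free}[0]}_{\dot H^1\times L^2}$. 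Combined with the classical solvability of the free half-wave equation in $S^{1/2}_{s,k}$ (proven via Strichartz, $X^{0,1/2}_{\infty}$ and null-frame estimates on the Duhamel and homogeneous pieces, in direct analogy with \eqref{eq:en-box}), this yields an approximate solution $\psi_k^{(0)} = e^{i\Psi_s}(D,s,y)\tilde\psi_k$ obeying the desired bound with a residual source term of size $\lesssim \eps_* \nrm{F_k}$.

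Third, a standard Neumann series in the source term, using the smallness of $\nrm{A^{free}[0]}_{\dot H^1\times L^2} < \eps_*$, converges and upgrades approximate solvability to exact solvability, giving the frequency-enveloped estimate \eqref{eq:paradiff-en}; the bound \eqref{spsestim} is then the special case of taking $c$ to be a frequency envelope of the data and source. The main obstacle is Step two: verifying that the KST phase $\Psi_s$, originally tailored to $\Box^p_A$, actually renormalizes the half-wave operator, and checking that the renormalization operators $e^{\pm i\Psi_s}$ preserve \emph{all} components of $S^{1/2}_{s,k}$, including the new $S^{box}_k$ piece that is absent from \cite{KST} (cf.\ Remark~\ref{sboxrk}). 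The $S^{box}_k$ bound requires the angular gain $2^{-\ell'/2}$, which is implicit in the main parametrix estimates of \cite[Sec.~11.3]{KST} but has to be carefully extracted; this is precisely the new analytic content that must be established in Section~\ref{sec:para}.
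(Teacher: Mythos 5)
Your overall architecture (reduce to dyadic pieces, build an approximate solution via a microlocal renormalization, then run a Neumann-type iteration on the residual) is the same as the paper's, and your Step 3 matches the proof of Theorem~\ref{thm:paradiff} from Proposition~\ref{papp} essentially verbatim. The gap is in Step 2, which you yourself flag as ``the main obstacle'' but do not resolve: you propose to conjugate the \emph{first-order} operator $(i\rd_t+s\abs{D})^{p}_{A^{free}}$ directly by $e^{\pm i\Psi_s}$ and to quote Theorem~\ref{covariantparametrix} as a ``half-wave parametrix.'' But Theorem~\ref{covariantparametrix} (and all of the KST machinery behind it --- the conjugation error bound \eqref{conj}, the mapping properties \eqref{renbd}--\eqref{solnorbd}) is stated and proved for the second-order operator $\Box^{p}_{A_{<0}}$. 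A direct conjugation identity for the half-wave operator, with an error $E_{s,k}:S^{1/2}_{s,k}\to N^{1/2}_{s,k}\cap L^2L^2$ of norm $O(\eps)$, is precisely what is \emph{not} available off the shelf; establishing it would mean redoing the commutator and decomposability estimates of \cite{KST} for a nonlocal first-order symbol, which is a substantial piece of new analysis that your proposal asserts rather than supplies.

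The paper avoids this entirely by a factorization trick you are missing (Proposition~\ref{phyb}, identity \eqref{covopreduction}): one sets $\psi_k^1:=(i\rd_t-\abs{D})\phi$, where $\phi$ is the KST \emph{wave} parametrix for $\Box^{p}_{-A_{<k}}$ with data chosen so that $(i\rd_t-\abs{D})\phi(0)=f$ (Proposition~\ref{propmod}). Then
\begin{equation*}
(i\rd_t+\abs{D})^{p}_{A_{<k}}\psi_k^1=\Box^{p}_{-A_{<k}}\phi- i\,A^{free,\ell}_{<k-C}\frac{\rd_\ell}{\abs{D}}(i\rd_t+\abs{D})P_k\phi,
\end{equation*}
and the cross term is small because $\phi$ has low $Q^{+}$-modulation, so $(i\rd_t+\abs{D})P_k\phi$ is controlled by the $X^{0,1/2}_\infty$ component of $S_k$. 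Note the sign flip in the potential, which is essential for the identity. This lets the entire KST construction be used as a black box. Two further ingredients in the paper's proof are absent from your outline: the high-modulation part $Q^{+}_{>k-6}P_kF$ of the source must be removed first and solved by direct division by the symbol $\tau-\abs{\xi}$ (the KST parametrix requires modulation $\aleq 2^k$), and the Duhamel piece $S^{-}$ of the wave parametrix contains a lower-cone component $S^{-}_0$ that must be split off and re-expressed as a homogeneous wave (equations \eqref{decoms}--\eqref{szr}) before the $Q^{+}$-modulation localization you rely on is valid. Without these, the dyadic estimate you want in Step 1 does not close.
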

A sketch of proof of Lemma~\ref{lem:diffr-free} will be given in Remark~\ref{rem:diffr-free}.
Theorem~\ref{thm:paradiff} will be established in Section~\ref{sec:para} by adapting the parametrix construction for the paradifferential covariant wave equation from \cite{KST}.

\begin{remark} \label{rem:hi-d-1}
In the case of a general dimension $d \geq 4$, all the estimates above hold with the following substitutions:
\begin{align*}
	& L^{2} \dot{H}^{-3/2} \to L^{2} \dot{H}^{\frac{d-7}{2}}, \quad L^{2} \dot{H}^{-1/2} \to L^{2} \dot{H}^{\frac{d-5}{2}}, \quad L^{2} L^{2} \to L^{2} \dot{H}^{\frac{d-4}{2}}, \\
	& N \to N^{\frac{d-4}{2}}, \quad N_{s}^{1/2} \to N^{\frac{d-3}{2}}, \quad G^{1/2} \to G^{\frac{d-3}{2}}, \\
	& S^{1} \to S^{\frac{d-2}{2}}, \quad Y^{1} \to Y^{\frac{d-2}{2}}, \quad \tilde{S}_{s}^{1/2} \to \tilde{S}_{s}^{\frac{d-3}{2}}.
\end{align*}
See Remarks~\ref{rem:hi-d-3}, \ref{rem:hi-d-4} and \ref{rem:hi-d-5} in Sections~\ref{sec:bi}, \ref{sec:tri} and \ref{sec:para}, respectively.
\end{remark}
\section{Proof of the main theorem} \label{sec:iter}
Assuming the estimates in Section~\ref{sec:main-est}, we now prove Theorem~\ref{thm:main}. The MD-CG system takes the form
 \begin{equation} \tag{MD-CG}
	\left\{
\begin{aligned}
	\alp^{\mu} \covD_{\mu} \psi =& 0 \\
	\Box A_{j} =& \calP_{j} \brk{\psi, \alp_{x} \psi} \\
	 \lap A_{0} =& - \brk{\psi, \psi}
\end{aligned}
	\right.
\end{equation}

\subsection{Subcritical local well-posedness of MD-CG} \label{subsec:subcrit-lwp}
We first state a subcritical local well-posedness result for MD-CG, which will be used in our proof below.

Let $(\psi(0), A_{x}[0])$ be an initial data set for MD-CG, where $A_{x}[t]$ is the shorthand for a pair of spatial 1-forms $(A_{j} (t)\, \ud x^{j} , \rd_{t} A_{j} (t) \, \ud x^{j} )$. In particular, $\rd^{\ell} A_{\ell}(0) = \rd^{\ell} \rd_{t} A_{\ell}(0) = 0$. Given $s, N \in \bbR$, we introduce shorthands $H^{s, N} = \dot{H}^{s} \cap \dot{H}^{N}$ and $\calH^{s, N} = (\dot{H}^{s} \times \dot{H}^{s-1}) \cap (\dot{H}^{N} \times \dot{H}^{N-1})$.
\begin{proposition} \label{prop:subcrit-lwp}
For any initial data $\psi(0) \in H^{1/2, 5/2}$ and $A_{x}[0] \in \calH^{1, 3}$, there exists a unique local solution $(A, \psi)$ to MD-CG with these data in the space $\psi \in C_{t}([0, T]; H^{1/2, 5/2})$ and $\rd_{t,x} A_{x} \in C_{t}([0, T]; \calH^{0, 2})$, where $T > 0$ depends only on $\nrm{\psi(0)}_{H^{1/2, 5/2}}$ and $\nrm{A_{x}[0]}_{\calH^{1, 3}}$. The data-to-solution map in these spaces is Lipschitz continuous. Moreover, if $\psi(0) \in \dot{H}^{1/2+N}$, $A_{x}[0] \in \dot{H}^{1+N} \times \dot{H}^{N}$ for $N \geq 2$, then $\psi \in C_{t}([0, T]; \dot{H}^{1/2+N})$, $\rd_{t,x} A_{x} \in C_{t}([0, T]; \dot{H}^{N})$.
\end{proposition}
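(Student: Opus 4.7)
The plan is a standard Picard iteration in subcritical Sobolev spaces. The key point is that, in dimension $d=4$, the regularity indices $5/2$ (for $\psi$) and $3$ (for $A_x$) are strictly above the Sobolev threshold $d/2=2$, so $\dot{H}^{5/2}$ and $\dot{H}^3$ embed into $L^\infty(\bbR^4)$ and one has the algebra-type product estimates $\nrm{fg}_{\dot{H}^s} \aleq \nrm{f}_{\dot{H}^{5/2}}\nrm{g}_{\dot{H}^s} + \nrm{f}_{\dot{H}^s}\nrm{g}_{\dot{H}^{5/2}}$ for $0 \leq s \leq 5/2$. This means none of the null structure, parametrix, or multilinear machinery of the paper is needed; only linear energy estimates and Sobolev multiplication.

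I would diagonalize the Dirac equation as in Section~4, writing $\psi = \Pi_+\psi + \Pi_-\psi$ with each $\psi_s$ solving a half-wave equation. Reconstituting $A_0 = -\lap^{-1}\brk{\psi,\psi}$ (so $A_0$ is determined at each time by $\psi$), I set up the iteration: given $(\psi^{(n)}, A_x^{(n)})$, define $A_0^{(n)}$ by the elliptic equation, then solve the linear half-wave equations $(i\rd_t + s\vm{D})\psi_s^{(n+1)} = \Pi_s(\alp^\mu A^{(n)}_\mu \psi^{(n)})$ and the linear Coulomb wave equation $\Box A_x^{(n+1)} = \calP \brk{\psi^{(n)},\alp_x\psi^{(n)}}$ with the prescribed initial data. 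The standard energy inequalities give
\begin{align*}
\nrm{\psi_s^{(n+1)}}_{L^\infty_t(\dot{H}^{1/2}\cap\dot{H}^{5/2})} &\aleq \nrm{\psi_s(0)}_{\dot{H}^{1/2}\cap\dot{H}^{5/2}} + \nrm{\alp^\mu A^{(n)}_\mu \psi^{(n)}}_{L^1_t(\dot{H}^{1/2}\cap\dot{H}^{5/2})}, \\
\nrm{A_x^{(n+1)}[\cdot]}_{L^\infty_t \calH^{1,3}} &\aleq \nrm{A_x[0]}_{\calH^{1,3}} + \nrm{\calP\brk{\psi^{(n)},\alp_x\psi^{(n)}}}_{L^1_t(L^2\cap\dot{H}^2)}.
\end{align*}

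The Sobolev product estimates above, together with the bound $\nrm{A_0^{(n)}}_{\dot{H}^1\cap\dot{H}^3} \aleq \nrm{\psi^{(n)}}_{H^{1/2,5/2}}^2$ (from elliptic regularity, noting that $\brk{\psi,\psi}$ gains two derivatives under $\lap^{-1}$), control all the source terms by products of the iterate norms. Integrating in $t$ over $[0,T]$ produces a factor of $T$ in front of every nonlinear term, so for $T$ sufficiently small depending only on $\nrm{\psi(0)}_{H^{1/2,5/2}}$ and $\nrm{A_x[0]}_{\calH^{1,3}}$, the map is a contraction on a ball in $C^0_t([0,T]; H^{1/2,5/2}) \times C^0_t([0,T]; \mathcal{H}^{1,3})$. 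The fixed point solves MD-CG; uniqueness and Lipschitz continuity of the data-to-solution map in these spaces are read off from the identical difference estimates. The Coulomb condition $\rd^\ell A_\ell = 0$ is preserved automatically since $\calP$ is the Leray projection, and the compatibility $\lap \rd_t A_0(0) = \rd^\ell J_\ell(0)$ follows from $\rd^\mu J_\mu = 0$ (Remark~\ref{rem:d0a0}), which in turn is automatic for any $\psi$ solving a covariant Dirac equation with a real connection.

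Persistence of higher regularity ($\psi \in \dot{H}^{1/2+N}$, $A_x[\cdot] \in \dot{H}^{1+N}\times\dot{H}^N$ for $N \geq 2$) is obtained by running the same estimates at the higher level: since the base $H^{5/2}$ norm of $\psi$ and $\dot{H}^3$ norm of $A_x$ are already controlled on $[0,T]$, the bilinear estimates for the higher Sobolev norms are linear in the top-regularity factor times a (bounded) base factor, yielding a Gr\"onwall inequality that precludes blowup on $[0,T]$. The main difficulty is purely organizational: one must carefully track the two-tier regularity pair and verify that every quadratic nonlinearity closes in the appropriate norm, which reduces to a short catalogue of Sobolev multiplication estimates on $\bbR^4$. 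No step involves any genuinely new idea beyond what is available in standard references on subcritical local well-posedness for semilinear wave-type systems.
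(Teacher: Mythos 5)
Your proposal is correct and is essentially the argument the paper has in mind: the paper omits the proof of Proposition~\ref{prop:subcrit-lwp} entirely, describing it as "a usual Picard iteration (based on the d'Alembertian $\Box$ and the free Dirac operator $\alp^{\mu}\rd_{\mu}$) using the energy integral method," which is exactly your contraction scheme with linear energy estimates and subcritical Sobolev multiplication (your half-wave diagonalization being a cosmetic variant of iterating on the free Dirac operator). The only caveat is that your displayed product estimate should be understood for the intersection norms $\dot H^{1/2}\cap\dot H^{5/2}$ (which embed in $L^{\infty}(\bbR^{4})$), not for the single homogeneous space $\dot H^{5/2}$ alone.
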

We omit the proof, which proceeds by a usual Picard iteration (based on the d'Alembertian $\Box$ and the free Dirac operator $\alp^{\mu} \rd_{\mu}$) using the energy integral method.

\subsection{Main iteration procedure} \label{subsec:main-iter}
In this subsection, we prove the existence and uniqueness statements in Theorem~\ref{thm:main}. We begin with a more precise formulation of these statements.

Let $(\psi(0), A_{x}[0])$ be an initial data set for MD-CG. We say that $c = (c_{k})_{k \in \bbZ}$ is a \emph{frequency envelope for} $(\psi(0), A_{x}[0])$ if 
\begin{equation*}
	\nrm{P_{k} \psi(0)}_{\dot{H}^{1/2}} + \nrm{P_{k} A_{x}[0]}_{\dot{H}^{1} \times L^{2}} \leq c_{k}.
\end{equation*}
Given any initial data $\psi(0) \in \dot{H}^{1/2}$, $A_{x}[0] \in \dot{H}^{1} \times L^{2}$, an admissible frequency envelope for $(\psi(0), A_{x}[0])$ such that $(\sum_{k} c_{k}^{2})^{1/2} \aleq \nrm{\psi(0)}_{\dot{H}^{1/2}} + \nrm{A_{x}[0]}_{\dot{H}^{1} \times L^{2}}$ can be constructed as follows (cf. \eqref{eq:fe-construct-0}):
\begin{equation} \label{eq:fe-construct}
	c_{k} := \sum_{k'} 2^{-\dlt_{1} \abs{k - k'}} \bb( \nrm{P_{k'} \psi(0)}_{\dot{H}^{1/2}} + \nrm{P_{k'} A_{x}[0]}_{\dot{H}^{1} \times L^{2}}  \bb).
\end{equation}

\begin{theorem} \label{thm:main-iter}
There exists a universal constant $\eps_{\ast} > 0$ such that the following statements hold.
\begin{enumerate}[leftmargin=*]
\item For any initial data $\psi(0) \in \dot{H}^{1/2}$, $A_{x}[0] \in \dot{H}^{1} \times L^{2}$ for MD-CG satisfying the smallness condition \eqref{eq:main:smalldata}, there exists a unique global solution $(A, \psi)$ to MD-CG with these data in the space $\Pi_{s} \psi \in \tilde{S}^{1/2}_{s}$, $A_{0} \in Y^{1}$, $A_{j} \in S^{1}$. Given any admissible frequency envelope $c$ for $(\psi(0), A_{x}[0])$, we have
\begin{equation} \label{eq:main-fe}
	\sup_{s \in \set{+, -}} \nrm{\Pi_{s} \psi}_{(\tilde{S}^{1/2}_{s})_{c}} + \nrm{A_{x} - A_{x}^{free}}_{(S^{1})_{c^{2}}} + \nrm{A_{0}}_{Y^{1}_{c^{2}}}
	\aleq 1.	
\end{equation}
\item Let $(A', \psi')$ be another solution to MD-CG such that $\Pi_{s} \psi' \in \tilde{S}^{1/2}_{s}$, $A_{0}' \in Y^{1}$, $A_{j} \in S^{1}$ and the data $\psi'(0), A'_{x}[0]$ satisfies \eqref{eq:main:smalldata}. Assume also that $(\psi - \psi')(0) \in \dot{H}^{1/2-\dlt_{2}}$ and $(A_{x} - A_{x}')[0] \in \dot{H}^{1-\dlt_{2}} \times \dot{H}^{-\dlt_{2}}$ for some $\dlt_{2} \in (0, \dlt_{1})$. Then we have
\begin{equation} \label{eq:weak-lip}
\begin{aligned}
& \hskip-2em
	\sup_{s \in \set{+, -}} \nrm{\Pi_{s} (\psi - \psi')}_{\tilde{S}^{1/2-\dlt_{2}}} +  \nrm{A_{x} - A_{x}'}_{S^{1-\dlt_{2}}} + \nrm{A_{0} - A_{0}'}_{Y^{1-\dlt_{2}}}   \\
	\aleq & \nrm{(\psi - \psi')(0)}_{\dot{H}^{1/2-\dlt_{2}}} + \nrm{(A_{x} - A_{x}')[0]}_{\dot{H}^{1-\dlt_{2}} \times \dot{H}^{-\dlt_{2}}}.
\end{aligned}
\end{equation}
\item If $\psi(0) \in \dot{H}^{1/2+N}$, $A_{x}[0] \in \dot{H}^{1+N} \times \dot{H}^{N}$ $(N \geq 0)$, then $\psi \in C_{t}(\bbR; \dot{H}^{1/2+N})$, $\rd_{t, x} A_{x} \in C_{t}(\bbR; \dot{H}^{N})$. In particular, if the data $(\psi(0), A_{x}[0])$ are smooth, then so is the solution $(A, \psi)$.
\end{enumerate}
\end{theorem}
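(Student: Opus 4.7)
The plan is to combine the critical a priori estimates of Section~\ref{sec:main-est} (Propositions~\ref{prop:a}, \ref{prop:dirac}, \ref{prop:trilinear} and Theorem~\ref{thm:paradiff}) with the subcritical local well-posedness of Proposition~\ref{prop:subcrit-lwp}, in the standard three-step pattern: (a) establish a bootstrap for smooth data in the critical frequency-envelope norm; (b) prove the weak Lipschitz bound \eqref{eq:weak-lip} as a linearized version of the same estimates; (c) deduce existence/uniqueness for rough data by frequency-truncation and limiting, and persistence of regularity by a further frequency-envelope bootstrap.

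\textbf{Step 1 (bootstrap for smooth data).} Given smooth data of size $<\eps_\ast$ with admissible frequency envelope $c$ constructed as in \eqref{eq:fe-construct}, Proposition~\ref{prop:subcrit-lwp} produces a smooth solution on a time interval $[0,T_{\max})$. I split $A_x = A_x^{free} + \bfA_x$, where $A_x^{free}$ is the free wave with data $A_x[0]$, and recast the Dirac system via Lemma~\ref{lem:cov-dirac-HW} as
\[
(i\rd_t + s|D|)^{p}_{A^{free}} \psi_s = \Pi_s \mathcal{F}_s(\psi, A_0, \bfA_x, A_x^{free}),
\]
where the nonperturbative piece $s\,\pi^{R}[A_x^{free}]\psi_s$ has been absorbed on the left and $\mathcal{F}_s$ collects: the remainders $\NR^{E},\NR^{R},\NR^{S}_{s'}$; the opposite-sign paradifferential terms $\pi^{E}[A_0]\psi_{-s}$, $\pi^{R}[\bfA_x]\psi_{-s}$; the spinorial paradifferential term $\pi^{S}_{s}[A_x]\psi_{s}$; and the same-sign scalar paradifferential term $\pi^{E}[A_0]\psi_s - s\,\pi^{R}[\bfA_x]\psi_s$. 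Let
\[
M[0,T] \defeq \sup_{s}\nrm{\Pi_s\psi}_{(\tilde S^{1/2}_{s})_{c}[0,T]} + \nrm{\bfA_x}_{(S^{1})_{c^{2}}[0,T]} + \nrm{A_0}_{(Y^{1})_{c^{2}}[0,T]} \ .
\]
Theorem~\ref{thm:paradiff} applied to the paradifferential half-wave equation controls $\|\Pi_s\psi\|_{(\tilde{S}^{1/2}_s)_c}$ by $\|\psi(0)\|_{\dot H^{1/2}_c} + \|\mathcal{F}_s\|_{(N^{1/2}_s\cap L^2L^2)_c}$. The remainders and the opposite-sign paradifferential pieces are estimated by Proposition~\ref{prop:dirac}(1)–(2), the spinorial term by part~(3), and the high-modulation $L^2L^2$ contribution by part~(4). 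The same-sign terms $\pi^{E}[\bfA_0(\psi,\psi)]\psi - s\,\pi^{R}[\bfA_x(\psi,\psi)]\psi$, where $\bfA_0 = \Delta^{-1}\NM^{E}(\psi,\psi)$ and $\bfA_x = \Box^{-1}\calP\,\NM(\psi,\psi)$, are genuinely cubic and are controlled by Proposition~\ref{prop:trilinear} with $b=c$, $c'=d'=c$, yielding the envelope $f_k \simeq (\sum_{k'<k} c_{k'}^2)\,c_k$ which is admissible and square-summable in $k$. For the Maxwell side, the energy inequality \eqref{eq:en-box} plus Proposition~\ref{prop:a} gives
\[
\nrm{\bfA_x}_{(S^1)_{c^2}} + \nrm{A_0}_{(Y^1)_{c^2}} \aleq \bigl(\sup_s \nrm{\Pi_s\psi}_{(\tilde S^{1/2}_s)_c}\bigr)^{2}.
\]
Assembling these gives $M \le C_0(1 + M + M^{2})\,\nrm{\text{data}}_c$ plus a quadratic/cubic self-feedback of the form $M \le C_0 \nrm{\text{data}}_c + C_0 M^{2}$, which for $\eps_\ast$ small enough closes to $M\le 2C_0\nrm{\text{data}}_c \aleq 1$. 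Using the fungibility property \eqref{eq:N-fungibility} of the $N$-norm in $T$ (Lemma~\ref{lem:int-loc}), the bound is continuous in $T$; a standard continuity/bootstrap argument then promotes the local solution to a global one with \eqref{eq:main-fe}.

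\textbf{Step 2 (weak Lipschitz).} Let $(A,\psi)$ and $(A',\psi')$ be two solutions as in the theorem, and set $\dlt\psi = \psi - \psi'$, $\dlt A = A - A'$. The difference solves a linearized MD-CG whose nonlinearity is a sum of bilinear/trilinear expressions with one factor $\dlt\psi$ or $\dlt A$. Every estimate in Propositions~\ref{prop:a}, \ref{prop:dirac}, \ref{prop:trilinear} and Theorem~\ref{thm:paradiff} shifts down by $\dlt_2$ derivatives without loss (since $\dlt_2 < \dlt_1$ and all spaces are dyadically defined), so the same bootstrap now applied to the $\dlt_2$-shifted norm of the difference closes and yields \eqref{eq:weak-lip}. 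In particular this gives uniqueness at the critical regularity.

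\textbf{Step 3 (rough data, limits, and persistence of regularity).} Given rough data satisfying \eqref{eq:main:smalldata}, truncate in frequency to obtain smooth approximate data $(\psi^{(n)}(0), A_x^{(n)}[0])$ converging in $\dot H^{1/2} \times (\dot H^{1} \times L^{2})$. Step~1 supplies global smooth solutions with uniform critical bounds; Step~2 upgrades the Cauchy property of the data to Cauchy in the $\dlt_2$-weakened norms, producing a limit $(A,\psi)$. Standard weak-strong continuity and the uniform envelope bound then show $(A,\psi)$ lies in $\tilde S^{1/2}_s\times S^1 \times Y^1$, satisfies MD-CG and obeys \eqref{eq:main-fe}; uniqueness follows from Step~2. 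For higher regularity ($\psi(0)\in \dot H^{1/2+N}$, $A_x[0]\in \dot H^{1+N}\times \dot H^{N}$), construct a second admissible envelope $c^{(N)}_k = 2^{Nk}c_k\cdot(\text{mollified})$ and re-run the bootstrap of Step~1 with $c$ replaced by $c^{(N)}$; the weak interaction between frequencies (encoded in admissibility) lets all estimates go through unchanged, giving persistence in $C_t\dot H^{1/2+N}$ and $C_t\dot H^{N}$ for $A_x$, hence smoothness in the smooth case.

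The main obstacle is the nonperturbative paradifferential term $s\,\pi^{R}[A^{free}_x]\psi_s$: a naive Picard iteration at critical regularity cannot absorb it (Lemma~\ref{lem:diffr-free} shows the relevant bound genuinely requires $\ell^{1}$ summation on $A^{free}$). This is precisely why the iteration must be set up with this term moved to the left-hand side and inverted by the parametrix of Theorem~\ref{thm:paradiff}; all other trilinear interactions, including those that arise when $\bfA_x$ is substituted into $\pi^{R}$, are handled perturbatively by Proposition~\ref{prop:trilinear} thanks to the $\ell^{1}$-summability of $\bfA_x$ noted in Remark~\ref{rem:fe}.
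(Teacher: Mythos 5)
Your overall architecture matches the paper's: split off $A^{free}_x$, absorb the nonperturbative term $s\,\pi^{R}[A^{free}_x]\psi_s$ into the linear operator and invert it with Theorem~\ref{thm:paradiff}, treat the cubic scalar terms $\pi^{E}[\bfA_0]\psi_s - s\,\pi^{R}[\bfA_x]\psi_s$ with Proposition~\ref{prop:trilinear}, and everything else with Propositions~\ref{prop:a}--\ref{prop:dirac}. The structural difference is that you run a continuity/bootstrap argument on a priori bounds for smooth solutions, whereas the paper runs a Picard-type iteration: each iterate $\psi^{n+1}$ is obtained by \emph{solving} the full covariant Dirac equation with potential $A^{n+1}=A^{free}+\bfA(\psi^n,\psi^n)$ via a separate solvability result (Proposition~\ref{prop:parasys-dirac}, itself proved by contraction using Theorem~\ref{thm:paradiff}), and convergence is geometric via the induction hypothesis on $\|\Pi_s(\psi^m-\psi^{m-1})\|_{\tilde S^{1/2}_s}$. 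The iteration route sidesteps two technical points your bootstrap leaves open: (i) Theorem~\ref{thm:paradiff} as stated is an \emph{existence} result for the linear paradifferential equation, so to use it as an a priori estimate on the given smooth solution you need uniqueness for that linear equation in $S^{1/2}_s$, which you do not address; and (ii) a continuity-in-$T$ argument for norms like $S_k$ (null-frame and square-summed angular components) is not automatic and is nowhere justified. Also note that Theorem~\ref{thm:paradiff} only controls $S^{1/2}_s$, not the $\tilde Z^{1/2}_s$ component of $\tilde S^{1/2}_s$; the paper recovers the latter separately from the $G^{1/2}$ bounds \eqref{eq:ne-z}--\eqref{eq:ns-z}, a step missing from your Step~1.

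Two further gaps are more serious. First, your persistence-of-regularity argument via the envelope $c^{(N)}_k = 2^{Nk}c_k$ does not work: such an envelope is not $\dlt_1$-admissible for any $N\geq\dlt_1$, and all the dyadic summations in Sections~\ref{sec:bi}--\ref{sec:tri} rely on the off-diagonal gains $2^{\dlt_0(k_{\min}-k_{\max})}$ beating a $2^{\dlt_1|k-k'|}$ variation, so "all estimates go through unchanged" is false. The paper instead differentiates the equation, estimates $\nb\psi$ at the \emph{critical} regularity with admissible envelopes adapted to $\nb\psi(0)$ and $\nb A_x[0]$, and handles the problematic term $\Diff^{R}[\nb A^{free}_x]\psi$ by Lemma~\ref{lem:diffr-free} plus an interpolation in $\dlt$. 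Second, deducing uniqueness from the weak Lipschitz bound \eqref{eq:weak-lip} is circular as written: to close the absorption argument for the difference $\dlt\psi$ one must know a priori that $\Pi_s\dlt\psi\in\tilde S^{1/2-\dlt_2}_s$, which does not follow from $\Pi_s\psi,\Pi_s\psi'\in\tilde S^{1/2}_s$. The paper proves uniqueness separately, at critical regularity, on a short time interval $[0,T]$ using the fungibility \eqref{eq:N-fungibility} of the $N$-norm to make the right-hand side small, and then continues. You should either adopt that argument or run the weak Lipschitz estimate on differences of Picard iterates (as the paper remarks) rather than on the solutions themselves.
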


Theorem~\ref{thm:main-iter} is proved by a Picard-type iteration argument as in \cite{KST}. The presence of a non-perturbative interaction with $A^{free}$ precludes the usual Picard iteration procedure based on the free Dirac operator. Instead, we rely on the following solvability result for the covariant Dirac equation which, in particular, contains the contribution of $A^{free}$.
\begin{proposition} \label{prop:parasys-dirac}
There exists a universal constant $\eps_{\ast \ast} > 0$ such that the following holds.
Let $I \subseteq \bbR$ be a time interval containing $0$. Given spinor fields $\psi_{0} \in \dot{H}^{1/2}$ on $\bbR^{4}$ and $F$ on $I \times \bbR^{4}$ such that $\Pi_{s} F \in N_{s}^{1/2} \cap L^{2} L^{2} \cap G^{1/2}[I]$ $(s \in \set{+, -})$, consider the covariant Dirac equation
\begin{equation} \label{eq:parasys-dirac}
	\left\{
\begin{aligned}
	\alp^{\mu} \covD_{\mu}^{A} \psi =& F \quad \hbox{ on } I \\
	\psi(0) =& \psi_{0},
\end{aligned}
	\right.
\end{equation}
where the potential $A = A_{\mu} \ud x^{\mu}$ is given by
\begin{align*}
	A_{0} = \bfA_{0}(\psi', \psi'), \quad
	A_{j} = A^{free}_{j} + \bfA_{j}(\psi', \psi') \quad \hbox{ on } I
\end{align*}
for some free wave $A^{free}_{j} \in C_{t} \dot{H}^{1} \cap \dot{C}^{1}_{t} L^{2}$ $(j=1, \ldots, 4)$ and a spinor field $\psi'$ satisfying $\Pi_{s} \psi' \in \tilde{S}^{1/2}_{s}[I]$ and $\rd_{\mu} \brk{\psi', \alp^{\mu} \psi'} = 0$. If
\begin{equation} \label{eq:parasys-dirac-hyp}
\sup_{s \in \set{+, -}} \nrm{\Pi_{s} \psi'}_{\tilde{S}^{1/2}_{s}[I]} + \sup_{j \in \set{1, \ldots, 4}} \nrm{A_{j}^{free}[0]}_{\dot{H}^{1} \times L^{2}} \leq \eps_{\ast \ast},
\end{equation}
then there exists a unique solution $\psi$ to \eqref{eq:parasys-dirac} on $I \times \bbR^{4}$ such that $\Pi_{s} \psi \in \tilde{S}^{1/2}_{s}[I]$ for $s \in \set{+, -}$. For any admissible frequency envelope $c$, we have
\begin{equation}
	\nrm{\Pi_{s} \psi}_{(\tilde{S}^{1/2}_{s}[I])_{c}} \aleq \nrm{\Pi_{s} \psi_{0}}_{\dot{H}^{1/2}_{c}} + \nrm{\Pi_{s} F}_{(N^{1/2}_{s} \cap L^{2} L^{2} \cap G^{1/2} [I])_{c}}.
\end{equation}
The implicit constants are independent of $ I $.	

%% LONGER VERSION
%then the following statements hold:
%\begin{enumerate}[leftmargin=*]
%\item There exists a unique solution $\psi$ to \eqref{eq:parasys-dirac} on $I \times \bbR^{4}$ such that $\Pi_{s} \psi \in \tilde{S}^{1/2}_{s}[I]$ for $s \in \set{+, -}$. Moreover, we have
%\begin{equation}
%	\nrm{\Pi_{s} \psi}_{S^{1/2}_{s}[I]} \aleq \nrm{\Pi_{s} \psi_{0}}_{\dot{H}^{1/2}} + \nrm{\Pi_{s} F}_{N^{1/2}_{s} \cap L^{2} L^{2}[I]},
%\end{equation}
%
%\item For any admissible frequency envelope $c$, we have
%\begin{equation}
%	\nrm{\Pi_{s} \psi}_{(S^{1/2}_{s}[I])_{c}} \aleq \nrm{\Pi_{s} \psi_{0}}_{\dot{H}^{1/2}_{c}} + \nrm{\Pi_{s} F}_{(N^{1/2}_{s} \cap L^{2} L^{2}[I])_{c}}.
%\end{equation}
%\end{enumerate}
\end{proposition}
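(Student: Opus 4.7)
Since \eqref{eq:parasys-dirac} is linear in $\psi$ (the potential $A$ depends only on the prescribed $\psi'$), the plan is to build a Banach fixed point based on inverting the paradifferential covariant half-wave operator $(i\pt_t + s|D|)^p_{A^{free}}$ of Theorem~\ref{thm:paradiff}. Decomposing $\psi = \Pi_+\psi_+ + \Pi_-\psi_-$ via Lemma~\ref{lem:cov-dirac-HW}, substituting $A_j = A^{free}_j + \bfA_j(\psi',\psi')$, and moving the non-perturbative same-sign low-high piece $-s\pi^R[A^{free}_x]\psi_s$ to the left, \eqref{eq:parasys-dirac} becomes
\begin{equation*}
(i\pt_t + s|D|)^p_{A^{free}}\psi_s \;=\; \Pi_s F + \calT_s[\psi] \qquad \text{on } I,
\end{equation*}
where $\calT_s[\psi]$ collects: the remainders $\calR^E, \calR^R, \calR^S$ applied to $(A_0, A_x, \psi)$; the opposite-sign paradifferential pieces $\pi^E[A_0]\psi_{-s}$ and $\pi^R[A_x]\psi_{-s}$; the spinorial paradifferential $\pi^S_{s'}[A_x]\psi_{s'}$; and the ``MKG-type'' same-sign trilinear combination $\pi^E[\bfA_0(\psi',\psi')]\psi_s - s\pi^R[\bfA_x(\psi',\psi')]\psi_s$ (with appropriate $\Pi_s$ projections throughout). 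The crucial observation is that, after this reduction, every term in $\calT_s[\psi]$ is either bilinear in $(A, \psi)$ or trilinear in $(\psi', \psi', \psi)$ with at least one factor of $\psi'$, so all of them are perturbative in the smallness regime \eqref{eq:parasys-dirac-hyp}.

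The key estimate is
\begin{equation*}
\|\calT_s[\psi]\|_{(N^{1/2}_s \cap L^2L^2 \cap G^{1/2})[I]} \;\lesssim\; \bigl(\sup_{s'}\|\Pi_{s'}\psi'\|_{\tilde S^{1/2}_{s'}[I]}\bigr)^{2} \cdot \sup_{s''}\|\Pi_{s''}\psi\|_{\tilde S^{1/2}_{s''}[I]}.
\end{equation*}
Proposition~\ref{prop:a} applied to $\psi'$, together with \eqref{eq:en-box} and the hypothesis $\rd^\mu\langle\psi',\alp_\mu\psi'\rangle = 0$ (needed so that $\rd_t A_0$ agrees with $\rd_t \calN^E(\psi',\psi')$ and \eqref{eq:a0} is applicable), yields $\|A_0\|_{Y^1} + \|\bfA_x(\psi',\psi')\|_{S^1} \lesssim (\sup_{s'}\|\Pi_{s'}\psi'\|_{\tilde S^{1/2}_{s'}[I]})^2$. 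The remainders and the opposite-sign/spinorial paradifferential pieces are then bounded in $N^{1/2}_s \cap L^2L^2 \cap G^{1/2}$ directly by Proposition~\ref{prop:dirac}; the MKG-type trilinear piece is bounded in $N^{1/2}_s$ by Proposition~\ref{prop:trilinear}, while its $L^2L^2$ and $G^{1/2}$ parts are bootstrapped from \eqref{eq:ne-himod}--\eqref{eq:nr-himod} and \eqref{eq:ne-z}--\eqref{eq:nr-z} applied to the already-controlled $A_0$ and $\bfA_x$. Time-interval localization is handled by extending $\psi'$ to all of $\bbR$ via the free Dirac flow outside $I$ (which preserves $\rd^\mu\langle\psi',\alp_\mu\psi'\rangle = 0$) and $F$ by zero using Lemma~\ref{lem:int-loc}, producing constants independent of $I$.

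Theorem~\ref{thm:paradiff}, applicable by the smallness of $A^{free}$ in \eqref{eq:parasys-dirac-hyp}, inverts $(i\pt_t + s|D|)^p_{A^{free}}$ with source in $N^{1/2}_s \cap L^2L^2$ and data in $\dot H^{1/2}$ to produce the $S^{1/2}_s$ component of the target norm; the $\tilde Z^{1/2}_s$ component is recovered by rewriting $(i\pt_t + s|D|)\psi_s = \Pi_s F + \calT_s[\psi] - s\pi^R[A^{free}_x]\psi_s$ and estimating the right-hand side in $G^{1/2}$ using \eqref{eq:nr-z} combined with the $S^{1/2}_s$ bound just obtained. The resulting affine map $\calS$, sending $\psi$ to the solution of the paradifferential equation with right-hand side $\Pi_s F + \calT_s[\psi]$ and data $\Pi_s \psi_0$, then obeys
\begin{equation*}
\sup_s \|\Pi_s \calS\psi\|_{\tilde S^{1/2}_s[I]} \;\le\; C\bigl(\|\psi_0\|_{\dot H^{1/2}} + \|F\|_{(N^{1/2} \cap L^2L^2 \cap G^{1/2})[I]}\bigr) + C\eps_{\ast\ast}^{2}\, \sup_s \|\Pi_s \psi\|_{\tilde S^{1/2}_s[I]},
\end{equation*}
with an identical bound governing its Lipschitz constant; for $\eps_{\ast\ast}$ sufficiently small, $\calS$ is a contraction and produces the unique fixed point. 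The frequency-envelope bound follows by repeating the contraction with envelope norms, using the envelope form of every estimate cited above. The main obstacle is less a single difficult step than the systematic recovery of all three norm components ($N^{1/2}_s$, $L^2L^2$, and $G^{1/2}$) needed to reconstruct $\tilde S^{1/2}_s$: Proposition~\ref{prop:trilinear} supplies only the $N^{1/2}_s$ bound for the MKG-type piece, so its $L^2L^2$ and $G^{1/2}$ parts must be bootstrapped from the bilinear $\calN^E, \calN^R$ estimates in parts (4)--(5) of Proposition~\ref{prop:dirac} applied to the already-estimated Maxwell quantities $A_0$ and $\bfA_x(\psi',\psi')$.
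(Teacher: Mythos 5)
Your overall strategy coincides with the paper's: absorb the nonperturbative same\-/sign low\--high interaction with $A^{free}_{x}$ into the paradifferential operator of Theorem~\ref{thm:paradiff}, estimate everything else perturbatively via Propositions~\ref{prop:a}, \ref{prop:dirac} and \ref{prop:trilinear}, recover the $\tilde{Z}^{1/2}_{s}$ component from \eqref{eq:nr-z} after the $S^{1/2}_{s}$ bound is in hand, and close by contraction. The bookkeeping of the three norm components, the role of $\rd_{\mu}\brk{\psi',\alp^{\mu}\psi'}=0$, and the treatment of the interval $I$ all match the paper's proof.

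The genuine gap is hidden in ``with appropriate $\Pi_{s}$ projections throughout'' and in the phrase ``moving $-s\Diff^{R}[A^{free}_{x}]\psi_{s}$ to the left.'' The equation for $\psi_{s}=\Pi_{s}\psi$ carries an \emph{outer} $\Pi_{s}$ on every nonlinear term (cf.\ \eqref{eq:cov-dirac-HW}), and $\Pi_{s}$ does not commute with multiplication by $A$. Hence what you can move to the left is $-s\,\Pi_{s}\Diff^{R}[A^{free}_{x}]\psi_{s}$, which differs from the term built into $(i\rd_{t}+s\abs{D})^{p}_{A^{free}}$ in \eqref{eq:paradiff-def} by $s\,\Pi_{-s}\Diff^{R}[A^{free}_{x}]\psi_{s}$; this discrepancy is not covered by any estimate you cite (it is \emph{not} controlled by Lemma~\ref{lem:diffr-free}-type bounds; one would have to exploit the spinorial angular gain $\abs{\Pi_{-s}(\xi)\Pi_{s}(\zeta)}\aleq\abs{\angle(\xi,\zeta)}$ to make it perturbative). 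A parallel issue afflicts the trilinear piece: Proposition~\ref{prop:trilinear} requires $\Diff^{E}[\bfA_{0}]$ and $\Diff^{R}[\bfA_{x}]$ to act on the \emph{same} function, whereas the equation naturally produces $\Diff^{E}[A_{0}]\Pi_{s}\varphi_{s}$ next to $\Diff^{R}[A_{x}]\varphi_{s}$, and along the iteration the outputs of Theorem~\ref{thm:paradiff} are \emph{not} in the range of $\Pi_{s}$, so one cannot simply identify $\Pi_{s}\varphi_{s}$ with $\varphi_{s}$. The paper resolves both problems at once by solving the auxiliary system \eqref{eq:projsys}: the scalar and spinorial $A_{x}$-terms act on the unprojected iterates (so that, by Lemma~\ref{lem:commAlpPi}, their sum reassembles into $A_{j}\alp^{j}\Pi_{s'}\varphi_{s'}$ after applying $\Pi_{s}$, and the free part matches \eqref{eq:paradiff-def} exactly), while the extra term $\Pi_{-s}(\Diff^{E}[A_{0}]\varphi_{s})$ — which vanishes under $\Pi_{s}$ and hence does not disturb the reduction to \eqref{eq:parasys-dirac} — converts $\Diff^{E}[A_{0}]\Pi_{s}\varphi_{s}$ into $\Diff^{E}[A_{0}]\varphi_{s}$ at the cost of the commutator \eqref{eq:projsys-5}, which requires its own estimate (Lemma~\ref{lem:diffe-comm}). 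None of this appears in your write-up, and it is the only non-routine content of the proof. A smaller point: your displayed bound for $\calT_{s}[\psi]$ omits the contribution of $A^{free}_{x}$ to the remainders and to the opposite-sign and spinorial paradifferential terms, which is $O(\eps_{\ast\ast})\sup_{s}\nrm{\Pi_{s}\psi}_{\tilde{S}^{1/2}_{s}}$ rather than $O\big((\sup\nrm{\psi'})^{2}\big)\sup_{s}\nrm{\Pi_{s}\psi}_{\tilde{S}^{1/2}_{s}}$; the contraction survives, but the estimate as stated is not the one you would prove.
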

We defer the proof of Proposition~\ref{prop:parasys-dirac} until Section~\ref{subsec:iter-dirac}, which is accomplished by a separate iteration argument. Here we assume the validity of Proposition~\ref{prop:parasys-dirac} and sketch the proof of Theorem~\ref{thm:main-iter}.

\subsubsection*{Step 1: Existence and frequency envelope bound}
We first prove Statement~(1) of Theorem~\ref{thm:main-iter} except uniqueness, which is proved in the next step. We proceed by a Picard-type iteration, where the iterates are constructed recursively as follows. For the zeroth iterate, we take the trivial pair $(A^{0}, \psi^{0}) = 0$. Then for any $n \geq 0$, we first define
\begin{equation*}
	A_{0}^{n+1} = \bfA_{0}(\psi^{n}, \psi^{n}), \quad
	A_{j}^{n+1} = A_{j}^{free} + \bfA_{j}(\psi^{n}, \psi^{n}),
\end{equation*}
where $A_{j}^{free}$ denotes the free wave development of $A_{j}[0] = (A_{j}, \rd_{t} A_{j})(0)$. Next, we define $\psi^{n+1}$ by solving the covariant Dirac equation
\begin{equation*}
	\alp^{\mu} \covD_{\mu}^{A^{n+1}} \psi^{n+1} = 0, \quad \psi^{n+1}(0) = \psi(0).
\end{equation*}
In order to construct $\psi^{n+1}$, we wish to apply Proposition~\ref{prop:parasys-dirac} with $A = A^{n+1}$, or equivalently, $\psi' = \psi^{n}$ and $A^{free}_{j}[0]=A_{j}[0]$. When $n = 0$ we have $\psi^{0} = 0$, so the hypothesis of Proposition~\ref{prop:parasys-dirac} is verified simply by recalling \eqref{eq:main:smalldata} and taking $\eps_{\ast} \leq \eps_{\ast \ast}$. For $n \geq 1$, we make the induction hypothesis
\begin{equation} \label{eq:main-iter-ind}
	\sup_{s \in \set{+, -}} \nrm{\Pi_{s}(\psi^{m} - \psi^{m-1})}_{\tilde{S}^{1/2}_{s}} \leq (C_{\ast} \eps_{\ast})^{m} \quad \hbox{ for all } 1 \leq m \leq n.
\end{equation}
for some universal constant $C_{\ast} > 0$. Recalling \eqref{eq:main:smalldata}, summing up \eqref{eq:main-iter-ind} for $1 \leq m \leq n$ and taking $\eps_{\ast}$ sufficiently small compared to $\eps_{\ast \ast}$ (independent of $n$), we may ensure that the hypothesis \eqref{eq:parasys-dirac-hyp} of Proposition~\ref{prop:parasys-dirac} holds. Moreover, since $\psi^{n}$ obeys a covariant Dirac equation, the condition $\rd_{\mu} \brk{\psi^{n}, \alp^{\mu} \psi^{n}} = 0$ is satisfied by Remark~\ref{rem:d0a0}. 

With an appropriate choice of $C_{\ast}$ and $\eps_{\ast}$, we claim that the $(n+1)$-th iterate $(A^{n+1}, \psi^{n+1})$ has the following properties:
\begin{equation} \label{eq:main-iter-fe} 
	\sup_{s \in \set{+, -}} \nrm{\Pi_{s} \psi^{n+1}}_{(\tilde{S}^{1/2}_{s})_{c}} + \nrm{A^{n+1}_{x} - A_{x}^{free}}_{(S^{1})_{c^{2}}} + \nrm{A^{n+1}_{0}}_{Y^{1}_{c^{2}}}
	\aleq  1,	
\end{equation}
\begin{equation} \label{eq:main-iter-diff}
\begin{aligned}
	\sup_{s \in \set{+, -}} \nrm{\Pi_{s}(\psi^{n+1} - \psi^{n})}_{\tilde{S}^{1/2}_{s}} + \nrm{A^{n+1}_{x} - A^{n}_{x}}_{S^{1}} 
	+ \nrm{A^{n+1}_{0} - A^{n}_{0}}_{Y^{1}}
	\leq (C_{\ast} \eps_{\ast})^{n+1}. 
\end{aligned}\end{equation}
Assuming these, the proof of existence and \eqref{eq:main-fe} may be concluded as follows. Note that \eqref{eq:main-iter-diff} ensures that the induction hypothesis \eqref{eq:main-iter-ind} remains valid up to $m = n+1$. Moreover, these estimates immediately imply convergence of $(A^{n}, \psi^{n})$ in the topology $\Pi_{s} \psi^{n} \in \tilde{S}_{s}^{1/2}$, $A_{j} \in S^{1}$ and $A_{0} \in Y^{1}$ to a solution $(A, \psi)$ to MD-CG; furthermore, the solution obeys the frequency envelope bound \eqref{eq:main-fe}.

It only remains to establish \eqref{eq:main-iter-fe} and \eqref{eq:main-iter-diff}; we start with \eqref{eq:main-iter-fe}. Decomposing $\lap A_{0}^{n+1}$, $\lap \rd_{t} A_{0}^{n+1}$ and $\Box A_{x}^{n+1}$ as in Section~\ref{subsec:nonlin} and applying Proposition~\ref{prop:a}, the proof of \eqref{eq:main-iter-fe} is reduced to establishing
\begin{equation} \label{eq:main-iter-fe-key}
	\sup_{s \in \set{+, -}} \nrm{\Pi_{s} \psi^{m}}_{(\tilde{S}^{1/2}_{s})_{c}} \aleq 1 \quad \hbox{ for } m = 1, \ldots, n+1.
\end{equation}
Choosing $\eps_{\ast}$ sufficiently small and summing up the induction hypothesis \eqref{eq:main-iter-ind}, we obtain
\begin{equation} \label{eq:main-iter-ind-1}
	\sup_{s \in \set{+, -}} \nrm{\Pi_{s} \psi^{m}}_{\tilde{S}^{1/2}_{s}} + \nrm{A_{x}[0]}_{\dot{H}^{1} \times L^{2}} \leq C \eps_{\ast} \leq \eps_{\ast \ast} \quad \hbox{ for } m = 0, \ldots, n.
\end{equation}
This bound allows us to apply Proposition~\ref{prop:parasys-dirac}, which implies \eqref{eq:main-iter-fe-key} as desired.

Next, we turn to \eqref{eq:main-iter-diff}. For any $\mu \in \set{0, 1, \ldots, 4}$, we may write
\begin{equation*}
	A_{\mu}^{n+1} - A_{\mu}^{n} = \bfA_{\mu}(\dlt \psi^{n}, \psi^{n}) + \bfA_{\mu}(\psi^{n-1}, \dlt \psi^{n}),
\end{equation*}
where we have used the shorthand $\dlt \psi^{n} = \psi^{n} - \psi^{n-1}$. Decomposing $\lap \bfA_{0} = \NM_{0}$, $\lap \rd_{t} \bfA_{0} = \rd_{t} \NM_{0}$ and $\Box \bfA_{x} = \NM_{x}$ as in Section~\ref{subsec:nonlin} and applying\footnote{Proposition~\ref{prop:a} is stated in terms of admissible frequency envelopes. Constructing frequency envelopes as in \eqref{eq:fe-construct-0}, Proposition~\ref{prop:a} easily implies the non-frequency envelope version, which we use here. The same remark applies to the application of estimates in Propositions~\ref{prop:dirac} and \ref{prop:trilinear} below.} Proposition~\ref{prop:a}, we obtain
\begin{align*}
& \hskip-2em
	\nrm{A_{0}^{n+1} - A_{0}^{n}}_{L^{2} \dot{H}^{3/2}}
	+ \nrm{\rd_{t} A_{0}^{n+1} - \rd_{t} A_{0}^{n}}_{L^{2} \dot{H}^{1/2}} 
	+ \nrm{A_{x}^{n+1} - A_{x}^{n}}_{S^{1}} \\
\aleq &\sup_{s, s' \in \set{+, -}} \bb( \nrm{\Pi_{s} \psi^{n}}_{\tilde{S}^{1/2}_{s}} + \nrm{\Pi_{s} \psi^{n-1}}_{\tilde{S}^{1/2}_{s}} \bb) \nrm{\Pi_{s'} \dlt \psi^{n}}_{\tilde{S}^{1/2}_{s}}
\end{align*}
By \eqref{eq:main-iter-ind} and \eqref{eq:main-iter-fe} for $\psi^{n}$ and $\psi^{n-1}$, it follows that
\begin{equation*}
	\nrm{A_{0}^{n+1} - A_{0}^{n}}_{Y^{1}}
	+ \nrm{A_{x}^{n+1} - A_{x}^{n}}_{S^{1}} 
	\aleq \eps_{\ast} (C_{\ast} \eps_{\ast})^{n}
\end{equation*}
which is acceptable by choosing $C_{\ast}$ larger than the implicit (universal) constant. 

We now estimate the $\tilde{S}^{1/2}_{s}$ norm of $\dlt \psi^{n+1} = \psi^{n+1} - \psi^{n}$. We begin by computing
\begin{align*}
	\alp^{\mu} \covD^{A^{n}}_{\mu} \dlt \psi^{n+1}
	=& - i \alp^{\mu} (A^{n+1}_{\mu} - A^{n}_{\mu}) \psi^{n+1} \\
	=& - i \alp^{\mu} \bb( \bfA_{\mu}(\dlt \psi^{n}, \psi^{n}) + \bfA_{\mu}(\psi^{n-1}, \dlt \psi^{n}) \bb) \psi^{n+1}.
\end{align*}
By symmetry, it suffices to consider only the contribution of $\bfA_{\mu}(\dlt \psi^{n-1}, \psi^{n})$.
Using the shorthand $\psi^{n+1}_{s} = \Pi_{s} \psi^{n+1}$, we expand
\begin{align}
& \hskip-2em
\Pi_{s} \bb( \alp^{\mu}  \bfA_{\mu}(\dlt \psi^{n}, \psi^{n}) \psi^{n+1} \bb) \notag \\
	= & \Pi_{s} \bb( \pi^{E}[\bfA_{0}(\dlt \psi^{n}, \psi^{n})] \psi_{s}^{n+1} -s \pi^{R}[\bfA_{x}(\dlt \psi^{n}, \psi^{n})] \psi_{s}^{n+1} \bb) \label{eq:main-iter-dltpsi-1} \\
	& + \Pi_{s} \NR^{E}(\bfA_{0}(\dlt \psi^{n}, \psi^{n}), \Pi_{s} \psi_{s}^{n+1}) -s \Pi_{s} \NR^{R}(\bfA_{x}(\dlt \psi^{n}, \psi^{n}), \psi_{s}^{n+1}) \label{eq:main-iter-dltpsi-2} \\
	& + \Pi_{s} \ND^{E}(\bfA_{0}(\dlt \psi^{n}, \psi^{n}), \Pi_{-s} \psi_{-s}^{n+1}) + s \Pi_{s} \ND^{R}(\bfA_{x}(\dlt \psi^{n}, \psi^{n}), \psi_{-s}^{n+1}) \label{eq:main-iter-dltpsi-3} \\
	& + \Pi_{s} \ND^{S}_{s}(\bfA_{x}(\dlt \psi^{n}, \psi^{n}), \psi_{s}^{n+1}) + \Pi_{s} \ND^{S}_{-s}(\bfA_{x}(\dlt \psi^{n}, \psi^{n}), \psi_{-s}^{n+1}) \label{eq:main-iter-dltpsi-4}
\end{align}
We wish to estimate the $N_{s}^{1/2} \cap L^{2} L^{2} \cap G^{1/2}$ norm of the RHS using Proposition~\ref{prop:dirac} and \ref{prop:trilinear}. More precisely, For the $N^{1/2}_{s}$ norm, we apply \eqref{eq:tri-r} for \eqref{eq:main-iter-dltpsi-1}; \eqref{eq:nre}--\eqref{eq:nrr} for \eqref{eq:main-iter-dltpsi-2}; \eqref{eq:nre}--\eqref{eq:nrr}, \eqref{eq:diffe-opp}--\eqref{eq:diffr-opp} for \eqref{eq:main-iter-dltpsi-3} and \eqref{eq:nrs}, \eqref{eq:diffs} for \eqref{eq:main-iter-dltpsi-4}. For the $L^{2} L^{2} \cap G^{1/2}$ norm, we simply use \eqref{eq:ne-himod}--\eqref{eq:ns-himod} and \eqref{eq:ne-z}--\eqref{eq:ns-z}. Then we obtain
\begin{align*}
\nrm{\Pi_{s} \bb( \alp^{\mu}  \bfA_{\mu}(\dlt \psi^{n}, \psi^{n}) \psi^{n+1} \bb)}_{N_{s}^{1/2} \cap L^{2} L^{2} \cap G^{1/2}} 
\aleq \sup_{s_{1}, s_{2}, s_{3}}\nrm{\dlt \psi^{n}_{s_{1}}}_{\tilde{S}^{1/2}_{s_{1}}} \nrm{\psi^{n}_{s_{2}}}_{\tilde{S}^{1/2}_{s_{2}}} \nrm{\psi^{n+1}_{s_{3}}}_{\tilde{S}^{1/2}_{s_{3}}} 
\end{align*}
Hence by Proposition~\ref{prop:parasys-dirac}, \eqref{eq:main-iter-ind} and \eqref{eq:main-iter-fe} for $\psi^{n+1}$ and $\psi^{n}$, we arrive at
\begin{equation} \label{main-iter-diff-psi-0}
	\sup_{s \in \set{+, -}} \nrm{\Pi_{s} \dlt \psi^{n+1}}_{\tilde{S}^{1/2}_{s}}
	\aleq \eps_{\ast}^{2} (C_{\ast} \eps_{\ast})^{n},
\end{equation}
which is acceptable.

\subsubsection*{Step 2: Uniqueness}
To finish the proof of Statement~(1) of Theorem~\ref{thm:main-iter}, we need to show that the solution $(A, \psi)$ is unique in the iteration space. Let $(A', \psi')$ be another solution to MD-CG with the same data, which obeys $\Pi_{s} \psi' \in \tilde{S}^{1/2}_{s}$, $A_{x}' \in S^{1}$ and $A_{0}' \in Y^{1}$. To prove the desired uniqueness, by a simple continuity argument, it is enough show that $(A, \psi) = (A', \psi')$ on $[0, T]$ for some $T = T(\psi(0), A_{x}[0]) > 0$. Moreover, it is clear from MD-CG that $A_{0}' = \bfA_{0}(\psi', \psi')$ and $A_{x}' = A_{x}^{free} + \bfA_{x}(\psi', \psi')$; hence it suffices to establish 
\begin{equation} \label{eq:main-iter-uni}
	\psi(t) = \psi'(t) \quad \hbox{ for } t \in [0, T].
\end{equation}

Define $\dlt \psi = \psi' - \psi$. By Proposition~\ref{prop:parasys-dirac}, we have
\begin{equation} \label{eq:main-iter-uni-1}
	\sup_{s \in \set{+, -}} \nrm{\Pi_{s} \dlt \psi}_{\tilde{S}^{1/2}_{s}[0, T]} 
	\aleq \sup_{s \in \set{+, -}} \nrm{\Pi_{s} \alp^{\mu} \covD_{\mu}^{A} \dlt \psi}_{N_{s}^{1/2} \cap L^{2} L^{2} \cap G^{1/2}[0, T]} 
\end{equation}
Moreover, writing out the equations for $\alp^{\mu} \covD^{A}_{\mu} \dlt \psi$ and $\alp^{\mu} \rd_{\mu} \dlt \psi$ in terms of $\psi$, $\dlt \psi$ and analyzing it as in the proof of \eqref{eq:main-iter-diff}, we arrive at 
\begin{equation*}
	\hbox{RHS of } \eqref{eq:main-iter-uni-1}
	\aleq \bb( \eps_{\ast} + \sup_{s \in \set{+, -}} \nrm{\Pi_{s} \dlt \psi}_{\tilde{S}^{1/2}_{s}[0, T]} \bb)^{2} \sup_{s \in \set{+, -}} \nrm{\Pi_{s} \dlt \psi}_{\tilde{S}^{1/2}_{s}[0, T]} 
\end{equation*}
In particular, the RHS of \eqref{eq:main-iter-uni-1} is finite; hence the LHS of \eqref{eq:main-iter-uni-1} can be made as small as we want by choosing $T$ sufficiently small (we use \eqref{eq:N-fungibility} for $N_{s}^{1/2}$). Combining 
\eqref{eq:main-iter-uni-1} with the preceding estimate, and taking $\eps_{\ast}$ smaller if necessary, we may conclude that $\dlt \psi = 0$ on $[0, T]$ as desired. 

%%
% OLD VERSION
%%
%Define $\dlt \psi = \psi' - \psi$. By Proposition~\ref{prop:parasys-dirac}, we have
%\begin{equation} \label{eq:main-iter-uni-1}
%\begin{aligned}
%	\sup_{s \in \set{+, -}} \nrm{\Pi_{s} \dlt \psi}_{\tilde{S}^{1/2}_{s}[0, T]} 
%	\aleq & \sup_{s \in \set{+, -}} \nrm{\Pi_{s} \alp^{\mu} \covD_{\mu}^{A} \dlt \psi}_{N_{s}^{1/2} \cap L^{2} L^{2}[0, T]} \\
%		& + \sup_{s \in \set{+, -}} \nrm{\abs{D}^{-\frac{3}{2}} \Pi_{s} (\alp^{\mu} \rd_{\mu} \dlt \psi)}_{\ell^{2} L^{1} L^{\infty}[0, T]}.
%\end{aligned}\end{equation}
%Moreover, writing out the equations for $\alp^{\mu} \covD^{A}_{\mu} \dlt \psi$ and $\alp^{\mu} \rd_{\mu} \dlt \psi$ in terms of $\psi$, $\dlt \psi$ and analyzing it as in the proof of \eqref{eq:main-iter-diff}, we arrive at 
%\begin{equation*}
%	(\hbox{RHS of } \eqref{eq:main-iter-uni-1})
%	\aleq \bb( \eps_{\ast} + \sup_{s \in \set{+, -}} \nrm{\Pi_{s} \dlt \psi}_{\tilde{S}^{1/2}_{s}[0, T]} \bb)^{2} \sup_{s \in \set{+, -}} \nrm{\Pi_{s} \dlt \psi}_{\tilde{S}^{1/2}_{s}[0, T]} 
%\end{equation*}
%In particular, the RHS of \eqref{eq:main-iter-uni-1} is finite; hence the LHS of \eqref{eq:main-iter-uni-1} can be made as small as we want by choosing $T$ sufficiently small (where we \eqref{eq:N-fungibility} for $N_{s}^{1/2}$). Combining 
%\eqref{eq:main-iter-uni-1} with the preceding estimate and taking $\eps_{\ast}$ smaller if necessary, we may conclude that $\dlt \psi = 0$ on $[0, T]$ as desired. 

\subsubsection*{Step 3: Weak Lipschitz dependence}
Here we outline the proof of Statement~(2) of Theorem~\ref{thm:main-iter}. Let $\dlt \psi = \psi - \psi'$ and $\dlt A = A - A'$. It is clear from MD-CG that $A_{0}' = \bfA_{0}(\psi', \psi')$ and $A_{x}' = (A_{x}')^{free} + \bfA_{x}(\psi', \psi')$, where $(A_{x}')^{free}$ is the free wave development of $A_{x}'[0]$. Applying Proposition~\ref{prop:a} with appropriate frequency envelopes, we see that establishing \eqref{eq:weak-lip} reduces to showing
\begin{equation} \label{eq:weak-lip-0}
	\sup_{s \in \set{+, -}} \nrm{\Pi_{s} \dlt \psi}_{\tilde{S}_{s}^{1/2 - \dlt_{2}}} \aleq \nrm{\dlt \psi (0)}_{\dot{H}^{1/2 - \dlt_{2}}} + \nrm{\dlt A_{x}[0]}_{\dot{H}^{1 - \dlt_{2}} \times \dot{H}^{-\dlt_{2}}}.
\end{equation}

For simplicity of exposition, we will assume that $\Pi_{s} \dlt \psi \in \tilde{S}^{1/2-\dlt_{2}}_{s}$ and prove \eqref{eq:weak-lip-0}. This assumption can be bypassed by establishing \eqref{eq:weak-lip-0} for the difference $\dlt \psi = \psi^{n} - (\psi')^{n}$ of Picard iterates in Step 1; we omit the details.

The difference $\dlt \psi$ obeys the covariant equation
\begin{equation*}
	\alp^{\mu} \covD_{\mu}^{A} \dlt \psi
	= - i \alp^{\mu} \bb( \bfA_{\mu}(\dlt \psi, \psi) + \bfA_{\mu}(\psi', \dlt \psi) \bb) \psi' - i \alp^{\ell} \dlt A_{\ell}^{free} \psi' =: \dlt I_{1} + \dlt I_{2}.
\end{equation*}
We claim that
\begin{align}
\sup_{s' \in \set{+, -}} \nrm{\Pi_{s'} \dlt I_{1}}_{N_{s'}^{1/2-\dlt_{2}} \cap L^{2} \dot{H}^{-\dlt_{2}} \cap G^{1/2-\dlt_{2}}}
	\aleq & \eps_{\ast}^{2} \sup_{s \in \set{+, -}} \nrm{\Pi_{s} \dlt \psi}_{\tilde{S}^{1/2-\dlt_{2}}_{s}}, 	\label{eq:weak-lip-1} \\
\sup_{s' \in \set{+, -}} \nrm{\Pi_{s'} \dlt I_{2}}_{N_{s'}^{1/2-\dlt_{2}} \cap L^{2} \dot{H}^{-\dlt_{2}} \cap G^{1/2-\dlt_{2}}}
	\aleq & \eps_{\ast} \nrm{\dlt A_{x}[0]}_{\dot{H}^{1-\dlt_{2}} \times \dot{H}^{-\dlt_{2}}}. \label{eq:weak-lip-2} 
\end{align}
Assuming that \eqref{eq:weak-lip-1}--\eqref{eq:weak-lip-2} hold, we may finish the proof as follows. Applying Proposition~\ref{prop:parasys-dirac} with an appropriate frequency envelope, we obtain
\begin{align*}
	\sup_{s \in \set{+, -}} \nrm{\Pi_{s} \dlt \psi}_{S^{1/2-\dlt_{2}}_{s}} 
	\aleq & \nrm{\dlt \psi (0)}_{\dot{H}^{1/2 - \dlt_{2}}}  \\
	& + \sup_{s \in \set{+, -}} \nrm{\Pi_{s} (\alp^{\mu} \covD_{\mu}^{A} \dlt \psi)}_{N_{s}^{1/2-\dlt_{2}} \cap L^{2} \dot{H}^{-\dlt_{2}} \cap G^{1/2-\dlt_{2}}} 
\end{align*}
The last terms can be estimated using \eqref{eq:weak-lip-1}--\eqref{eq:weak-lip-2}. Taking $\eps_{\ast}$ sufficiently small to absorb the contribution of $\nrm{\Pi_{s} \dlt \psi}_{\tilde{S}^{1/2-\dlt_{2}}_{s}}$ (which is finite by assumption) into the LHS, the desired inequality \eqref{eq:weak-lip-0} follows in a straightforward manner. 

It only remains to establish \eqref{eq:weak-lip-1}--\eqref{eq:weak-lip-2}. The proof of \eqref{eq:weak-lip-1} is very similar to that of \eqref{eq:main-iter-diff} in Step 1; we omit the details. To prove \eqref{eq:weak-lip-2}, we start by writing
\begin{equation*}
	\Pi_{s'}\dlt I_{2} = - i \sum_{s} \Pi_{s'}(\alp^{\ell} \dlt A_{\ell}^{free}) \Pi_{s} \psi_{s} = i \sum_{s} \bb( s \Pi_{s'} \ND^{R}(\dlt A^{free}_{x}, \psi_{s}) - \ND_{s}^{S}(\dlt A^{free}_{x}, \psi_{s}) \bb)
\end{equation*}
where $\psi_{s} = \Pi_{s} \psi$. The $L^{2} \dot{H}^{-\dlt_{2}} \cap G^{1/2-\dlt_{2}}$ norm of both terms can be handled by applying \eqref{eq:ne-himod}--\eqref{eq:ns-himod} and \eqref{eq:ne-z}--\eqref{eq:ns-z} with appropriate frequency envelopes. Henceforth, we focus on the $N^{1/2-\dlt_{2}}_{s}$ norm. The term $\ND_{s}^{S}(\dlt A^{free}_{x}, \psi_{s})$ can be treated using \eqref{eq:nrs} and \eqref{eq:diffs}. For the term $\ND^{R}(\dlt A^{free}_{x}, \psi_{s})$, application of \eqref{eq:nrr} and \eqref{eq:diffr-opp} leaves us only with the term $s' \Pi_{s'} (\Diff^{R}[\dlt A_{x}^{free}] \psi_{s'})$. For this term, we apply \eqref{eq:diffr-free} with frequency envelopes $a$ and $b$ for $\nrm{\dlt A_{x}[0]}_{\dot{H}^{1} \times L^{2}}$ and $\nrm{\psi}_{\tilde{S}^{1/2}_{s'}}$, respectively. Observe that $\sum_{k' < k} a_{k'} \aleq 2^{\dlt_{2}} \nrm{\dlt A_{x}[0]}_{\dot{H}^{1-\dlt_{2}} \times \dot{H}^{-\dlt_{2}}}$, so
\begin{equation*}
	\nrm{\Diff^{R}[A_{x}^{free}] \psi_{s'}}_{N^{1/2-\dlt_{2}}_{s'}}
	\aleq \nrm{\dlt A_{x}[0]}_{\dot{H}^{1-\dlt_{2}} \times \dot{H}^{-\dlt_{2}}} \nrm{\psi_{s'}}_{\tilde{S}^{1/2}_{s'}}
\end{equation*}
which is exactly what we need (it is this point where $\dlt_{2} > 0$ is used). 

\subsubsection*{Step~4: Persistence of regularity}
Finally, we sketch the proof of Statement~(3) of Theorem~\ref{thm:main-iter}. In view of Proposition~\ref{prop:subcrit-lwp}, it suffices to show that 
\begin{equation} \label{eq:hi-reg}
	\sup_{s \in \set{+, -}} \nrm{\Pi_{s} \psi}_{\tilde{S}^{1/2+N}_{s}}
	+ \nrm{A_{x}}_{S^{1+N}} \aleq \nrm{\psi(0)}_{\dot{H}^{1/2+N}} + \nrm{A_{x}[0]}_{\dot{H}^{1+N} \times \dot{H}^{N}}
\end{equation}
for $N = 1, 2$, whenever the RHS is finite. Henceforth, we only consider the case $N = 1$; the case $N=2$ can be handled similarly. Moreover, for simplicity, we will already assume that $\Pi_{s} \psi \in \tilde{S}^{1/2+N}_{s}$ and prove \eqref{eq:hi-reg}. As before, this assumption may be bypassed by repeating the proof of \eqref{eq:hi-reg} for each iterate in Step~1.

The proof closely parallels Step~3; both are essentially analysis of the linearized MD-CG. By Proposition~\ref{prop:a} (for $\Box \nb \bfA_{x}$), it suffices to bound only the contribution of $\psi$ in \eqref{eq:hi-reg}. Observe that $\nb \psi$ obeys 
\begin{equation*}
	\alp^{\mu} \covD_{\mu}^{A} \nb \psi = - i \alp^{\mu} \bb( \bfA_{\mu}(\nb \psi, \psi) \psi +  \bfA_{\mu}(\psi, \nb \psi) \bb) \psi - i \alp^{\ell} \nb A^{free}_{\ell} \psi =: I_{1} + I_{2}.
\end{equation*}
We claim that
\begin{align} 
	& \sup_{s' \in \set{+, -}} \nrm{\Pi_{s'} I_{1}}_{N^{1/2}_{s'} \cap L^{2} L^{2} \cap G^{1/2}} \aleq \eps_{\ast}^{2} \sup_{s \in \set{+, -}} \nrm{\Pi_{s} \psi}_{\tilde{S}^{3/2}_{s}}, \label{eq:hi-reg-1} \\
	& \sup_{s' \in \set{+, -}} \nrm{\Pi_{s'} I_{2}}_{N^{1/2}_{s'} \cap L^{2} L^{2} \cap G^{1/2}} \aleq \eps_{\ast} (\sup_{s \in \set{+, -}} \nrm{\Pi_{s} \psi}_{\tilde{S}^{3/2}_{s}} + \nrm{A_{x}[0]}_{\dot{H}^{2} \times \dot{H}^{1}}), \label{eq:hi-reg-2} 
\end{align}
Then by Proposition~\ref{prop:parasys-dirac} and \eqref{eq:hi-reg-1}--\eqref{eq:hi-reg-2}, we would have 
\begin{equation*}
	\sup_{s \in \set{+, -}} \nrm{\Pi_{s} \psi}_{\tilde{S}^{3/2}_{s}}
	\aleq \nrm{\psi(0)}_{\dot{H}^{3/2}} + \eps_{\ast} \nrm{A_{x}[0]}_{\dot{H}^{2} \times \dot{H}^{1}}+ \eps_{\ast} \sup_{s \in \set{+, -}} \nrm{\Pi_{s} \psi}_{\tilde{S}^{3/2}_{s}}.
\end{equation*}
Taking $\eps_{\ast}$ smaller if necessary, we may absorb the last term into the LHS, which would prove \eqref{eq:hi-reg}.

It remains to justify \eqref{eq:hi-reg-1}--\eqref{eq:hi-reg-2}; below we only discuss \eqref{eq:hi-reg-2}, as the other bounds can be proved in a similar fashion to Step~1 (in parallel with Step~3). By \eqref{eq:nrr}--\eqref{eq:nrs}, \eqref{eq:diffr-opp}--\eqref{eq:diffs}, \eqref{eq:nr-himod}--\eqref{eq:ns-himod} and \eqref{eq:nr-z}--\eqref{eq:ns-z}, it is straightforward to show that
\begin{equation*}
	\nrm{\Pi_{s'}(I_{2} + i s' \Diff^{R} [\nb A_{x}^{free}] \psi_{s'})}_{N^{1/2}_{s'} \cap L^{2} L^{2} \cap G^{1/2}} \aleq \eps_{\ast} \nrm{A_{x}[0]}_{\dot{H}^{2} \times \dot{H}^{1}} .
\end{equation*}
Moreover, the $L^{2} L^{2} \cap G^{1/2}$ norm of $s' \Pi_{s'} (\Diff^{R}[\nb A_{x}^{free}] \psi_{s'})$ can be bounded by the same RHS using \eqref{eq:nr-himod} and \eqref{eq:nr-z}. To handle its $N^{1/2}_{s'}$ norm, we apply \eqref{eq:diffr-free} with frequency envelopes $a$ and $b$ for $\nrm{\nb A_{x}[0]}_{\dot{H}^{1} \times L^{2}}$, $\nrm{\psi}_{\tilde{S}^{1/2}_{s'}}$, respectively. For any $0 < \dlt < \dlt_{1}$, we have 
\begin{equation*}
	\nrm{P_{k} \Pi_{s'} (\Diff^{R}[\nb A_{x}^{free}] \psi_{s'})}_{N^{1/2}_{s'}} \leq (\sum_{k' < k} a_{k'}) b_{k} \leq 2^{\dlt k} \nrm{\nb A_{x}[0]}_{\dot{H}^{1-\dlt} \times \dot{H}^{-\dlt}} b_{k}
\end{equation*}
Square summing over $k$, we see that the $N_{s'}^{1/2}$ norm of $\Pi_{s'} (\Diff^{R}[\nb A_{x}^{free}] \psi_{s'})$ is bounded by $\nrm{\nb A_{x}[0]}_{\dot{H}^{1-\dlt} \times \dot{H}^{-\dlt}} \nrm{\psi_{s'}}_{\tilde{S}^{1/2+\dlt}_{s'}}$. By a simple interpolation, the desired bound \eqref{eq:hi-reg-2} follows.

\subsection{Solvability of the covariant Dirac equation} \label{subsec:iter-dirac}
We now complete the proof of Theorem~\ref{thm:main-iter} by proving Proposition~\ref{prop:parasys-dirac}, employing all estimates stated in Section~\ref{sec:main-est}. 

To solve \eqref{eq:parasys-dirac}, we introduce an auxiliary equation (see \eqref{eq:projsys} below), which on one hand reduces to \eqref{eq:parasys-dirac} after suitable manipulation, and on the other hand possess appropriate structure so that it could be solved via an iteration argument. 
More precisely, we look for a pair $(\varphi_{+}, \varphi_{-})$ of spinor fields which obeys
%We begin with preliminary bounds for $A = A_{\mu} \, \ud x^{\mu}$.
%\Red{About $\rd_{t} A_{0}$:}
%Recall the definition of $A$ in \eqref{eq:parasys-dirac-A}. Since $\psi'$ obeys the conservation law $\rd_{\mu} \brk{\psi', \alp^{\mu} \psi'}$, we also have
%\begin{equation*}
%	\rd_{t} A_{0} = \lap^{-1} \rd_{t} \NM^{E}(\psi', \psi').
%\end{equation*}
\begin{equation} \label{eq:projsys}
\begin{aligned}
	(i \rd_{t} + s \abs{D}) \varphi_{s}
	=& \ND^{E}(A_{0}, \Pi_{+} \varphi_{+}) + \ND^{E}(A_{0}, \Pi_{-} \varphi_{-}) + \Pi_{-s} (\pi^{E}[A_{0}] \varphi_{s}) \\
	& - \ND^{R}(A_{x}, \varphi_{+}) + \ND^{R}(A_{x}, \varphi_{-}) \\
	& + \Pi_{s} \ND^{S}_{+}(A_{x}, \varphi_{+}) + \Pi_{s} \ND^{S}_{-}(A_{x}, \varphi_{-}) + i \Pi_{s} F.
\end{aligned}
\end{equation}
with $\varphi_{s} (0) = \Pi_{s} \psi(0)$ for $s \in \set{+, -}$. 

Taking $\Pi_{s}$ of both sides, a computation similar to Lemma~\ref{lem:cov-dirac-HW} shows that $\psi = \Pi_{+} \varphi_{+} + \Pi_{-} \varphi_{-}$ solves the desired covariant Dirac equation; a key observation here is that the last term on the first line vanishes. Therefore, in order to establish the existence statement in Proposition~\ref{prop:parasys-dirac}, it suffices to show that, under the hypotheses of Proposition~\ref{prop:parasys-dirac}, there exists a solution $(\varphi_{+}, \varphi_{-})$ to \eqref{eq:projsys} obeying
\begin{equation} \label{eq:projsys-est}
	\nrm{\varphi_{s}}_{(\tilde{S}^{1/2}[I])_{c}} \aleq \nrm{\Pi_{s} \psi_{0}}_{\dot{H}^{1/2}_{c}} + \nrm{\Pi_{s} F}_{(N_{s}^{1/2} \cap L^{2} L^{2} \cap G^{1/2}[I])_{c}} .
\end{equation}
Our goal in the remainder of this subsection is to prove the preceding statement. The remaining uniqueness statement in Proposition~\ref{prop:parasys-dirac} follows by a similar argument applied to $\Pi_{s} \eqref{eq:projsys}$; we omit the repetitive details.

Before analyzing \eqref{eq:projsys}, we begin with some simple remarks. First, extending $\Pi_{s} F$ by zero outside of $I$ results in an equivalent $N_{s}^{1/2} \cap L^{2} L^{2} \cap G^{1/2}$ norm (see Lemma~\ref{lem:int-loc} and the preceding discussion); therefore, it suffices to focus on the case $I = \bbR$. Next, by Proposition~\ref{prop:a} (note that $\rd_{t} A_{0} = \rd_{t} \NM^{E}(\psi', \psi')$ thanks to the hypothesis $\rd_{\mu} \brk{\psi', \alp^{\mu} \psi'} = 0$), $A$ obeys the following bound: Given an admissible frequency envelope $b$ with $\sup_{s \in \set{+, -}} \nrm{\Pi_{s} \psi'}_{(\tilde{S}^{1/2}_{s})_{b}} \leq 1$, we have
\begin{equation} \label{eq:projsys-a}
	\nrm{A_{0}}_{Y^{1}_{b^{2}}}
	+ \nrm{A_{x} - A^{free}_{x}}_{S^{1}_{b^{2}}} \aleq 1.
\end{equation}
Constructing $b$ as in \eqref{eq:fe-construct}, we have $\nrm{b^{2}}_{\ell^{1}}\leq \nrm{b}_{\ell^{2}}^{2} \aleq \eps_{\ast \ast}^{2}$ by hypothesis.

We are now ready to begin the analysis of \eqref{eq:projsys}. Using the decomposition in Section~\ref{subsec:nonlin} and the identity
\begin{align*}
	\pi^{E}[A_{0}] \Pi_{s} \varphi_{s} + \Pi_{-s} \pi^{E}[A_{0}] \varphi_{s}
	= \pi^{E}[A_{0}] (1 - \Pi_{-s}) \varphi_{s} + \Pi_{-s} \pi^{E}[A_{0}] \varphi_{s},
\end{align*}
the system \eqref{eq:projsys} can be rewritten as $(i \rd_{t} + s \abs{D})_{A^{free}}^{p} \varphi_{s}  = \calE_{s} \varphi + i \Pi_{s} F$, 
where
\begin{align}
\calE_{s} \varphi
= \calE_{s}[A^{free}, \psi'] \varphi
	= & \pi^{E}[\bfA_{0}(\psi', \psi')] \varphi_{s} -s \pi^{R}[\bfA_{x}(\psi', \psi')] \varphi_{s} 		\label{eq:projsys-1} \\
	& + \NR^{E}(A_{0}, \Pi_{s} \varphi_{s}) -s \NR^{R}(A_{x}, \varphi_{s}) 				\label{eq:projsys-2} \\
	& + \ND^{E}(A_{0}, \Pi_{-s} \varphi_{-s}) + s \ND^{R}(A_{x}, \varphi_{-s}) 				\label{eq:projsys-3} \\
	& + \Pi_{s} \ND^{S}_{+}(A_{x}, \varphi_{+}) + \Pi_{s} \ND^{S}_{-}(A_{x}, \varphi_{-}) 		\label{eq:projsys-4} \\
	& + [\Pi_{-s}, \pi^{E}[A_{0}]] \varphi_{s}.										\label{eq:projsys-5}
\end{align}
For any admissible frequency envelope $c$ and $\varphi' = (\varphi_{+}', \varphi_{-}') \in (\tilde{S}^{1/2}_{+} \times \tilde{S}^{1/2}_{-})_{c}$, we claim that
\begin{equation} \label{eq:iter-dirac-key}
	\nrm{\calE_{s} \varphi'}_{(N_{s}^{1/2} \cap L^{2} L^{2} \cap G^{1/2})_{c}} 
	\aleq \eps_{\ast \ast} \sup_{s \in \set{+, -}} \nrm{\varphi_{s}'}_{(\tilde{S}^{1/2}_{s})_{c}}.
\end{equation}
For the moment, we assume the claim and complete the proof. 
Let $\varphi' = (\varphi'_{+}, \varphi'_{-}) \in (\tilde{S}^{1/2}_{+} \times \tilde{S}^{1/2}_{-})_{c}$, and consider a solution $\varphi$ to 
\begin{equation*}
(i \rd_{t} + s \abs{D})_{A^{free}}^{p} \varphi_{s}  = \calE_{s} \varphi' + i \Pi_{s} F
\end{equation*}
given by Theorem~\ref{thm:paradiff}. By the same theorem and \eqref{eq:iter-dirac-key}, we have
\begin{equation*}
	\nrm{\varphi_{s}}_{S^{1/2}_{c}} \aleq \eps_{\ast \ast} \sup_{s \in \set{+, -}} \nrm{\varphi'_{s}}_{(\tilde{S}^{1/2}_{s})_{c}} + \nrm{\varphi(0)}_{\dot{H}^{1/2}} + \nrm{\Pi_{s} F}_{(N^{1/2}_{s} \cap L^{2} L^{2})_{c}}.
\end{equation*}
Combined with the inequality
\begin{equation*}
\nrm{\varphi}_{(\tilde{Z}^{1/2}_{s})_{c}} = \nrm{(i \rd_{t} + s \abs{D}) \varphi}_{G^{1/2}_{c}} \leq \nrm{(i \rd_{t} + s \abs{D})_{A^{free}}^{p} \varphi}_{G^{1/2}_{c}} + \nrm{\Diff^{R}[A^{free}_{x}] \varphi}_{G^{1/2}_{c}}
\end{equation*}
and \eqref{eq:nr-z} (which only involves the $S^{1/2}_{s}$ norm on the RHS), we have
\begin{equation*}
	\nrm{\varphi_{s}}_{\tilde{S}^{1/2}_{c}} \aleq \eps_{\ast \ast} \sup_{s \in \set{+, -}} \nrm{\varphi'_{s}}_{(\tilde{S}^{1/2}_{s})_{c}} + \nrm{\varphi(0)}_{\dot{H}^{1/2}} + \nrm{\Pi_{s} F}_{(N^{1/2}_{s} \cap L^{2} L^{2} \cap G^{1/2})_{c}}.
\end{equation*}
Taking $\eps_{\ast \ast} > 0$ sufficiently small, we may ensure that the map $\varphi' \mapsto \varphi$ is a contraction in $(\tilde{S}_{+}^{1/2} \times \tilde{S}_{-}^{1/2})_{c}$. By iteration (or Banach fixed point theorem), we may then obtain the desired solution $\varphi$ to \eqref{eq:projsys}.

Now it only remains to prove \eqref{eq:iter-dirac-key}. For \eqref{eq:projsys-1}, we use Proposition~\ref{prop:trilinear} with appropriate frequency envelopes. For \eqref{eq:projsys-2}--\eqref{eq:projsys-4}, we apply Proposition~\ref{prop:dirac} and \eqref{eq:projsys-a}.
Finally, \eqref{eq:projsys-5} is handled using \eqref{eq:projsys-a} and the following lemma.
\begin{lemma} \label{lem:diffe-comm}
Let $a$, $b$ be any admissible frequency envelopes, and $s \in \set{+, -}$. Then we have
\begin{equation} \label{eq:diffe-comm}
	\nrm{[\Pi_{-s}, \Diff^{E}[A_{0}]] \psi}_{(N_{s}^{1/2} \cap L^{2} L^{2} \cap G^{1/2})_{ab}} \aleq \nrm{A_{0}}_{Y^{1}_{a}} \nrm{\psi}_{(\tilde{S}^{1/2}_{s})_{b}}
\end{equation}
\end{lemma}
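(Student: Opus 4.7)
The key idea is that $\Pi_{-s}$ is a smooth $0$-homogeneous Fourier multiplier, so that its commutator with multiplication by a much lower-frequency function gains a derivative. Concretely, for $|\xi|\sim 2^k$ and $|\eta|\sim 2^{k_1}$ with $k_1<k-10$, the Taylor expansion $\Pi_{-s}(\xi+\eta)-\Pi_{-s}(\xi)=O(|\eta|/|\xi|)$ allows one to write
\begin{equation*}
[\Pi_{-s},\,P_{<k-10}A_0\,\cdot]P_k\psi = \sum_{k_1<k-10} 2^{k_1-k}\,\calL_{k_1,k}(P_{k_1}A_0,\,P_k\psi),
\end{equation*}
where $\calL_{k_1,k}$ is a translation-invariant bilinear operator with uniformly bounded kernel mass. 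The commutator thus improves the low-high paraproduct by precisely a factor $|\eta|/|\xi|$, reducing the claim to a bilinear estimate with this extra gain.

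With the $2^{k_1-k}$ gain in hand, the $L^2L^2$ and $G^{1/2}$ components of the target norm follow from H\"older and Bernstein, in parallel with \eqref{eq:ne-himod} and \eqref{eq:ne-z}: for $L^2L^2$ one pairs $\|P_{k_1}A_0\|_{L^2L^\infty}\lesssim 2^{k_1/2}a_{k_1}$ (Bernstein applied to the $Y^1$-bound) with $\|P_k\psi\|_{L^\infty L^2}\lesssim 2^{-k/2}b_k$; for $G^{1/2}$ one pairs $L^2L^\infty$ bounds on both factors, using the Strichartz $L^2L^r$ estimates contained in $S_k^{str}$ for $\psi_k$ to pass from $L^2L^r$ to $L^2L^\infty$ by Bernstein. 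In both cases the sum $\sum_{k_1<k-10}2^{k_1-k}(\cdots)$ converges absolutely by admissibility of $a$, yielding the envelope $(ab)_k$.

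The $N_s^{1/2}$ component is the most delicate part, since it targets the \emph{same} sign as $\psi$, for which the underlying paraproduct $\Diff^E[A_0]\psi$ is itself nonperturbative. I would decompose by the output modulation $Q_j^s$: the high-modulation piece ($j\gtrsim k$) reduces to the already-established $L^2L^2$ bound via the elementary embedding $\|Q_j^sF\|_{X^{0,-1/2}_{s,1}}\lesssim 2^{-j/2}\|Q_j^sF\|_{L^2L^2}$, while the low-modulation piece is bounded in $L^1L^2\subset N_s$ through a modulation trichotomy of both factors. The main obstacle is precisely this low-modulation regime: naive H\"older pairings such as $\|A_{0,k_1}\|_{L^2L^2}\|\psi_k\|_{L^2L^\infty}$ produce a sum over $k_1$ that diverges as $k_1\to-\infty$ even after the $2^{k_1-k}$ gain, and overcoming this divergence requires exploiting the refined estimate $\|Q_{j_1}P_{k_1}A_0\|_{L^2L^2}\lesssim 2^{-j_1-k_1/2}a_{k_1}$ for $j_1\geq k_1$ (extracted from $\partial_tA_0\in L^2L^2$ in $Y^1$, via Sobolev embedding in time) in concert with the $X^{1/2}_{s,\infty}$, Strichartz, and angular-sector components of $S_k^s$ for $\psi_k$.
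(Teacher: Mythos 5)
Your opening move is exactly the paper's: the commutator of the zero\--homogeneous multiplier $\Pi_{-s}$ (or of $\tilde P_{k'} = \Pi_{-s}P_{k'}$) with multiplication by $A_{0,k_1}$, $k_1<k-10$, gains a full factor $2^{k_1-k}$, i.e. $[\tilde P_{k'},A_{0,k_1}]f = 2^{-k'}\calL(\nb A_{0,k_1},f)$ with $\calL$ of bounded mass. Your treatment of the $L^2L^2$ and $G^{1/2}$ components is also fine (indeed, as the paper notes, these follow from \eqref{eq:ne-himod} and \eqref{eq:ne-z} even without the commutator structure).

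The gap is in the $N_s^{1/2}$ component, which you correctly identify as the crux but do not actually prove: you end by asserting that a divergence in the $k_1$-sum can be "overcome" by combining a modulation-localized bound on $A_0$ with the $X^{0,1/2}_{s,\infty}$, Strichartz and angular components of $S_k^s$, without exhibiting the estimate. Moreover, the divergence you diagnose is an artifact of pairing against the \emph{unlocalized} $\nrm{\psi_k}_{L^2L^\infty}\aleq 2^k b_k$. If instead you use the box-localized square-summed norm built into $S^{\mathrm{box}}_k$, namely $\bb(\sum_{\calC_{k_1}(0)}\nrm{P_{\calC_{k_1}(0)}\psi_k}_{L^2L^\infty}^2\bb)^{1/2}\aleq 2^{k_1}b_k$ (cf. \eqref{eq:fe-L2Linfty-0}), together with the H\"older-type estimate \eqref{eq:ellip-1} of Lemma~\ref{lem:ellip}, you get
\begin{equation*}
2^{\frac{k}{2}}\cdot 2^{-k'}\nrm{\calL(\nb A_{0,k_1},\psi_k)}_{L^1L^2}
\aleq 2^{-\frac{k}{2}}\nrm{\nb A_{0,k_1}}_{L^2L^2}\bb(\sum_{\calC_{k_1}(0)}\nrm{P_{\calC_{k_1}(0)}\psi_k}_{L^2L^\infty}^2\bb)^{1/2}
\aleq 2^{\frac{1}{2}(k_1-k)}a_{k_1}b_k,
\end{equation*}
using $\nrm{\nb A_{0,k_1}}_{L^2L^2}\aleq 2^{-k_1/2}a_{k_1}$ from the $Y^1$ norm. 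Since $L^1L^2\subseteq N_s$, this already sums over $k_1<k-10$ with the required off-diagonal gain; no modulation trichotomy, no refined $Q_{j_1}A_0$ bounds, and none of the null-frame or angular machinery is needed. As written, your argument for the $N_s^{1/2}$ bound is incomplete, and the route you sketch is substantially heavier than what the estimate requires.
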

\begin{proof}
By \eqref{eq:ne-himod} and \eqref{eq:ne-z}, \eqref{eq:diffe-comm} holds for the $L^{2} L^{2} \cap G^{1/2}$ norm on the LHS even without the commutator structure; hence it remains to show
\begin{equation} \label{eq:diffe-comm-key}
	\nrm{[\Pi_{-s}, \Diff^{E}[A_{0}]] \psi}_{(N_{s}^{1/2})_{ab}} \aleq \nrm{A_{0}}_{Y^{1}_{a}} \nrm{\psi}_{(\tilde{S}^{1/2}_{s})_{b}}
\end{equation}
Write $A_{k} = P_{k} A_{0}$, $\psi_{k} = P_{k} \psi$ and $\tilde{P}_{k} := \Pi_{-s_{0}} P_{k}$, so that
\begin{equation*}
[\Pi_{-s_{0}}, \Diff^{E}[A_{0}]] \psi = \sum_{k', k_{1}, k : k_{1} < k-5} [\tilde{P}_{k'}, A_{k_{1}}] \psi_{k}.
\end{equation*}
Observe that the summand vanishes unless $k' = k + O(1)$. Moreover, we have the well-known commutator identity
\begin{equation*}
	[\tilde{P}_{k'}, A_{k_{1}}] f = 2^{-k'}\calL(\nb A_{k_{1}}, f)
\end{equation*}
where $\calL$ is a translation-invariant bilinear operator with bounded mass kernel (see \cite[Lemma 2]{Tao2}). Applying Lemma~\ref{lem:ellip} from Section~\ref{subsec:ellip} below, we have
\begin{align*}
\nrm{[\tilde{P}_{k'}, A_{k_{1}}] \psi_{k}}_{N_{s_{0}}^{1/2}}
\aleq & 2^{-k} \nrm{\calL(\nb A_{k_{1}}, \psi_{k})}_{N_{s_{0}}^{1/2}} \\
\aleq & 2^{- \frac{1}{2} k} \nrm{\nb A_{k_{1}}}_{L^{2} L^{2}} \bb(\sum_{\calC_{k_{1}}(0)}\nrm{P_{\calC_{k_{1}}(0)} \psi_{k}}_{L^{2} L^{\infty}}^{2} \bb)^{1/2} \\
\aleq & 2^{\frac{1}{2}(k_{1} - k)} \nrm{A_{k_{1}}}_{Y^{1}} \nrm{\psi_{k}}_{S^{1/2}_{s_{0}}}.
\end{align*}
Thanks to the gain $2^{\frac{1}{2}(k_{1} - k)}$, the frequency envelope bound \eqref{eq:diffe-comm-key} follows. \qedhere
\end{proof}

\subsection{Proof of continuous dependence on data}
Here we prove Statement~(2) of Theorem~\ref{thm:main}. Along the way, we also show that every solution obtained by Theorem~\ref{thm:main-iter} arises as an approximation by smooth solutions.

Let $\psi(0) \in \dot{H}^{1/2}$, $A_{x}[0] \in \dot{H}^{1} \times L^{2}$ be an initial data set for MD-CG. Given $m \in \bbZ$, let $\psi^{(m)}(0), A_{x}^{(m)}[0]$ be the regularization $\psi^{(m)}(0) = P_{\leq m} \psi(0)$, $A_{x}^{(m)}[0] = P_{\leq m} A_{x}[0]$. Denote by $(A, \psi)$ [resp. $(A^{(m)}, \psi^{(m)})$] the solution with the data $\psi(0), A_{x}[0]$ [resp. $\psi^{(m)}(0), A_{x}^{(m)}[0]$ ] given by Theorem~\ref{thm:main-iter}.
\begin{lemma}[Approximation by smooth solutions] \label{lem:smth-data}
Let $c$ be an admissible frequency envelope for $\psi(0)$, $A_{x}[0]$. In the above setting, we have
\begin{equation*}
	\sup_{s \in \set{+, -}} \nrm{\Pi_{s} (\psi - \psi^{(m)})}_{\tilde{S}^{1/2}_{s}} +  \nrm{A_{x} - A_{x}^{(m)}}_{S^{1}} + \nrm{A_{0} - A_{0}^{(m)}}_{Y^{1}}  \aleq \bb( \sum_{k > m} c_{k}^{2} \bb)^{1/2}.
\end{equation*}
\end{lemma}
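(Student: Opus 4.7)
The proof adapts the weak Lipschitz dependence argument (Step~3 of the proof of Theorem~\ref{thm:main-iter}(2)), with the loss $\dlt_{2}$ replaced by a quantitative frequency envelope capturing the tail $(\sum_{k>m} c_{k}^{2})^{1/2}$. Define the $\dlt_{1}$-admissible envelope
\[
f_{k} := \sum_{k' > m} 2^{-\dlt_{1} \abs{k-k'}} c_{k'},
\]
so that $\nrm{f}_{\ell^{2}} \aleq (\sum_{k > m} c_{k}^{2})^{1/2}$ by Young's inequality, and $f_{k}$ dominates the frequency-$k$ piece of the difference data $\dlt \psi(0) := P_{>m} \psi(0)$ and $\dlt A^{free}_{x}[0] := P_{>m} A_{x}[0]$ at the critical regularity. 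Let $c^{(m)}$ be the envelope for $(\psi^{(m)}(0), A^{(m)}_{x}[0])$ constructed via \eqref{eq:fe-construct}; since the regularized data is frequency-localized to $\set{k' \leq m}$, Cauchy--Schwarz gives $c^{(m)}_{k} \aleq \eps_{\ast} \, 2^{-\dlt_{1}(k-m)}$ for $k > m$.

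The difference $\dlt \psi := \psi - \psi^{(m)}$ solves
\[
\alp^{\mu} \covD^{A}_{\mu} \dlt \psi = -i \alp^{\mu} \dlt A_{\mu} \psi^{(m)},
\]
where $\dlt A = \dlt A^{free} + \dlt \bfA$, with $\dlt A^{free}$ the free wave developing $(0, P_{>m} A_{x}[0])$ and $\dlt \bfA_{\mu} = \bfA_{\mu}(\dlt \psi, \psi) + \bfA_{\mu}(\psi^{(m)}, \dlt \psi)$. Since $\rd_{\mu} \brk{\psi, \alp^{\mu} \psi} = 0$ (as $\psi$ solves the covariant Dirac equation), $A$ meets the hypotheses of Proposition~\ref{prop:parasys-dirac} with $\psi' = \psi$, yielding
\[
\nrm{\Pi_{s} \dlt \psi}_{\tilde{S}^{1/2}_{s}} \aleq \nrm{\dlt \psi(0)}_{\dot{H}^{1/2}} + \nrm{\Pi_{s} (\alp^{\mu} \dlt A_{\mu} \psi^{(m)})}_{N^{1/2}_{s} \cap L^{2} L^{2} \cap G^{1/2}}.
\]
Decomposing the source as in Section~\ref{subsec:nonlin}, the spinorial and remainder pieces (treated by Proposition~\ref{prop:dirac}) together with the trilinear scalar paradifferential part from $\dlt \bfA$ (treated by Proposition~\ref{prop:trilinear}) contribute at most $C \eps_{\ast} (\sum_{k>m} c_{k}^{2})^{1/2} + C \eps_{\ast}^{2} \nrm{\dlt \psi}_{\tilde{S}^{1/2}_{s}}$, as each envelope product $fc$, $f c^{(m)}$, $f c c^{(m)}$ has $\ell^{2}$ norm controlled by $\eps_{\ast} \nrm{f}_{\ell^{2}}$ or $\eps_{\ast}^{2} \nrm{f}_{\ell^{2}}$ (using $\nrm{c}_{\ell^{2}} + \nrm{c^{(m)}}_{\ell^{2}} \aleq \eps_{\ast}$).

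The essential obstacle is the non-perturbative residual $\pi^{R}[\dlt A^{free}_{x}] \psi^{(m)}_{s}$, which cannot be handled by Lemma~\ref{lem:diffr-free} directly because the admissible envelope for $\dlt A^{free}[0]$ is only $\ell^{2}$. The saving grace is that this term vanishes unless $\psi^{(m)}_{s}$ is at frequency $k > m + 5$, where $c^{(m)}_{k} \aleq \eps_{\ast} 2^{-\dlt_{1}(k-m)}$. Applying Lemma~\ref{lem:diffr-free} with admissible envelope $a_{k'} \aleq f_{k'}$ (bounding $\dlt A^{free}[0]$) and $b_{k} \aleq c^{(m)}_{k}$ (for $\psi^{(m)}$), and using Cauchy--Schwarz $\sum_{m < k' < k} f_{k'} \leq (k-m)^{1/2} \nrm{f}_{\ell^{2}}$, gives
\[
\nrm{P_{k} \pi^{R}[\dlt A^{free}_{x}] \psi^{(m)}_{s}}_{N^{1/2}_{s}} \aleq (k - m)^{1/2} \nrm{f}_{\ell^{2}} \cdot \eps_{\ast} \, 2^{-\dlt_{1}(k-m)}.
\]
Squaring and summing over $k > m+5$, the convergent series $\sum_{j > 0} j \cdot 2^{-2 \dlt_{1} j}$ yields $\nrm{\pi^{R}[\dlt A^{free}_{x}] \psi^{(m)}_{s}}_{N^{1/2}_{s}} \aleq \eps_{\ast} (\sum_{k>m} c_{k}^{2})^{1/2}$. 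Collecting all contributions, $\nrm{\dlt \psi}_{\tilde{S}^{1/2}_{s}} \aleq (\sum_{k>m} c_{k}^{2})^{1/2} + C \eps_{\ast}^{2} \nrm{\dlt \psi}_{\tilde{S}^{1/2}_{s}}$; absorbing the last term for $\eps_{\ast}$ sufficiently small proves the claim for $\dlt \psi$, and the bounds on $\dlt A_{0}$ and $\dlt A_{x}$ then follow from Proposition~\ref{prop:a} applied to the Maxwell equations for the difference.
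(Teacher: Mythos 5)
Your proof is correct, but it takes a genuinely different route from the paper's. The paper's argument is soft and very short: it splits the difference into $P_{>m}$ and $P_{\leq m}$; the high frequencies are bounded by applying the frequency envelope bound \eqref{eq:main-iter-fe} to $\psi$ and $\psi^{(m)}$ \emph{separately} (both admit $c$ as an envelope, so each contributes $\aleq (\sum_{k>m} c_k^2)^{1/2}$ at frequencies $>m$), while the low frequencies are bounded by the weak Lipschitz estimate \eqref{eq:weak-lip} at regularity $\tfrac12-\dlt_2$, paying $2^{\dlt_2 m}$ to return to critical regularity on $\set{k \leq m}$ and recovering exactly that factor from $\nrm{P_{>m}\psi(0)}_{\dot H^{1/2-\dlt_2}} \aleq 2^{-\dlt_2 m}(\sum_{k>m}c_k^2)^{1/2}$. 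You instead re-run the difference estimate from scratch at critical regularity with the tail envelope $f_k=\sum_{k'>m}2^{-\dlt_1|k-k'|}c_{k'}$, which forces you to confront the non-perturbative term $\pi^R[\dlt A^{free}_x]\psi^{(m)}$ directly rather than hide it behind the $\dlt_2$-loss; your resolution — the low-high interaction forces $P_k\psi^{(m)}$ with $k>m+O(1)$, where $c^{(m)}_k\aleq \eps_\ast 2^{-\dlt_1(k-m)}$ beats the $(k-m)^{1/2}$ loss from summing the $\ell^2$ envelope of $\dlt A^{free}[0]$ in Lemma~\ref{lem:diffr-free} — is correct and is the genuinely new ingredient relative to Step 3. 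The trade-off: the paper's proof is two lines given Statements (1)–(2) of Theorem~\ref{thm:main-iter}, whereas yours duplicates the weak-Lipschitz machinery (and inherits its caveat that the a priori finiteness of $\nrm{\Pi_s\dlt\psi}_{\tilde S^{1/2}_s}$ must be available for the absorption, which it is here since both solutions lie in the iteration space); in exchange, your argument avoids the auxiliary regularity drop entirely and yields directly a frequency-envelope version of the difference bound with envelope $f$, which is slightly more information than the lemma asserts.
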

\begin{proof}
Let $c$ be an admissible frequency envelope for $(\psi(0), A_{x}[0])$; observe that it is also a frequency envelope for $(\psi^{(m)}(0), A_{x}^{(m)}[0])$. Applying the frequency envelope bound \eqref{eq:main-iter-fe} to $(A, \psi)$ and $(A^{(m)}, \psi^{(m)})$ separately, the above estimate follows for $P_{>m} (\psi - \psi^{(m)})$ and $P_{>m} (A - A^{(m)})$. On the other hand, for $P_{\leq m} (\psi - \psi^{(m)})$ and $P_{\leq m} (A - A^{(m)})$ we use weak Lipschitz continuity \eqref{eq:weak-lip}. Observe that
\begin{align*}
	\nrm{P_{\leq m} \Pi_{s} (\psi - \psi^{(m)})}_{\tilde{S}^{1/2}_{s}}  
	\aleq & 2^{\dlt_{2} m} \nrm{P_{\leq m} \Pi_{s} (\psi - \psi^{(m)})}_{\tilde{S}^{1/2-\dlt_{2}}_{s}} \\
	\aleq & 2^{\dlt_{2} m} (\nrm{P_{>m} \psi(0)}_{\dot{H}^{1/2-\dlt_{2}}} + \nrm{P_{>m} A_{x}[0]}_{\dot{H}^{1} \times L^{2}}),
\end{align*}
where the last line is bounded by $(\sum_{k > m} c_{k}^{2})^{1/2}$. Combined with similar observations for $A_{x} - A_{x}^{(m)}$ in $S^{1}$ and $A_{0} - A_{0}^{(m)}$ in $Y^{1}$, the lemma follows. \qedhere
\end{proof}

We are now ready to prove Statement~(2) of Theorem~\ref{thm:main}. Let $\psi^{n}(0), A^{n}_{x}[0]$ be a sequence of initial data sets for MD-CG such that $\psi^{n}(0) \to \psi(0)$ in $\dot{H}^{1/2}$ and $A^{n}_{x}[0] \to A_{x}[0]$ in $\dot{H}^{1} \times L^{2}$. Denote by $(A^{n}, \psi^{n})$ the corresponding solution to MD-CG, which exists for large $n$ by Theorem~\ref{thm:main-iter}. For any $\eps > 0$, we claim that
\begin{equation} \label{eq:cont}
	\sup_{s \in \set{+, -}} \nrm{\Pi_{s} (\psi^{n} - \psi)}_{\tilde{S}^{1/2}_{s}} +  \nrm{A^{n}_{x} - A_{x}}_{S^{1}}
	< \eps
\end{equation}
for sufficiently large $n$. The desired continuity statement is equivalent to this claim.

Let $c$ be an admissible frequency envelope for $(\psi(0), A_{x}[0])$. Applying Lemma~\ref{lem:smth-data}, we may find $m \in \bbZ$ such that for sufficiently large $n$,
\begin{equation} \label{eq:cont-hi}
\begin{aligned}
	\sup_{s \in \set{+, -}} \nrm{\Pi_{s} (\psi - \psi^{(m)})}_{\tilde{S}^{1/2}_{s}} +  \nrm{A_{x} - A_{x}^{(m)}}_{S^{1}}
	< & \frac{1}{4} \eps, \\
	\sup_{s \in \set{+, -}} \nrm{\Pi_{s} (\psi^{n} - \psi^{n(m)})}_{\tilde{S}^{1/2}_{s}} +  \nrm{A^{n}_{x} - A_{x}^{n (m)}}_{S^{1}} 
	< & \frac{1}{4} \eps,
\end{aligned}
\end{equation}
where $(A^{n (m)}, \psi^{n (m)})$ is defined in the obvious manner. By persistence of regularity and Proposition~\ref{prop:subcrit-lwp}, we have (as $n \to \infty$)
\begin{equation*}
	\nrm{(\psi^{n (m)} - \psi^{(m)})(t)}_{C_{t} ([0, T]; H^{1/2, 5/2})}
	+ \nrm{(A_{x}^{n (m)} - A_{x}^{(m)})[t]}_{C_{t} ([0, T]; \calH^{1, 3})} \to 0.
\end{equation*}
Reiterating the preceding bound in MD-CG, we also obtain (as $n \to \infty$)
\begin{equation*}
	\nrm{\alp^{\mu} \rd_{\mu} (\psi^{n (m)} - \psi^{(m)})}_{C_{t} ([0, T]; H^{1/2, 5/2})}
	+ \nrm{\Box (A_{x}^{n (m)} - A_{x}^{(m)})}_{C_{t} ([0, T]; H^{0, 2})} \to 0.
\end{equation*}
In a straightforward manner, the preceding two statements imply
\begin{equation*}
	\sup_{s \in \set{+, -}} \nrm{\Pi_{s}(\psi^{n (m)} - \psi^{(m)})}_{\tilde{S}_{s}^{1/2}[0, T]}
	+ \nrm{A_{x}^{n (m)} - A_{x}^{(m)}}_{S^{1}[0, T]} < \frac{1}{2} \eps
\end{equation*}
for sufficiently large $n$. Combined with \eqref{eq:cont-hi}, the desired conclusion \eqref{eq:cont} follows.

\subsection{Proof of modified scattering}
Here we conclude the proof of Theorem~\ref{thm:main} by sketching the proof of Statement~(3). Without loss of generality, we fix $\pm = +$.

Let $(A, \psi)$ be a solution to MD-CG with data $(\psi(0), A_{x}[0])$ given by Theorem~\ref{thm:main-iter}, and let $A^{free}_{x}$ denote the free wave development of $A_{x}[0]$. To prove modified scattering for $\psi$, we first decompose the covariant Dirac equation into
\begin{equation*}
	\alp^{\mu} \covD_{\mu}^{A^{free}} \psi = - i \alp^{\mu} \bfA_{\mu}(\psi, \psi) \psi.
\end{equation*}
For any $t < t'$, Proposition~\ref{prop:parasys-dirac} implies that
\begin{align*}
\nrm{\psi (t') - S^{A^{free}}(t', t) \psi(t)}_{\dot{H}^{1/2}} 
\aleq \sup_{s \in \set{+, -}} \nrm{\Pi_{s} (\alp^{\mu} \bfA_{\mu}(\psi, \psi) \psi)}_{(N_{s}^{1/2} \cap L^{2} L^{2} \cap G^{1/2})[t, \infty)},
\end{align*}
where $S^{A^{free}}(t', t)$ denotes the propagator from time $t$ to $t'$ for the covariant Dirac equation $\alp^{\mu} \covD_{\mu}^{A^{free}} \varphi = 0$. An analysis as in Section~\ref{subsec:iter-dirac} using Propositions~\ref{prop:a} and \ref{prop:dirac} shows that the RHS is finite for (say) $t = 0$; by \eqref{eq:N-fungibility}, it follows that the RHS vanishes as $t \to \infty$. Using the uniform boundedness of $S^{A^{free}}(0, t')$ on $\dot{H}^{1/2}$ (again by Proposition~\ref{prop:parasys-dirac}), as well as the formula $S^{A^{free}}(t'', t) = S^{A^{free}}(t'', t') S^{A^{free}}(t', t)$, it follows that (as $t \to \infty$)
\begin{equation*}
\nrm{S^{A^{free}}(0, t')\psi (t') - S^{A^{free}}(0, t) \psi(t)}_{\dot{H}^{1/2}}  
\aleq \nrm{\psi (t') - S^{A^{free}}(t', t) \psi(t)}_{\dot{H}^{1/2}}  \to 0.
\end{equation*}
Hence $\lim_{t \to \infty} S^{A^{free}}(0, t)\psi (t)$ tends to some limit $\psi^{\infty}(0)$ in $\dot{H}^{1/2}$, which is precisely the data for $\psi^{\infty}$ in Theorem~\ref{thm:main}.

The proof of scattering for $A_{x}$ is more standard and straightforward. In fact, since $\nrm{\NM_{x}(\psi, \psi)}_{\ell^{1} (N \cap L^{2} \dot{H}^{-1/2})[0, \infty)} < \infty$ by Proposition~\ref{prop:a}, $\lim_{t \to \infty} S[0, t] A_{x}[t]$ tends to a limit $A_{x}^{\infty}[0]$ in $\ell^{1} (\dot{H}^{1} \times L^{2})$; here $S[t', t]$ denotes the propagator for the free wave equation. In particular, we have $A_{x}[0] - A_{x}^{\infty}[0] \in \ell^{1} (\dot{H}^{1} \times L^{2})$; this fact, combined with \eqref{eq:diffr-free} and the preceding argument for $\psi$, allows us to replace $A^{free}$ by $A^{\infty}$ as claimed in Theorem~\ref{thm:main}. We leave the details to the reader.
\begin{remark} \label{rem:hi-d-2}
In a general dimension $d \geq 4$, all arguments in this section apply with substitutions as in Remark~\ref{rem:hi-d-1}.
\end{remark}

\section{Interlude: Bilinear null form estimates} \label{sec:nf}
The remainder of this paper is devoted to the proof of the estimates stated in Section~\ref{sec:main-est}.
In this section, we present a few concepts and basic techniques for carrying out these proofs. 

In Section~\ref{subsec:geom-cone}, we consider a bilinear operator and investigate its vanishing property based on the Fourier supports of the inputs and the output; this is fundamental for orthogonality arguments that we employ later. As an immediate application, in Section~\ref{subsec:ellip} we state and prove a H\"older-type inequality involving box localization. Next, we introduce the notion of an \emph{abstract null form} in Section~\ref{subsec:abs-nf}, which is used in Section~\ref{subsec:MD-nf} to express the null structure of the Maxwell--Dirac system in the Coulomb gauge in a unified fashion. Finally, in Section~\ref{subsec:core-nf}, we state and prove \emph{core bilinear estimates}, which concern the `resonant' case (i.e., when the inputs and the output all have low modulation), when the null form is most useful. 

In this section, we consider the general case of $\bbR^{1+d}$ with any $d \geq 2$, although much of our discussion would be fully useful only in $d \geq 4$. By a \emph{universal} constant, we mean that it depends only on the dimension $d$. We denote by $f, g, h, \ldots$ functions which may take values in a vector space (e.g., $f$ may stand for a spinor field $\psi$, or a real-valued spatial 1-form $A_{x}$ etc.). Accordingly, multilinear operators are assumed to take in and output vector-valued functions. These features would be inessential for the proof of the estimates, and for practical purposes the reader may assume that all functions and multilinear operators are scalar-valued.

\subsection{Orthogonality and geometry of the cone} \label{subsec:geom-cone}
Let $\BL$ be a translation-invariant bilinear operator on $ \bbR^{d}$ or  $\bbR^{1+d}$ with symbol $ m(\xi_1,\xi_2) $, respectively $m(\Xi^{1}, \Xi^{2})$ (which is possibly a distribution), i.e.,
\begin{equation*}
	\BL(f_{1}, f_{2}) (x) = \int e^{i  x \cdot (\xi_{1} + \xi_{2})} m(\xi_1,\xi_2) \hat{f}_{1}(\xi_{1}) \hat{f}_{2}(\xi_{2}) \, \frac{\ud \xi_{1} \, \ud \xi_{2}}{(2 \pi)^{2 d}}.
\end{equation*}
The operator $\calL$ introduced in Section~\ref{subsec:notation} can be written in this form by defining
$$ m(\xi_1,\xi_2)= \hat{K} (\xi_1,\xi_2). $$
Conversely, $ L $ can be written in the form \eqref{transinvop}, if we ensure that $ K \in L^1 $ or that it has bounded mass.
An important example will be provided in Definition~\ref{def:abs-nf} below.

To understand $\BL(f_{1}, f_{2})$, we may consider the `dualized'\footnote{We have chosen to use a pairing different from the standard one $\int f \overline{g}$ to have complete symmetry among $f_{0}, f_{1}, f_{2}$. In the vector-valued case, we multiply $\BL(f_{1}, f_{2})$ by the transpose $f_{0}^{\dagger}$ instead of the hermitian transpose $f_{0}^{\ast} = \overline{f_{0}}^{\dagger}$.} expression
\begin{equation} \label{eq:L-expr}
	\iint f_{0} \BL(f_{1}, f_{2}) \, \ud t \ud x
%	= (2 \pi)^{1+d}\int \dlt(\Xi^{0} + \Xi^{1} + \Xi^{2}) m(\Xi^{1}, \Xi^{2}) \hat{f}_{0}(\Xi^{0}) \hat{f}_{1}(\Xi^{1}) \hat{f}_{2}(\Xi^{2}) \, \frac{\ud \Xi^{0} \, \ud \Xi^{1} \, \ud \Xi^{2}}{(2 \pi)^{3(1+d)}} \\
%	= \int m(\Xi^{1}, \Xi^{2}) \hat{f}_{0}(-\Xi^{1} - \Xi^{2}) \hat{f}_{1}(\Xi^{1}) \hat{f}_{2}(\Xi^{2}) \, \frac{\ud \Xi^{1} \, \ud \Xi^{2}}{(2 \pi)^{2(1+d)}} 
	= \int_{\set{\Xi^{0} + \Xi^{1} + \Xi^{2} = 0}} m(\Xi^{1}, \Xi^{2}) \hat{f}_{0}(\Xi^{0}) \hat{f}_{1}(\Xi^{1}) \hat{f}_{2}(\Xi^{2}) \, \frac{\ud \Xi^{1} \, \ud \Xi^{2}}{(2 \pi)^{2(d+1)}}.
\end{equation}
In view of performing summation arguments later on, we present below various `orthogonality' statements concerning the vanishing property of the expression \eqref{eq:L-expr} based on the Fourier supports of $f_{i}$ $(i=0,1,2)$. 

Given a triple $k_{0}, k_{1}, k_{2} \in \bbR$, we denote by $k_{\min}$, $k_{\med}$ and $k_{\max}$ the minimum, median and maximum of $k_{0}, k_{1}, k_{2}$. If $f_{i} = P_{k_{i}} f_{i}$, then \eqref{eq:L-expr} vanishes unless the maximum and the median of $k_{0}, k_{1}, k_{2}$ (i.e., the two largest numbers) are apart by at most (say) $5$; this is the standard Littlewood-Paley trichotomy. We furthermore have the following refinement, which is useful when $k_{\min}$ is very small compared to $k_{\max}$:
\begin{lemma} \label{lem:box-orth-0}
Let $k_{0}, k_{1}, k_{2} \in \bbZ$ be such that $\abs{k_{\med} - k_{\max}} \leq 5$. 
For $i=0, 1, 2$, let $\calC^{i}$ be a cube of the form $\calC_{k_{\min}}(0)$ (i.e., of dimension $2^{k_{\min}} \times \cdots \times 2^{k_{\min}}$) situated in $\set{\abs{\xi} \simeq 2^{k_{i}}}$.
\begin{enumerate}[leftmargin=*]
\item Then the expression 
\begin{equation} \label{eq:box-orth-0-expr}
	\iint P_{\calC^{0}} h_{k_{0}} \, \BL(P_{\calC^{1}} f_{k_{1}}, P_{\calC^{2}} g_{k_{2}}) \, \ud t \ud x
\end{equation}
vanishes unless $\calC^{0}+ \calC^{1} + \calC^{2} \ni 0$. 

\item If $\calC^{0}+ \calC^{1} + \calC^{2} \ni 0$, then the cubes situated in the non-minimal frequency annuli are almost diametrically opposite. More precisely, we have
\begin{equation*}
	\abs{\angle(\calC^{i}, -\calC^{i'})} \aleq 2^{k_{\min} - k_{\max}},
\end{equation*}
where $k_{i}, k_{i'}$ $(i \neq i')$ are the median and maximal frequencies.
\item Without loss of generality, assume that $k_{0}$ is non-minimal, i.e., $k_{0} = k_{\med}$ or $k_{\max}$.
For any fixed cube $\calC^{0}$ of the form $\calC_{k_{\min}}(0)$ situated in $\set{\abs{\xi} \simeq 2^{k_{0}}}$, there are only (uniformly) bounded number of cubes $\calC^{1}, \calC^{2}$ of the form $\calC_{k_{\min}}(0)$ in $\set{\abs{\xi} \simeq 2^{k_{1}}}, \set{\abs{\xi} \simeq 2^{k_{2}}}$ such that $\calC^{0}+ \calC^{1} + \calC^{2} \ni 0$. 
\end{enumerate}
\end{lemma}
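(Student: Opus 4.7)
The plan is to reduce all three claims to Fourier-support considerations applied to the dualized pairing in \eqref{eq:L-expr}.

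For Part (1), I would substitute $P_{\calC^{0}} h_{k_{0}}$, $P_{\calC^{1}} f_{k_{1}}$, $P_{\calC^{2}} g_{k_{2}}$ into the identity \eqref{eq:L-expr}. Each $P_{\calC^{i}}$ localizes the spatial Fourier variable to $\calC^{i}$, while the delta constraint $\Xi^{0} + \Xi^{1} + \Xi^{2} = 0$ in the pairing forces in particular $\xi^{0} + \xi^{1} + \xi^{2} = 0$. If $\calC^{0} + \calC^{1} + \calC^{2} \not\ni 0$, the joint support of the integrand is empty and the expression vanishes. The symbol $m$ plays no role here.

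For Part (2), the hypothesis $\calC^{0} + \calC^{1} + \calC^{2} \ni 0$ furnishes a triple $(\xi^{0}, \xi^{1}, \xi^{2})$ with $\xi^{i} \in \calC^{i}$ and $\xi^{0} + \xi^{1} + \xi^{2} = 0$. Letting $i, i'$ denote the two non-minimal indices and $i''$ the minimal one, I would rewrite this as $\xi^{i} + \xi^{i'} = -\xi^{i''}$, giving $\abs{\xi^{i} - (-\xi^{i'})} = \abs{\xi^{i''}} \aleq 2^{k_{\min}}$ while $\abs{\xi^{i}}, \abs{\xi^{i'}} \simeq 2^{k_{\max}}$. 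A direct planar estimate (comparing chord to arc) then yields $\abs{\angle(\xi^{i}, -\xi^{i'})} \aleq 2^{k_{\min} - k_{\max}}$, which is an upper bound for the infimum defining $\abs{\angle(\calC^{i}, -\calC^{i'})}$.

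For Part (3), since $k_{0}$ is non-minimal, at least one of $k_{1}, k_{2}$ equals $k_{\min}$; by relabeling inputs $1 \leftrightarrow 2$ if necessary, I may assume $k_{2} = k_{\min}$. The plan is then a two-step count. First, $\calC^{2}$ is a cube of side $2^{k_{\min}}$ inside the annulus $\set{\abs{\xi} \simeq 2^{k_{\min}}}$, which has thickness $\simeq 2^{k_{\min}}$, so there are only $O(1)$ possible $\calC^{2}$. Second, for each such $\calC^{2}$, the constraint $\calC^{0} + \calC^{1} + \calC^{2} \ni 0$ forces $\calC^{1}$ to intersect the set $-\calC^{0} - \calC^{2}$, which is convex of diameter $\aleq 2^{k_{\min}}$; only $O(1)$ cubes of side $2^{k_{\min}}$ can meet such a set. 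Multiplying gives the uniform bound.

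Main obstacle: there is none of substance — all three parts package elementary Fourier-support and geometry facts used later to justify orthogonality sums. The only point requiring brief attention is the corner case $k_{1} = k_{2} = k_{\min}$ in Part (3), in which all three frequencies are comparable and every annulus already contains only $O(1)$ cubes of side $2^{k_{\min}}$, so the bound is immediate.
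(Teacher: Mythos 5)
Your proposal is correct and follows essentially the same route as the paper: Part (1) by reading off the Fourier-support constraint from \eqref{eq:L-expr}, Part (2) by the identity $\abs{\xi^{i}+\xi^{i'}}=\abs{\xi^{i''}}\aleq 2^{k_{\min}}$ (your chord-to-arc comparison is exactly the paper's law-of-cosines computation), and Part (3) by the same two-step count ($O(1)$ cubes in the minimal annulus, then $O(1)$ cubes meeting the diameter-$2^{k_{\min}}$ set $-\calC^{0}-\calC^{2}$).
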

For the notation $\calC^{0} + \calC^{1} + \calC^{2}$ and $\abs{\angle(\calC, \calC')}$, we refer the reader to Section~\ref{subsec:notation}. 
%\begin{remark} \mred{NEEDED?}
%As it is clear from the proof, Lemma~\ref{lem:box-orth-0} holds for boxes of the form $\calC_{k_{\min}}(C)$ (with adjusted constants) as well.
%\end{remark}
\begin{proof}
Statement~(1) is obvious from the Fourier space representation of \eqref{eq:box-orth-0-expr}.
For the proof of Statements~(2) and (3), we assume without loss of generality that $k_{2} = k_{\min}$. Since $\calC^{0} + \calC^{1} + \calC^{2} \ni 0$, there exists $\xi^{i} \in \calC^{i}$ $(i = 0, 1, 2)$ forming a triangle, i.e., $\sum_{i} \xi^{i} = 0$. By the law of cosines,
\begin{equation*}
	\abs{\xi^{0}}^{2} + \abs{\xi^{1}}^{2} - 2 \abs{\xi^{0}} \abs{\xi^{1}} \cos \angle(\xi^{0}, - \xi^{1}) = \abs{\xi^{2}}^{2}.
\end{equation*}
Rearranging terms, we see that
\begin{equation*}
	2 \abs{\xi^{0}} \abs{\xi^{1}} (1 - \cos \angle(\xi^{0}, - \xi^{1})) = \abs{\xi^{2}}^{2} - (\abs{\xi^{0}} - \abs{\xi^{1}})^{2}.
\end{equation*}
The LHS is comparable to $2^{2 k_{\max}} \abs{\angle(\xi^{0}, -\xi^{1})}$, whereas the RHS is bounded from above by $\aleq 2^{2 k_{\min}}$. Statement~(2) now follows.

It remains to establish Statement~(3). Since there are only bounded number of cubes $\calC_{k_{\min}}(0)$ in $\set{\abs{\xi} \simeq 2^{k_{\min}}}$, the desired statement for $\calC^{2}$ follows. Observing that $\calC^{0} + \calC^{2}$ is contained in a cube of dimension $\aleq 2^{k_{\min}}$, we see that there are only bounded number of cubes $\calC^{1} = \calC_{k_{\min}}(0)$ such that $\calC^{0} + \calC^{2} \cap (- \calC^{1}) \neq \0$, or equivalently, $\calC^{0} + \calC^{1} + \calC^{2} \ni 0$. \qedhere
\end{proof}

Next, we consider the case when, in addition to frequency, the modulation of $f_{i}$ is localized as well, i.e., $f_{i} = P_{k_{i}} Q_{j_{i}} f_{i}$ for some $k_{i}, j_{i} \in \bbZ$ $(i=0, 1, 2)$. From the triple $j_{0}, j_{1}, j_{2}$, we define $j_{\min}$, $j_{\med}$ and $j_{\max}$ as before. The analogue of the Littlewood-Paley trichotomy does \emph{not} hold for modulations; it is possible that \eqref{eq:L-expr} does not vanish while $j_{\max}$ is much larger than $j_{\med}$. However, modulation localization forces certain angular conditions among the spatial Fourier supports of $f_{i}$. An excellent discussion on this subject can be found in \cite[Section~13]{Tao2}. 
Thanks to the fact that we are in dimension $d \geq 4$, we only need the following simple statement.

\begin{lemma}[Geometry of the cone] \label{lem:geom-cone}
Let $k_{0}$, $k_{1}$, $k_{2}$, $j_{0}, j_{1}, j_{2} \in \bbZ$ be such that $\abs{k_{\med} - k_{\max}} \leq 5$. For $i=0,1,2$, let $\omg_{i} \subseteq \bbS^{d-1}$ be an angular cap of radius $0 < r_{i} < 2^{-5}$ and let $f_{i}$ have Fourier support in the region $\set{\abs{\xi} \simeq 2^{k_{i}}, \, \frac{\xi}{\abs{\xi}} \in \omg_{i}, \, \abs{\tau - s_{i} \abs{\xi}} \simeq 2^{j_{i}}}$. Then there exists a constant $C_{0} > 0$ such that the following statements hold:
\begin{enumerate}[leftmargin=*]
\item Suppose that $j_{\max} \leq k_{\min} + C_{0}$. Define $\ell := \frac{1}{2} (j_{\max} - k_{\min})_{-}$. Then the expression $\iint f_{0} \BL(f_{1}, f_{2}) \, \ud t \ud x$ vanishes unless 
\begin{equation} \label{eq:geom-cone}
	\abs{\angle(s_{i} \omg_{i}, s_{i'} \omg_{i'})} \aleq 2^{k_{\min} - \min\set{k_{i}, k_{i'}}} 2^{\ell} + \max \set{r_{i}, r_{i'}} 
\end{equation}
for every pair $i, i' \in \set{0, 1, 2}$ $(i \neq i')$. 
\item Suppose that $j_{\med} \leq j_{\max} - 5$. Then the expression $\iint f_{0} \BL(f_{1}, f_{2}) \, \ud t \ud x$ vanishes unless either $j_{\max} = k_{\max} + O(1)$ or $j_{\max} \leq k_{\min} + \frac{1}{2}C_{0}$.
\end{enumerate}
\end{lemma}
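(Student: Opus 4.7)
I would work entirely on the Fourier side. By \eqref{eq:L-expr}, the expression $\iint f_0\,\BL(f_1,f_2)\,\ud t\,\ud x$ vanishes unless there exist $\Xi^i=(\tau^i,\xi^i)$ in the respective Fourier supports with $\sum_i \Xi^i = 0$. Setting $\rho^i := \tau^i - s_i|\xi^i|$ so that $|\rho^i|\simeq 2^{j_i}$, the temporal constraint becomes the scalar identity
\[
\sum_i s_i|\xi^i| \;=\; -\sum_i \rho^i,
\]
which always yields $|\sum_i s_i|\xi^i|| \lesssim 2^{j_{\max}}$; under the gap condition $j_{\med}\leq j_{\max}-5$ of Part (2), this upgrades to the two-sided bound $|\sum_i s_i|\xi^i||\simeq 2^{j_{\max}}$.

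For Part (2) I would do a two-case sign analysis after relabeling so that $|\xi^1|\simeq|\xi^2|\simeq 2^{k_{\max}}$ and $|\xi^0|\simeq 2^{k_{\min}}$. If $s_1 s_2=+1$, then $|s_1|\xi^1|+s_2|\xi^2||=|\xi^1|+|\xi^2|\simeq 2^{k_{\max}}$ dominates $|s_0|\xi^0||\lesssim 2^{k_{\min}}$, yielding $|\sum_i s_i|\xi^i||\simeq 2^{k_{\max}}$. If $s_1 s_2=-1$, the polarization identity $|\xi^1|^2-|\xi^2|^2 = -(\xi^1-\xi^2)\cdot\xi^0$ forces $||\xi^1|-|\xi^2||\lesssim 2^{k_{\min}}$, hence $|\sum_i s_i|\xi^i||\lesssim 2^{k_{\min}}$. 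Matching these against $|\sum_i s_i|\xi^i||\simeq 2^{j_{\max}}$ produces the claimed dichotomy once $C_0$ is chosen larger than twice the logarithm of the universal implicit constant.

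For Part (1) the centerpiece is the trilinear cone identity
\[
(s_i|\xi^i|+s_{i'}|\xi^{i'}|)^2 - |\xi^i+\xi^{i'}|^2 \;=\; 4 s_i s_{i'} |\xi^i||\xi^{i'}| \sin^2(\phi_{ii'}/2), \qquad \phi_{ii'} := \angle(s_i \xi^i, s_{i'}\xi^{i'}),
\]
applied uniformly for each pair $i\neq i'$. Writing $i''$ for the third index and substituting $\xi^i+\xi^{i'}=-\xi^{i''}$ together with $s_i|\xi^i|+s_{i'}|\xi^{i'}|= -s_{i''}|\xi^{i''}|+O(2^{j_{\max}})$, the left-hand side is bounded in absolute value by $O(|\xi^{i''}|\cdot 2^{j_{\max}}+2^{2j_{\max}})\lesssim 2^{k_{i''}+j_{\max}}$, where the absorption uses $j_{\max}\leq k_{\min}+C_0\leq k_{i''}+C_0$. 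I would then verify in the two possible configurations---(a) $(i,i')$ both non-minimum, giving $k_{i''}=k_{\min}$ and $|\xi^i||\xi^{i'}|\simeq 2^{2k_{\max}}$; (b) exactly one of $(i,i')$ is the minimum, giving $k_{i''}=k_{\max}+O(1)$ and $|\xi^i||\xi^{i'}|\simeq 2^{k_{\max}+k_{\min}}$---that in both cases $2^{k_{i''}+j_{\max}}/(|\xi^i||\xi^{i'}|)\simeq 2^{j_{\max}-k_{\min}}\cdot 2^{2(k_{\min}-\min(k_i,k_{i'}))}$. Taking square roots yields
\[
\phi_{ii'} \;\lesssim\; 2^{(j_{\max}-k_{\min})/2}\cdot 2^{k_{\min}-\min(k_i,k_{i'})} \;\lesssim\; 2^\ell\cdot 2^{k_{\min}-\min(k_i,k_{i'})},
\]
where the last bound absorbs a factor $2^{C_0/2}$ (active only for $j_{\max}>k_{\min}$) into $\lesssim$. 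Since $\xi^j/|\xi^j|\in\omega_j$ for each $j$, this transfers to $|\angle(s_i\omega_i,s_{i'}\omega_{i'})|\leq \phi_{ii'}$, which is stronger than the stated inequality (the $+\max(r_i,r_{i'})$ is slack that accommodates replacing the directional witness by any other point of the caps).

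The chief obstacle will be the configuration-dependent bookkeeping: the error $O(2^{2j_{\max}})$ from $(\sum\rho^j)^2$ can a priori dominate the main $|\xi^{i''}|\cdot 2^{j_{\max}}$ term when $k_{i''}$ is close to $k_{\min}$, but the hypothesis $j_{\max}\leq k_{\min}+C_0$ exactly prevents this; conversely, the matching of the exponent $k_{i''}$ to the target power $2(k_{\min}-\min(k_i,k_{i'}))+k_i+k_{i'}-k_{\min}$ in both configurations (a) and (b) relies on the Littlewood--Paley trichotomy $|k_{\med}-k_{\max}|\leq 5$ to treat $k_{\max}$ and $k_{\med}$ as interchangeable. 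Both points are routine once the cone identity is in place, so the essence of the proof is the identity and the two-case check above.
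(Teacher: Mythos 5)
Your proof is correct and follows essentially the same route as the paper: both arguments reduce to the resonance function $H=\sum_i s_i|\xi^i|$, bound $|H|\lesssim 2^{j_{\max}}$ (with the two-sided bound under the modulation gap), and convert this into angular information via the law of cosines on the cone. The only difference is organizational — your single identity $(s_i|\xi^i|+s_{i'}|\xi^{i'}|)^2-|\xi^{i''}|^2=4s_is_{i'}|\xi^i||\xi^{i'}|\sin^2(\phi_{ii'}/2)$ treats all three pairs uniformly, where the paper splits on the signs of the two largest frequencies and invokes the law of sines for the max--max pair, but the substance is identical.
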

\begin{proof}
If the expression does not vanish, there exists $\Xi^{i} = (\tau^{i}, \xi^{i}) \in \set{\abs{\xi} \simeq 2^{k_{i}}, \, \frac{\xi}{\abs{\xi}} \in \omg_{i}, \, \abs{\tau - s_{i} \abs{\xi}} \simeq 2^{j_{i}}}$ $(i=0,1,2)$ such that $\sum_{i=0,1,2} \Xi^{i}= 0$. 
Without loss of generality, assume that $\abs{\xi^{2}} \leq \abs{\xi^{0}}, \abs{\xi^{1}}$. Observe that $\abs{\xi^{0}}, \abs{\xi^{1}} = k_{\max} + O(1)$, whereas $\abs{\xi^{2}} = k_{\min} + O(1)$.

\pfstep{Statement~(1)} Consider the quantity
\begin{equation*}
	H := s_{0} \abs{\xi^{0}} + s_{1} \abs{\xi^{1}} + s_{2} \abs{\xi^{2}}.
\end{equation*}
On one hand, using $\sum_{i} \tau^{i} = 0$, observe that
\begin{equation} \label{eq:cone-H-j}
	\abs{H} = \abs{(\tau^{0} - s_{0} \abs{\xi^{0}}) + (\tau^{1} - s_{1} \abs{\xi^{1}}) + (\tau^{2} - s_{2} \abs{\xi^{2}})} \aleq 2^{j_{\max}}. 
\end{equation}

On the other hand, $\abs{H}$ may be related to $k_{0}, k_{1}, k_{2}$ and the angles among $\xi^{0}, \xi^{1}, \xi^{2}$.
When $s_{0}  = s_{1}$, then $\abs{H} \simeq 2^{k_{\max}}$, which implies that $j_{\max} \geq k_{\max} - C$. Combined with the assumption $j_{\max} \leq k_{\min} + C_{0}$, it follows that $\abs{k_{\max} - k_{\min}} \leq C$ and $\abs{\ell} \leq C$. Hence \eqref{eq:geom-cone} trivially holds.

Consider now the case $s_{0} = - s_{1}$. Without loss of generality, we may assume that $(s_{0}, s_{1}, s_{2}) = (-, +, +)$ by swapping $f^{0}$ with $f^{1}$ and replacing $f^{i}$ by $\overline{f}^{i}$ $(i = 0, 1, 2)$ if necessary. We claim that
\begin{align} \label{eq:cone-H-angle}
	\abs{H} \simeq& 2^{k_{\min}} \abs{\angle(s_{i} \xi^{i}, s_{2} \xi^{2})}^{2} \quad \hbox{ for } i = 0, 1.
\end{align}
Assuming \eqref{eq:cone-H-angle}, we may conclude the proof of \eqref{eq:geom-cone}. Combined with \eqref{eq:cone-H-j}, the desired statement \eqref{eq:geom-cone} follows in all cases except $\set{i, i'} = \set{0, 1}$. To prove the remaining case, we first apply the law of sines to obtain
\begin{equation*}
	\sin \angle (-\xi^{0}, \xi^{1}) = \frac{\abs{\xi^{2}}}{\abs{\xi^{0}}} \sin \angle(\xi^{1}, \xi^{2})
\end{equation*}
Since $\xi^{0}, \xi^{1}$ are the two longest vectors, $\angle (-\xi^{0}, \xi^{1})$ must be acute; hence the LHS is comparable to $\abs{\angle(- \xi^{0}, \xi^{1})}$. Combined with \eqref{eq:geom-cone} in the case $\set{i, i'} = \set{1, 2}$, it follows that
\begin{equation*}
	\abs{\angle(s_{0} \xi^{0}, s_{1} \xi^{1})} \simeq 2^{k_{\min} - k_{0}} 2^{\ell}
\end{equation*}
as desired.

It remains to verify \eqref{eq:cone-H-angle}. Using $\sum_{i} \xi^{i} = 0$, we have the identity
\begin{align*}
	H = - \abs{\xi^{0}} + \abs{\xi^{1}} + \abs{\xi^{2}}
	= \frac{- \abs{\xi^{0}}^{2} + (\abs{\xi^{1}} + \abs{\xi^{2}})^{2}}{\abs{\xi^{0}} + \abs{\xi^{1}} + \abs{\xi^{2}}}
	= \frac{- 2 ( \xi^{1} \cdot \xi^{2} - \abs{\xi^{1}} \abs{\xi^{2}})}{\abs{\xi^{0}} + \abs{\xi^{1}} + \abs{\xi^{2}}} 
\end{align*}
Hence
\begin{equation*}
\abs{H}
= \frac{\abs{\xi^{1}} \abs{\xi^{2}}}{\abs{\xi^{0}} + \abs{\xi^{1}} + \abs{\xi^{2}}} (1-\cos \angle(\xi^{1}, \xi^{2}) )
\simeq 2^{k_{\min}} \abs{\angle(s_{1} \xi^{1}, s_{2} \xi^{2})}^{2} 
\end{equation*}
Similarly,
\begin{align*}
	H = - \abs{\xi^{0}} + \abs{\xi^{1}} + \abs{\xi^{2}}
	= \frac{- (\abs{\xi^{0}} - \abs{\xi^{2}})^{2} + \abs{\xi^{1}}^{2}}{\abs{\xi^{0}} + \abs{\xi^{1}} - \abs{\xi^{2}}}
	= \frac{- 2 ( (-\xi^{0}) \cdot \xi^{2} - \abs{-\xi^{0}} \abs{\xi^{2}})}{\abs{\xi^{0}} + \abs{\xi^{1}} - \abs{\xi^{2}}} 
\end{align*}
Since $\abs{\xi^{0}} + \abs{\xi^{1}} - \abs{\xi^{2}} \simeq 2^{k_{\max}}$, we have
\begin{equation*}
	\abs{H} \simeq 2^{k_{\min}} \abs{\angle(s_{0} \xi^{0}, s_{2} \xi^{2})}^{2}.
\end{equation*}

\pfstep{Statement~(2)} From the proof of (1), observe that either $\abs{H} \simeq 2^{k_{\max}}$ or $\abs{H} \aleq 2^{k_{\min}}$. On the other hand, by the assumption $j_{\med} \leq j_{\max} - 5$, we have
$\abs{H} \simeq 2^{j_{\max}}$ instead of \eqref{eq:cone-H-j}. Taking $C_{0} > 0$ large enough, the desired statement now follows. \qedhere
\end{proof}

From Lemma~\ref{lem:geom-cone}, we immediately obtain the following refinement of Lemma~\ref{lem:box-orth-0}.
\begin{lemma} \label{lem:box-orth}
Let $k_{0}, k_{1}, k_{2}, j_{0}, j_{1}, j_{2} \in \bbZ$ be such that $\abs{k_{\med} - k_{\max}} \leq 5$ and $j_{\max} \leq k_{\min} + C_{0}$. Define $\ell := \frac{1}{2} (j_{\max} - k_{\min})_{-}$. 
For $i=0, 1, 2$, let $\calC^{i}$ be a rectangular box of the form $\calC_{k_{\min}}(\ell)$ (i.e., of dimension $2^{k_{\min}} \times 2^{k_{\min} + \ell} \times \cdots \times 2^{k_{\min} + \ell}$, with the longest side aligned in the radial direction) situated in $\set{\abs{\xi} \simeq 2^{k_{i}}}$.
\begin{enumerate}[leftmargin=*]
\item Then the expression
\begin{equation} \label{eq:box-orth-expr} 
	\iint P_{\calC^{0}} Q_{j_{0}}^{s_{0}} h_{k_{0}} \, \BL(P_{\calC^{1}} Q_{j_{1}}^{s_{1}} f_{k_{1}}, P_{\calC^{2}} Q_{j_{2}}^{s_{2}} g_{k_{2}}) \, \ud t \ud x
\end{equation}
vanishes unless 
\begin{equation} \label{eq:box-orth} 
	\calC^{0}+ \calC^{1} + \calC^{2} \ni 0 \quad \hbox{ and } \quad
	\abs{\angle(s_{i} \calC^{i}, s_{i'} \calC^{i'})} \aleq 2^{\ell} 2^{k_{\min} - \min \set{k_{i}, k_{i'}}} 
\end{equation}
for every $i, i' \in \set{0, 1, 2}$ $(i \neq i')$.

\item Let $k_{i} = k_{\med}$ or $k_{\max}$; without loss of generality, assume that $i = 0$. Then for any fixed rectangular box $\calC^{0}$ of the form $\calC_{k_{\min}}(\ell)$ situated in $\set{\abs{\xi} \simeq 2^{k_{0}}}$, there are only (uniformly) bounded number of boxes $\calC^{1}, \calC^{2}$ in $\set{\abs{\xi} \simeq 2^{k_{1}}}, \set{\abs{\xi} \simeq 2^{k_{2}}}$ such that \eqref{eq:box-orth} holds.
\end{enumerate}
\end{lemma}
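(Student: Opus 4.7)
The plan is to deduce both statements directly from Lemma~\ref{lem:geom-cone} together with the elementary Lemma~\ref{lem:box-orth-0}, with the key observation being that a box $\calC_{k_{\min}}(\ell)$ sitting inside $\set{\abs{\xi}\simeq 2^{k_i}}$ is contained in an angular cap of aperture $\simeq 2^{k_{\min}+\ell-k_i}$.

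For Statement~(1), the containment $\calC^0+\calC^1+\calC^2 \ni 0$ follows directly from the Fourier-space representation \eqref{eq:L-expr} of the integral: the spatial Fourier variables $\xi^0,\xi^1,\xi^2$ lie in $\calC^0,\calC^1,\calC^2$ respectively and must sum to zero, so this reduces to the argument of Lemma~\ref{lem:box-orth-0}~(1). For the angular condition, I will apply Lemma~\ref{lem:geom-cone}~(1) with angular caps $\omg_i$ of radii $r_i\simeq 2^{k_{\min}+\ell-k_i}$ chosen so that the radial projection of $\calC^i$ lies in $\omg_i$. The hypothesis $j_{\max}\le k_{\min}+C_0$ is exactly what that lemma requires, and the output bound
\begin{equation*}
\abs{\angle(s_i\omg_i,s_{i'}\omg_{i'})} \aleq 2^\ell 2^{k_{\min}-\min\set{k_i,k_{i'}}} + \max\set{r_i,r_{i'}}
\end{equation*}
becomes the desired bound once I notice that the second term is $\simeq 2^\ell 2^{k_{\min}-\min\set{k_i,k_{i'}}}$, hence of the same order as the first, so both can be absorbed into a single constant.

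For Statement~(2), assume without loss of generality that $i=0$, so $\calC^0$ lies in a non-minimal frequency annulus. I fix $\calC^0$ and distinguish the two sub-cases: either only one of $\calC^1,\calC^2$ sits in a non-minimal annulus (i.e., $k_2=k_{\min}$ while $k_1=k_{\med}$ or $k_{\max}$), or all three frequencies are comparable. In the first case, the angular constraint \eqref{eq:box-orth} applied to the pair $(0,1)$ forces $s_1\calC^1$ to lie within angular distance $\aleq 2^\ell 2^{k_{\min}-k_{\max}}$ of $-s_0\calC^0$; since each box $\calC^1\subset\set{\abs{\xi}\simeq 2^{k_1}}$ itself has angular aperture $\simeq 2^{k_{\min}+\ell-k_{\max}}$, only $O(1)$ choices of $\calC^1$ are admissible. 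Once $\calC^0$ and $\calC^1$ are fixed, the sum $-\calC^0-\calC^1$ is contained in a rectangular region of comparable size to $\calC^2$, leaving only $O(1)$ admissible boxes $\calC^2$. In the second (comparable-frequencies) case, the same argument applies symmetrically to any pair.

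The main obstacle is bookkeeping: reconciling the two independent geometric parameters (radial dimension $2^{k_{\min}}$ and transverse dimension $2^{k_{\min}+\ell}$) with the two contributions to the angular bound in Lemma~\ref{lem:geom-cone}~(1). Once one observes that, with the scaling $r_i\simeq 2^{k_{\min}+\ell-k_i}$ inherited from the shape of the boxes, the two contributions match, both statements are almost immediate corollaries of the preceding two lemmas and no genuinely new geometric input is needed.
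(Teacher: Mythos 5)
Your treatment of Statement~(1) is correct and is the paper's own route: the containment $\calC^{0}+\calC^{1}+\calC^{2}\ni 0$ is immediate from the Fourier representation, and the angular condition follows from Lemma~\ref{lem:geom-cone}~(1) applied with caps of radii $r_{i}\simeq 2^{k_{\min}+\ell-k_{i}}$, whose contribution $\max\set{r_{i},r_{i'}}$ is indeed of the same order as the main term $2^{\ell}2^{k_{\min}-\min\set{k_{i},k_{i'}}}$.

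In Statement~(2), however, your count of admissible $\calC^{1}$ has a gap in the main case $k_{1}=k_{\max}\gg k_{\min}$. The angular constraint confines $\calC^{1}$ to $O(1)$ angular sectors of aperture $2^{k_{\min}+\ell-k_{\max}}$, but it says nothing about the \emph{radial} position of $\calC^{1}$ inside the annulus $\set{\abs{\xi}\simeq 2^{k_{\max}}}$: that annulus has radial thickness $\simeq 2^{k_{\max}}$ while each box has radial extent only $2^{k_{\min}}$, so a single admissible angular sector still contains $\simeq 2^{k_{\max}-k_{\min}}$ boxes stacked radially, all consistent with the angular condition. As written, your argument therefore only bounds the number of pairs by $2^{k_{\max}-k_{\min}}$, which is not uniform. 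The repair is to reverse the order, as the paper does: first note that there are only $O(1)$ admissible $\calC^{2}$ (the minimal annulus has $O(1)$ radial box positions, and the angular constraint with $\calC^{0}$ leaves $O(1)$ sectors of aperture $2^{\ell}$); then, for each such $\calC^{2}$, the sum condition places $-\calC^{1}$ inside $\calC^{0}+\calC^{2}$, a cube of side $\aleq 2^{k_{\min}}$, which fixes the radial position of $\calC^{1}$ up to $O(1)$, while the angular constraint fixes its transverse position. Both constraints are needed to pin down $\calC^{1}$; neither suffices alone.
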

\begin{proof} 
Statement~(1) follows immediately from Lemma~\ref{lem:geom-cone}. 
Statement~(2) can be proved in a similar fashion as Lemma~\ref{lem:box-orth-0}. We first assume without loss of generality that $k_{2} = k_{\min}$. It is clear that there are only bounded number of $\calC^{2} = \calC_{k_{\min}}(\ell)$ in $\set{\abs{\xi} \simeq 2^{k_{\min}}}$ such that $\abs{\angle(s_{0} \calC^{0}, s_{2} \calC^{2})} \aleq 2^{\ell} $. Moreover, observe that $\calC^{0} + \calC^{2}$ is contained in a cube of sidelength $\aleq 2^{k_{\min}}$. Combined with the angular restriction $\abs{\angle(s_{0} \calC^{0}, s_{1} \calC^{1})} \aleq 2^{k_{\min} - k_{\max}} 2^{\ell}$, it follows that there are only bounded number of $\calC^{1}$ such that \eqref{eq:box-orth} holds. \qedhere
\end{proof}

Finally, we state a simple abstract summation lemma, which will be repeatedly used in conjunction with the orthogonality results in this subsection. Roughly speaking, it is the Cauchy-Schwarz inequality for an `essentially diagonal' sum. 
\begin{lemma} \label{lem:CS}
Let $\set{a_{\alp}}_{\alp \in \calA}$ and $\set{b_{\bt}}_{\bt \in \calB}$ be (countably) indexed sequences of real numbers. Let $\calJ \subseteq \calA \times \calB$ be such that for each fixed $\alp \in \calA$, $\abs{\# \set{\bt: (\alp, \bt) \in \calJ}} \leq M$, and for each fixed $\bt \in \calB$, $\abs{\# \set{\alp: (\alp, \bt) \in \calJ}} \leq M$. Then we have
\begin{equation*}
	\abs{\sum_{\alp, \bt \in \calJ} a_{\alp}  b_{\bt}}
	\leq M \bb( \sum_{\alp \in \calA} a_{\alp}^{2} \bb)^{1/2} \bb( \sum_{\bt \in \calB} b_{\bt}^{2} \bb)^{1/2}.
\end{equation*}
\end{lemma}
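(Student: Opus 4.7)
The plan is to reduce the claimed inequality to the ordinary Cauchy--Schwarz inequality, paying for the restriction to the subset $\calJ$ by means of the bounded-overlap hypothesis.

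First I would apply Cauchy--Schwarz to the sequences $(a_{\alp})_{(\alp,\bt) \in \calJ}$ and $(b_{\bt})_{(\alp,\bt) \in \calJ}$ on the common index set $\calJ$, obtaining
\begin{equation*}
\bigg| \sum_{(\alp,\bt) \in \calJ} a_{\alp} b_{\bt} \bigg| \leq \bigg( \sum_{(\alp,\bt) \in \calJ} a_{\alp}^{2} \bigg)^{1/2} \bigg( \sum_{(\alp,\bt) \in \calJ} b_{\bt}^{2} \bigg)^{1/2}.
\end{equation*}

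Next, I would convert each restricted sum into a full sum by Fubini and the bounded-overlap hypothesis. Since for every fixed $\alp \in \calA$ the number of $\bt$ with $(\alp,\bt) \in \calJ$ is at most $M$,
\begin{equation*}
\sum_{(\alp,\bt) \in \calJ} a_{\alp}^{2} = \sum_{\alp \in \calA} a_{\alp}^{2} \, \#\{\bt \in \calB : (\alp,\bt) \in \calJ\} \leq M \sum_{\alp \in \calA} a_{\alp}^{2},
\end{equation*}
and an entirely symmetric estimate yields $\sum_{(\alp,\bt) \in \calJ} b_{\bt}^{2} \leq M \sum_{\bt \in \calB} b_{\bt}^{2}$. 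Inserting these two bounds into the preceding display immediately produces the claimed factor $M \, (\sum_{\alp} a_{\alp}^{2})^{1/2} (\sum_{\bt} b_{\bt}^{2})^{1/2}$.

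There is no genuine obstacle here: the lemma merely quantifies the fact that a bipartite relation all of whose vertex-degrees are bounded by $M$ behaves, up to a multiplicative factor $M$, like the graph of a bijection. The only minor subtlety, when $\calA$ or $\calB$ is infinite, is absolute convergence, but since the conclusion is stated with the absolute value on the outside the same argument applied with $|a_{\alp}|, |b_{\bt}|$ in place of $a_{\alp}, b_{\bt}$ works verbatim, so this is a non-issue. I would therefore expect the complete write-up to fit in essentially the three displays above.
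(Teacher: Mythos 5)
Your proof is correct and is precisely the standard argument the authors had in mind when writing ``we omit the straightforward proof'': Cauchy--Schwarz over the index set $\calJ$, followed by the degree bound to compare the restricted square sums with the full ones, and the factor $M^{1/2}\cdot M^{1/2}=M$ comes out exactly right. Your remark on absolute convergence in the infinite case is also handled correctly by running the argument with $|a_{\alp}|,|b_{\bt}|$.
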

We omit the straightforward proof.

\subsection{A H\"older-type estimate} \label{subsec:ellip}
In this short subsection, we prove a H\"older-type estimate which will be useful later for dealing with the high modulation contribution, as well as the elliptic equations of MD-CG. Moreover, the orthogonality argument we employ below will serve as a model for the proof of the core bilinear estimates in Propositions~\ref{prop:no-nf}, ~\ref{prop:nf} and \ref{prop:nfs}. 

%The main estimate of this subsection is as follows:
\begin{lemma} \label{lem:ellip}
Let $k_{0}, k_{1}, k_{2} \in \bbZ$ be such that $\abs{k_{\med} - k_{\max}} \leq 5$. Let $\calL$ be a translation invariant bilinear operator on $\bbR^{d}$ with bounded mass kernel. Then we have
\begin{align}
	\nrm{P_{k_{0}} \calL(f_{k_{1}}, g_{k_{2}})}_{L^{2} L^{2}} 
	\aleq & \nrm{f_{k_{1}}}_{L^{\infty} L^{2}} \bb( \sum_{\calC_{k_{\min}}} \nrm{P_{\calC_{k_{\min}}} g_{k_{2}}}_{L^{2} L^{\infty}}^{2} \bb)^{1/2}, \label{eq:ellip-0} \\
	\nrm{P_{k_{0}} \calL(f_{k_{1}}, g_{k_{2}})}_{L^{1} L^{2}} 
	\aleq & \nrm{f_{k_{1}}}_{L^{2} L^{2}} \bb( \sum_{\calC_{k_{\min}}} \nrm{P_{\calC_{k_{\min}}} g_{k_{2}}}_{L^{2} L^{\infty}}^{2} \bb)^{1/2}.		\label{eq:ellip-1}
\end{align}
\end{lemma}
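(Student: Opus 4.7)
The plan is to combine the basic H\"older inequality~\eqref{eq:L-atom} for $\calL$ with an $\ell^{2}$-orthogonality argument based on decomposing $g_{k_{2}}$ into cubes of scale $2^{k_{\min}}$, using Lemma~\ref{lem:box-orth-0}. This proof will double as a template for the orthogonality arguments in Propositions~\ref{prop:no-nf}, \ref{prop:nf}, \ref{prop:nfs}.

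I would first dualize, writing the left-hand side of each estimate as $\sup_{h} \abs{\iint h \cdot P_{k_{0}} \calL(f_{k_{1}}, g_{k_{2}}) \, \ud t \, \ud x}$, where the supremum is taken over test functions $h$ with spatial Fourier support in $\set{\abs{\xi} \simeq 2^{k_{0}}}$ and $\nrm{h}_{L^{p'} L^{2}} \leq 1$ (here $p = 2$ for~\eqref{eq:ellip-0} and $p = 1$ for~\eqref{eq:ellip-1}). Next, I would expand $h = \sum_{\calC^{0}} P_{\calC^{0}} h$ and $g_{k_{2}} = \sum_{\calC^{2}} P_{\calC^{2}} g_{k_{2}}$, where $\calC^{0}, \calC^{2}$ are cubes of the form $\calC_{k_{\min}}(0)$ situated in $\set{\abs{\xi} \simeq 2^{k_{0}}}$ and $\set{\abs{\xi} \simeq 2^{k_{2}}}$ respectively. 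By Lemma~\ref{lem:box-orth-0}(1), only the ``compatible'' pairs $(\calC^{0}, \calC^{2})$ satisfying the triangle condition $\calC^{0} + \calC^{1} + \calC^{2} \ni 0$ for some $\calC^{1} \subseteq \set{\abs{\xi} \simeq 2^{k_{1}}}$ give a nonzero contribution. For each such pair, the H\"older bound~\eqref{eq:L-atom} applied in the form $L^{p'} L^{2} \cdot L^{q} L^{2} \cdot L^{2} L^{\infty} \hookrightarrow L^{1}_{t,x}$ (with $q = \infty$ for~\eqref{eq:ellip-0}, $q = 2$ for~\eqref{eq:ellip-1}) produces
\[
\abs{\iint P_{\calC^{0}} h \cdot \calL(f_{k_{1}}, P_{\calC^{2}} g_{k_{2}}) \, \ud t \, \ud x} \aleq \nrm{P_{\calC^{0}} h}_{L^{p'} L^{2}} \nrm{f_{k_{1}}}_{L^{q} L^{2}} \nrm{P_{\calC^{2}} g_{k_{2}}}_{L^{2} L^{\infty}}.
\]

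Finally, I would sum this pointwise estimate over compatible pairs using Lemma~\ref{lem:box-orth-0}(3) and Lemma~\ref{lem:CS}. In the main case $k_{1} = k_{\min}$, both $\calC^{0}$ and $\calC^{2}$ live in non-minimal annuli, so Lemma~\ref{lem:box-orth-0}(3) gives an $O(1)$ multiplicity bound for the compatibility relation in both directions; Lemma~\ref{lem:CS} then collapses the double sum into a product of $\ell^{2}$ norms, and using $\sum_{\calC^{0}} \nrm{P_{\calC^{0}} h}_{L^{p'} L^{2}}^{2} \aleq \nrm{h}_{L^{p'} L^{2}}^{2}$ and taking the supremum in $h$ completes the proof. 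In the degenerate sub-cases $k_{2} = k_{\min}$ and $k_{0} = k_{\min}$, the corresponding annulus contains only $O(1)$ cubes of scale $2^{k_{\min}}$, so the $\ell^{2}$ sum on the right-hand side of~\eqref{eq:ellip-0}--\eqref{eq:ellip-1} is comparable to $\nrm{g_{k_{2}}}_{L^{2} L^{\infty}}$ in the former case (reducing the estimate to a direct application of~\eqref{eq:L-atom}), while the latter case is symmetric, using $\#\set{\calC^{0}} = O(1)$ together with the $\calC^{2}$-side multiplicity bound from Lemma~\ref{lem:box-orth-0}(3). The main obstacle will be this combinatorial bookkeeping of the compatibility relation and of verifying that the multiplicity bounds from Lemma~\ref{lem:box-orth-0}(3) suffice uniformly across the three sub-cases above.
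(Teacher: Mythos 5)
Your overall strategy---cube decomposition at scale $2^{k_{\min}}$, the compatibility and multiplicity facts of Lemma~\ref{lem:box-orth-0}, Cauchy--Schwarz via Lemma~\ref{lem:CS}, and the H\"older bound \eqref{eq:L-atom}---is the same as the paper's, but the order in which you apply H\"older in time versus the cube summation creates a genuine gap. You apply the space-time H\"older inequality to each compatible pair $(\calC^{0}, \calC^{2})$ and only then sum; the Cauchy--Schwarz step then requires $\sum_{\calC^{0}} \nrm{P_{\calC^{0}} h}_{L^{p'} L^{2}}^{2} \aleq \nrm{h}_{L^{p'} L^{2}}^{2}$. For \eqref{eq:ellip-1} the dual exponent is $p' = \infty$, and $\ell^{2}$-summability over cubes of $L^{\infty}_{t} L^{2}_{x}$ norms fails: the supremum in $t$ may be attained at a different time for each cube (take $h(t_{j})$ concentrated on $\calC^{0}_{j}$ with unit $L^{2}$ norm, so that $\nrm{h}_{L^{\infty} L^{2}} = 1$ while the square sum equals the number of cubes). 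Thus your argument for \eqref{eq:ellip-1} breaks precisely in the main case $k_{1} = k_{\min}$, where the number of cubes $\calC^{0}$ is $\simeq 2^{d(k_{\max} - k_{\min})}$. The paper avoids this by running the entire decomposition and Cauchy--Schwarz argument on the fixed-time quantity $I(t) = \int h \, \calL(f, g)(t) \, \ud x$, where only spatial $L^{2}_{x}$ orthogonality is needed, and integrating in $t$ (with H\"older in the time variable) only at the very end; you should restructure your proof the same way.

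There is a second problem in the high-high case $k_{0} = k_{\min}$. Since you never decompose $f_{k_{1}}$, the only constraint linking $\calC^{0}$ and $\calC^{2}$ is that $-\calC^{0} - \calC^{2}$ meets the annulus $\set{\abs{\xi} \simeq 2^{k_{1}}}$, which essentially every pair satisfies when $\calC^{0}$ lies in the minimal annulus; Statement~(3) of Lemma~\ref{lem:box-orth-0} is formulated for a fixed cube in a \emph{non-minimal} annulus and gives no bounded-multiplicity pairing between $\calC^{0}$ and $\calC^{2}$ here. After summing over the $O(1)$ cubes $\calC^{0}$ you are left with an unconstrained $\ell^{1}$ sum $\sum_{\calC^{2}} \nrm{P_{\calC^{2}} g_{k_{2}}}_{L^{2} L^{\infty}}$, which exceeds the $\ell^{2}$ sum on the right-hand side by the square root of the number of cubes. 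The fix, as in the paper, is to decompose $f_{k_{1}}$ into cubes $\calC^{1}$ as well: the essentially diagonal pairing is then between the two non-minimal boxes $\calC^{1}$ and $\calC^{2}$, and the resulting $\ell^{2}_{\calC^{1}}$ sum of $\nrm{P_{\calC^{1}} f(t)}_{L^{2}_{x}}^{2}$ collapses to $\nrm{f(t)}_{L^{2}_{x}}^{2}$ by orthogonality---again at fixed time, for the same reason as above.
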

\begin{proof}
For $t \in \bbR$ and rectangular boxes $\calC^{0}, \calC^{1}, \calC^{2}$ of the form $\calC_{k_{\min}}(0)$, define
\begin{align*}
	I_{\calC^{0}, \calC^{1}, \calC^{2}}(t)
	= & \int P_{\calC^{0}} h_{k_{0}} \, \calL( P_{\calC^{1}} f_{k_{1}},P_{\calC^{2}} g_{k_{2}}) (t) \, \ud x.
\end{align*}
Also defining $I(t) = \int h_{k_{0}} \calL(f_{k_{1}}, g_{k_{2}}) (t) \, \ud x$, note that $I(t) = \sum_{\calC^{0}, \calC^{1}, \calC^{2}} I_{\calC^{0}, \calC^{1}, \calC^{2}}(t)$,
where the summand vanishes unless $\calC^{0} + \calC^{1} + \calC^{2} \ni 0$. Moreover, for any $1 \leq q_{0}, q_{1}, q_{2} \leq \infty$ such that $q_{0}^{-1} + q_{1}^{-1} + q_{2}^{-1} = 1$, we have
\begin{equation} \label{eq:ellip-atom}
	\abs{I_{\calC^{0}, \calC^{1}, \calC^{2}}(t)}
	\aleq \nrm{P_{\calC^{0}} h_{k_{0}}(t)}_{L^{q_{0}}} \nrm{P_{\calC^{1}} f_{k_{1}}(t)}_{L^{q_{1}}} \nrm{P_{\calC^{2}} g_{k_{2}}(t)}_{L^{q_{2}}}.
\end{equation}
We claim that
\begin{equation} \label{eq:ellip-orth}
	\abs{I(t)}
	\aleq \bb(\sum_{\calC^{0}} \nrm{P_{\calC^{0}} h_{k_{0}}(t)}_{L^{q_{0}}}^{2}\bb)^{1/2} 
		\bb( \sum_{\calC^{1}} \nrm{P_{\calC^{1}} f_{k_{1}}(t)}_{L^{q_{1}}}^{2} \bb)^{1/2} 
		\bb( \sum_{\calC^{2}} \nrm{P_{\calC^{2}} g_{k_{2}}(t)}_{L^{q_{2}}}^{2} \bb)^{1/2}.
\end{equation}
From \eqref{eq:ellip-orth}, the desired estimates follow immediately. Indeed, taking $(q_{0}, q_{1}, q_{2}) = (2, 2, \infty)$ and using orthogonality in $L^{2}$, we obtain
\begin{equation*}
	\abs{I(t)} 
	\aleq 	\nrm{h_{k_{0}}(t)}_{L^{2}} 
			\nrm{f_{k_{1}}(t)}_{L^{2}} 
			\bb( \sum_{\calC^{2}}\nrm{P_{\calC^{2}} g_{k_{2}}(t)}_{L^{\infty}}^{2} \bb)^{1/2}.
\end{equation*}
Integrating and applying H\"older in $t$ appropriately, \eqref{eq:ellip-0} and \eqref{eq:ellip-1} follow by duality.

It remains to prove \eqref{eq:ellip-orth}. The idea is to sum up first over boxes situated in $\set{\abs{\xi} \simeq 2^{k_{\min}}}$, for which there are only (uniformly) bounded summands by Lemma~\ref{lem:box-orth-0}, and then apply Lemma~\ref{lem:CS} to the remaining summation, which is essentially diagonal again by Lemma~\ref{lem:box-orth-0}. More precisely, we split into three cases:
\pfstep{Case 1: (high-high) interaction, $k_{0} = k_{\min}$} 
Summing up first in $\calC^{0}$, for which there are only bounded number of summands for each fixed $(\calC^{1}, \calC^{2})$ by Lemma~\ref{lem:box-orth-0}, we obtain
\begin{align*}
	\abs{\int h_{k_{0}} \calL(f_{k_{1}}, g_{k_{2}}) (t) \, \ud x}
	\aleq & \sup_{\calC^{0}} \nrm{P_{\calC^{0}} h_{k_{0}}(t)}_{L^{q_{0}}} \sum_{\calC^{1}, \calC^{2} : (\star)} \nrm{P_{\calC^{1}} f_{k_{1}}(t)}_{L^{q_{1}}} \nrm{P_{\calC^{2}} g_{k_{2}}(t)}_{L^{q_{2}}}
\end{align*}
where $(\star)$ refers to the condition for $(\calC^{1}, \calC^{2})$ that  $\calC^{0} + \calC^{1} +\calC^{2} \ni 0$ for some $\calC^{0} \subseteq \set{\abs{\xi} \simeq 2^{k_{0}}}$. Again by Lemma~\ref{lem:box-orth-0}, for a fixed box $\calC^{1}$ there are only (uniformly) bounded number of boxes $\calC^{2}$ obeying $(\star)$ and vice versa; hence the $\calC^{1}, \calC^{2}$ summation is essentially diagonal, and an application of Lemma~\ref{lem:CS} gives
\begin{align*}
	\abs{\int h_{k_{0}} \calL(f_{k_{1}}, g_{k_{2}}) (t) \, \ud x}
	\aleq & \sup_{\calC^{0}} \nrm{P_{\calC^{0}} h_{k_{0}}(t)}_{L^{q_{0}}} 		\bb( \sum_{\calC^{1}} \nrm{P_{\calC^{1}} f(t)}_{L^{q_{1}}}^{2} \bb)^{1/2} \\
	& \times 	\bb( \sum_{\calC^{2}} \nrm{P_{\calC^{2}} g(t)}_{L^{q_{2}}}^{2} \bb)^{1/2}.
\end{align*}
The claim \eqref{eq:ellip-orth} now follows.
\pfstep{Cases 2 \& 3: (low-high) or (high-low) interaction, $k_{1} = k_{\min}$ or $k_{2} = k_{\min}$} 
The proof proceeds in exactly the same fashion as Case 1, with the role of $k_{0}$ played by the minimum frequency. \qedhere 
\end{proof}

\subsection{Abstract null forms} \label{subsec:abs-nf}
In the context of nonlinear wave equations, a \emph{null form} is a multilinear operator which gains in the angle between the Fourier supports of inputs (or by duality, an input and the output). According to Lemma~\ref{lem:geom-cone}, this angular gain is helpful in the `resonant' case, when the inputs and the output have low modulation (i.e., close to the characteristic cone).

To unify the treatment of various null forms that arise in MD-CG, we introduce the notion of an \emph{abstract null form}. 
For technical advantage, we formulate this notion for a bilinear operator on the space $\bbR^{d}$, rather than on the spacetime $\bbR^{1+d}$.
\begin{definition} \label{def:abs-nf}
Let $\ANF$ be a translation-invariant bilinear operator corresponding to a symbol $m(\xi, \eta)$ on $\bbR^{d} \times \bbR^{d}$.
%That is,
%\begin{equation*}
%	\ANF(f, g)(x) = \int e^{i x \cdot(\xi + \eta)} m(\xi, \eta) \hat{f}(\xi) \hat{g}(\eta) \, \frac{\ud \xi \, \ud \eta}{(2 \pi)^{2d}}.
%\end{equation*}
We say that $\ANF$ is an \emph{abstract null form} (of order $1$) if the symbol $m$ obeys the following bounds:
\begin{align}
	\abs{S_{\xi}^{n_{1}} S_{\eta}^{n_{2}} m(\xi, \eta)} & \leq A_{n_{1}, n_{2}} \abs{\angle (\xi, \eta)} \label{eq:abs-nf:angle} \\
		\abs{\rd_{\xi}^{\alp_{1}} \rd_{\eta}^{\alp_{2}} m(\xi, \eta)} & \leq A_{\alp_{1}, \alp_{2}} \abs{\xi}^{-\abs{\alp_{1}}} \abs{\eta}^{-\abs{\alp_{2}}} \label{eq:abs-nf:bnd}
\end{align}
where $S_{\xi} = \xi \cdot \rd_{\xi}$ and $S_{\eta} = \eta \cdot \rd_{\eta}$.
\end{definition}

\begin{remark} \label{rem:hom-nf}
If the symbol $m(\xi, \eta)$ of $\ANF$ is homogeneous of degree $0$ in $\xi$, $\eta$ and obeys
\begin{equation*}
	\abs{m(\xi, \eta)} \leq A \abs{\angle(\xi, \eta)}
\end{equation*}
then $\ANF$ is an abstract null form of order $1$.
\end{remark}

%%
% OLD VERSION
%%
%We say that $\ANF$ is an \emph{abstract null form} of order $M \geq 0$ if the symbol $m$ obeys the following bounds:
%\begin{align}
%	\abs{S_{\xi}^{n_{1}} S_{\eta}^{n_{2}} m(\xi, \eta)} & \leq A_{n_{1}, n_{2}} \abs{\angle (\xi, \eta)}^{M} \label{eq:abs-nf:angle} \\
%		\abs{\rd_{\xi}^{\alp_{1}} \rd_{\eta}^{\alp_{2}} m(\xi, \eta)} & \leq A_{\alp_{1}, \alp_{2}} \abs{\xi}^{-\abs{\alp_{1}}} \abs{\eta}^{-\abs{\alp_{2}}} \label{eq:abs-nf:bnd}
%\end{align}
%where $S_{\xi} = \xi \cdot \rd_{\xi}$ and $S_{\eta} = \eta \cdot \rd_{\eta}$.

%\begin{remark} \label{rem:hom-nf}
%If the symbol $m(\xi, \eta)$ of $\ANF$ is homogeneous of degree $0$ in $\xi$, $\eta$ and obeys
%\begin{equation*}
%	\abs{m(\xi, \eta)} \leq A \abs{\angle(\xi, \eta)}^{M}
%\end{equation*}
%then $\ANF$ is an abstract null form of order $M$.
%\end{remark}

We now state a basic estimate for an abstract null form.
\begin{proposition}  \label{prop:nf-basic}
Let $\ANF$ be an abstract null form of order $1$. Let $\omg_{1}, \omg_{2} \subseteq \bbS^{d-1}$ be angular caps (i.e., geodesic balls on $\bbS^{d-1}$) such that the radius $r_{j}$ of $\omg_{j}$ is at most $2^{-10}$ $(j=1,2)$ and define
\begin{equation*}
	\tht := \max \set{\abs{\angle(\omg_{1}, \omg_{2})}, r_{1}, r_{2}}.
\end{equation*}
Let $k_{1}, k_{2} \in \bbZ$ and consider test functions $f_{1}$, $f_{2}$ on $\bbR^{d}$ with Fourier support
\begin{equation*}
\supp \hat{f}_{j} \sbeq E_{j} := \set{\frac{1}{100} 2^{k_{j}} < \abs{\xi} < 100 \cdot 2^{k_{j}}, \ \frac{\xi}{\abs{\xi}} \in \omg_{j}}
\end{equation*}
Then for any $1 \leq p, q_{1}, q_{2} \leq \infty$ such that $p^{-1} = q_{1}^{-1} + q_{2}^{-1}$, we have
\begin{equation} \label{eq:nf-basic}
	\nrm{\ANF(f_{1}, f_{2})}_{L^{p}} 
	\aleq \tht \nrm{f_{1}}_{L^{q_{1}}} \nrm{f_{2}}_{L^{q_{2}}},
\end{equation}
where the implicit constant depends only on $A$.
\end{proposition}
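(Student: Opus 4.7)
The plan is to show that the bilinear operator $\ANF$ restricted to inputs with Fourier support in $E_1\times E_2$ factors as $\theta$ times a translation-invariant bilinear operator with kernel of bounded mass. Combined with the H\"older-type inequality \eqref{eq:L-atom} from Section~\ref{subsec:notation}, this yields \eqref{eq:nf-basic}. To begin, I would exploit the invariance of the hypotheses \eqref{eq:abs-nf:angle}, \eqref{eq:abs-nf:bnd} and of \eqref{eq:nf-basic} under the anisotropic rescaling $(\xi,\eta)\mapsto(2^{k_1}\xi, 2^{k_2}\eta)$ to reduce to the case $k_1=k_2=0$; indeed, both the angle $\angle(\xi,\eta)$ and the scale-invariant estimates on the symbol are preserved. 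Since $r_1, r_2, \abs{\angle(\omg_1,\omg_2)}\leq\theta$, a partition of unity on $\bbS^{d-1}$ at scale $\theta$ further reduces to the case where $\omg_1\cup\omg_2$ is contained in a single spherical cap $\omg$ of radius $O(\theta)$.

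Next I would introduce smooth cutoffs $\tilde\chi_j$ equal to $1$ on $E_j$ and supported in slight enlargements with the same angular and radial scales. Writing $\tilde m := m\,\tilde\chi_1\tilde\chi_2$ and $\tilde K := \mathcal{F}^{-1}\tilde m$, the proposition reduces to the claim $\vn{\tilde K}_{L^1}\aleq\theta$. To prove this, I would parametrize the cap $\omg$ by coordinates $\tau\in B_{O(\theta)}(0)\subset\bbR^{d-1}$ and make the anisotropic rescaling $\tau=\theta\tau'$, so that $(\abs{\xi},\tau'_\xi,\abs{\eta},\tau'_\eta)$ varies over a unit-size product box. The task then is to verify that $\theta^{-1}\tilde m$, viewed as a function of these rescaled variables, is a standard bump function uniformly in all parameters, which entails three pieces of symbol calculus. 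The pointwise size $\abs{\tilde m}\aleq\theta$ follows from \eqref{eq:abs-nf:angle} at $n_1=n_2=0$ combined with $\abs{\angle(\xi,\eta)}\aleq\theta$ on the support; pure radial derivatives $\pt_r^{n}\tilde m$ retain the $\theta$-prefactor through iterated application of \eqref{eq:abs-nf:angle} with $n_i\geq 1$ (using $\pt_r = \abs{\xi}^{-1} S_\xi$); and rescaled angular derivatives pick up factors of $\theta$ per derivative via the chain rule $\pt_{\tau'}=\theta\,\pt_\tau$, which absorbs the unit-size bound $\abs{\pt_\xi^\alp m}\aleq 1$ from \eqref{eq:abs-nf:bnd}. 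Once the bump property is established, a standard nonstationary-phase estimate (with the Jacobian of the polar change of variables) yields $\vn{\tilde K}_{L^1}\aleq\theta$ as desired.

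The main obstacle I anticipate is the careful verification of the bump estimates for $\theta^{-1}\tilde m$, in particular for mixed radial-angular derivatives. The subtlety is that hypothesis \eqref{eq:abs-nf:bnd} provides only the isotropic bound $\abs{\pt_\xi^\alp m}\aleq 1$ (at unit frequency), which is \emph{not} a priori adapted to the cap scale $\theta$; the needed $\theta^n$ gain per angular derivative emerges only after the anisotropic rescaling $\tau=\theta\tau'$. For mixed derivatives, Leibniz reduces the analysis to separate radial and angular pieces: the radial part retains the $\theta$-factor via \eqref{eq:abs-nf:angle}, while each angular factor acquires a power of $\theta$ from the rescaling, with \eqref{eq:abs-nf:bnd} controlling the original $\pt_\tau^{n_\omega} m$ by $O(1)$. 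It will also be important to bound the derivatives of the cutoffs $\tilde\chi_j$, which have angular support at scale $O(\theta)$, so that they too are bump functions in the rescaled coordinates; this is straightforward since $\tilde\chi_j$ can be chosen explicitly adapted to $\tilde E_j$.
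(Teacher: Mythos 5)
Your proposal is correct, but it takes a genuinely different route from the paper. You establish directly that the localized bilinear symbol $\tilde m = m\,\tilde\chi_1\tilde\chi_2$ has a kernel of mass $\aleq \tht$ on $\bbR^{2d}$ (via anisotropic bump estimates in cap-adapted, rescaled polar coordinates) and then invoke the H\"older-type inequality \eqref{eq:L-atom}. The paper instead performs a separation of variables: it expands $m$ as a Fourier series $\sum_{\bfj,\bfk} c_{\bfj,\bfk}\, a_{\bfj}(\xi) b_{\bfk}(\eta)$ on boxes adapted to the caps, proves $\abs{c_{\bfj,\bfk}} \aleq_n \tht(1+\abs{\bfj}+\abs{\bfk})^{-n}$ by integrating by parts (radial directions via \eqref{eq:abs-nf:angle}, angular directions via the cap-scale bounds), and bounds each $a_{\bfj}(D), b_{\bfk}(D)$ on $L^q$ with polynomial loss in $\bfj,\bfk$. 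The underlying symbol estimates are identical — the $\tht$ comes from \eqref{eq:abs-nf:angle} when no angular derivative falls on $m$, and from the cap-scale rescaling (via \eqref{eq:abs-nf:bnd}) otherwise — so your argument is essentially a "non-separated" version of the paper's; it is slightly more economical, whereas the paper's tensor-product form $\sum c_{\bfj,\bfk}\, a_{\bfj}(D)f_1\cdot b_{\bfk}(D)f_2$ decouples the two inputs, which is convenient when one wants to place them in different (e.g.\ mixed space-time) norms elsewhere. One point you should not gloss over when writing this up: the passage between Cartesian derivatives and the cap-adapted radial/angular frame is the delicate step in both arguments. In particular, the Cartesian derivative $\rd_{\xi_1}$ along the cap's axis must not cost a factor $\tht^{-1}$; the paper secures this by choosing the angular coordinates $\zt_j$ so that $\rd_{\xi_1}^{n}\zt_j$ vanishes at the cap center, whence $\abs{\rd_{\xi_1}^{n}\tilde\xi_j} \aleq \tht$ on the cap. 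Your "standard nonstationary-phase estimate with the Jacobian of the polar change of variables" needs exactly this computation to go through.
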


The proof of this proposition is purely technical and disparate from the rest of this section. For the sake of exposition, we defer it until the end of this section.

\subsection{Null structure of Maxwell--Dirac in Coulomb gauge} \label{subsec:MD-nf}
We now recast the null structure of MD-CG in terms of abstract null forms. It is at this point that we may fully explain an important point discussed in the introduction, namely, how the spinorial nonlinearities $\NM^{S}$ and $\ND^{S}$ exhibit \emph{more favorable} null structure compared to the Riesz transform parts $\NM^{R}$ and $\ND^{R}$. We refer the reader to Remark~\ref{rem:spin-nf}.

We begin with some schematic definitions.
\begin{definition}[Symbols $\NF$ and $\NF_{\pm}$] \label{def:nf}
We denote by $\NF_{+}$ an abstract null form (Definition~\ref{def:abs-nf}) of order $1$, and by $\NF_{-}$ a bilinear operator such that $(f, g) \mapsto \NF_{-}(f, \overline{g})$ is an abstract null form of order 1. We call $\NF_{s}$ an \emph{abstract null form} of type $s \in \set{+, -}$. Denoting the symbol of $\NF_{s}$ by $m_{s}$, note that it satisfies
\begin{equation*}
	\abs{S_{\xi}^{k_{1}} S_{\eta}^{k_{2}} m_{s}(\xi, \eta)} \leq A_{s, k_{1}, k_{2}} \abs{\angle (\xi, s \eta)}.
\end{equation*}
We write $ \NF $ for a bilinear operator which is an abstract null form of both types; in short, $\NF = \NF_{+}$ and $\NF_{-}$.
\end{definition}
\begin{definition}[Symbols $\NF^{\ast}$ and $\NF_{\pm}^{\ast}$] \label{def:nfs}
For $s \in \set{+, -}$, we denote by $\NF^{\ast}_{s}$ (called a \emph{dual abstract null form} of type $s$) a bilinear operator such that
\begin{equation} \label{eq:nfs-dual}
	\int h \, \NF^{\ast}_{s}(f, g) \, \ud x = \int f \, \NF_{-s}(h, g) \, \ud x
\end{equation}
for some abstract null form $\NF_{-s}$ of type $-s$. We denote by $\NF^{\ast}$ a bilinear operator which is a dual abstract null form of both types, i.e., $\NF^{\ast} = \NF^{\ast}_{+}$ and $\NF^{\ast}_{-}$. (Note that the second input $ g $ plays a special role  in $\NF^{\ast}$ and $\NF^{\ast}_{s}$.)
\end{definition}

We are now ready to describe the (bilinear) null structure of MD-CG in terms of abstract null forms.
\begin{proposition} \label{prop:MD-CG-nf}
The Maxwell nonlinearities $\NM_{s}^{S}$, $\NM^{R}$ have the null structure
\begin{align} 
\NM_{s_{2}}^{S}(\Pi_{s_{1}} \psi, \varphi) 
=& \calP_{j} \brk{\Pi_{s_{1}}  \psi, \Pi_{-s_{2}} \alp_{x} \varphi} = \NF_{s_{1} s_{2}}(\psi, \varphi), 		\label{eq:ms-nf} \\
\NM^{R}(\psi, \varphi) =& \calP_{j} \brk{\psi, \mR_{x}  \varphi} = \NF^{\ast}(\psi, \varphi)	.		\label{eq:mr-nf}
\end{align}
The Dirac nonlinearities $\ND_{s}^{S}$, $\ND^{R}$ have the null structure
\begin{align} 
\Pi_{s_{0}} \calN^{S}_{s_{2}}(A_{x}, f) =& \Pi_{s_{0}}(a_{j} \Pi_{-s_{2}} (\alp^{j} f)) = \NF^{\ast}_{s_{0} s_{2}}(A_{x}, f),	\label{eq:ns-nf} \\
\calN^{R}(A_{x}, \psi) =& \calP_{j} A_{x} \mR^{j} \psi = \NF(A_{x}, \psi). 				\label{eq:nr-nf} 
\end{align}
\end{proposition}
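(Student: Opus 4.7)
The four identities in Proposition~\ref{prop:MD-CG-nf} split into two parallel groups, and the plan is to treat each by a single mechanism. For the spinorial identities \eqref{eq:ms-nf} and \eqref{eq:ns-nf}, the key input is Lemma~\ref{lem:spin-nf}, which provides the matrix-algebraic bound $\abs{\Pi(\xi)\Pi(-\eta)} \aleq \abs{\angle(\xi,\eta)}$. For \eqref{eq:ms-nf}, I would expand
\begin{equation*}
\NM^{S}_{j,s_{2}}(\Pi_{s_{1}}\psi,\varphi) = \calP_{j}\brk{\Pi_{s_{1}}\psi,\Pi_{-s_{2}}\alp_{x}\varphi}
\end{equation*}
and move the Hermitian transpose across so that, at the symbol level in the Fourier variables $(\xi_{1},\xi_{2})$ of $(\psi,\varphi)$, there appears a factor $\Pi(-s_{2}\xi_{2})\Pi(s_{1}\xi_{1})$ sandwiched between $\alp_{x}$ and the spinors. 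Lemma~\ref{lem:spin-nf} applied with the substitution $\xi \mapsto -s_{2}\xi_{2}$, $\eta \mapsto -s_{1}\xi_{1}$ then gives
\begin{equation*}
\abs{\Pi(-s_{2}\xi_{2})\Pi(s_{1}\xi_{1})} \aleq \abs{\angle(-s_{2}\xi_{2},-s_{1}\xi_{1})} = \abs{\angle(\xi_{1},s_{1}s_{2}\xi_{2})},
\end{equation*}
which is precisely the angular condition for $\NF_{s_{1}s_{2}}$. The identity \eqref{eq:ns-nf} is handled in exactly the same way after first dualizing $\Pi_{s_{0}}\calN^{S}_{s_{2}}(A_{x},f)$ against a test spinor, which transfers $\Pi_{s_{0}}$ onto it and again exposes a $\Pi\Pi$ factor controllable by Lemma~\ref{lem:spin-nf}.

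For the Riesz transform identities \eqref{eq:mr-nf} and \eqref{eq:nr-nf}, the null structure comes from an explicit symbol computation. For \eqref{eq:nr-nf}, the combined Fourier symbol of $(\calP_{j}A_{x})(\mR^{j}\psi)$ in variables $(\eta,\xi)$ of $(A_{x},\psi)$ is
\begin{equation*}
\sum_{j}\bb(\delta^{l}_{j}-\frac{\eta^{l}\eta_{j}}{\abs{\eta}^{2}}\bb)\frac{\xi^{j}}{\abs{\xi}} = \frac{\xi^{l}\abs{\eta}^{2}-\eta^{l}(\eta\cdot\xi)}{\abs{\eta}^{2}\abs{\xi}}.
\end{equation*}
The numerator is $\abs{\eta}^{2}$ times the component of $\xi$ orthogonal to $\eta$, whose Euclidean magnitude is $\abs{\xi}\abs{\eta}^{2}\sin\angle(\xi,\eta)$, so the displayed symbol is bounded by $\sin\angle(\xi,\eta)$. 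The elementary inequality $\sin\theta\leq\min\set{\theta,\pi-\theta}$ for $\theta\in[0,\pi]$ then controls the symbol simultaneously by $\abs{\angle(\xi,\eta)}$ and by $\abs{\angle(\xi,-\eta)}$, which is exactly the condition for $\NF$, i.e., $\NF_{+}$ and $\NF_{-}$ simultaneously. The identity \eqref{eq:mr-nf} is strictly parallel upon dualization: transferring the self-adjoint $\calP_{j}$ from the output onto the test function $h$ yields a bilinear form in $(h,\varphi)$ with identical symbol structure, establishing the $\NF^{\ast}$ property.

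It remains to verify the Mikhlin--H\"ormander bound \eqref{eq:abs-nf:bnd} beyond the angular bound just established. For \eqref{eq:nr-nf} the symbol displayed above is separately homogeneous of degree zero in $\xi$ and in $\eta$, so Remark~\ref{rem:hom-nf} applies directly and no further work is needed. The main obstacle, and the one place where care is required in the remaining three identities, is that the Leray projection $\calP_{j}$ in \eqref{eq:ms-nf} and \eqref{eq:mr-nf} (and the multiplier $\Pi_{s_{0}}$ in \eqref{eq:ns-nf}) acts on the \emph{output} frequency $\xi_{1}+\xi_{2}$, so its derivatives with respect to $\xi_{i}$ scale like $\abs{\xi_{1}+\xi_{2}}^{-1}$ rather than $\abs{\xi_{i}}^{-1}$, which can violate \eqref{eq:abs-nf:bnd} in the high-high-to-low regime. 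The plan is to absorb such output multipliers into an outer Littlewood--Paley projection before invoking Proposition~\ref{prop:nf-basic}; on the localized piece they act as smooth bounded Fourier multipliers at the output scale, so the derivative bounds hold with constants independent of the dyadic frequencies. Sign bookkeeping through the various Fourier transpositions is then routine.
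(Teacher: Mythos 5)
Your proof is correct and follows essentially the same route as the paper: Lemma~\ref{lem:spin-nf} together with Remark~\ref{rem:hom-nf} for the spinorial identities \eqref{eq:ms-nf} and \eqref{eq:ns-nf}, and a degree-zero homogeneous symbol bounded by $\sin \angle(\xi,\eta) \leq \min\set{\tht, \pi - \tht}$ for the Riesz identities, with the starred forms obtained by duality. The only differences are presentational: the paper antisymmetrizes $\calP_{j} A_{x} \mR^{j} \psi = \dlt^{k\ell}\dlt^{ij}\calN_{kj}(\mR_{\ell} A_{i}, \psi)$ into classical $Q_{kj}$-type null forms rather than bounding the full vector symbol directly, and it leaves implicit your (legitimate) observation that output multipliers such as $\calP_{j}$ and $\Pi_{s_{0}}$ must either be transferred onto the dual input (which is exactly what Definition~\ref{def:nfs} accomplishes for $\NF^{\ast}$) or absorbed into the outer dyadic and box localizations, where they are disposable.
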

\begin{proof}
Statements \eqref{eq:ms-nf} and \eqref{eq:ns-nf} follow from Lemma~\ref{lem:spin-nf} and Remark~\ref{rem:hom-nf}. To prove the remaining statements, we use \eqref{eq:Pj} to compute
\begin{align*}
	\calP_{j} A_{x} \mR^{j} \psi
	= & (\dlt^{k \ell} \dlt^{j i} - \dlt^{j \ell} \dlt^{i k})  \mR_{k} \mR_{\ell} A_{i} \mR_{j} \psi \\
	= & \dlt^{k \ell} \dlt^{ij} (\mR_{k} \mR_{\ell} A_{i} \mR_{j} \psi - \mR_{j} \mR_{\ell} A_{i} \mR_{k} \psi)  \\
	= & \dlt^{k \ell} \dlt^{ij} \calN_{k j}(\mR_{\ell} A_{i}, \psi ) 
\end{align*}
where $\calN_{k j}$ is a bilinear operator with symbol $\abs{\xi}^{-1} \abs{\eta}^{-1} (\xi_{k} \eta_{j} - \eta_{j} \xi_{k})$.
By Remark~\ref{rem:hom-nf}, it is clear that each $\calN_{k j}$ is an abstract null form of the form $\calN$, which proves \eqref{eq:mr-nf} and \eqref{eq:nr-nf} (the former follows by duality).
\end{proof}

\begin{remark} \label{rem:spin-nf}
A crucial observation, which is one of the main points of this paper, is that the spinorial nonlinearities have more favorable null structure than the Riesz transform counterparts. To see this, consider the Dirac nonlinearities $\ND^{R}$ and $\ND_{s}^{S}$ in the low-high interaction case, which is the worst frequency balance scenario. According to Section~\ref{subsec:nonlin}, this case corresponds to
\begin{equation*}
	\pi^{R}[A_{x}] \psi = \sum_{k} \sum_{k' < k-10}\ND^{R}(P_{k'} A_{x}, P_{k} \psi), \quad
	\pi^{S}_{s}[A_{x}] \psi = \sum_{k} \sum_{k' < k-10} \ND^{S}_{s}(P_{k'} A_{x}, P_{k} \psi).
\end{equation*}
Proposition~\ref{prop:MD-CG-nf} shows that $\ND^{R}$ gains in the angle $\tht$ between (the Fourier variables of) $A_{x}$ and $\psi$, whereas $\ND_{s}^{S}$ gains in the angle $\tht^{\ast}$ between $\psi$ and the output. In this frequency balance scenario, observe that $\tht^{\ast}$ is smaller than $\tht$! Indeed, for each fixed $k, k'$, the law of sines implies that $\tht^{\ast} \simeq 2^{k' - k} \tht$. This extra exponential high-low gain leads to the improved estimate \eqref{eq:diffs} for $\pi_{s}^{S}[A_{x}]$, which fails for $\pi^{R}[A_{x}]$. Similarly, $\NM_{s}^{S}$ exhibits an extra exponential off-diagonal gain compared to $\NM^{R}$ in the worst frequency balance scenario (high-high, in this case), which leads to the improved $Z^{1}$ norm bound \eqref{eq:axs-Z} below. 

Heuristically, the preceding observation leaves us with only the contribution of the scalar part $\NM^{E}, \NM^{R}, \ND^{E}, \ND^{R}$ to be handled; this is the main point of Proposition~\ref{prop:trilinear} and Theorem~\ref{thm:paradiff}. The redeeming feature of this scalar remainder is that it closely resembles MKG-CG; see Remark~\ref{rem:nonlin}. In particular, exploiting this similarity, we are able to borrow a trilinear null form estimate (Proposition~\ref{prop:tri-MKG}) and parametrix construction (Theorem~\ref{covariantparametrix}) from the MKG-CG case \cite{KST} at key steps in our proof below.
\end{remark}

%In what follows, we denote by $\calN_{+}$ an abstract null form of order $1$ (see Definition~\ref{def:abs-nf}), and by $\calN_{-}$ a bilinear form such that $(f, g) \mapsto \calN_{-}(f, \overline{g})$ is an abstract null form of order 1. Denoting the symbol of $\calN_{\pm}$ by $m_{\pm}$, note that it satisfies
%\begin{equation*}
%	\abs{S_{\xi}^{k_{1}} S_{\eta}^{k_{2}} m_{\pm}(\xi, \eta)} \leq A_{\pm, k_{1}, k_{2}} \abs{\angle (\xi, \pm \eta)}.
%\end{equation*}
%Define $\calN^{\ast}_{\pm}$ by ?? (ADJUST AND EXPLAIN.... SO THAT $\calN^{\ast}_{+}$ IS GOOD FOR OUTPUT AND INPUT OF THE SAME SIGN)
%\begin{equation*}
%	\brk{\calN^{\ast}_{\pm}(f, g), h} = \brk{\calN_{\pm}(h, g), f}.
%\end{equation*}
%For the pairing $\brk{\cdot, \cdot}$, we use the convention
%\begin{equation*}
%	\brk{f, g} = \int_{\bbR^{4}} f \overline{g} \, \ud x,
%\end{equation*}
%where $f$ and $g$ are test functions.
%
%\begin{equation*}
%\calP_{j} \brk{\Pi_{-s_{1}} \alp_{x} f, \Pi_{s_{2}} g} = \calN_{s_{1} s_{2}}(f, g)
%\end{equation*}
%\begin{equation*}
%\calP_{j} \brk{\mR_{x} f, \Pi_{s_{2}} g} = \abs{D}^{-1} \calN_{s_{1} s_{2}}(f, \abs{D} g)
%\end{equation*}
%where we remind the reader that $D = (D_{1}, \ldots, D_{4})$.
%
%\begin{equation*}
%\calM, \Pi
%\end{equation*}
%
%REMARK THAT THESE ARE SUFFICIENT TO PROVE THE BILINEAR ESTIMATES, BUT NOT THE TRILINEAR ONE. FOR THAT, THERE IS ALSO A SECONDARY NULL STRUCTURE.

\subsection{Core bilinear estimates} \label{subsec:core-nf}
We now state \emph{core bilinear estimates}, which are estimates for $\calL$, $\NF_{s}$ and $\NF^{\ast}_{s}$ when the inputs and the output have low modulation (more precisely, less than the minimum frequency). 
%For convenience in application, we have chosen to present an extensive list of all cases. 
\begin{proposition} [Core estimates for $\calL$] \label{prop:no-nf}
Let $k_{0}, k_{1}, k_{2}, j \in \bbZ$ be such that $\abs{k_{\max} - k_{\med}} \leq 5$ and $j \leq k_{\min} + C_{0}$. Define $\ell := \frac{1}{2} (j-k_{\min})_{-}$ and let $\calL$ be a translation invariant bilinear operator on $\bbR^{d}$ with bounded mass. Then for any signs $s_{0}, s_{1}, s_{2} \in \set{+,-}$, the following estimates hold:
\begin{equation} \label{eq:no-nf:est0}
\nrm{P_{k_{0}} Q_{j}^{s_{0}} \calL(Q_{<j}^{s_{1}} f_{k_{1}}, Q_{<j}^{s_{2}} g_{k_{2}})}_{L^{2} L^{2}} 
\aleq \nrm{f_{k_{1}}}_{L^{\infty} L^{2}} \bb(\sum_{\calC_{k_{\min}}(\ell)}\nrm{P_{\calC_{k_{\min}}(\ell)} Q_{<j}^{s_{2}} g_{k_{2}}}_{L^{2} L^{\infty}}^{2} \bb)^{1/2}
\end{equation}
\begin{equation} \label{eq:no-nf:est1}
\nrm{P_{k_{0}} Q_{<j}^{s_{0}} \calL(Q_{j}^{s_{1}} f_{k_{1}}, Q_{<j}^{s_{2}} g_{k_{2}})}_{L^{1} L^{2}} 
\aleq \nrm{Q_{j}^{s_{1}} f_{k_{1}}}_{L^{2} L^{2}} \bb(\sum_{\calC_{k_{\min}}(\ell)}\nrm{P_{\calC_{k_{\min}}(\ell)} Q_{<j}^{s_{2}} g_{k_{2}}}_{L^{2} L^{\infty}}^{2} \bb)^{1/2}
\end{equation}
\end{proposition}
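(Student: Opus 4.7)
The plan is to adapt the proof of Lemma~\ref{lem:ellip}, substituting Lemma~\ref{lem:box-orth} (which accounts for modulation localization) for Lemma~\ref{lem:box-orth-0}. First, I dualize each estimate by pairing against a test function $h$ in the appropriate dual norm --- $h \in L^{2} L^{2}$ for \eqref{eq:no-nf:est0} and $h \in L^{\infty} L^{2}$ for \eqref{eq:no-nf:est1} --- and absorb the outer projections $P_{k_{0}} Q_{j/<j}^{s_{0}}$ into $h$ using their $L^{q} L^{2}$-boundedness, cf.\ \eqref{eq:q-LqL2}. Both estimates reduce to bounding the trilinear form
\[
I = \iint h \, \calL\bigl(Q_{\sharp}^{s_{1}} f_{k_{1}}, \, Q_{<j}^{s_{2}} g_{k_{2}}\bigr) \, \ud t \ud x,
\]
where $Q_{\sharp}^{s_{1}}$ equals $Q_{<j}^{s_{1}}$ or $Q_{j}^{s_{1}}$ depending on the estimate.

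Next, I decompose each factor by the finitely-overlapping box projections $P_{\calC}$ with $\calC = \calC_{k_{\min}}(\ell)$ covering the frequency annulus $\{|\xi| \simeq 2^{k_{i}}\}$ at each input. Since $\ell = \frac{1}{2}(j - k_{\min})_{-}$ and each factor has modulation $\lesssim 2^{j}$, the composite projections $P_{\calC} Q_{\sharp}^{s}$ are disposable on $L^{q} L^{2}$, and by Lemma~\ref{lem:box-orth}(1) the subsum $I_{\calC^{0}, \calC^{1}, \calC^{2}}$ vanishes unless $\calC^{0} + \calC^{1} + \calC^{2} \ni 0$. At each fixed time, H\"older in $x$ yields
\[
|I_{\calC^{0}, \calC^{1}, \calC^{2}}(t)| \lesssim \|h_{\calC^{0}}(t)\|_{L^{2}} \, \|f_{1, \calC^{1}}(t)\|_{L^{2}} \, \|g_{\calC^{2}}(t)\|_{L^{\infty}},
\]
writing $f_{1} = Q_{\sharp}^{s_{1}} f_{k_{1}}$ and $g = Q_{<j}^{s_{2}} g_{k_{2}}$. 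Anchoring on a non-minimal-frequency box, Lemma~\ref{lem:box-orth}(2) provides an $O(1)$ bound on the number of compatible partners; hence summing first over the box at $k_{\min}$ (only $O(1)$ terms for each fixed pair at the non-minimal frequencies), then applying Lemma~\ref{lem:CS} to the remaining essentially-diagonal pair-sum, and finally using Bessel's inequality, gives
\[
\sum_{\calC^{0}, \calC^{1}, \calC^{2}} |I_{\calC^{0}, \calC^{1}, \calC^{2}}(t)| \lesssim \|h(t)\|_{L^{2}} \, \|f_{1}(t)\|_{L^{2}} \, \Bigl(\sum_{\calC_{k_{\min}}(\ell)} \|P_{\calC} g(t)\|_{L^{\infty}}^{2}\Bigr)^{\!1/2}.
\]

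Finally, integrating in $t$ with H\"older $L^{2}_{t} \times L^{\infty}_{t} \times L^{2}_{t}$ for \eqref{eq:no-nf:est0} and $L^{\infty}_{t} \times L^{2}_{t} \times L^{2}_{t}$ for \eqref{eq:no-nf:est1}, and exchanging $\ell^{2}_{\calC}$ with $L^{2}_{t}$ via Minkowski, produces the claimed bounds. The argument is conceptually identical to Lemma~\ref{lem:ellip}; the only technical bookkeeping is verifying disposability of $P_{\calC_{k_{\min}}(\ell)} Q_{\sharp}^{s}$ at each of the frequencies $k_{0}, k_{1}, k_{2}$, which amounts to $j \geq 2 k_{\min} - k_{i} + 2 \ell - O(1)$. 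This inequality follows directly from $\ell = \frac{1}{2}(j - k_{\min})_{-}$ and $k_{i} \geq k_{\min}$, and is the only place where the hypothesis $j \leq k_{\min} + C_{0}$ enters (implicitly, via the choice of $\ell$). I expect this bookkeeping, rather than any conceptual difficulty, to be the main obstacle.
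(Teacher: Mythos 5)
Your proposal is correct and follows essentially the same route as the paper: the paper proves Propositions~\ref{prop:no-nf}--\ref{prop:nfs} simultaneously by repeating the orthogonality argument of Lemma~\ref{lem:ellip} with Lemma~\ref{lem:box-orth} in place of Lemma~\ref{lem:box-orth-0}, using the H\"older bound \eqref{eq:L-atom} on each box triple, summing first over the minimal-frequency box and then applying Lemma~\ref{lem:CS} to the essentially diagonal remainder, exactly as you describe. The disposability bookkeeping you flag as the main obstacle is not even needed in the paper's formulation, since the square sums over boxes are retained and closed by $L^{2}$ almost-orthogonality rather than by disposing of $P_{\calC}Q^{s}_{j/<j}$; your verification of it is nonetheless correct.
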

\begin{proposition} [Core estimates for $\NF_{s}$] \label{prop:nf}
Let $k_{0}, k_{1}, k_{2}, j \in \bbZ$ be such that $\abs{k_{\max} - k_{\med}} \leq 5$ and $j \leq k_{\min} + C_{0}$. Define $\ell := \frac{1}{2} (j-k_{\min})_{-}$ and let $ \NF_{s}$ be an abstract null form as in Definition~\ref{def:nf}. Then, for any signs $s_{0}, s_{1}, s_{2} \in \set{+, -}$, the following estimates hold:
\begin{equation} \label{eq:nf:est0}
\begin{aligned} 
& \hskip-2em
\nrm{P_{k_{0}} Q_{j}^{s_{0}} \NF_{s_{1} s_{2}}(Q_{<j}^{s_{1}} f_{k_{1}}, Q_{<j}^{s_{2}} g_{k_{2}})}_{L^{2} L^{2}} \\
\aleq & 2^{\ell} 2^{k_{\min} - \min \set{k_{1}, k_{2}}} \nrm{f_{k_{1}}}_{L^{\infty} L^{2}} \bb(\sum_{\calC_{k_{\min}}(\ell)}\nrm{P_{\calC_{k_{\min}}(\ell)} Q_{<j}^{s_{2}} g_{k_{2}}}_{L^{2} L^{\infty}}^{2} \bb)^{1/2}
\end{aligned}
\end{equation}
\begin{equation} \label{eq:nf:est1}
\begin{aligned} 
& \hskip-2em
\nrm{P_{k_{0}} Q_{<j}^{s_{0}} \NF_{s_{1} s_{2}}(Q_{j}^{s_{1}} f_{k_{1}}, Q_{<j}^{s_{2}} g_{k_{2}})}_{L^{1} L^{2}} \\
\aleq & 2^{\ell} 2^{k_{\min} - \min \set{k_{1}, k_{2}}} \nrm{Q_{j}^{s_{1}} f_{k_{1}}}_{L^{2} L^{2}} \bb(\sum_{\calC_{k_{\min}}(\ell)}\nrm{P_{\calC_{k_{\min}}(\ell)} Q_{<j}^{s_{2}} g_{k_{2}}}_{L^{2} L^{\infty}}^{2} \bb)^{1/2}
\end{aligned}
\end{equation}
%\begin{equation} \label{eq:nf:est2}
%\begin{aligned} 
%& \hskip-2em
%\nrm{P_{k_{0}} Q_{<j}^{s_{0}} \NF_{s_{1} s_{2}}(Q_{<j}^{s_{1}} f_{k_{1}}, Q_{j}^{s_{2}} g_{k_{2}})}_{L^{1} L^{2}} \\
%\aleq & 2^{\ell} 2^{k_{\min} - \min \set{k_{1}, k_{2}}} \bb(\sum_{\calC_{k_{\min}(\ell)}}\nrm{P_{\calC_{k_{\min}(\ell)}}Q_{<j}^{s_{1}} f_{k_{1}}}_{L^{2} L^{\infty}}^{2} \bb)^{1/2} \nrm{Q_{j}^{s_{2}} g_{k_{2}}}_{L^{2} L^{2}}
%\end{aligned}
%\end{equation}
\end{proposition}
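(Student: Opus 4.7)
The plan is to run the orthogonality argument of Lemma~\ref{lem:ellip} and Proposition~\ref{prop:no-nf}, while extracting along the way the angular gain $2^{\ell} 2^{k_{\min} - \min\{k_1, k_2\}}$ afforded by the null form symbol $m_{s_1 s_2}$ of $\NF_{s_1 s_2}$ (Definition~\ref{def:nf}). First I dualize: by approximate self-adjointness of $P_{k_0}$ and $Q_j^{s_0}$, it suffices to bound, for $h$ with $\|h\|_{L^2 L^2} \leq 1$ (for \eqref{eq:nf:est0}) or $\|h\|_{L^\infty L^2} \leq 1$ (for \eqref{eq:nf:est1}), the trilinear form
\begin{equation*}
I = \iint h \cdot P_{k_0} Q_{j/<j}^{s_0} \NF_{s_1 s_2}(Q_{<j}^{s_1} f_{k_1}, Q_{<j}^{s_2} g_{k_2}) \, \ud t \, \ud x,
\end{equation*}
with $h$ Fourier-localized to match the outer projection.

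\textbf{Box decomposition and null form bound.} Decompose each factor via smooth projections $P_{\calC^i}$ onto radially-aligned boxes $\calC^i = \calC_{k_{\min}}(\ell)$ inside $\{|\xi|\simeq 2^{k_i}\}$; since $\ell = \tfrac{1}{2}(j - k_{\min})_-$ gives $k_{\min} + 2\ell \leq j$, the composition $P_{\calC^i} Q_{<j/j}^{s_i}$ is essentially disposable. By Lemma~\ref{lem:box-orth}, only triples $(\calC^0, \calC^1, \calC^2) \in \calJ$ satisfying $\calC^0 + \calC^1 + \calC^2 \ni 0$ together with $|\angle(s_i \calC^i, s_{i'} \calC^{i'})| \lesssim 2^{\ell} 2^{k_{\min} - \min\{k_i, k_{i'}\}}$ contribute. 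In particular $|\angle(\xi_1, s_1 s_2 \xi_2)| = |\angle(s_1 \xi_1, s_2 \xi_2)| \lesssim 2^{\ell} 2^{k_{\min} - \min\{k_1, k_2\}}$ on the support, which is precisely the angle controlled by the symbol bound of $m_{s_1 s_2}$. Applying Proposition~\ref{prop:nf-basic} on each piece (further subdividing into caps of radius $\leq 2^{-10}$ in the non-minimal annuli if needed, which only improves the angular gain) gives, for any H\"older triple $(q_0, q_1, q_2)$,
\begin{equation*}
|I(t)| \lesssim 2^{\ell} 2^{k_{\min} - \min\{k_1, k_2\}} \sum_{(\calC^0, \calC^1, \calC^2) \in \calJ} \|P_{\calC^0} h(t)\|_{L^{q_0}} \|P_{\calC^1} f_{k_1}(t)\|_{L^{q_1}} \|P_{\calC^2} g_{k_2}(t)\|_{L^{q_2}}.
\end{equation*}

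\textbf{Summation and integration in time.} Handle the $\calJ$-sum exactly as in the proof of Lemma~\ref{lem:ellip}: case-split on which of $k_0, k_1, k_2$ is minimal, sum trivially over the $O(1)$ boxes in the minimal annulus (Lemma~\ref{lem:box-orth}(2)), then apply Lemma~\ref{lem:CS} to the remaining essentially-diagonal double sum over non-minimal boxes, together with $L^2$-orthogonality of the $P_{\calC^i}$-pieces whenever $q_i = 2$. For \eqref{eq:nf:est0} choose $(q_0, q_1, q_2) = (2, 2, \infty)$ and finish with H\"older in $t$ against $h \in L^2_t L^2_x$, $f \in L^\infty_t L^2_x$, and the square-summed $L^2_t L^\infty_x$-norm of the $g$-pieces; for \eqref{eq:nf:est1}, the same choice succeeds after dualizing against $h \in L^\infty_t L^2_x$ and redistributing the $t$-norms so that $h \in L^\infty_t L^2_x$ and $Q_j^{s_1} f \in L^2_t L^2_x$. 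The proof is a mechanical upgrade of Proposition~\ref{prop:no-nf}; no serious obstacle arises beyond the bookkeeping of signs — one must verify that the symbol bound of $\NF_{s_1 s_2}$ on $|\angle(\xi_1, s_1 s_2 \xi_2)|$ aligns correctly with the angular separation $|\angle(s_1 \calC^1, s_2 \calC^2)|$ output by Lemma~\ref{lem:box-orth}(1), which it does by the simple identity $|\angle(s_1 \xi_1, s_2 \xi_2)| = |\angle(\xi_1, s_1 s_2 \xi_2)|$.
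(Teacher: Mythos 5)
Your proposal is correct and follows essentially the same route as the paper's proof: dualize, decompose into boxes $\calC_{k_{\min}}(\ell)$, invoke Lemma~\ref{lem:box-orth} to restrict to triples satisfying \eqref{eq:box-orth}, apply Proposition~\ref{prop:nf-basic} on each such triple to extract the factor $2^{\ell}2^{k_{\min}-\min\{k_1,k_2\}}$ (this is exactly the content of Lemma~\ref{lem:nf-atom}), and then sum via the bounded-overlap/Cauchy--Schwarz argument of Lemma~\ref{lem:ellip} with $(q_0,q_1,q_2)=(2,2,\infty)$ before applying H\"older in $t$ and duality. Your sign check that $|\angle(s_1\xi_1,s_2\xi_2)|=|\angle(\xi_1,s_1s_2\xi_2)|$ matches the angular output of Lemma~\ref{lem:box-orth} is the same verification the paper delegates to Lemma~\ref{lem:nf-atom}.
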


\begin{proposition} [Core estimates for $\NF^{\ast}_{s}$] \label{prop:nfs}
Let $k_{0}, k_{1}, k_{2}, j \in \bbZ$ be such that $\abs{k_{\max} - k_{\med}} \leq 5$ and $j \leq k_{\min} + C_{0}$. Define $\ell := \frac{1}{2} (j-k_{\min})_{-}$ and let $ \NF^{\ast}_{s}$ be an abstract null form as in Definition~\ref{def:nfs}. Then, for any signs $s_{0}, s_{1}, s_{2} \in \set{+, -}$, the following estimates hold:
\begin{equation} \label{eq:nfs:est0}
\begin{aligned} 
& \hskip-2em
\nrm{P_{k_{0}} Q_{j}^{s_{0}} \NF^{\ast}_{s_{0} s_{2}}(Q_{<j}^{s_{1}} f_{k_{1}}, Q_{<j}^{s_{2}} g_{k_{2}})}_{L^{2} L^{2}} \\
\aleq & 2^{\ell} 2^{k_{\min} - \min \set{k_{0}, k_{2}}} \nrm{f_{k_{1}}}_{L^{\infty} L^{2}} \bb(\sum_{\calC_{k_{\min}}(\ell)}\nrm{P_{\calC_{k_{\min}}(\ell)} Q_{<j}^{s_{2}} g_{k_{2}}}_{L^{2} L^{\infty}}^{2} \bb)^{1/2}
\end{aligned}
\end{equation}
\begin{equation} \label{eq:nfs:est1}
\begin{aligned} 
& \hskip-2em
\nrm{P_{k_{0}} Q_{<j}^{s_{0}} \NF^{\ast}_{s_{0} s_{2}}(Q_{j}^{s_{1}} f_{k_{1}}, Q_{<j}^{s_{2}} g_{k_{2}})}_{L^{1} L^{2}} \\
\aleq & 2^{\ell} 2^{k_{\min} - \min \set{k_{0}, k_{2}}} \nrm{Q_{j}^{s_{1}} f_{k_{1}}}_{L^{2} L^{2}} \bb(\sum_{\calC_{k_{\min}}(\ell)}\nrm{P_{\calC_{k_{\min}}(\ell)} Q_{<j}^{s_{2}} g_{k_{2}}}_{L^{2} L^{\infty}}^{2} \bb)^{1/2}
\end{aligned}
\end{equation}
\begin{equation} \label{eq:nfs:est2}
\begin{aligned} 
& \hskip-2em
\nrm{P_{k_{0}} Q_{<j}^{s_{0}} \NF^{\ast}_{s_{0} s_{2}}(Q_{<j}^{s_{1}} f_{k_{1}}, Q_{j}^{s_{2}} g_{k_{2}})}_{L^{1} L^{2}} \\
\aleq & 2^{\ell} 2^{k_{\min} - \min \set{k_{0}, k_{2}}} \bb(\sum_{\calC_{k_{\min}(\ell)}}\nrm{P_{\calC_{k_{\min}(\ell)}}Q_{<j}^{s_{1}} f_{k_{1}}}_{L^{2} L^{\infty}}^{2} \bb)^{1/2} \nrm{Q_{j}^{s_{2}} g_{k_{2}}}_{L^{2} L^{2}}
\end{aligned}
\end{equation}
\end{proposition}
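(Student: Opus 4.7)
The proof follows the template already established in Propositions~\ref{prop:no-nf} and \ref{prop:nf}. The key new ingredient is the duality relation \eqref{eq:nfs-dual} that defines $\NF^{\ast}_{s}$; invoking it converts each of the three estimates into a bound for an ordinary null form $\NF_{-s_0 s_2}$ in which the roles of the output frequency $k_0$ and the first input frequency $k_1$ are interchanged. This swap is precisely what explains the appearance of $\min\{k_0, k_2\}$ (rather than $\min\{k_1, k_2\}$) in the angular gain.

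For \eqref{eq:nfs:est0} I would first test against an arbitrary $h_{k_0,j}^{s_0} = P_{k_0} Q_j^{s_0} h$ with $\|h_{k_0,j}^{s_0}\|_{L^2 L^2} = 1$ and apply Definition~\ref{def:nfs} to rewrite
\begin{align*}
&\iint h_{k_0,j}^{s_0}\, P_{k_0} Q_j^{s_0} \NF^{\ast}_{s_0 s_2}(Q_{<j}^{s_1} f_{k_1}, Q_{<j}^{s_2} g_{k_2})\,\ud t\ud x \\
= &\iint Q_{<j}^{s_1} f_{k_1}\, \NF_{-s_0 s_2}(h_{k_0,j}^{s_0}, Q_{<j}^{s_2} g_{k_2})\,\ud t\ud x.
\end{align*}
Estimates \eqref{eq:nfs:est1} and \eqref{eq:nfs:est2} would be handled analogously, dualizing against a test function in $L^\infty L^2$; in the former case the high modulation remains on the first input, while in the latter it remains on $g$.

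Having reduced to an ordinary null form, I would then follow the orthogonality scheme of Proposition~\ref{prop:nf}. Decompose each of the three functions into $\calC_{k_{\min}}(\ell)$-boxes; by Lemma~\ref{lem:box-orth} only triples with $\calC^0+\calC^1+\calC^2 \ni 0$ and $|\angle(s_i \calC^i, s_{i'} \calC^{i'})| \lesssim 2^\ell 2^{k_{\min}-\min\{k_i,k_{i'}\}}$ contribute. The duality in Step 1 is built so that $\NF_{-s_0 s_2}(h, g)$, whose symbol is controlled by $|\angle(\xi_h, -s_0 s_2 \xi_g)| = |\angle(s_0 \xi_0, s_2 \xi_2)|$ once one tracks $\xi_h = -\xi_0$, exactly matches the cone-geometric bound $\lesssim 2^\ell 2^{k_{\min} - \min\{k_0, k_2\}}$. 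For each compatible pair of boxes I would extract this angular factor via Proposition~\ref{prop:nf-basic}, apply H\"older in space with exponents $(L^2, L^2, L^\infty)$ assigned to (test, non-minimum, minimum) factor, and then integrate in time.

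Finally I would sum: first over the box situated in the minimum-frequency annulus, for which Lemma~\ref{lem:box-orth}(2) gives a uniformly bounded number of compatible choices, and then apply Lemma~\ref{lem:CS} to the remaining essentially diagonal sum over the two non-minimum-frequency boxes, converting the result into the square-summed $L^2 L^\infty$-box norm on the appropriate factor and an $L^\infty L^2$ (or $L^2 L^2$) norm on the other. The main bookkeeping difficulty is keeping the sign conventions straight through the duality of Step 1: the type of the dualized null form being $-s_0 s_2$ (rather than $s_0 s_2$) is precisely what is needed so that, after the reflection $\xi_h \mapsto -\xi_0$ implicit in the pairing, the resulting angular symbol bound lines up with the $(s_0, s_2)$ cone geometry of the test function and the second input. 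Estimate \eqref{eq:nfs:est2} requires slightly more care because the high-modulation variable now sits in the second argument of the dualized null form, but the structural argument is identical.
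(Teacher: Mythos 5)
Your proposal is correct and follows essentially the same route as the paper: the paper proves Propositions~\ref{prop:no-nf}, \ref{prop:nf} and \ref{prop:nfs} simultaneously by the box decomposition at scale $\calC_{k_{\min}}(\ell)$, Lemma~\ref{lem:box-orth}, Lemma~\ref{lem:nf-atom}/Proposition~\ref{prop:nf-basic}, and Lemma~\ref{lem:CS}, deducing the $\NF^{\ast}_{s}$ case by exactly the duality \eqref{eq:nfs-dual} you invoke (with the same caveat that the transpose of $Q^{s_{0}}_{j/<j}$ under the pairing $\int fg$ is $Q^{-s_{0}}_{j/<j}$, which is the sign bookkeeping you describe). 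Your exponent assignments, including the modified choice for \eqref{eq:nfs:est2}, match the paper's.
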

\begin{remark} It is clear from the proof that each of the inequalities holds (with an adjusted constant) when we replace any of the multipliers $ Q^{s_{i}}_{<j}$ by $Q^{s_{i}}_{\leq j}$ or $Q^{s_{i}}_{< j - C}$ for any fixed $C \geq 0$.
\end{remark}

Although there are numerous cases, all the estimates may be proved in an identical fashion, which combines Lemma~\ref{lem:box-orth} with either \eqref{eq:L-atom} or the following estimate:

\begin{lemma} \label{lem:nf-atom}
Let $k_{0}, k_{1}, k_{2}, j, \ell$ be as in Propositions~\ref{prop:nf} and \ref{prop:nfs}. For $i = 0, 1, 2$, let $s_{i} \in \set{+, -}$ and $\calC^{i}$ be a rectangular box of the form $\calC_{k_{\min}}(\ell)$ situated in $\set{\abs{\xi} \sim 2^{k_{i}}}$ such that \eqref{eq:box-orth} holds. Then for any $1 \leq q_{0}, q_{1}, q_{2} \leq \infty$ such that $q_{0}^{-1} + q_{1}^{-1} + q_{2}^{-1} = 1$, we have
\begin{align*}
& \hskip-2em
	\abs{\int P_{\calC^{0}} h_{k_{0}} \NF_{s_{1} s_{1}} ( P_{\calC^{1}} f_{k_{1}}, P_{\calC^{2}} g_{k_{2}}) \, \ud x} \\
	\aleq & 2^{\ell} 2^{k_{\min} - \min \set{k_{1}, k_{2}}} \nrm{P_{\calC^{0}} h_{k_{0}}}_{L^{q_{0}}} \nrm{P_{\calC^{1}} f_{k_{1}}}_{L^{q_{1}}} \nrm{P_{\calC^{2}} g_{k_{2}}}_{L^{q_{2}}}
\end{align*}
\end{lemma}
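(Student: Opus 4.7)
The plan is to reduce the lemma to a direct application of Proposition~\ref{prop:nf-basic} after a H\"older step in the spatial variable. First, I would apply H\"older's inequality with exponents $q_0$ and $p$ where $p^{-1}=1-q_0^{-1}=q_1^{-1}+q_2^{-1}$, obtaining
\begin{equation*}
\abs{\int P_{\calC^{0}} h_{k_{0}} \, \NF_{s_{1} s_{2}}\bigl(P_{\calC^{1}} f_{k_{1}}, P_{\calC^{2}} g_{k_{2}}\bigr) \ud x} \leq \nrm{P_{\calC^{0}} h_{k_{0}}}_{L^{q_0}} \nrm{\NF_{s_{1} s_{2}}\bigl(P_{\calC^{1}} f_{k_{1}}, P_{\calC^{2}} g_{k_{2}}\bigr)}_{L^{p}},
\end{equation*}
so that it remains to bound the second factor by $2^{\ell} 2^{k_{\min}-\min\{k_1,k_2\}} \nrm{P_{\calC^{1}} f_{k_{1}}}_{L^{q_1}} \nrm{P_{\calC^{2}} g_{k_{2}}}_{L^{q_2}}$.

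Next, I would disentangle the sign conventions of Definition~\ref{def:nf}. When $s_{1}s_{2}=+$, the operator $\NF_{s_{1}s_{2}}$ is already an abstract null form of order $1$ and its symbol gains in $\abs{\angle(\xi_{1},\xi_{2})}$. When $s_{1}s_{2}=-$, the auxiliary operator $(f,g)\mapsto \NF_{-}(f,\overline{g})$ is an abstract null form; since conjugation reflects Fourier support from $\calC^{2}$ to $-\calC^{2}$, the effective angular gain is in $\abs{\angle(\xi_{1},-\xi_{2})}$. In both cases the relevant angle equals $\abs{\angle(s_{1}\xi_{1},s_{2}\xi_{2})}$ (reflection of both caps preserves the angle). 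By Lemma~\ref{lem:box-orth}(1), the nonvanishing hypothesis gives
\begin{equation*}
\abs{\angle(s_{1}\calC^{1},s_{2}\calC^{2})} \aleq 2^{\ell}\, 2^{k_{\min}-\min\{k_{1},k_{2}\}}.
\end{equation*}

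Finally, I would invoke Proposition~\ref{prop:nf-basic}. Observe that each $\calC^{j}=\calC_{k_{\min}}(\ell)$ has transverse dimensions $2^{k_{\min}+\ell}$ at radius $\sim 2^{k_{j}}$, so its angular projection fits inside a cap $\omega_{j}\subseteq\bbS^{d-1}$ of radius $r_{j}\aleq 2^{\ell}\,2^{k_{\min}-k_{j}}\leq 2^{\ell}\,2^{k_{\min}-\min\{k_{1},k_{2}\}}$ (after an orthogonal reflection in the $s_{1}s_{2}=-$ case). Combined with the angular separation above, the parameter
\begin{equation*}
\tht = \max\bigl\{\abs{\angle(\omega_{1},(\pm)\omega_{2})},\, r_{1},\, r_{2}\bigr\}
\end{equation*}
of Proposition~\ref{prop:nf-basic} is $\aleq 2^{\ell}\,2^{k_{\min}-\min\{k_{1},k_{2}\}}$, and the proposition yields the required $L^{p}$ bound with this factor in front.

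This lemma is essentially a clean corollary of Proposition~\ref{prop:nf-basic} together with the geometric information from Lemma~\ref{lem:box-orth}, so there is no genuine analytic obstacle. The only point requiring care is bookkeeping: matching the sign $s_{1}s_{2}$ in the symbol of $\NF_{s_{1}s_{2}}$ to the correct reflected angular separation and verifying that the radii of the containing caps are dominated by (rather than exceed) the target gain; once this is in place the estimate follows mechanically.
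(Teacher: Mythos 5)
Your proposal is correct and follows essentially the same route as the paper, whose proof consists precisely of verifying that the localized inputs satisfy the hypotheses of Proposition~\ref{prop:nf-basic} (with $\tht \aleq 2^{\ell} 2^{k_{\min}-\min\{k_1,k_2\}}$ coming from the angular separation in \eqref{eq:box-orth} together with the angular sizes of the boxes) and then applying that proposition. Your explicit treatment of the H\"older step and of the sign/conjugation bookkeeping is exactly the content the paper leaves implicit.
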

\begin{proof}
Upon verifying that the inputs obey the hypothesis of Proposition~\ref{prop:nf-basic}, the lemma follows immediately.
\end{proof}

%\begin{align*}
%\nrm{P_{k_{0}} P_{-\calC^{0}} \NF_{s_{1} s_{2}}(P_{\calC^{1}} f_{k_{1}}, P_{\calC^{2}} g_{k_{2}})}_{L^{p}} 
%\aleq & 2^{\ell} 2^{k_{\min} - \min \set{k_{1}, k_{2}}} \nrm{P_{\calC^{1}} f_{k_{1}}}_{L^{q_{1}}} \nrm{P_{\calC^{2}} g_{k_{2}}}_{L^{q_{2}}}, \\
%\nrm{P_{k_{0}} P_{-\calC^{0}} \NF^{\ast}_{-s_{0} s_{2}}(P_{\calC^{1}} f_{k_{1}}, P_{\calC^{2}} g_{k_{2}})}_{L^{p}} 
%\aleq & 2^{\ell} 2^{k_{\min} - \min \set{k_{0}, k_{2}}} \nrm{P_{\calC^{1}} f_{k_{1}}}_{L^{q_{1}}} \nrm{P_{\calC^{2}} g_{k_{2}}}_{L^{q_{2}}}.
%\end{align*}
%\end{lemma}

%\begin{proof}
%Upon verifying that the inputs obey the hypothesis of Proposition~\ref{prop:nf-basic}, the first statement immediately is proved.
%The second estimate then follows by \eqref{eq:nfs-dual} and duality. We remark that, since we use the pairing $\int f g$, the transpose of $P_{\calC^{0}}$ is $P_{-\calC^{0}}$.
%\end{proof}

\begin{proof} [Proof of Propositions~\ref{prop:no-nf}, \ref{prop:nf} and \ref{prop:nfs}]
The proof is similar to Lemma~\ref{lem:ellip}, but we use Lemma~\ref{lem:box-orth} instead of Lemma~\ref{lem:box-orth-0}. We present details in the case of Propositions~\ref{prop:nf} and \ref{prop:nfs}; Proposition~\ref{prop:no-nf} follows from the same proof with Lemma~\ref{lem:nf-atom} replaced by \eqref{eq:L-atom}, which removes $2^{\ell} 2^{k_{\min} - \min \set{k_{1}, k_{2}}}$ in \eqref{eq:nf:pf-atom}.

For $t \in \bbR$ and rectangular boxes $\calC^{0}, \calC^{1}, \calC^{2}$ of the form $\calC_{k_{\min}}(\ell)$, we introduce the expression
\begin{align*}
	I_{\calC^{0}, \calC^{1}, \calC^{2}} (t) 
	= \int P_{\calC^{0}} Q_{j / <j}^{s_{0}} h_{k_{0}} \, \NF_{s_{1}  s_{2}} (P_{\calC^{1}} Q_{j / <j}^{s_{1}} f_{k_{1}}, P_{\calC^{2}} Q_{j / <j}^{s_{2}} g_{k_{2}}) (t)\,  \ud x
\end{align*}
where $Q_{j / <j}^{s_{i}}$ stands for either $Q_{j}^{s_{i}}$ or $Q_{< j}^{s_{i}}$. Note that
\begin{equation} \label{eq:nf:pf-sum}
	\iint Q_{j / <j}^{s_{0}} h_{k_{0}} \, \NF_{s_{1}  s_{2}} (Q_{j / <j}^{s_{1}} f_{k_{1}}, Q_{j / <j}^{s_{2}} g_{k_{2}}) \, \ud t \ud x
	 = \sum_{\calC^{0}, \calC^{1}, \calC^{2}} \int I_{\calC^{0}, \calC^{1}, \calC^{2}} (t)  \, \ud t.
\end{equation}

By Lemma~\ref{lem:box-orth}, the summand on the RHS vanishes unless $\calC^{0}, \calC^{1}, \calC^{2}$ satisfy \eqref{eq:box-orth}.  Using the shorthand $\tilde{h} = Q_{j / <j}^{s_{0}} h_{k_{0}}$, $\tilde{f} = Q_{j / <j}^{s_{1}} f_{k_{1}}$ and $\tilde{g} = Q_{j / <j}^{s_{2}} h_{k_{2}}$, Lemma~\ref{lem:nf-atom} implies
\begin{equation} \label{eq:nf:pf-atom}
	\abs{I_{\calC^{0}, \calC^{1}, \calC^{2}} (t)}
	\aleq 2^{\ell} 2^{k_{\min} - \min \set{k_{1}, k_{2}}} \nrm{P_{\calC^{0}} \tilde{h}(t)}_{L^{q_{0}}} \nrm{P_{\calC^{1}} \tilde{f}(t)}_{L^{q_{1}}} \nrm{P_{\calC^{2}} \tilde{g}(t)}_{L^{q_{2}}}
\end{equation}
for any $1 \leq q_{0}, q_{1}, q_{2} \leq \infty$ such that $q_{0}^{-1} + q_{1}^{-1} + q_{2}^{-1} = 1$. We now sum up the RHS of \eqref{eq:nf:pf-atom} in $(\calC^{0}, \calC^{1}, \calC^{2})$ for which \eqref{eq:box-orth} holds. As in the proof of Lemma~\ref{lem:ellip}, we first sum up the boxes in $\set{\abs{\xi} \simeq 2^{k_{\min}}}$ (for which there are only bounded many summands) and then apply Lemma~\ref{lem:CS} to the remaining (essentially diagonal) summation. We then obtain 
\begin{align*}
\sum_{\calC^{0}, \calC^{1}, \calC^{2} : \eqref{eq:box-orth}} \abs{I_{\calC^{0}, \calC^{1}, \calC^{2}} (t)} 
\aleq & 2^{\ell} 2^{k_{\min} - \min \set{k_{1}, k_{2}}} \bb(\sum_{\calC^{0}} \nrm{P_{\calC^{0}} \tilde{h}(t)}_{L^{q_{0}}}^{2}\bb)^{1/2} \\
& \times \bb( \sum_{\calC^{1}} \nrm{P_{\calC^{1}} \tilde{f}(t)}_{L^{q_{1}}}^{2} \bb)^{1/2} \bb( \sum_{\calC^{2}} \nrm{P_{\calC^{2}} \tilde{g}(t)}_{L^{q_{2}}}^{2} \bb)^{1/2}.
\end{align*}
We are ready to complete the proof in a few strokes. To prove estimates \eqref{eq:nf:est0} and \eqref{eq:nf:est1}, take $(q_{0}, q_{1}, q_{2}) = (2, 2, \infty)$. By orthogonality in $L^{2}$, factors involving $\tilde{h}$ and $\tilde{f}$ can be bounded by $\nrm{\tilde{h}(t)}_{L^{2}}$ and $\nrm{\tilde{f}(t)}_{L^{2}}$, respectively. Integrating and applying H\"older's inequality in $t$, estimates \eqref{eq:nf:est0} and \eqref{eq:nf:est1} follow by duality as in the proof of Lemma~\ref{lem:ellip}. Next, by the definition of $\NF_{s}^{\ast}$ in \eqref{eq:nfs-dual}, \eqref{eq:nfs:est0} and \eqref{eq:nfs:est1} follow from the same method as well (we note that, since we use the pairing $\int f g$, the transpose of $Q_{j / <j}^{s_{0}}$ is $Q_{j / <j}^{-s_{0}}$). Finally, \eqref{eq:nfs:est2} is proved by taking $(q_{0}, q_{1}, q_{2}) = (\infty, 2, 2)$ and proceeding analogously. \qedhere
 \end{proof}

\subsection*{Appendix: Proof of Proposition~\ref{prop:nf-basic}} \label{subsec:nf-basic-pf}
The idea of the proof is to perform a separation of variables to write the symbol $m(\xi, \eta)$ of $\ANF$ in the form
\begin{equation} \label{eq:nf-basic:dcmp}
	m(\xi, \eta) = \sum_{\bfj, \bfk \in \bbZ^{d}} c_{\bfj, \bfk} \, a_{\bfj}(\xi) b_{\bfk}(\eta) \quad \hbox{ for } (\xi, \eta) \in E_{1} \times E_{2}
\end{equation}
where for each integer $n \geq 0$ the coefficient $c_{\bfj, \bfk}$ obeys 
\begin{equation}  \label{eq:nf-basic:c}
	\abs{c_{\bfj, \bfk}} \aleq_{n} \tht (1+\abs{\bfj} + \abs{\bfk})^{-n},
\end{equation}
and for some universal constant $n_{0} > 0$, the quantizations of the symbols $a_{\bfj}$ and $b_{\bfk}$ satisfy
\begin{equation} \label{eq:nf-basic:ab}
	\nrm{a_{\bfj}(D)}_{L^{q} \to L^{q}} \aleq (1+\abs{\bfj})^{n_{0}}, \quad \nrm{b_{\bfk}(D) }_{L^{q} \to L^{q}} \aleq (1+\abs{\bfk})^{n_{0}},
\end{equation}
for every $1 \leq q \leq \infty$.

Assuming \eqref{eq:nf-basic:dcmp}--\eqref{eq:nf-basic:ab}, the desired estimate \eqref{eq:nf-basic} follows immediately. Indeed, \eqref{eq:nf-basic:dcmp} implies that 
\begin{equation*}
	\ANF(f_{1}, f_{2}) = \sum_{\bfj, \bfk \in \bbZ^{d}} c_{\bfj, \bfk} \cdot a_{\bfj}(D) f_{1} \cdot b_{\bfk}(D) f_{2},
\end{equation*}
so \eqref{eq:nf-basic} follows by applying H\"older's inequality and \eqref{eq:nf-basic:ab}, then using \eqref{eq:nf-basic:c} to sum up in $\bfj, \bfk \in \bbZ^{d}$.
 
Without loss of generality, we may assume that $k_{1} \geq k_{2}$ and that $\omg_{1}, \omg_{2}$ are angular caps of an equal diameter, denoted by $r$. Moreover, in view of the scaling invariance of the bounds \eqref{eq:abs-nf:angle} and \eqref{eq:abs-nf:bnd}, we may set $k_{1} = 0$. Let $\hat{E}_{j}$ be an enlargement of $E_{j}$ $(j=1,2)$ with a \emph{fixed} angular dimension and let $ a(\xi), b(\eta) $ be bump function adapted to these sets, which are equal to $ 1 $ on $ E_1 $, respectively $ E_2 $, so that $ \hat{f_1}=a \hat{f_1} $ and $ \hat{f_2}=b \hat{f_2} $. Then 
$$ N(f_1, f_2)=N^{m'} (f_1,f_2), $$
where $ N^{m'} $ is the bilinear operator with symbol $ m'(\xi,\eta)=a(\xi) b(\eta) m(\xi,\eta) $.

The first step is to make an invertible change of variables $\xi \mapsto \tilde{\xi} = \tilde{\xi}(\xi)$, so that $S_{\xi} = \tilde{\xi}_{1} \rd_{\tilde{\xi}_{1}}$ and the Jacobian and its derivatives obey appropriate bounds of all order for $\xi \in \hat{E}_{1}$. We also need to perform a similar change of variables $\eta \mapsto \tilde{\eta}(\eta)$ for $\eta \in \hat{E}_{2}$. 
Essentially, what we need is a polar coordinate system with the radial variable as the first component. 

One concrete way to proceed is as follows. Denote the center of the angular cap $\omg_{1}$ by $\bfp_{1} \in \bbS^{d-1}$. Let $(\zt_{2}, \ldots, \zt_{d}) \in \bbR^{d-1}$ be a smooth positively oriented coordinate system on the hemisphere $\bbS^{d-1} \cap \set{\xi : \bfp_{1} \cdot \xi >0}$, such that $(\zt_{2}, \ldots, \zt_{d}) = (0, \ldots, 0)$ corresponds to $\bfp_{1}$. Define
\begin{equation*}
	\tilde{\xi}(\xi) = \bb( \abs{\xi},\abs{\xi} \zt_{2} \bb(\frac{\xi}{\abs{\xi}} \bb), \ldots, \abs{\xi} \zt_{d} \bb(\frac{\xi}{\abs{\xi}} \bb) \bb) \quad \hbox{ for } \xi \in \set{\xi : \bfp_{1} \cdot \xi > 0}.
\end{equation*}
We define $\tilde{\eta}(\eta)$ for $\eta \in \set{\eta : \bfp_{2} \cdot \eta > 0}$ similarly, with the point $\bfp_{1}$ replaced by the center $\bfp_{2}$ of the cap $\omg_{2}$. Observe that $(\tilde{\xi}, \tilde{\eta})$ are well-defined and invertible on $\tilde{E}_{1} \times \tilde{E}_{2}$, in which $m'$ is supported. Abusing the notation a bit, we write $m(\tilde{\xi}, \tilde{\eta}) = m(\xi(\tilde{\xi}), \eta(\tilde{\eta}))$ and simply $E_{j}$ for the region $\set{\tilde{\xi} : \xi(\tilde{\xi}) \in E_{j}}$ etc.

With such definitions, it is clear that $\tilde{\xi}_{1} \rd_{\tilde{\xi}_{1}} = S_{\xi}$ and $\tilde{\eta}_{1} \rd_{\tilde{\eta}_{1}} = S_{\eta}$. Hence \eqref{eq:abs-nf:angle} translates to
\begin{equation} \label{eq:nf-basic:angle} 
	\abs{\rd_{\tilde{\xi}_{1}}^{n_{1}} \rd_{\tilde{\eta}_{1}}^{n_{2}} m(\tilde{\xi}, \tilde{\eta})} \aleq_{A, n_{1}, n_{2}}  \tht \abs{\tilde{\xi}_{1}}^{-n_{1}} \abs{\tilde{\eta}_{1}}^{-n_{2}}.
\end{equation}
Moreover, since each component of $\tilde{\xi} \in \bbR^{d}$ [resp. $\tilde{\eta}$] is homogeneous of degree $1$ in $\xi$ [resp. in $\eta$], we immediately have the bounds
\begin{equation} \label{eq:nf-basic:jacobian-1}
	\abs{\rd_{\xi}^{\alp} \tilde{\xi} (\xi)} \aleq_{\alp} \abs{\xi}^{1-\abs{\alp}} \quad 
	[\hbox{resp. } \abs{\rd_{\eta}^{\alp} \tilde{\eta} (\eta)} \aleq_{\alp} \abs{\eta}^{1-\abs{\alp}}] \quad 
	\hbox{ for any multi-index } \alp.
\end{equation}
Observe that we have $\abs{\tilde{\xi}(\xi)} \simeq \abs{\xi}$ for $\xi \in \hat{E}_{1}$ [resp. $\abs{\tilde{\eta}(\eta)} \simeq \abs{\eta}$ on $\eta \in \hat{E}_{2}$]. Further straightforward computations using \eqref{eq:nf-basic:jacobian-1} show that 
\begin{equation} \label{eq:nf-basic:jacobian-2}
	\abs{\rd_{\tilde{\xi}}^{\alp} \xi (\tilde{\xi})} \aleq_{\alp} \abs{\tilde{\xi}}^{1-\abs{\alp}} \quad 
	[\hbox{resp. } \abs{\rd_{\tilde{\eta}}^{\alp} \eta (\tilde{\eta})} \aleq_{\alp} \abs{\tilde{\eta}}^{1-\abs{\alp}}] \quad
	\hbox{ for any multi-index } \alp,
\end{equation}
for $(\xi(\tilde{\xi}), \eta(\tilde{\eta})) \in \hat{E}_{1} \times \hat{E}_{2}$. Combined with \eqref{eq:abs-nf:bnd} and the support property of $m$, we have
\begin{equation} \label{eq:nf-basic:bnd}
	\abs{\rd_{\tilde{\xi}}^{\alp_{1}} \rd_{\tilde{\eta}}^{\alp_{2}} m(\tilde{\xi}, \tilde{\eta})} \aleq_{A, \alp_{1}, \alp_{2}}  \abs{\tilde{\xi}}^{-\abs{\alp_{1}}} \abs{\tilde{\eta}}^{-\abs{\alp_{2}}} . 
\end{equation}

We now introduce rectangular boxes $R_{1}$ and $R_{2}$, which are defined as
\begin{equation*}
R_{1}  = \set{\tilde{\xi} : \tilde{\xi}_{1} \simeq 1, \, \sup_{j=2, \ldots, d}\abs{\tilde{\xi}_{j}} \aleq r}, \quad
R_{2}  = \set{\tilde{\eta} : \tilde{\eta}_{1} \simeq 2^{k_{2}}, \, \sup_{j=2, \ldots, d} \abs{\tilde{\eta}_{j}} \aleq 2^{k_{2}} r},
\end{equation*}
where the implicit constants are chosen so that $E_{1} \subseteq R_{1}$ and $E_{2} \subseteq R_{2}$. Let $\tilde{a}(\tilde{\xi})$ and $\tilde{b}(\tilde{\eta})$ be the bump functions adapted to the boxes $R_{1}$ and $R_{2}$, respectively such that $\tilde{a}$ and $\tilde{b}$ are equal to $1$ on $ E_{1}$ and $ E_{2}$, respectively. 

Thus we have the following bounds for $ j =2, \dots d $:
\be \label{cbds} \abs{ (r \rd_{\tilde{\xi}_{j}})^{n}  m(\tilde{\xi}, \tilde{\eta}) \tilde{a}(\tilde{\xi}) \tilde{b}(\tilde{\eta})  } \aleq_{A, n}  \tht, 
	\quad   \abs{ (2^{k_2} r \rd_{\tilde{\eta}_{j}})^{n}  m(\tilde{\xi}, \tilde{\eta}) \tilde{a}(\tilde{\xi}) \tilde{b}(\tilde{\eta})  } \aleq_{A, n}  \tht \ee 

Performing a Fourier series expansion of $m(\tilde{\xi}, \tilde{\eta}) \tilde{a}(\tilde{\xi}) \tilde{b}(\tilde{\eta})  $ in the variables $(\tilde{\xi}, \tilde{\eta})$ by viewing $R_{1} \times R_{2}$ as a torus, we may write
\begin{equation} \label{eq:nf-basic:fs}
	m(\tilde{\xi}, \tilde{\eta}) = \sum_{\bfj, \bfk \in \bbZ^{d}} c_{\bfj, \bfk} \, e^{2 \pi i \bfj \cdot D_{1} \tilde{\xi}} e^{2 \pi i \bfk \cdot D_{2} \tilde{\eta}} \quad \hbox{ for } (\tilde{\xi}, \tilde{\eta}) \in E_{1} \times E_{2},
\end{equation}
where $D_{1}, D_{2}$ are diagonal matrices of the form
\begin{align*}
	D_{1} =& \mathrm{diag} \, (O(1), O(r^{-1}), \ldots, O(r^{-1})), \\
	D_{2} =& \mathrm{diag} \, (O(2^{-k_{2}}), O(2^{-k_{2}} r^{-1}), \ldots, O(2^{-k_{2}} r^{-1})).
\end{align*}
Defining
\begin{equation*}
	a_{\bfj}(\xi) = (\tilde{a}(\tilde{\xi}) e^{2 \pi i \bfj \cdot D_{1} \tilde{\xi}})(\xi), \quad 
	b_{\bfk}(\eta) = (\tilde{b}(\tilde{\eta}) e^{2 \pi i \bfk \cdot D_{2} \tilde{\eta}})(\eta),
\end{equation*}
we obtain the desired decomposition \eqref{eq:nf-basic:dcmp} from \eqref{eq:nf-basic:fs}.

To prove \eqref{eq:nf-basic:c}, we begin with the following formula for the Fourier coefficient $c_{\bfj, \bfk}$:
\begin{equation*}
	c_{\bfj, \bfk} =  \frac{1}{\Vol(R_{1} \times R_{2})}  \int_{R_{1} \times R_{2}} m(\tilde{\xi}, \tilde{\eta}) \tilde{a}(\tilde{\xi}) \tilde{b}(\tilde{\eta}) e^{- 2 \pi i \bfj \cdot D_{1} \tilde{\xi}} e^{- 2 \pi i \bfk \cdot D_{2} \tilde{\eta}} \, \ud \tilde{\xi} \, \ud \tilde{\eta}.
\end{equation*}
Integrating by parts in $\tilde{\xi}_{1}$ [resp. in $\tilde{\eta}_{1}$] and using \eqref{eq:nf-basic:angle}, we obtain 
\begin{equation*}
	\abs{c_{\bfj, \bfk}} \aleq_{n} \tht (1+\abs{\bfj_{1}})^{-n} \quad [\hbox{resp. } \abs{c_{\bfj, \bfk}} \aleq_{n} \tht (1+\abs{\bfk_{1}})^{-n}],
\end{equation*}
for each integer $n \geq 0$. On the other hand, for any $j = 2, \ldots, d$, integration by parts in $\tilde{\xi}_{j}$ [resp. in $\tilde{\eta}_{j}$] and using \eqref{cbds} yields
\begin{equation*}
	\abs{c_{\bfj, \bfk}} \aleq_{n} \tht \abs{\bfj_{j}}^{-n} \quad [\hbox{resp. } \abs{c_{\bfj, \bfk}} \aleq_{n} \tht \abs{\bfk_{j}}^{-n}],
\end{equation*}
The preceding bounds imply \eqref{eq:nf-basic:c} as desired.

Finally, we need to establish \eqref{eq:nf-basic:ab}. We will describe the case of $a_{\bfj}(D)$ in detail, and leave the similar proof for $b_{\bfk}(\eta)$ to the reader. For any multi-index $\alp$, observe that
\begin{equation} \label{eq:nf-basic:ab:sym-bnd-pre}
\abs{\rd_{\tilde{\xi}}^{\alp} (\tilde{a}(\tilde{\xi}) e^{2 \pi i \bfj \cdot D_{1} \tilde{\xi}})} \aleq_{\alp} (1+\abs{\bfj})^{\abs{\alp}} r^{-(\alp_{2} + \cdots +\alp_{d})}.
\end{equation}
By rotation, we may assume that the center of $\omg_{1}$ is aligned with the $\xi_{1}$-axis, i.e., $\bfp_{1} = (1, 0, \ldots, 0)$. Then we claim that
\begin{equation} \label{eq:nf-basic:ab:sym-bnd}
	\abs{\rd_{\xi}^{\alp} a_{\bfj}(\xi)} \aleq_{\alp} (1+\abs{\bfj})^{\abs{\alp}} r^{-(\alp_{2} + \cdots +\alp_{d})}.
\end{equation}
From such a bound, it is straightforward to check that the convolution kernel (i.e., inverse Fourier transform) $\check{a}_{\bfj}(x)$ of $a_{\bfj}(D)$ obeys $\nrm{\check{a}_{\bfj}}_{L^{1}} \aleq (1+\abs{\bfj})^{n_{0}}$ for some universal constant $n_{0}$ (in fact, $n_{0} = d$ would work), which implies the desired $L^{q}$ bounds \eqref{eq:nf-basic:ab} for $a_{\bfj}(D)$.

In order to verify \eqref{eq:nf-basic:ab:sym-bnd}, the key is to ensure that each $\rd_{\xi_{1}}$ derivative does not lose a factor of $r^{-1}$.
Recall that $\tilde{\xi}_{j} = \abs{\xi} \zt_{j}(\xi / \abs{\xi})$ for $j = 2, \ldots, d$. Observe that $\rd_{\xi_{1}}^{n} \zt_{j}(\xi / \abs{\xi}) \restriction_{\xi = \bfp_{1}} = 0$ for every $n \geq 0$ (in fact, $\zt_{j}$ can be chosen to be independent of the first coordinate $\xi_{1}$ everywhere on $\bbS^{d-1} \cap \set{\xi_{1} > 0} \subseteq \bbR^{d}$). Therefore, we have
\begin{equation*}
	\bb| \frac{\rd^{n} \tilde{\xi}_{j}}{\rd \xi_{1}^{n}} \bb| 
	\aleq \sum_{i=0}^{n} \bb| \rd_{\xi_{1}}^{i} \zt_{j} \bb( \frac{\xi}{\abs{\xi}} \bb) \bb| 
	\aleq_{n} \dist \bb( \frac{\xi}{\abs{\xi}}, \bfp_{1} \bb) \aleq r
	\quad \hbox{ for every } n \geq 0, \ \xi \in \supp a_{\bfj}.
\end{equation*}
Let $c(\xi)$ be any smooth function. By an iteration of the chain rule $\rd_{\xi_{1}} = (\rd_{\xi_{1}} \tilde{\xi}_{1}) \rd_{\tilde{\xi}_{1}} + \sum_{j=2}^{d} (\rd_{\xi_{1}} \tilde{\xi}_{j}) \rd_{\tilde{\xi}_{j}}$, it follows that
\begin{equation*}
	\abs{\rd_{\xi_{1}}^{\alp_{1}} c(\xi) } \aleq_{\alp_{1}} \sum_{\abs{\bt} \leq \alp_{1}} r^{\bt_{2} + \cdots +\bt_{d}} \abs{(\rd_{\tilde{\xi}}^{\bt} c)(\xi)}
	\quad \hbox{ for every } \alp_{1} \geq 0, \ \xi \in \supp a_{\bfj}.
\end{equation*}
Substituting $c(\xi) = \rd_{\xi_{2}}^{\alp_{2}} \cdots \rd_{\xi_{d}}^{\alp_{d}} a_{\bfj}(\xi)$ and using \eqref{eq:nf-basic:jacobian-1}, \eqref{eq:nf-basic:ab:sym-bnd-pre}, the desired bound \eqref{eq:nf-basic:ab:sym-bnd} follows after a straightforward computation. 

%For any multi-index $\alp$, observe that 
%\begin{equation} 
%\begin{aligned}
%	\abs{\rd_{\tilde{\xi}}^{\alp} (\tilde{a}(\tilde{\xi}) e^{2 \pi i \bfj \cdot D_{1} \tilde{\xi}})} \aleq_{\alp}& (1+\abs{\bfj})^{\abs{\alp}} r^{-(\alp_{2} + \cdots +\alp_{d})}, \\ 
%	\abs{\rd_{\tilde{\eta}}^{\alp} (\tilde{b}(\tilde{\eta}) e^{2 \pi i \bfk \cdot D_{2} \tilde{\eta}})} \aleq_{\alp}& 2^{-k_{2} \abs{\alp}} (1+\abs{\bfk})^{\abs{\alp}} r^{-(\alp_{2} + \cdots +\alp_{d})}.
%\end{aligned}
%\end{equation}
%We claim that
%\begin{equation}\label{eq:nf-basic:ab:sym-bnd}
%	\abs{\rd_{\xi}^{\alp} a_{\bfj}(\xi)} \aleq_{\alp} (1+\abs{\bfj})^{\abs{\alp}} r^{-(\alp_{2} + \cdots +\alp_{d})}, \quad 
%	\abs{\rd_{\eta}^{\alp} b_{\bfk}(\eta)} \aleq_{\alp} 2^{-k_{2} \abs{\alp}} (1+\abs{\bfk})^{\abs{\alp}} r^{-(\alp_{2} + \cdots +\alp_{d})}.
%\end{equation}
%From these bounds, it is straightforward to check that the convolution kernels (i.e., inverse Fourier transforms) $\check{a}_{\bfj}(x)$, $\check{b}_{\bfk}(x)$ of $a_{\bfj}(D)$, $b_{\bfk}(D)$ obey
%\begin{equation*}
%	\nrm{\check{a}_{\bfj}}_{L^{1}} \aleq (1+\abs{\bfj})^{n_{0}}, \quad
%	\nrm{\check{b}_{\bfk}}_{L^{1}} \aleq (1+\abs{\bfk})^{n_{0}}, 
%\end{equation*}
%for some universal constant $n_{0}$, which implies the desired $L^{q}$ bounds \eqref{eq:nf-basic:ab}.

\section{Proof of the bilinear estimates} \label{sec:bi}
Here we prove Propositions~\ref{prop:a}--\ref{prop:dirac} concerning bilinear estimates.
As a byproduct of our proof, Lemma~\ref{lem:diffr-free} also follows; see Remark~\ref{rem:diffr-free}.
Unless otherwise stated, we restrict to the case $d = 4$; the general case of $d \geq 4$ is discussed in Remark~\ref{rem:hi-d-3} below. 

\subsection{Preliminaries: Conventions and frequency envelope bounds} \label{subsec:freq-env-bnd}
Henceforth, we use the shorthand $A$ to denote any $A_{j}$ $(j = 1, \ldots, 4)$.
Unless otherwise stated, we normalize the frequency envelope norms of the inputs as follows:
\begin{equation} \label{eq:fe-n}
	\nrm{B}_{Y^{1}_{a}} = \nrm{A}_{S^{1}_{a}} = \nrm{\psi}_{(\tilde{S}^{1/2}_{s})_{b}} = \nrm{\varphi}_{(\tilde{S}^{1/2}_{s'})_{c}} = 1.
\end{equation}
Having control of the $S^{1}$ and $S^{1/2}_{\pm}$ norms through the frequency envelopes $a, b$ results in the following estimates, which we will use repeatedly in the proofs of the bilinear and trilinear estimates\footnote{Of course, the same estimates as $\psi$ hold for $\varphi$ with $(s, b_{k})$ replaced by $(s', c_{k})$.}:
\begin{align}
\nrm{A_{k}}_{L^{\infty} L^{2}}
	\aleq & 2^{-k} a_{k}	,	\hskip9em
\nrm{\psi_{k}}_{L^{\infty} L^{2}}
	\aleq 2^{-\frac{1}{2} k} b_{k} ,			\label{eq:fe-L2} \\
\nrm{Q_{j} A_{k}}_{L^{2} L^{2}}
	\aleq & 2^{- \frac{1}{2} \max \set{j, k}}2^{-\frac{1}{2} j} 2^{- \frac{1}{2} k} a_{k} , \ 
\nrm{Q_{j}^{s} \psi_{k}}_{L^{2} L^{2}}
	\aleq  2^{- \frac{1}{2} \max \set{j, k}} 2^{-\frac{1}{2} j} b_{k} . \label{eq:fe-L2L2} 
\end{align}
For $k' < k$, we have
\begin{equation} \label{eq:fe-L2Linfty-0}
\begin{aligned}
\bb(\sum_{\calC_{k'}(0)} \nrm{P_{\calC_{k'}(0)} A_{k}}_{L^{2} L^{\infty}}^{2} \bb)^{1/2}
	\aleq & 2^{k'} 2^{-\frac{1}{2} k} a_{k} ,	\\
\bb(\sum_{\calC_{k'}(0)} \nrm{P_{\calC_{k'}(0)} \psi_{k}}_{L^{2} L^{\infty}}^{2} \bb)^{1/2}
	\aleq & 2^{k'} b_{k}				.
\end{aligned}
\end{equation}
For $k'$ such that $k' \leq k$ and $j \leq k' + C$, define $\ell = \frac{1}{2}(j - k')_{-}$. Then we have
\begin{equation} \label{eq:fe-L2Linfty}
\begin{aligned}
\bb(\sum_{\calC_{k'}(\ell)} \nrm{P_{\calC_{k'}(\ell)} Q_{<j} A_{k}}_{L^{2} L^{\infty}}^{2} \bb)^{1/2}
	\aleq & 2^{k'} 2^{\frac{1}{2} \ell} 2^{-\frac{1}{2} k} a_{k},	\\
\bb(\sum_{\calC_{k'}(\ell)} \nrm{P_{\calC_{k'}(\ell)} Q_{<j}^{s} \psi_{k}}_{L^{2} L^{\infty}}^{2} \bb)^{1/2}
	\aleq & 2^{k'} 2^{\frac{1}{2} \ell} b_{k}.
\end{aligned}
\end{equation}
These bounds follow immediately from the definition of the norms $S^{1}_{a}$ and $S^{1/2}_{b}$.

The $\tilde{Z}_{s}^{1/2}$ component leads to the bound
\begin{equation} \label{eq:fe-L1Linfty}
	\nrm{Q_{j}^{s} \psi_{k}}_{L^{1} L^{\infty}} \aleq 2^{\frac{1}{2} k} 2^{5(k - j)_{+}} b_{k}.
\end{equation}
Indeed, by \eqref{eq:q-disp} we have
\begin{align*}
\nrm{Q_{j}^{s} \psi_{k}}_{L^{1} L^{\infty}} 
\aleq & 2^{-j} 2^{4(k-j)_{+}} \nrm{(i \rd_{t} + s \abs{D}) \psi_{k}}_{L^{1} L^{\infty}} \\
%\aleq & 2^{2k} 2^{-j} 2^{4(k-j)_{+}} \nrm{\psi_{k}}_{\tilde{Z}_{s}} \\
\aleq & 2^{\frac{1}{2} k} 2^{k-j} 2^{4(k-j)_{+}} \nrm{\psi_{k}}_{\tilde{Z}_{s}^{1/2}},
\end{align*}
from which \eqref{eq:fe-L1Linfty} follows.

Finally, the normalization $\nrm{B}_{Y^{1}_{a}} = 1$ implies
\begin{equation} \label{eq:fe-Y}
	\nrm{B_{k}}_{L^{2} L^{2}}  \aleq 2^{-\frac{3}{2} k} a_{k}, \qquad
	\nrm{Q_{j} B_{k}}_{L^{2} L^{2}}  \aleq 2^{-\max\set{j, k}} 2^{-\frac{1}{2} k} a_{k}.
\end{equation}

\subsection{Proof of Proposition~\ref{prop:a}} \label{subsec:bi-a}
Here we prove \eqref{eq:a0}--\eqref{eq:axs}.
\subsubsection*{Step~0:~Reduction to dyadic estimates}
Under the normalization \eqref{eq:fe-n}, we claim:
\begin{align} 
	2^{-\frac{1}{2} k_{0}} \nrm{P_{k_{0}} \calL(\psi_{k_{1}}, \varphi_{k_{2}})}_{L^{2} L^{2}}
	\aleq & 2^{\frac{1}{2} (k_{\max} - k_{\min})} b_{k_{1}} c_{k_{2}}, 		\label{eq:bi-a-est1}\\
	\nrm{P_{k_{0}} \calN^{\ast}(\psi_{k_{1}}, \varphi_{k_{2}})}_{N}
	\aleq & 2^{\dlt_{0} (k_{\max} - k_{\min})} b_{k_{1}} c_{k_{2}}, 		\label{eq:bi-a-est2}\\
	\nrm{P_{k_{0}} \calN_{s s'}(\psi_{k_{1}}, \varphi_{k_{2}})}_{N}
	\aleq & 2^{\dlt_{0} (k_{\max} - k_{\min})} b_{k_{1}} c_{k_{2}}.		\label{eq:bi-a-est3}
\end{align}

Proposition~\ref{prop:a} follows from the above dyadic estimates. We begin with the proof of \eqref{eq:a0}. From \eqref{eq:me-def} and \eqref{eq:d0me-def}, observe that $\NM^{E}(\tilde{P}_{k_{1}} \cdot, \tilde{P}_{k_{2}} \cdot) = \calL$ and $ P_{k_{0}} \rd_{t} \NM^{E} (\tilde{P}_{k_{1}} \cdot, \tilde{P}_{k_{2}} \cdot) = \abs{D} P_{k_{0}} \calL$.
Therefore, \eqref{eq:bi-a-est1} implies
\begin{equation*}
	\nrm{P_{k_{0}} \NM^{E}(\psi_{k_{1}}, \varphi_{k_{2}})}_{L^{2} \dot{H}^{-1/2}}
	+ \nrm{P_{k_{0}} \rd_{t} \NM^{E}(\psi_{k_{1}}, \varphi_{k_{2}})}_{L^{2} \dot{H}^{-3/2}}
	\aleq 2^{\frac{1}{2} (k_{\max} - k_{\min})} b_{k_{1}} c_{k_{2}}.
\end{equation*}
The LHS is non-vanishing only if $\abs{k_{\max} - k_{\med}} \leq 5$ (Littlewood-Paley trichotomy). We now divide into cases $k_{\min} = k_{0}$, $k_{1}$ and $k_{2}$, which roughly correspond to (high-high), (low-high) and (high-low), respectively. In each case, summing up in $k_{1}, k_{2}$ using the exponential gain $2^{\frac{1}{2}(k_{\min} - k_{\max})}$ and the slow variance of $b, c$, we arrive at
\begin{equation*}
	\nrm{P_{k_{0}} \NM^{E}(\psi, \varphi)}_{L^{2} \dot{H}^{-1/2}}
	+ \nrm{P_{k_{0}} \rd_{t} \NM^{E}(\psi, \varphi)}_{L^{2} \dot{H}^{-3/2}}
	\aleq b_{k_{0}} c_{k_{0}},	
\end{equation*}
which is precisely the desired estimate \eqref{eq:a0} under the normalization \eqref{eq:fe-n}.

The proof of \eqref{eq:axr} and \eqref{eq:axs} proceeds similarly.
By Proposition~\ref{prop:MD-CG-nf}, we have $\NM_{x}^{R} = \NF^{\ast}$ and $\NM_{x, s'}^{S} (\Pi_{s} \cdot, \cdot) = \NF_{s s'}(\cdot, \cdot)$. Therefore, \eqref{eq:bi-a-est2} and \eqref{eq:bi-a-est3} imply
\begin{equation*}
	\nrm{P_{k_{0}} \NM^{R}(\psi_{k_{1}}, \varphi_{k_{2}})}_{N}
	+ \nrm{P_{k_{0}} \NM^{S}_{s'}(\Pi_{s} \psi_{k_{1}}, \varphi_{k_{2}})}_{N}
	\aleq 2^{\dlt_{0}(k_{\min} - k_{\max})} b_{k_{1}} c_{k_{2}}.
\end{equation*}
On the other hand, application of \eqref{eq:bi-a-est1} shows that
\begin{equation*}
	\nrm{P_{k_{0}} \NM^{R}(\psi_{k_{1}}, \varphi_{k_{2}})}_{L^{2} \dot{H}^{-1/2}}
	+ \nrm{P_{k_{0}} \NM^{S}_{s'}(\Pi_{s} \psi_{k_{1}}, \varphi_{k_{2}})}_{L^{2} \dot{H}^{-1/2}}
	\aleq 2^{\frac{1}{2} (k_{\min} - k_{\max})} b_{k_{1}} c_{k_{2}}.
\end{equation*}
Proceeding as before using Littlewood-Paley trichotomy, the exponential gain in $k_{\min} - k_{\max}$ and the slow variance of $b, c$, the desired estimates \eqref{eq:axr} and \eqref{eq:axs} follow.

The rest of this subsection is devoted to establishing \eqref{eq:bi-a-est1}--\eqref{eq:bi-a-est3}.
\subsubsection*{Step~1:~Proof of \eqref{eq:bi-a-est1}}
Without loss of generality, assume that $k_{2} \leq k_{1}$. Then \eqref{eq:bi-a-est1} follows from application of \eqref{eq:ellip-0} in Lemma~\ref{lem:ellip} and the frequency envelope bounds \eqref{eq:fe-L2} and \eqref{eq:fe-L2Linfty-0}.

\subsubsection*{Step~2:~Proof of \eqref{eq:bi-a-est2}}
We first treat the high modulation contribution.
\begin{lemma} \label{lem:ax-hi-mod}
Assume the normalization \eqref{eq:fe-n}. For any $k_{0}, k_{1}, k_{2}, j \in \bbZ$, we have
\begin{align*}
	2^{-\frac{1}{2} j} \nrm{P_{k_{0}} Q_{j}\calL(\psi_{k_{1}}, \varphi_{k_{2}})}_{L^{2} L^{2}}
	\aleq & 2^{\frac{1}{2}(k_{\min} - j)} 2^{\frac{1}{2}(k_{\min} - k_{\max})} b_{k_{1}} c_{k_{2}},  \\
	\nrm{P_{k_{0}} \calL(Q_{j}^{s_{1}} \psi_{k_{1}}, \varphi_{k_{2}})}_{L^{1} L^{2}}
	\aleq & 2^{\frac{1}{2}(k_{\min} - j)} 2^{\frac{1}{2}(k_{\min} - k_{\max})} b_{k_{1}} c_{k_{2}},  \\
	\nrm{P_{k_{0}} \calL(\psi_{k_{1}}, Q_{j}^{s_{2}} \varphi_{k_{2}})}_{L^{1} L^{2}} 
	\aleq & 2^{\frac{1}{2}(k_{\min} - j)} 2^{\frac{1}{2}(k_{\min} - k_{\max})} b_{k_{1}} c_{k_{2}}.
\end{align*}
\end{lemma}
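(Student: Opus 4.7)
By Littlewood--Paley trichotomy we may assume $|k_{\max}-k_{\med}|\leq 5$ throughout. All three inequalities compare to the basic $L^2L^2$-H\"older gain $2^{(k_{\min}-k_{\max})/2}$ from Step~1 of Section~\ref{subsec:bi-a}, the novelty being the additional decay $2^{(k_{\min}-j)/2}$ in the modulation scale $j$.

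The first inequality is the easiest. Since $Q_j$ is bounded on $L^2L^2$ by Plancherel, we drop it and apply Lemma~\ref{lem:ellip}\,\eqref{eq:ellip-0} with the $L^\infty L^2$ factor on an extremal-frequency input and a $\calC_{k_{\min}}$-box-decomposed $L^2L^\infty$ factor on the minimum-frequency input. Invoking \eqref{eq:fe-L2} and \eqref{eq:fe-L2Linfty-0} yields $\|P_{k_0}\calL(\psi_{k_1},\varphi_{k_2})\|_{L^2L^2}\lesssim 2^{k_{\min}-k_{\max}/2}\,b_{k_1}c_{k_2}$, and multiplying by $2^{-j/2}$ reproduces the stated RHS.

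For the second (and symmetrically, third) inequality, my plan is to decompose $\varphi_{k_2}$ dyadically in modulation and pair by duality against an $L^\infty L^2$ test function, itself decomposed in modulation. By Lemma~\ref{lem:geom-cone}(2), each summand falls into one of two regimes: a \emph{low-modulation} regime, in which all three modulations are $\leq k_{\min}+C_0$, and a \emph{high-modulation} regime, in which the maximum modulation is $\simeq 2^{k_{\max}}$ and is attained by at least two of the three modulations. In the high-modulation regime, the factor carrying the largest modulation goes into $L^2L^2$ via \eqref{eq:fe-L2L2}, while Lemma~\ref{lem:ellip}\,\eqref{eq:ellip-1} together with a $\calC_{k_{\min}}$-box decomposition on the remaining low-modulation factors delivers the extra $2^{-k_{\max}/2}$-decay. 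In the low-modulation regime I apply the core bilinear estimate \eqref{eq:no-nf:est1} for $\calL$; the off-diagonal scale $\ell=\tfrac12(j-k_{\min})_-\leq 0$ then supplies the refined $2^{\ell/2}$-gain through \eqref{eq:fe-L2Linfty}, matching the desired RHS.

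The main subtlety is the LHH frequency configuration $(k_1=k_{\min},\, k_0=k_2=k_{\max})$ in the low-modulation regime at $j$ comparable to $k_{\min}$: the bare $L^2L^2\times L^2L^\infty$ H\"older built from \eqref{eq:fe-L2L2}--\eqref{eq:fe-L2Linfty} falls short of the target by the factor $2^{(k_{\max}-k_{\min})/2}$. Closing this gap is where the argument genuinely uses the structure of $\tilde{S}_s^{1/2}$ beyond the $S_s^{1/2}$-component: the $\tilde{Z}_s^{1/2}$-norm controls $(i\rd_t+s|D|)\psi_{k_1}$ in $L^1L^\infty$ at level $2^{3k_{\min}/2}b_{k_1}$, and combining this with the disposability of $Q_j^{s_1}$ for $j\geq k_{\min}-O(1)$ yields an improved $L^1L^\infty$ bound on $Q_j^{s_1}\psi_{k_1}$. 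Pairing this bound directly against $\|\varphi_{k_2}\|_{L^\infty L^2}\lesssim 2^{-k_{\max}/2}c_{k_2}$ through the trivial H\"older $L^1L^\infty\cdot L^\infty L^2\subset L^1L^2$ then supplies the missing $2^{(k_{\min}-k_{\max})/2}$ factor and closes the estimate.
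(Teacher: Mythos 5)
Your treatment of the first estimate is exactly the paper's: drop $Q_{j}$ via \eqref{eq:q-LqL2}, apply \eqref{eq:ellip-0} with the non-minimal-frequency input in $L^{\infty}L^{2}$ and the minimal one in the box-summed $L^{2}L^{\infty}$, then use \eqref{eq:fe-L2} and \eqref{eq:fe-L2Linfty-0}. For the second and third estimates, however, the paper does something far lighter than what you propose: it applies \eqref{eq:ellip-1} once, putting the modulation-localized factor into $L^{2}L^{2}$ via \eqref{eq:fe-L2L2} and the other factor into the box-summed $L^{2}L^{\infty}$ via \eqref{eq:fe-L2Linfty-0} — no duality pairing in time, no modulation decomposition of the other input or the output, no appeal to Lemma~\ref{lem:geom-cone}, and no core bilinear estimates. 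Two steps of your alternative scheme would not survive scrutiny. First, dualizing $L^{1}L^{2}$ against $L^{\infty}L^{2}$ and decomposing the test function in modulation buys nothing, since $\nrm{Q_{j_{0}}h}_{L^{\infty}L^{2}} \aleq \nrm{h}_{L^{\infty}L^{2}}$ is all one has for an arbitrary test function; moreover Lemma~\ref{lem:geom-cone}(2) does not yield your two regimes — the case $j_{\med} \geq j_{\max} - 5$ with $j_{\max}$ anywhere between $k_{\min}$ and $k_{\max}$ survives and is covered by neither of your branches. Second, the core estimate \eqref{eq:no-nf:est1} requires $Q^{s_{0}}_{<j}$ on the output and $Q^{s_{2}}_{<j}$ on the second input; neither localization is available in the statement of the lemma, so it cannot be invoked as described.

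That said, your worry about the low-high configuration is arithmetically legitimate relative to the stated envelope bounds: the direct application of \eqref{eq:ellip-1} yields $2^{k_{\min}} 2^{-j/2} 2^{-\frac{1}{2}\max\set{j,k_{1}}} b_{k_{1}} c_{k_{2}}$, which matches the asserted right-hand side only when $\max\set{j,k_{1}} \geq k_{\max} - O(1)$, i.e., in every configuration except $k_{1} = k_{\min}$ with $j < k_{\max}$ (and symmetrically for the third estimate). Your patch there is sound for $j \geq k_{\min}$: by \eqref{eq:q-disp} and the $\tilde{Z}$ component one gets $\nrm{Q_{j}^{s_{1}}\psi_{k_{1}}}_{L^{1}L^{\infty}} \aleq 2^{-j} 2^{\frac{3}{2}k_{1}} b_{k_{1}}$, and H\"older against $\nrm{\varphi_{k_{2}}}_{L^{\infty}L^{2}} \aleq 2^{-k_{2}/2} c_{k_{2}}$ beats the target by $2^{\frac{1}{2}(k_{1}-j)}$. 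But this should be applied directly to $P_{k_{0}}\calL(Q_{j}^{s_{1}}\psi_{k_{1}}, \varphi_{k_{2}})$ rather than routed through the duality/trichotomy scaffold; and for $j < k_{\min}$ (which the lemma nominally covers, though the applications only use $j \geq k_{\min} - 10$) the $L^{1}L^{\infty}$ bound degrades exponentially in $k_{1} - j$ while your low-modulation branch is inapplicable for the reason above, so as written the proposal does not cover the full stated range of $j$.
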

\begin{proof}
The lemma is a corollary of Lemma~\ref{lem:ellip}.
Indeed, the first estimate follows from \eqref{eq:ellip-0} and the frequency envelope bounds \eqref{eq:fe-L2} and \eqref{eq:fe-L2Linfty-0}.
Similarly, the second estimate follows from \eqref{eq:ellip-1} and the frequency envelope bounds \eqref{eq:fe-L2L2} and \eqref{eq:fe-L2Linfty-0}. The final estimate follows from the second one by symmetry. \qedhere
\end{proof}

By Lemma~\ref{lem:ax-hi-mod} and \eqref{eq:q-LqL2}, it follows that
\begin{align*}
	\nrm{P_{k_{0}} Q_{\geq k_{\min} - 10} \NF^{\ast}(\psi_{k_{1}}, \varphi_{k_{2}})}_{X^{0, -1/2}_{1}}
	\aleq & 2^{\frac{1}{2}(k_{\min} - k_{\max})} b_{k_{1}} c_{k_{2}}, \\
	\nrm{P_{k_{0}} Q_{< k_{\min} - 10} \NF^{\ast}(Q_{\geq k_{\min} - 10}^{s} \psi_{k_{1}}, \varphi_{k_{2}})}_{L^{1} L^{2}}
	\aleq & 2^{\frac{1}{2}(k_{\min} - k_{\max})} b_{k_{1}} c_{k_{2}}, \\
	\nrm{P_{k_{0}} Q_{< k_{\min} - 10} \NF^{\ast}(Q_{< k_{\min} - 10}^{s} \psi_{k_{1}}, Q_{\geq k_{\min} - 10}^{s'} \varphi_{k_{2}})}_{L^{1} L^{2}}
	\aleq & 2^{\frac{1}{2}(k_{\min} - k_{\max})} b_{k_{1}} c_{k_{2}},
\end{align*}
which are all acceptable.  Using the identity $P_{k_{0}} Q_{<k_{\min} - 10} = \sum_{s_{0}} P_{k_{0}} Q_{< k_{\min} - 10}^{s_{0}}$, the remainder can be written as
\begin{equation*}
	\sum_{s_{0}} P_{k_{0}} Q_{< k_{\min} - 10}^{s_{0}}  \NF^{\ast}(Q_{< k_{\min} - 10}^{s} \psi_{k_{1}}, Q_{< k_{\min} - 10}^{s'} \varphi_{k_{2}})
\end{equation*}

Summing according to the highest modulation, we decompose the remainder into $I_{0} + I_{1} + I_{2}$, where
\begin{align}
	I_{0} =& \sum_{s_{0}} \sum_{j < k_{\min} - 10} P_{k_{0}} Q_{j}^{s_{0}} \NF^{\ast}(Q_{< j}^{s} \psi_{k_{1}}, Q_{< j}^{s'} \varphi_{k_{2}}), \label{eq:nm-I0} \\
	I_{1} =& \sum_{s_{0}} \sum_{j < k_{\min} - 10} P_{k_{0}} Q_{\leq j}^{s_{0}} \NF^{\ast}(Q_{j}^{s} \psi_{k_{1}}, Q_{< j}^{s'} \varphi_{k_{2}}), \label{eq:nm-I1}\\
	I_{2} =& \sum_{s_{0}} \sum_{j < k_{\min} - 10} P_{k_{0}} Q_{\leq j}^{s_{0}} \NF^{\ast}(Q_{\leq j}^{s} \psi_{k_{1}}, Q_{ j}^{s'} \varphi_{k_{2}}). \label{eq:nm-I2}
\end{align}
These sums can be estimated using Proposition~\ref{prop:nfs}. We split into three cases according to Littlewood-Paley trichotomy:
\pfstep{Step~2.1:~(high-high) interaction, $k_{0} = k_{\min}$}
Let $\ell = \frac{1}{2} (j - k_{\min})$. By Proposition~\ref{prop:nfs}, we have
\begin{align*}
	\nrm{I_{0}}_{X^{0, -1/2}_{1}} 
	\aleq & \sum_{j < k_{0} - 10} 2^{-\frac{1}{2} j} 2^{\ell} 
			\nrm{\psi_{k_{1}}}_{L^{\infty} L^{2}} 
			\bb( \sum_{\calC_{k_{0}}(\ell)} \nrm{P_{\calC_{k_{0}}(\ell)} Q_{< j}^{s'} \varphi_{k_{2}}}_{L^{2} L^{\infty}}^{2} \bb)^{1/2}, \\
%	\aleq 2^{\frac{1}{2} (k_{0} - k_{1})}b_{k_{1}} c_{k_{2}}, \\
%
\nrm{I_{1}}_{L^{1} L^{2}} 
	\aleq & \sum_{j < k_{0} - 10} 2^{\ell} 
			\nrm{Q_{j}^{s} \psi_{k_{1}}}_{L^{2} L^{2}} 
			\bb( \sum_{\calC_{k_{0}}(\ell)} \nrm{P_{\calC_{k_{0}}(\ell)} Q_{< j}^{s'} \varphi_{k_{2}}}_{L^{2} L^{\infty}}^{2} \bb)^{1/2}, \\
%	\aleq 2^{\frac{1}{2} (k_{0} - k_{1})}b_{k_{1}} c_{k_{2}}, \\
%
\nrm{I_{2}}_{L^{1} L^{2}} 
	\aleq & \sum_{j < k_{0} - 10} 2^{\ell} 
			\bb( \sum_{\calC_{k_{0}}(\ell)} \nrm{P_{\calC_{k_{0}}(\ell)} Q_{\leq j}^{s} \psi_{k_{1}}}_{L^{2} L^{\infty}}^{2} \bb)^{1/2} 
			\nrm{Q_{j}^{s'} \varphi_{k_{2}}}_{L^{2} L^{2}}.
%	\aleq 2^{\frac{1}{2} (k_{0} - k_{1})}b_{k_{1}} c_{k_{2}},
\end{align*}
Then by the frequency envelope bounds \eqref{eq:fe-L2}, \eqref{eq:fe-L2L2} and \eqref{eq:fe-L2Linfty}, we obtain
\begin{equation*}
	\nrm{I_{0}}_{X^{0, -1/2}_{1}} + \nrm{I_{1}}_{L^{1} L^{2}} + \nrm{I_{2}}_{L^{1} L^{2}} 
	\aleq \sum_{j < k_{0} - 10} 2^{\frac{1}{4} \ell} 2^{\frac{1}{2} (k_{0} - k_{1})} b_{k_{1}} c_{k_{2}},
\end{equation*}
which is bounded by $2^{\frac{1}{2} (k_{0} - k_{1})} b_{k_{1}} c_{k_{2}}$ and thus acceptable.

\pfstep{Step~2.2:~(high-low) interaction, $k_{2} = k_{\min}$}
As before, let $\ell = \frac{1}{2} (j - k_{\min})$. By Proposition~\ref{prop:nfs}, we have
\begin{align*}
	\nrm{I_{0}}_{X^{0, -1/2}_{1}} 
	\aleq & \sum_{j < k_{2} - 10} 2^{\frac{1}{2} j} 2^{\ell} 
			\nrm{\psi_{k_{1}}}_{L^{\infty} L^{2}} 
			\bb( \sum_{\calC_{k_{2}}(\ell)} \nrm{P_{\calC_{k_{2}}(\ell)} Q_{< j}^{s'} \varphi_{k_{2}}}_{L^{2} L^{\infty}}^{2} \bb)^{1/2},	\\
%	\aleq 2^{\frac{1}{2} (k_{2} - k_{1})}b_{k_{1}} c_{k_{2}}, \\
%
\nrm{I_{1}}_{L^{1} L^{2}} 
	\aleq & \sum_{j < k_{2} - 10} 2^{\ell} 
			\nrm{Q_{j}^{s} \psi_{k_{1}}}_{L^{2} L^{2}} 
			\bb( \sum_{\calC_{k_{2}}(\ell)} \nrm{P_{\calC_{k_{2}}(\ell)} Q_{< j}^{s'} \varphi_{k_{2}}}_{L^{2} L^{\infty}}^{2} \bb)^{1/2},
%	\aleq 2^{\frac{1}{2} (k_{2} - k_{1})}b_{k_{1}} c_{k_{2}}, 
\end{align*}
which are both bounded by $\aleq 2^{\frac{1}{2} (k_{2} - k_{1})} b_{k_{1}} c_{k_{2}}$ by the frequency envelope bounds \eqref{eq:fe-L2}, \eqref{eq:fe-L2L2} and \eqref{eq:fe-L2Linfty}.
However, a naive application of the same strategy to $I_{2}$ only yields 
\begin{equation*}
\nrm{I_{2}}_{N} 
	\aleq \sum_{j < k_{2} - 10} 2^{\ell} 
			\bb( \sum_{\calC_{k_{2}}(\ell)} \nrm{P_{\calC_{k_{2}}(\ell)} Q_{\leq j}^{s} \psi_{k_{1}}}_{L^{2} L^{\infty}}^{2} \bb)^{1/2} 
			\nrm{Q_{j} \varphi_{k_{2}}}_{L^{2} L^{2}}
	\aleq b_{k_{1}} c_{k_{2}}.
\end{equation*}
which lacks the necessary exponential gain in $k_{1} - k_{2}$. 

Here the idea is to use the $\tilde{Z}_{s'}^{1/2}$ bound \eqref{eq:fe-L1Linfty}. We introduce a small number $\dlt_{1} > 0$ to be determined later. We split the $j$-summation in $I_{2}$ to $I_{2}' =\sum_{j < k_{2} - 10 + \dlt_{1} (k_{2} - k_{1})} (\cdots) $ and $I_{2}'' = \sum_{j \in [k_{2} - 10 + \dlt_{1} (k_{2} - k_{1}), k_{2} - 10)}(\cdots)$. For the first sum $I_{2}'$, we use Proposition~\ref{prop:nf}, \eqref{eq:fe-L2L2} and \eqref{eq:fe-L2Linfty} as before to estimate
\begin{align*}
\nrm{I_{2}'}_{L^{1} L^{2}} 
	\aleq & \sum_{j < k_{2} - 10 + \dlt_{1} (k_{2} - k_{1})} 2^{\ell}
			\bb( \sum_{\calC_{k_{2}}(\ell)} \nrm{P_{\calC_{k_{2}}(\ell)} Q_{\leq j}^{s} \psi_{k_{1}}}_{L^{2} L^{\infty}}^{2} \bb)^{1/2} 
			\nrm{Q_{j}^{s'} \varphi_{k_{2}}}_{L^{2} L^{2}} \\
	\aleq & \sum_{j < k_{2} - 10 + \dlt_{1} (k_{2} - k_{1})} 2^{\frac{1}{2} \ell} b_{k_{1}} c_{k_{2}}\aleq 2^{\frac{\dlt_{1}}{4} (k_{2} - k_{1})} b_{k_{1}} c_{k_{2}},
\end{align*}
For the second sum $I_{2}''$, we use \eqref{eq:q-LqL2}, H\"older's inequality and the frequency envelope bounds \eqref{eq:fe-L2} and \eqref{eq:fe-L1Linfty} to bound
\begin{align*}
\nrm{I_{2}''}_{L^{1} L^{2}} 
	\aleq & \sum_{j \in [k_{2} - 10 + \dlt_{1} (k_{2} - k_{1}), k_{2} - 10)} 
			\nrm{\psi_{k_{1}}}_{L^{\infty} L^{2}}
			\nrm{Q_{j}^{s'} \varphi_{k_{2}}}_{L^{1} L^{\infty}} \\
	\aleq & \sum_{j \in [k_{2} - 10 + \dlt_{1} (k_{2} - k_{1}), k_{2} - 10)}  2^{-\frac{1}{2} k_{1}} 2^{\frac{1}{2} k_{2}} 2^{5(k_{2} - j)} b_{k_{1}} c_{k_{2}}
	\aleq 2^{(\frac{1}{2} - 5 \dlt_{1})(k_{2} - k_{1})} b_{k_{1}} c_{k_{2}}.
\end{align*}
In conclusion, we have
\begin{equation*}
	\nrm{I_{2}}_{L^{1} L^{2}} \aleq  2^{\min\set{\frac{\dlt_{1}}{4}, \frac{1}{2} - 5 \dlt_{1}} (k_{2} - k_{1})} b_{k_{1}} c_{k_{2}},
\end{equation*}
which is acceptable once we choose $0 < \dlt_{1} < \frac{1}{10}$.

%\begin{lemma} \label{lem:nf-tZ}
%\mred{Change to high-low.}
%LOW-HIGH, USING $\tilde{Z}$.
%\begin{equation} \label{}
%\nrm{P_{k} Q_{<j}^{s} \calL (Q_{j}^{s_{1}} f_{k_{\min}}, Q_{<j}^{s_{2}} g_{\tilde{k}})}_{L^{1} L^{2}}
%\aleq 2^{\frac{1}{2} k_{\min} } 2^{C (k_{\min} - j)} \nrm{f_{k_{\min}}}_{\tilde{Z}_{s_{1}}^{1/2}} \nrm{g_{\tilde{k}}}_{L^{\infty} L^{2}}
%%\aleq 2^{\frac{1}{2} (k_{\min} - k)} 2^{C (k_{\min} - j)} \nrm{f_{k_{\min}}}_{\tilde{Z}^{1/2}} 2^{\frac{1}{2} \tilde{k}}\nrm{g_{\tilde{k}}}_{L^{\infty} L^{2}}
%\end{equation}
%\end{lemma}
%
%\begin{proof} 
%\begin{align*}
%\nrm{P_{k} Q_{<j}^{s} \calL (Q_{j}^{s_{1}} f_{k_{\min}}, Q_{<j}^{s_{2}} g_{\tilde{k}})}_{L^{1} L^{2}}
%\aleq 2^{\frac{3}{2} k_{\min}}2^{-j} 2^{-\frac{3}{2} k_{\min}} 2^{j}\nrm{Q_{j}^{s_{1}} f_{k_{\min}}}_{L^{1} L^{\infty}} \nrm{g_{\tilde{k}}}_{L^{\infty} L^{2}}
%\end{align*} 
%
%For $j \leq k_{\min}$,
%\begin{equation*}
%\nrm{Q_{j}^{\pm}}_{L^{1} L^{\infty} \to L^{1} L^{\infty}} \aleq 2^{C(k_{\min} - j)}
%\end{equation*}
%\end{proof}

\pfstep{Step~2.3:~(low-high) interaction, $k_{1} = k_{\min}$}
This case is strictly easier than Step~2.2, thanks to the additional gain $2^{k_{\min} - \min \set{k_{0}, k_{2}}}\simeq 2^{k_{1} - k_{2}}$ in Proposition~\ref{prop:nfs}; in particular, the use of the $\tilde{Z}_{s}^{1/2}$ bound \eqref{eq:fe-L1Linfty} is not necessary. We omit the details.

\subsubsection*{Step~3:~Proof of \eqref{eq:bi-a-est3}}
We proceed similarly to Step~2, replacing the null form $\NF^{\ast}$ by $\NF_{s s'}$ and thus Proposition~\ref{prop:nfs} by Proposition~\ref{prop:nf}. The proof applies verbatim until reduction to the low modulation case (i.e., before Steps~2.1--2.3). A minor difference now is that the factor $2^{k_{\min} - \min \set{k_{1}, k_{2}}}$ does \emph{not}\footnote{Now this factor gains another $2^{k_{0} - k_{1}}$ in the (high-high) interaction case (i.e., analogue of Step~2.1), which was already fine.} gain $2^{k_{1} - k_{2}}$ in the (low-high) interaction case (i.e., analogue of Step~2.3); however, the same proof as in the (high-low) case applies (Step~2.2).

%\begin{lemma} \label{}
%FOR $\calC_{k_{\min}}, \calC'_{k_{\min}}, \calC''_{k_{\min}}$ SATISFYING...
%\begin{equation*}
%\nrm{P_{k_{0}} P_{-\calC^{0}} \calL (P_{\calC^{1}} f_{k_{1}}, P_{\calC^{2}} g_{k_{2}})}_{L^{p}_{x}} 
%\aleq \nrm{P_{\calC^{1}} f_{k_{1}}}_{L^{q_{1}}_{x}} \nrm{P_{\calC^{2}} g_{k_{2}}}_{L^{q_{2}}_{x}}
%\end{equation*}
%\end{lemma}
%
%\begin{corollary} \label{}
%\begin{align*}
%\nrm{P_{k_{0}} \calL (f_{k_{1}}, g_{k_{2}})}_{L^{1} L^{2}} 
%\aleq & \nrm{f_{k_{1}}}_{L^{2} L^{2}} \bb( \sum_{k_{\min}} \nrm{P_{\calC_{k_{\min}}} g_{k_{2}}}_{L^{2} L^{\infty}}^{2} \bb)^{1/2} \\
%\nrm{P_{k_{0}} \calL (f_{k_{1}}, g_{k_{2}})}_{L^{2} L^{2}} 
%\aleq & \nrm{f_{k_{1}}}_{L^{\infty} L^{2}} \bb( \sum_{k_{\min}} \nrm{P_{\calC_{k_{\min}}} g_{k_{2}}}_{L^{2} L^{\infty}}^{2} \bb)^{1/2}
%\end{align*}
%\end{corollary}

\subsection{Proof of Proposition~\ref{prop:dirac}, part I: $N_{\pm}^{1/2}$-bounds for $\NR$} \label{subsec:bi-d-r}
In this subsection, we prove \eqref{eq:nre}--\eqref{eq:nrs} concerning the remainders $\NR^{E}$, $\NR^{R}$ and $\NR^{S}_{s}$. 
\subsubsection*{Step~0:~Reduction to dyadic estimates}
Recall that $\calN^{E}(\tilde{P}_{k_{1}} \cdot, \tilde{P}_{k_{2}} \cdot) = \calL$, $\calN^{R} = \NF$ and $\Pi_{s'} \calN^{S}_{s} = \NF_{s s'}^{\ast}$, which vanish when applied to inputs $ A_{k_{1}}, \psi_{k_{2}} $ unless (say) $k_{1} \geq k_{2} - 20$.
The condition $k_{1} \geq k_{2} - 20$ effectively eliminates the (low-high) interaction (i.e., $k_{\min} = k_{1}$). More precisely, if $k_{1} = k_{\min}$ and $k_{1} \geq k_{2} - 20$, then all three frequencies must be comparable (i.e., $\abs{k_{\max} - k_{\min}} \leq C$) thanks to the Littlewood-Paley trichotomy $\abs{k_{\max} - k_{\med}} \leq 5$.

Under the normalization \eqref{eq:fe-n} and the condition $k_{1} \geq k_{2} - 20$, we claim:
\begin{align} 
	\nrm{P_{k_{0}} \calL(B_{k_{1}}, \psi_{k_{2}})}_{N_{s'}^{1/2}} 
	\aleq & 2^{\frac{1}{2} (k_{\min} - k_{\max})} a_{k_{1}} b_{k_{2}}, \label{eq:bi-d-r-1} \\
	\nrm{P_{k_{0}} \calN (A_{k_{1}}, \psi_{k_{2}})}_{N_{s'}^{1/2}} 
	\aleq & 2^{\dlt_{0} (k_{\min} - k_{\max})} a_{k_{1}} b_{k_{2}} ,	\label{eq:bi-d-r-2}\\
	\nrm{P_{k_{0}} \calN^{\ast}_{s s'}(A_{k_{1}}, \psi_{k_{2}})}_{N_{s'}^{1/2}} 
	\aleq & 2^{\dlt_{0} (k_{\min} - k_{\max})} a_{k_{1}} b_{k_{2}}.	\label{eq:bi-d-r-3}
\end{align}
From these estimates, \eqref{eq:nre}--\eqref{eq:nrs} follow as in the proof of \eqref{eq:a0}--\eqref{eq:axs} from the dyadic bounds \eqref{eq:bi-a-est1}--\eqref{eq:bi-a-est3} in Section~\ref{subsec:bi-a}; we omit the details.

\subsubsection*{Step~1:~Proof of \eqref{eq:bi-d-r-1}}
By \eqref{eq:ellip-1} in Lemma~\ref{lem:ellip} and the frequency envelope bounds \eqref{eq:fe-Y} and \eqref{eq:fe-L2Linfty-0}, we have
\begin{equation*}
	\nrm{P_{k_{0}} \calL(B_{k_{1}}, \psi_{k_{2}})}_{L^{1} \dot{H}^{1/2}}
	\aleq 2^{k_{\min}} 2^{\frac{1}{2} k_{0}} 2^{-\frac{3}{2} k_{1}} a_{k_{1}} b_{k_{2}},
\end{equation*}
which implies \eqref{eq:bi-d-r-1} under the condition $k_{1} \geq k_{2} - 20$.

\subsubsection*{Step~2:~Proof of \eqref{eq:bi-d-r-2}}
As before, we begin with the high modulation contribution.
\begin{lemma} \label{lem:dr-hi-mod}
Assume the normalization \eqref{eq:fe-n}.
For any $k_{0}, k_{1}, k_{2}, j \in \bbZ$ such that $k_{1} \geq k_{2} - 20$, we have
\begin{align*}
	\nrm{P_{k_{0}} Q_{j}^{s' }\calL(A_{k_{1}}, \psi_{k_{2}})}_{N_{s'}^{1/2}}
	\aleq & 2^{\frac{1}{2}(k_{\min} - j)} 2^{\frac{1}{2}(k_{\min} - k_{\max})} a_{k_{1}} b_{k_{2}}, \\
 	\nrm{P_{k_{0}} \calL(Q_{j} A_{k_{1}}, \psi_{k_{2}})}_{N_{s'}^{1/2}}
	\aleq & 2^{\frac{1}{2}(k_{\min} - j)} 2^{\frac{1}{2}(k_{\min} - k_{\max})} a_{k_{1}} b_{k_{2}} , 	\\
	\nrm{P_{k_{0}} \calL(A_{k_{1}}, Q_{j}^{s} \psi_{k_{2}})}_{N_{s'}^{1/2}} 
	\aleq & 2^{\frac{1}{2}(k_{\min} - j)} 2^{\frac{1}{2}(k_{\min} - k_{\max})} a_{k_{1}} b_{k_{2}}.
\end{align*}
\end{lemma}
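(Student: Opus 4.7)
My plan is to mirror the proof of Lemma~\ref{lem:ax-hi-mod} from the previous subsection, adjusting for the $N_{s'}^{1/2}$ output norm and the different input spaces $Y^{1}$ (for $A$) and $\tilde{S}^{1/2}_{s}$ (for $\psi$). Each of the three inequalities will follow by applying Lemma~\ref{lem:ellip} with an appropriate H\"older placement, then inserting the frequency envelope bounds collected in Section~\ref{subsec:freq-env-bnd}. Under the assumption $k_{1}\geq k_{2}-20$, the Littlewood--Paley trichotomy leaves just two regimes to check: either $k_{\min}=k_{0}$ with $k_{1}\simeq k_{2}$ at the top, or $k_{\min}=k_{2}<k_{0}\simeq k_{1}$ (the genuine high-low interaction).

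For the first inequality, where the output modulation is fixed at $\simeq 2^{j}$, I will bound the $X^{0,-1/2}_{s',1}$ component of $N_{k_{0}}^{s'}$ (paying the factor $2^{-j/2}$), and apply \eqref{eq:ellip-0} with $A_{k_{1}}\in L^{\infty}L^{2}$ and $\psi_{k_{2}}$ on the $L^{2}L^{\infty}$ side with a $\calC_{k_{\min}}(0)$-box decomposition. Inserting \eqref{eq:fe-L2} and \eqref{eq:fe-L2Linfty-0} and multiplying by the $N_{s'}^{1/2}$ weight $2^{k_{0}/2}$ yields the claim after elementary arithmetic in each of the two cases.

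For the second inequality, the modulation is on $A$; I will use the $L^{1}L^{2}$ side of $N_{k_{0}}^{s'}$ and apply \eqref{eq:ellip-1} with $Q_{j}A_{k_{1}}\in L^{2}L^{2}$ (controlled by \eqref{eq:fe-Y}) and $\psi_{k_{2}}$ on the $L^{2}L^{\infty}$ side (controlled by \eqref{eq:fe-L2Linfty-0}). A short check distinguishing $j\leq k_{1}$ from $j>k_{1}$ produces the desired exponential gain.

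For the third inequality, I apply the same scheme with the roles of $A$ and $\psi$ swapped on the H\"older side: $A_{k_{1}}\in L^{2}L^{2}$ via \eqref{eq:fe-Y}, and $Q_{j}^{s}\psi_{k_{2}}$ on the $L^{2}L^{\infty}$ side. To handle the modulated $\psi$ in $L^{2}L^{\infty}$, I plan to apply Bernstein in space on each $\calC_{k_{\min}}(0)$-box and pass to $L^{2}L^{2}$, whereupon \eqref{eq:fe-L2L2} gives $\nrm{Q_{j}^{s}\psi_{k_{2}}}_{L^{2}L^{2}}\aleq 2^{-\max(j,k_{2})/2-j/2}b_{k_{2}}$ (this bound combines the $X^{0,1/2}_{s,\infty}$ and the $(i\partial_{t}+s|D|)\psi\in L^{2}L^{2}$ components of $\tilde{S}^{1/2}_{s}$). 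The main technical obstacle I anticipate is confirming that the Bernstein loss $2^{d k_{\min}/2}=2^{2k_{\min}}$ is compensated by the product of the $Y^{1}$-bound $2^{-3k_{1}/2}$, the $L^{2}L^{2}$-bound above, and the $N_{s'}^{1/2}$ weight $2^{k_{0}/2}$; the delicate sub-case is the high-low regime $k_{\min}=k_{2}$ with $j>k_{2}$, where the constraint $k_{1}\geq k_{2}$ and the inequality $j>k_{2}\geq 2k_{2}-k_{0}$ are precisely what make the arithmetic close.
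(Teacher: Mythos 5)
Your overall scheme---reduce everything to Lemma~\ref{lem:ellip} plus a choice of H\"older placement, insert the frequency-envelope bounds of Section~\ref{subsec:freq-env-bnd}, and check the two surviving regimes of the Littlewood--Paley trichotomy---is exactly the paper's, and your treatment of the first inequality ($A_{k_1}$ in $L^\infty L^2$ via \eqref{eq:fe-L2}, $\psi_{k_2}$ in the box-summed $L^2L^\infty$ norm via \eqref{eq:fe-L2Linfty-0}) coincides with it. However, you have misidentified the space in which the first input lives, and this breaks your argument for the third inequality. In this lemma $A$ denotes a component of $A_x$, normalized in $S^1_a$ by \eqref{eq:fe-n}; the bounds \eqref{eq:fe-Y} that you invoke apply to the elliptic variable $B$ normalized in $Y^1_a$, which is the subject of the separate estimate \eqref{eq:bi-d-r-1}, not of this lemma.

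For the second inequality this is only a wrong citation: the correct bound is \eqref{eq:fe-L2L2}, i.e.\ $\nrm{Q_j A_{k_1}}_{L^2L^2}\aleq 2^{-\frac12\max\{j,k_1\}}2^{-\frac12 j}2^{-\frac12 k_1}a_{k_1}\leq 2^{-k_1}2^{-\frac12 j}a_{k_1}$, which is weaker than the $Y^1$ bound when $j<k_1$, and you must recheck that the arithmetic still closes with it (it does, as in the paper). For the third inequality the gap is fatal: there is no estimate $\nrm{A_{k_1}}_{L^2L^2}\aleq 2^{-\frac32 k_1}a_{k_1}$ for $A$ controlled only in $S^1$---an unlocalized $L^2_tL^2_x$ bound already fails for a global free wave, and no component of $S_{k}$ supplies one without a modulation cutoff---so the placement "$A_{k_1}\in L^2L^2$, $Q_j^s\psi_{k_2}\in L^2L^\infty$ via Bernstein" cannot be executed. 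The paper's proof instead puts the $L^2L^2$ factor on the spinor: it applies \eqref{eq:ellip-1} with $Q_j^s\psi_{k_2}$ measured in $L^2L^2$ via \eqref{eq:fe-L2L2} and $A_{k_1}$ in the box-summed $L^2L^\infty$ norm via \eqref{eq:fe-L2Linfty-0}, producing the factor $2^{k_{\min}}2^{\frac12 k_0}2^{-\frac12 k_1}2^{-\frac12 k_2}2^{-\frac12 j}a_{k_1}b_{k_2}$. You should redo the third estimate with this placement; the high--low regime $k_2=k_{\min}$ is the one where the exponents are tight, so carry the full factor $2^{-\frac12\max\{j,k_2\}}$ from \eqref{eq:fe-L2L2} rather than discarding it.
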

\begin{proof}
Like Lemma~\ref{lem:ax-hi-mod}, this lemma is a corollary of Lemma~\ref{lem:ellip}. 
Applying \eqref{eq:ellip-0} with $(f, g) = (A_{k_{1}}, \psi_{k_{2}})$ and the frequency envelope bounds \eqref{eq:fe-L2} and \eqref{eq:fe-L2Linfty-0}, we have
\begin{equation*}
	2^{\frac{1}{2} k_{0}} 2^{-\frac{1}{2} j}\nrm{P_{k_{0}} Q_{j}^{s' }\calL(A_{k_{1}}, \psi_{k_{2}})}_{L^{2} L^{2}}
	\aleq 2^{k_{\min}} 2^{\frac{1}{2} k_{0}} 2^{-k_{1}} 2^{-\frac{1}{2} j} a_{k_{1}} b_{k_{2}},
\end{equation*}
which proves the first estimate under the condition $k_{1} \geq k_{2} - 20$. On the other hand, applying \eqref{eq:ellip-0} in two different ways, then using the frequency envelope bounds \eqref{eq:fe-L2L2} and \eqref{eq:fe-L2Linfty-0}, we have
\begin{align*}
	2^{\frac{1}{2} k_{0}} \nrm{P_{k_{0}} \calL(Q_{j} A_{k_{1}}, \psi_{k_{2}})}_{L^{1} L^{2}}
	\aleq & 2^{k_{\min}} 2^{\frac{1}{2} k_{0}} 2^{-k_{1}} 2^{-\frac{1}{2} j} a_{k_{1}} b_{k_{2}}, \\
	2^{\frac{1}{2} k_{0}} \nrm{P_{k_{0}} \calL(A_{k_{1}}, Q_{j}^{s} \psi_{k_{2}})}_{L^{1} L^{2}}
	\aleq & 2^{k_{\min}} 2^{\frac{1}{2} k_{0}} 2^{-\frac{1}{2} k_{1}} 2^{-\frac{1}{2} k_{2}} 2^{-\frac{1}{2} j} a_{k_{1}} b_{k_{2}}, 
\end{align*}
which imply the other two estimates under the condition $k_{1} \geq k_{2} - 20$. \qedhere
\end{proof}

Proceeding as in Step~2 of Section~\ref{subsec:bi-a}, where we use Lemma~\ref{lem:dr-hi-mod} instead of Lemma~\ref{lem:ax-hi-mod}, the proof of \eqref{eq:bi-d-r-2} is reduced to handling the contribution of
\begin{equation*}
	\sum_{s_{1}} P_{k_{0}} Q_{< k_{\min} - 10}^{s'} \NF(Q_{< k_{\min} - 10}^{s_{1}} A_{k_{1}}, Q_{< k_{\min} - 10}^{s} \psi_{k_{2}}) 
	= I_{0} + I_{1} + I_{2},
\end{equation*}
where
\begin{align}
	I_{0} =& \sum_{s_{1}} \sum_{j < k_{\min} - 10} P_{k_{0}} Q_{j}^{s'} \NF(Q_{\leq j}^{s_{1}} A_{k_{1}}, Q_{\leq j}^{s} \varphi_{k_{2}}), \label{eq:nd-I0} \\
	I_{1} =& \sum_{s_{1}} \sum_{j < k_{\min} - 10} P_{k_{0}} Q_{< j}^{s'} \NF(Q_{j}^{s_{1}} A_{k_{1}}, Q_{< j}^{s} \varphi_{k_{2}}), \label{eq:nd-I1} \\
	I_{2} =& \sum_{s_{1}} \sum_{j < k_{\min} - 10} P_{k_{0}} Q_{< j}^{s'} \NF(Q_{\leq j}^{s_{1}} A_{k_{1}}, Q_{ j}^{s} \varphi_{k_{2}}). \label{eq:nd-I2}
\end{align}
We now split into two (slightly overlapping) cases, which roughly correspond to (high-high) and (high-low) interaction:
\pfstep{Step~2.1:~(high-high) interaction, $k_{0} = k_{\min} + O(1)$}
Let $\ell = \frac{1}{2} (j - k_{\min})$. Using Proposition~\ref{prop:nf} \emph{neglecting} the gain $2^{k_{\min} - \min\set{k_{1}, k_{2}}} \simeq 2^{k_{\min} - k_{\max}}$, we have
\begin{align*}
	\nrm{I_{0}}_{X^{1/2, -1/2}_{s', 1}} 
	\aleq & \sum_{j < k_{\min} - 10} 2^{-\frac{1}{2} j} 2^{\frac{1}{2} k_{0}} 2^{\ell} 
			\nrm{A_{k_{1}}}_{L^{\infty} L^{2}} 
			\bb( \sum_{\calC_{k_{\min}}(\ell)} \nrm{P_{\calC_{k_{\min}}(\ell)} Q_{< j}^{s} \psi_{k_{2}}}_{L^{2} L^{\infty}}^{2} \bb)^{1/2}, \\
	\nrm{I_{1}}_{L^{1} \dot{H}^{1/2}} 
	\aleq & \sum_{s_{1}} \sum_{j < k_{\min} - 10} 2^{\frac{1}{2} k_{0}} 2^{\ell} 
			\nrm{Q_{j}^{s_{1}} A_{k_{1}}}_{L^{2} L^{2}} 
			\bb( \sum_{\calC_{k_{\min}}(\ell)} \nrm{P_{\calC_{k_{\min}}(\ell)} Q_{< j}^{s} \psi_{k_{2}}}_{L^{2} L^{\infty}}^{2} \bb)^{1/2}, \\
	\nrm{I_{2}}_{L^{1} \dot{H}^{1/2}} 
	\aleq & \sum_{s_{1}} \sum_{j < k_{\min} - 10} 2^{\frac{1}{2} k_{0}} 2^{\ell} 
			\bb( \sum_{\calC_{k_{\min}}(\ell)} \nrm{P_{\calC_{k_{\min}}(\ell)} Q_{\leq j}^{s_{1}} A_{k_{1}}}_{L^{2} L^{\infty}}^{2} \bb)^{1/2} 
			\nrm{Q_{j}^{s} \psi_{k_{2}}}_{L^{2} L^{2}}.
\end{align*}
Then by the frequency envelope bounds \eqref{eq:fe-L2}, \eqref{eq:fe-L2L2} and \eqref{eq:fe-L2Linfty}, we obtain
\begin{equation*}
	\nrm{I_{0}}_{N_{s'}^{1/2}} + \nrm{I_{1}}_{N_{s'}^{1/2}} + \nrm{I_{2}}_{N_{s'}^{1/2}} 
	\aleq \sum_{j < k_{\min} - 10} 2^{\frac{1}{4} \ell} 2^{k_{\min} - k_{\max}} a_{k_{1}} b_{k_{2}}
	\aleq 2^{k_{\min} - k_{\max}} a_{k_{1}} b_{k_{2}},
\end{equation*}
which is acceptable.

\pfstep{Step~2.2:~(high-low) interaction, $k_{2} = k_{\min}$}
As in Step~2.2 of Section~\ref{subsec:bi-a}, we need to use the $\tilde{Z}_{s}^{1/2}$ bound \eqref{eq:fe-L1Linfty} in addition to Proposition~\ref{prop:nf}. As before, let $\ell = \frac{1}{2}(j - k_{\min})$ and $\dlt_{1} \in (0, 1/10)$ be the small constant in Step~2.2 of Section~\ref{subsec:bi-a}.  By Proposition~\ref{prop:nf} we have
\begin{align*}
	\nrm{I_{0}}_{X^{1/2, -1/2}_{1}} 
	\aleq & \sum_{j < k_{2} - 10} 2^{\frac{1}{2} j} 2^{\frac{1}{2} k_{0}} 2^{\ell} 
			\nrm{A_{k_{1}}}_{L^{\infty} L^{2}} 
			\bb( \sum_{\calC_{k_{2}}(\ell)} \nrm{P_{\calC_{k_{2}}(\ell)} Q_{< j}^{s} \psi_{k_{2}}}_{L^{2} L^{\infty}}^{2} \bb)^{1/2},	\\
%	\aleq 2^{\frac{1}{2} (k_{2} - k_{1})}b_{k_{1}} c_{k_{2}}, \\
%
\nrm{I_{1}}_{L^{1} L^{2}} 
	\aleq & \sum_{s_{1}} \sum_{j < k_{2} - 10} 2^{\frac{1}{2} k_{0}} 2^{\ell} 
			\nrm{Q_{j}^{s_{1}} A_{k_{1}}}_{L^{2} L^{2}} 
			\bb( \sum_{\calC_{k_{2}}(\ell)} \nrm{P_{\calC_{k_{2}}(\ell)} Q_{< j}^{s} \psi_{k_{2}}}_{L^{2} L^{\infty}}^{2} \bb)^{1/2},
\end{align*}
which are bounded by $\aleq 2^{\frac{1}{2} (k_{2} - k_{1})} a_{k_{1}} b_{k_{2}}$ thanks to the frequency envelope bounds \eqref{eq:fe-L2}, \eqref{eq:fe-L2L2} and \eqref{eq:fe-L2Linfty}. 
For $I_{2}$, we split the $j$-summation and write $I_{2} = I_{2}' + I_{2}''$, where $I_{2}' = \sum_{j < k_{2} - 10 + \dlt_{1}(k_{2} - k_{1})} (\cdots)$ and $I_{2}'' = \sum_{j \in [k_{2} - 10 + \dlt_{1}(k_{2} - k_{1}), k_{2} - 10)} (\cdots)$. For $I_{2}'$, we use Proposition~\ref{prop:nf} to obtain
\begin{align*}
	\nrm{I_{2}'}_{L^{1} \dot{H}^{1/2}}
	\aleq & \sum_{s_{1}} \sum_{j < k_{2} - 10 + \dlt_{1}(k_{2} - k_{1})} 2^{\ell} 2^{\frac{1}{2} k_{0}}
			\bb( \sum_{\calC_{k_{2}}(\ell)} \nrm{P_{\calC_{k_{2}}(\ell)} Q_{< j}^{s_{1}} A_{k_{1}}}_{L^{2} L^{\infty}}^{2} \bb)^{1/2}
			\nrm{Q_{j}^{s} \psi_{k_{2}}}_{L^{2} L^{2}},
\end{align*}
which, in turn, can be bounded by $\aleq 2^{\frac{\dlt_{1}}{4}(k_{2} - k_{1})} a_{k_{1}} b_{k_{2}}$ using 
\eqref{eq:fe-L2L2} and \eqref{eq:fe-L2Linfty}. For $I_{2}''$, we use \eqref{eq:q-LqL2}, H\"older's inequality, \eqref{eq:fe-L2} and \eqref{eq:fe-L1Linfty} to bound
\begin{align*}
\nrm{I_{2}''}_{L^{1} \dot{H}^{1/2}} 
	\aleq & \sum_{j \in [k_{2} - 10 + \dlt_{1} (k_{2} - k_{1}), k_{2} - 10)} 2^{\frac{1}{2} k_{0}}
			\nrm{A_{k_{1}}}_{L^{\infty} L^{2}}
			\nrm{Q_{j}^{s} \psi_{k_{2}}}_{L^{1} L^{\infty}} \\
	\aleq & \sum_{j \in [k_{2} - 10 + \dlt_{1} (k_{2} - k_{1}), k_{2} - 10)}  2^{\frac{1}{2} k_{0}} 2^{-k_{1}} 2^{\frac{1}{2} k_{2}} 2^{5(k_{2} - j)} a_{k_{1}} b_{k_{2}},
\end{align*}
which is bounded by $2^{(\frac{1}{2} - 5 \dlt_{1})(k_{2} - k_{1})} a_{k_{1}} b_{k_{2}}$ and thus acceptable (since $\dlt_{1} < 1/10$).
 %REFER TO (low-high) CASES LATER.
%In (low-high) cases:
%\begin{align*}
%	\nrm{P_{k_{0}} Q_{j}^{s_{0} }\calL(f_{k_{1}}, g_{k_{2}})}_{N_{s_{0}}^{1/2}}
%	\aleq & 2^{\frac{1}{2}(k_{\min} - j)} \nrm{f_{k_{1}}}_{S^{1}} \nrm{g_{k_{2}}}_{S_{s_{2}}^{1/2}} ,	\\
%	??\nrm{P_{k_{0}} \calL(Q_{j} f_{k_{1}}, g_{k_{2}})}_{N_{s_{0}}^{1/2}}
%	\aleq & 2^{\frac{1}{2}(k_{\min} - j)} \nrm{f_{k_{1}}}_{S^{1}} \nrm{g_{k_{2}}}_{S_{s_{2}}^{1/2}} , 	\\
%	\nrm{P_{k_{0}} \calL(f_{k_{1}}, Q_{j}^{s_{2}} g_{k_{2}})}_{N_{s_{0}}^{1/2}} 
%	\aleq & 2^{\frac{1}{2}(k_{\min} - j)} \nrm{f_{k_{1}}}_{S^{1}} \nrm{g_{k_{2}}}_{S_{s_{2}}^{1/2}} .
%\end{align*}

%DEAL WITH $\NR$. FOR $\NR^{E}$: For (high-high) or (high-low),
%\begin{equation*}
%	\nrm{P_{k_{0}} \calL(f_{k_{1}}, g_{k_{2}})}_{N_{s_{0}}^{1/2}}
%	\aleq 2^{\frac{1}{2} (k_{\min} - k_{\max})} \nrm{f_{k_{1}}}_{Y^{1}} \nrm{g_{k_{2}}}_{S_{s_{2}}^{1/2}}
%\end{equation*}
%INDEED,
%\begin{equation*}
%	2^{\frac{1}{2} k_{0}} \nrm{P_{k_{0}} \calL(f_{k_{1}}, g_{k_{2}})}_{L^{1} L^{2}}
%	\aleq 2^{\frac{1}{2} (k_{0} - k_{1})} 2^{k_{2} - k_{1}} 2^{\frac{3}{2} k_{1}} \nrm{f_{k_{1}}}_{L^{2} L^{2}} 2^{-k_{2}} \nrm{g_{k_{2}}}_{L^{2} L^{\infty}}
%\end{equation*}
%
%USE Lemma~\ref{lem:nf-tZ} IN THE HIGH-LOW CASE FOR $\NR^{R}$ AND $\NR^{S}$.

\subsubsection*{Step~3:~Proof of \eqref{eq:bi-d-r-3}}
The argument in Step~2 applies exactly, with $\NF$ and Proposition~\ref{prop:nf} replaced by $\NF^{\ast}_{s s'}$ and Proposition~\ref{prop:nfs}, respectively; note that this is possible since we have \emph{not} used the extra gain $2^{k_{\min} - \min\set{k_{1}, k_{2}}}$ from Proposition~\ref{prop:nf} in Step~2.1 above. We omit the details.

\begin{remark} \label{rem:bi-bal-freq}
In the course of Step~2, we have proved the bound
\begin{equation} \label{eq:bi-bal-freq}
	\nrm{P_{k_{0}} \NF(A_{k_{1}}, \psi_{k_{2}})}_{N_{s'}^{1/2}}
	\aleq \nrm{A_{k_{1}}}_{S^{1}} \nrm{\psi_{k_{2}}}_{S_{s}^{1/2}}
\end{equation}
when $k_{0} = k_{\min} + O(1)$ and $k_{1} > k_{2} - 20$. In fact, the number $20$ does not play any role, and the same bound holds (with an adjusted constant) when all three $k_{0}, k_{1} k_{2}$ are within an $O(1)$-interval of each other. This will be useful in Sections~\ref{sec:tri} and \ref{sec:para}.
\end{remark}

\subsection{Proof of Proposition~\ref{prop:dirac}, part II: $N^{1/2}_{\pm}$-bounds for $\Diff[A]$} \label{subsec:bi-d-diff}
Here we prove \eqref{eq:diffe-opp}, \eqref{eq:diffr-opp} and \eqref{eq:diffs} concerning the paradifferential terms $\Diff^{E}[A_{0}]$, $\Diff^{R}[A_{x}]$ and $\Diff^{S}_{s}[A_{x}]$.
%NOW CONSIDER
%\begin{align*} 
%	\nrm{\Diff^{E} [A_{0}] \psi}_{(N_{s_{0}}^{1/2})_{cd}} 
%	\aleq & \nrm{A_{0}}_{Y^{1}_{c}} \nrm{\psi}_{(\tilde{S}^{1/2}_{-s_{0}})_{d}}  \ ,			\\
%	\nrm{\Diff^{R} [A_{x}] \psi}_{(N_{s_{0}}^{1/2})_{cd}} 
%	\aleq & \nrm{A_{x}}_{S^{1}_{c}} \nrm{\psi}_{(\tilde{S}^{1/2}_{-s_{0}})_{d}}  \ .			
%\end{align*}
\subsubsection*{Step~0:~Reduction to dyadic estimates}  
As before, note that $\calN^{E}(\tilde{P}_{k_{1}} \cdot, \tilde{P}_{k_{2}} \cdot) = \calL$, $\calN^{R} = \NF$ and $\Pi_{s'} \calN^{S}_{s} = \NF_{s s'}^{\ast}$, 
and $ \Diff^{E} [A_{0}], \Diff^{R} [A_{x}], \Pi_{s'} \Diff^{S}_{s} [A_{x}] $ vanish when applied to $ A_{k_{1}}, \psi_{k_{2}} $ unless (say) $k_{1} < k_{2} - 5$. By Littlewood-Paley trichotomy ($\abs{k_{\max} - k_{\med}} \leq 5$), we only need to consider the (low-high) interaction, i.e., $k_{\min} = k_{1}$ and $k_{0} = k_{2} + O(1)$.

Under the normalization \eqref{eq:fe-n} and the condition $k_{1} < k_{2} - 5$, we claim:
\begin{align} 
	\nrm{P_{k_{0}} \calL(B_{k_{1}}, \psi_{k_{2}})}_{N_{-s}^{1/2}}
	\aleq & 2^{\frac{1}{4} (k_{\min} - k_{\max})} a_{k_{1}} b_{k_{2}},	\label{eq:bi-d-diff-1} \\
	\nrm{P_{k_{0}} \calL(A_{k_{1}}, \psi_{k_{2}})}_{N_{-s}^{1/2}}
	\aleq & 2^{\frac{1}{4} (k_{\min} - k_{\max})} a_{k_{1}} b_{k_{2}},	\label{eq:bi-d-diff-2} \\
	\nrm{P_{k_{0}} \calN^{\ast}_{+}(A_{k_{1}}, \psi_{k_{2}})}_{N_{s}^{1/2}}
	\aleq & 2^{\frac{1}{4} (k_{\min} - k_{\max})} a_{k_{1}} b_{k_{2}}.	\label{eq:bi-d-diff-3}
\end{align}
We remind the reader that $\psi$ is assumed to be normalized in $(S^{1/2}_{s})_{b}$; hence \eqref{eq:bi-d-diff-1} and \eqref{eq:bi-d-diff-2} concern the case when the output is estimated in the opposite-signed $N^{1/2}_{-s}$ space, whereas \eqref{eq:bi-d-diff-3} is the same sign case. 

The estimates \eqref{eq:diffe-opp} and \eqref{eq:diffr-opp} follow from \eqref{eq:bi-d-diff-1} and \eqref{eq:bi-d-diff-2}, respectively, whereas \eqref{eq:diffs} may be proved by combining \eqref{eq:bi-d-diff-2} (opposite sign case) and \eqref{eq:bi-d-diff-3} (same sign case). As the proof is similar to Step~0 of Section~\ref{subsec:bi-a}, we omit the details.

\subsubsection*{Step~1:~Case of opposite waves}
Here we prove \eqref{eq:bi-d-diff-1} and \eqref{eq:bi-d-diff-2}. Henceforth we write $f$ for either $B$ or $A$.
We begin with the case when the output or $\psi$ has high modulation.
\begin{lemma} \label{lem:diff-hi-mod}
Assume the normalization \eqref{eq:fe-n}.
For any $k_{0}, k_{1}, k_{2}, j \in \bbZ$ such that $k_{1} < k_{2} - 5$, we have
\begin{align*}
	2^{\frac{1}{2} k_{0}} 2^{-\frac{1}{2} j}\nrm{P_{k_{0}} Q_{j}^{s' }\calL(f_{k_{1}}, \psi_{k_{2}})}_{L^{2} L^{2}}
	\aleq & 2^{\frac{1}{2}(k_{1} - j)} 2^{-\frac{1}{2} k_{1}} \nrm{f_{k_{1}}}_{L^{2} L^{\infty}} b_{k_{2}} ,	\\
	2^{\frac{1}{2} k_{0}} \nrm{P_{k_{0}} \calL(f_{k_{1}}, Q_{j}^{s} \psi_{k_{2}})}_{L^{1} L^{2}} 
	\aleq & 2^{\frac{1}{2}(k_{1} - j)} 2^{-\frac{1}{2} k_{1}} \nrm{f_{k_{1}}}_{L^{2} L^{\infty}} b_{k_{2}} .
\end{align*}
\end{lemma}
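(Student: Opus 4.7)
\textbf{Proof proposal for Lemma~\ref{lem:diff-hi-mod}.} Since $k_1 < k_2 - 5$, the projection $P_{k_0}$ forces $k_0 = k_2 + O(1)$ via Littlewood--Paley trichotomy; outside that range both sides vanish and there is nothing to prove. The algebraic identity $2^{\frac{1}{2}(k_1-j)} 2^{-\frac{1}{2} k_1} = 2^{-\frac{1}{2} j}$ reduces the two claimed inequalities to
\begin{align*}
\nrm{P_{k_0} Q_j^{s'} \calL(f_{k_1}, \psi_{k_2})}_{L^2 L^2} & \aleq 2^{-\frac{1}{2} k_0} \nrm{f_{k_1}}_{L^2 L^\infty} b_{k_2}, \\
\nrm{P_{k_0} \calL(f_{k_1}, Q_j^{s} \psi_{k_2})}_{L^1 L^2} & \aleq 2^{-\frac{1}{2} k_0 - \frac{1}{2} j} \nrm{f_{k_1}}_{L^2 L^\infty} b_{k_2}.
\end{align*}
Note that no box decomposition is needed here: since $k_{\min} = k_1$, the cubes $\calC_{k_{\min}}(0)$ coincide in scale with the frequency annulus of $f_{k_1}$ itself, so the orthogonality argument of Lemma~\ref{lem:ellip} collapses to a direct H\"older estimate.

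For the first bound, the plan is to discard the outer multiplier $P_{k_0} Q_j^{s'}$ using the unconditional $L^{2} L^{2}$-boundedness recorded in \eqref{eq:q-LqL2}, apply H\"older in the form $L^2 L^\infty \cdot L^\infty L^2 \hookrightarrow L^2 L^2$ to the bilinear kernel $\calL$, and then invoke the frequency envelope estimate \eqref{eq:fe-L2} to control $\nrm{\psi_{k_2}}_{L^\infty L^2} \aleq 2^{-\frac{1}{2} k_2} b_{k_2}$. Since $k_2 = k_0 + O(1)$, this produces exactly the prefactor $2^{-\frac{1}{2} k_0}$.

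For the second bound, the outer $P_{k_0}$ is disposable on $L^1 L^2$, so the strategy is to apply H\"older in the dual balance $L^2 L^\infty \cdot L^2 L^2 \hookrightarrow L^1 L^2$ and then estimate $\nrm{Q_j^s \psi_{k_2}}_{L^2 L^2}$ via \eqref{eq:fe-L2L2}, which yields $2^{-\frac{1}{2} \max(j, k_2)} 2^{-\frac{1}{2} j} b_{k_2}$. When $j \leq k_2$ this is precisely $2^{-\frac{1}{2} k_2 - \frac{1}{2} j} b_{k_2}$; when $j > k_2$ the bound is even stronger, gaining an extra factor $2^{-\frac{1}{2}(j-k_2)}$, and in both regimes it is dominated by $2^{-\frac{1}{2} k_0 - \frac{1}{2} j} b_{k_2}$ after using $k_0 = k_2 + O(1)$.

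No genuine obstacle is anticipated; the lemma is a routine H\"older-plus-envelope computation. Its value is solely in producing the quantitative $2^{-\frac{1}{2} j}$ gain in high modulation, which will then be summed against the $X^{0,-1/2}_{\pm,1}$ and $L^1 L^2$ components of $N_{-s}^{1/2}$ in the subsequent reduction of \eqref{eq:bi-d-diff-1}--\eqref{eq:bi-d-diff-2} to the low-modulation case, exactly as in Steps~2.1--2.3 of Section~\ref{subsec:bi-a}.
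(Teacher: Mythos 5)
Your proposal is correct and follows essentially the same route as the paper: the first bound is exactly Hölder $L^{2}L^{\infty}\times L^{\infty}L^{2}\to L^{2}L^{2}$ combined with \eqref{eq:fe-L2}, and the second is Hölder $L^{2}L^{\infty}\times L^{2}L^{2}\to L^{1}L^{2}$ combined with \eqref{eq:fe-L2L2}, with the trichotomy $k_{0}=k_{2}+O(1)$ converting $2^{-\frac{1}{2}k_{2}}$ into $2^{-\frac{1}{2}k_{0}}$. Your additional remarks (disposing of the outer multipliers via \eqref{eq:q-LqL2} and disposability, and checking both regimes $j\lessgtr k_{2}$ in \eqref{eq:fe-L2L2}) are accurate details the paper leaves implicit.
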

\begin{proof}
The first estimate follows from the H\"older inequality $L^{2} L^{\infty} \times L^{\infty} L^{2} \to L^{2} L^{2}$ and the frequency envelope bound \eqref{eq:fe-L2}. Similarly, the second estimate follows from the H\"older inequality $L^{2} L^{\infty} \times L^{2} L^{2} \to L^{1} L^{2}$ and the frequency envelope bound \eqref{eq:fe-L2L2}. \qedhere
\end{proof}
By the frequency envelope bounds \eqref{eq:fe-L2Linfty-0} and \eqref{eq:fe-Y}, note that $f = B$ and $A$ yield the common bound
\begin{equation} \label{eq:fe-L2Linfty-AB}
	\nrm{B_{k_{1}}}_{L^{2} L^{\infty}} \aleq 2^{2 k_{1}} \nrm{B_{k_{1}}}_{L^{2} L^{2}} \aleq 2^{\frac{1}{2} k_{1}} a_{k_{1}}, \qquad
	\nrm{A_{k_{1}}}_{L^{2} L^{\infty}} \aleq 2^{\frac{1}{2} k_{1}} a_{k_{1}}.
\end{equation}
Since $k_{1} < k_{2} - 5$, we have $k_{\min} = k_{1}$ and $k_{0}, k_{2} = k_{\max} + O(1)$. Then from Lemma~\ref{lem:diff-hi-mod} and \eqref{eq:q-LqL2}, it follows that
\begin{align*}
	\nrm{P_{k_{0}} Q_{\geq k_{0} + \frac{1}{2} (k_{1} - k_{0}) - C'_{1}}^{-s} \calL(f_{k_{1}}, \psi_{k_{2}})}_{N_{-s}^{1/2}} 
	\aleq_{C'_{1}} & 2^{\frac{1}{4}(k_{\min} - k_{\max})} a_{k_{1}} b_{k_{2}}, \\
\nrm{P_{k_{0}} Q_{< k_{0} + \frac{1}{2} (k_{1} - k_{0}) - C'_{1}}^{-s} \calL(f_{k_{1}}, Q_{\geq k_{2} + \frac{1}{2} (k_{1} - k_{2}) - C'_{1}}^{s} \psi_{k_{2}})}_{N_{-s}^{1/2}} 
	\aleq_{C'_{1}} & 2^{\frac{1}{4}(k_{\min} - k_{\max})} a_{k_{1}} b_{k_{2}}, 
\end{align*}
which are acceptable for any $C'_{1} \geq 0$. It remains to treat the contribution of
\begin{equation*}
	I = P_{k_{0}} Q_{< \frac{1}{2} (k_{0} + k_{1}) - C'_{1}}^{-s} \calL(f_{k_{1}}, Q_{< \frac{1}{2} (k_{1} + k_{2}) - C'_{1}}^{s} \psi_{k_{2}})
\end{equation*}
for some $C'_{1} \geq 0$ to be determined. 
We now use the `geometry of the cone' to force modulation localization of $f$. 
\begin{lemma} \label{lem:geom-cone-opp}
Let $k_{0}, k_{1}, k_{2}, j_{0}, j_{1}, j_{2} \in \bbZ$ be such that $\abs{k_{0} - k_{2}} \leq 5$ and $k_{1} \leq \min\set{k_{0}, k_{2}} - 5$. Assume furthermore that $j_{0} \leq k_{0} - C'_{1}$ and  $j_{2} \leq k_{2} - C'_{1}$ for a sufficiently large $C'_{1} > 0$. For any sign $s \in \set{+, -}$, the expression
\begin{equation*}
	P_{k_{0}} Q^{-s}_{j_{0}} \calL (P_{k_{1}} Q_{j_{1}} f, P_{k_{2}} Q_{j_{2}}^{s} g)
\end{equation*}
vanishes unless $j_{1} = k_{\max} + O(1)$.
\end{lemma}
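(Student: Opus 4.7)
}
The plan is to repeat the Fourier-support analysis that underlies Lemma~\ref{lem:geom-cone}, but in the specific sign configuration that appears here. After dualizing the expression against a test function $h$ and using the fact that the transpose of $Q^{-s}_{j_{0}}$ with respect to the pairing $\int f g$ is $Q^{s}_{j_{0}}$ (see the proof of Proposition~\ref{prop:nfs}), the question reduces to showing that the spacetime integral
\begin{equation*}
\iint (P_{k_{0}} Q^{s}_{j_{0}} h)\,\calL(P_{k_{1}} Q_{j_{1}} f,\, P_{k_{2}} Q^{s}_{j_{2}} g)\,\ud t\ud x
\end{equation*}
vanishes unless $j_{1} = k_{\max}+O(1)$. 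Thus on the Fourier side we have outputs with sign assignments $(s_{0}, s_{1}, s_{2}) = (s, ?, s)$, i.e.\ the extremal-frequency factors carry the \emph{same} sign.

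The key step will be to examine the quantity $H = s_{0}\abs{\xi^{0}} + s_{1}\abs{\xi^{1}} + s_{2}\abs{\xi^{2}}$, as in the proof of Lemma~\ref{lem:geom-cone}. By $\sum_{i} \tau^{i} = 0$ and the modulation localization, one has the upper bound $\abs{H} \aleq 2^{j_{\max}}$. On the other hand, since $\abs{\xi^{0}},\abs{\xi^{2}} \simeq 2^{k_{\max}}$ and $\abs{\xi^{1}} \simeq 2^{k_{1}} \ll 2^{k_{\max}}$, and since the \emph{outer} signs agree ($s_{0}=s_{2}=s$), the dominant contribution $s(\abs{\xi^{0}}+\abs{\xi^{2}}) \simeq 2^{k_{\max}}$ cannot be cancelled by $s_{1}\abs{\xi^{1}}$ regardless of $s_{1}$. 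Hence $\abs{H} \simeq 2^{k_{\max}}$, and therefore $j_{\max} \geq k_{\max} - C$ for some absolute constant $C$. Now the hypotheses $j_{0} \leq k_{0} - C_{1}'$ and $j_{2} \leq k_{2} - C_{1}'$ force, for $C_{1}' > C$, neither $j_{0}$ nor $j_{2}$ to achieve $j_{\max}$. The only remaining possibility is $j_{\max} = j_{1}$, which yields the lower bound $j_{1} \geq k_{\max} - C$.

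For the matching upper bound, I would argue crudely at the level of the Fourier supports: from $\tau^{1} = -\tau^{0} - \tau^{2}$ together with $\abs{\tau^{i} - s_{i}\abs{\xi^{i}}} \aleq 2^{j_{i}}$ for $i=0,2$, one obtains $\abs{\tau^{1}} \aleq 2^{k_{0}} + 2^{k_{2}} + 2^{j_{0}} + 2^{j_{2}} \aleq 2^{k_{\max}}$. Consequently $\abs{\tau^{1} - s_{1}\abs{\xi^{1}}} \aleq 2^{k_{\max}} + 2^{k_{1}} \aleq 2^{k_{\max}}$, and since this quantity is localized to $\simeq 2^{j_{1}}$ on the Fourier support of $P_{k_{1}} Q_{j_{1}} f$, we conclude $j_{1} \leq k_{\max} + O(1)$.

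There is no real obstacle here; the only subtle point is verifying that it is indeed the same-sign case $s_{0}=s_{2}$ that arises after the transposition, so that the cancellation-free lower bound $\abs{H} \simeq 2^{k_{\max}}$ applies. Once that is in place, the proof is essentially a specialization of Statement~(2) of Lemma~\ref{lem:geom-cone} to this favorable sign configuration, and the choice of $C_{1}'$ is dictated purely by the implicit constant $C$ produced in that lemma.
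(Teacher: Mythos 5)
Your proof is correct and follows essentially the same route as the paper's: dualize so that the transposed output projection carries the sign $s$, note that the two high-frequency factors then carry the same sign so that $H = s_{0}\abs{\xi^{0}}+s_{1}\abs{\xi^{1}}+s_{2}\abs{\xi^{2}}$ satisfies $\abs{H}\simeq 2^{k_{\max}}$ without cancellation, and play this against the modulation bounds to force $j_{1}=k_{\max}+O(1)$. The only cosmetic difference is in the upper bound on $j_{1}$: the paper notes that once $j_{1}$ strictly dominates $j_{0},j_{2}$ one has $\abs{H}\simeq 2^{j_{1}}$, whereas you bound $\abs{\tau^{1}}$ directly from the support conditions; the two computations are equivalent.
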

\begin{proof}
By duality, it suffices to consider the expression
\begin{equation*}
	\iint P_{k_{0}} Q^{+}_{j_{0}} h \calL (P_{k_{1}} Q_{j_{1}} f, P_{k_{2}} Q_{j_{2}}^{+} g) \, \ud t \ud x.
\end{equation*}
We proceed as the proof of Lemma~\ref{lem:geom-cone}. If the expression does not vanish, there exists $\Xi^{i}$ $(i=0, 1, 2)$ such that $\sum_{i} \Xi^{i}  = 0$ and $\Xi^{i} \in \set{\abs{\xi} \simeq 2^{k_{i}}, \ \abs{\tau - s_{i} \abs{\xi}} \simeq 2^{j_{i}}}$, where $s_{0} = s_{2} = s$ and $s_{1}$ is the sign of $\tau$. Consider the quantity $H = s_{0} \abs{\xi^{0}} + s_{1} \abs{\xi^{1}} + s_{2} \abs{\xi^{2}}$. Subtracting $\sum_{i} \tau^{i} = 0$ and using the hypothesis on $k_{0}, k_{2}, j_{0}, j_{2}$, we have
\begin{equation*}
	\abs{H} \aleq 2^{j_{1}} + 2^{k_{\max} - C'_{1}}.
\end{equation*}
On the other hand, since $s_{0} = s_{2} = s$ and $k_{1} \leq  \min\set{k_{0}, k_{2}} - 5$, we have
\begin{equation*}
	\abs{H} = \abs{s \abs{\xi^{0}} + s_{1} \abs{\xi^{1}} + s \abs{\xi^{2}}} \simeq 2^{k_{\max}}.
\end{equation*}
Taking $C'_{1}$ sufficiently large, it follows that $j_{\max} \geq k_{\max} - C$ for some constant $C$ independent of $C'_{1}$. Taking $C'_{1}$ even larger so that $j_{1} \geq \max\set{j_{0}, j_{2}} + 5$, we have $\abs{H} \simeq 2^{j_{1}}$ and the claim follows. \qedhere
\end{proof}

Choosing $C'_{1} \geq 0$ to be sufficiently large, Lemma~\ref{lem:geom-cone-opp} is applicable to $I$. Hence
\begin{equation*}
	I = \sum_{j = k_{\max} + O(1)} P_{k_{0}} Q_{< \frac{1}{2} (k_{0} + k_{1}) - C'_{1}}^{-s} \calL(Q_{j} f_{k_{1}}, Q_{< \frac{1}{2} (k_{1} + k_{2}) - C'_{1}}^{s} \psi_{k_{2}}),
\end{equation*}
By \eqref{eq:q-LqL2}, \eqref{eq:ellip-1} and the frequency envelope bound \eqref{eq:fe-L2Linfty-0}, we may estimate
\begin{equation} \label{eq:ellip-1-AB}
\begin{aligned}
	2^{\frac{1}{2} k_{0}} \nrm{I}_{L^{1} L^{2}} 
%	\aleq  & \sum_{j = k_{\max} + O(1)} 2^{\frac{1}{2} k_{0}}  \nrm{P_{k_{0}} \calL(Q_{j} f_{k_{1}}, Q_{< \frac{1}{2} (k_{1} + k_{2}) - C'_{1} } \psi_{k_{2}})}_{L^{1} L^{2}} \\
	\aleq & \sum_{j = k_{\max} + O(1)} 2^{\frac{1}{2} k_{0}} \nrm{Q_{j} f_{k_{1}}}_{L^{2} L^{2}} \bb( \sum_{\calC_{k_{\min}}(0)} \nrm{P_{\calC_{k_{\min}}(0)} \psi_{k_{2}}}_{L^{2} L^{\infty}}^{2} \bb)^{1/2} \\
	\aleq &  \sum_{j = k_{\max} + O(1)} 2^{k_{\min}} 2^{\frac{1}{2} k_{0}} \nrm{Q_{j} f_{k_{1}}}_{L^{2} L^{2}} b_{k_{2}}.
\end{aligned}
\end{equation}
By the frequency envelope bounds \eqref{eq:fe-L2L2} and \eqref{eq:fe-Y}, we have the following common bound for $f = B$ or $A$ when $j > k_{1}$:
\begin{equation} \label{eq:fe-L2L2-AB}
	\nrm{Q_{j} B_{k_{1}}}_{L^{2} L^{2}} \aleq 2^{-j} 2^{-\frac{1}{2} k_{1}} a_{k_{1}}, \quad
	\nrm{Q_{j} A_{k_{1}}}_{L^{2} L^{2}} \aleq 2^{-j} 2^{-\frac{1}{2} k_{1}} a_{k_{1}}.
\end{equation}
Therefore, 
\begin{equation*}
	2^{\frac{1}{2} k_{0}} \nrm{I}_{L^{1} L^{2}} \aleq 2^{\frac{1}{2} (k_{\min} - k_{\max})} a_{k_{1}} b_{k_{2}} ,
\end{equation*}
which completes the proof of \eqref{eq:bi-d-diff-1} and \eqref{eq:bi-d-diff-2}.

\subsubsection*{Step~2:~Proof of \eqref{eq:bi-d-diff-3}}
This is one of the key estimates showing that spinorial nonlinearities have better structure than the Riesz-transform parts. The idea is that the null form $\NF^{\ast}_{+}$ gains an extra factor $2^{k_{\min} - k_{\max}}$ in the low-high case. 

We begin with the high modulation bounds:
\begin{lemma} \label{lem:diffs-hi}
For any $k_{0}, k_{1}, k_{2} \in \bbZ$ such that $\abs{k_{\max} - k_{\med}} \leq 5$ and $k_{1} < k_{2} - 5$, we have
\begin{align}
	\nrm{P_{k_{0}} \calN^{\ast}_{+}(f_{k_{1}}, g_{k_{2}})}_{L^{2} L^{2}}
	\aleq & 2^{k_{1} - k_{2}} \nrm{f_{k_{1}}}_{L^{2} L^{\infty}} \nrm{g_{k_{2}}}_{L^{\infty} L^{2}} ,	\label{eq:diffs-hi-0}\\
	\nrm{P_{k_{0}} \calN^{\ast}_{+}(f_{k_{1}}, g_{k_{2}})}_{L^{1} L^{2}}
	\aleq & 2^{k_{1} - k_{2}} \nrm{f_{k_{1}}}_{L^{2} L^{\infty}} \nrm{g_{k_{2}}}_{L^{2} L^{2}}, \label{eq:diffs-hi-1}\\
	\nrm{P_{k_{0}} \calN^{\ast}_{+}(f_{k_{1}}, g_{k_{2}})}_{L^{1} L^{2}}
	\aleq & 2^{k_{1} - k_{2}} \nrm{f_{k_{1}}}_{L^{2} L^{2}} \bb( \sum_{\calC_{k_{1}}(0)}\nrm{P_{\calC_{k_{1}}(0)} g_{k_{2}}}_{L^{2} L^{\infty}}^{2} \bb)^{1/2} .	\label{eq:diffs-hi-2}
\end{align}
\end{lemma}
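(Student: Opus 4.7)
My approach hinges on exploiting the dual null structure of $\calN^{\ast}_{+}$ in the low-high regime $k_1 < k_2 - 5$ (which forces $\abs{k_0 - k_2} \leq O(1)$ by Littlewood--Paley trichotomy). By the duality relation \eqref{eq:nfs-dual}, for any test function $h$ we have
\[
\iint h \cdot P_{k_0} \calN^{\ast}_{+}(f_{k_1}, g_{k_2}) \, \ud t \, \ud x = \iint f_{k_1} \cdot \calN_{-}(P_{k_0} h, g_{k_2}) \, \ud t \, \ud x,
\]
so all three estimates reduce via duality to bounds on $\calN_{-}(P_{k_0} h, g_{k_2})$, both of whose inputs lie at frequency $\simeq 2^{k_2}$ while only output frequencies $\simeq 2^{k_1} \ll 2^{k_2}$ contribute after pairing with $f_{k_1}$.

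The key geometric observation --- obtainable by the law of sines or directly from the relation $\xi_h + \xi_g + \xi_f = 0$ with $\abs{\xi_h}, \abs{\xi_g} \simeq 2^{k_2}$ and $\abs{\xi_f} \simeq 2^{k_1}$ --- is that this output constraint forces $\abs{\angle(\xi_h, -\xi_g)} \aleq 2^{k_1 - k_2}$ on the support. Setting $\ell = \lceil k_1 - k_2 \rceil$ and decomposing both inputs via angular projections $P^{\omega}_{\ell}$, the cap-sum becomes essentially diagonal (each cap having $O(1)$ antipodal partners), and Proposition~\ref{prop:nf-basic} applied to the abstract null form $(h, g) \mapsto \calN_{-}(h, \overline{g})$ of order $1$ yields
\[
\nrm{\calN_{-}(P^{\omega_h}_{\ell} P_{k_0} h, P^{\omega_g}_{\ell} g_{k_2})}_{L^p} \aleq 2^{k_1 - k_2} \nrm{P^{\omega_h}_{\ell} P_{k_0} h}_{L^{q_1}} \nrm{P^{\omega_g}_{\ell} g_{k_2}}_{L^{q_2}}
\]
for $p^{-1} = q_1^{-1} + q_2^{-1}$. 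Cauchy--Schwarz along diagonal caps together with the $L^2$-orthogonality of angular sectors then delivers the corresponding estimate on the undecomposed $\calN_{-}(P_{k_0} h, g_{k_2})$.

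The three claimed estimates follow by appropriate Hölder exponent choices. For \eqref{eq:diffs-hi-0} I dualize against $h \in L^2 L^2$; for \eqref{eq:diffs-hi-1} against $h \in L^\infty L^2$; in both cases taking $q_1 = q_2 = 2$ pointwise in $t$ and applying Hölder $L^{\infty} \times L^2 \to L^2$ (or the symmetric version) in time. Estimate \eqref{eq:diffs-hi-2} additionally decomposes $g_{k_2} = \sum_{\calC_{k_1}(0)} P_{\calC_{k_1}(0)} g_{k_2}$; by Lemma~\ref{lem:box-orth-0}, each cube $\calC$ is paired with an $O(1)$-bounded family of dual cubes $-\tilde{\calC}$ in the Fourier support of $P_{k_0} h$. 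Applying the null form bound per cube with $(q_1, q_2) = (2, \infty)$ pointwise in $t$, then Cauchy--Schwarz in $\calC$ using the $L^2$-orthogonality $\sum_{\calC} \nrm{P_{-\tilde{\calC}} P_{k_0} h(t)}_{L^2}^2 \aleq \nrm{h(t)}_{L^2}^2$, and finally Hölder $L^2 \times L^\infty \times L^2 \to L^1$ in time, closes the estimate against the square-summed $L^2 L^\infty$ norm on $g$.

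The main technical hurdle is isolating the automatic angular gain $2^{k_1 - k_2}$ in the low-high regime, which is the concrete manifestation of the superior null structure enjoyed by the spinorial nonlinearity in the sense of Remark~\ref{rem:spin-nf}. Once this gain is extracted, the proof is a routine combination of Proposition~\ref{prop:nf-basic}, angular orthogonality, and Hölder's inequality --- no modulation analysis or secondary null structure is needed.
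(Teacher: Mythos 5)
Your proof is correct and takes essentially the same route as the paper's: both arguments rest on Proposition~\ref{prop:nf-basic} combined with the observation (Statement~(2) of Lemma~\ref{lem:box-orth-0}) that the two high-frequency inputs are forced to be nearly antipodal, so the null-form symbol gains $\simeq 2^{k_{1}-k_{2}}$, after which an essentially diagonal Cauchy--Schwarz and H\"older's inequality close the estimates. The only cosmetic difference is that you dualize explicitly to $\calN_{-}$ and use angular sectors of size $2^{k_{1}-k_{2}}$ for \eqref{eq:diffs-hi-0}--\eqref{eq:diffs-hi-1}, whereas the paper runs the cube decomposition of Lemma~\ref{lem:ellip} uniformly and deduces those two bounds from \eqref{eq:diffs-hi-2} by swapping the roles of $f$ and $g$.
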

\begin{proof}
The idea is to proceed as in the proof of Lemma~\ref{lem:ellip} (where $\calL$ is replaced by $\NF^{\ast}_{+}$) with the following crucial modification: Instead of the bound \eqref{eq:ellip-atom}, use
\begin{equation}
	\abs{I_{\calC^{0}, \calC^{1}, \calC^{2}}(t)}
	\aleq \tht \nrm{P_{\calC^{0}} h_{k_{0}}(t)}_{L^{q_{0}}} \nrm{P_{\calC^{1}} f_{k_{1}}(t)}_{L^{q_{1}}} \nrm{P_{\calC^{2}} g_{k_{2}}(t)}_{L^{q_{2}}},
\end{equation}
where $\tht = \max\set{\abs{\angle(\calC^{0}, -\calC^{2})}, 2^{k_{1} - k_{0}}, 2^{k_{1} - k_{2}}}$. This bound follows from Proposition~\ref{prop:nf-basic}; note that $2^{k_{1} - k_{i}}$ is the angular dimension of $\calC^{i}$ for $i=0, 2$. By Statement~(2) of Lemma~\ref{lem:box-orth-0} and the hypothesis on $k_{0}, k_{1}, k_{2}$, it follows that $\tht \simeq 2^{k_{1} - k_{2}}$. Then proceeding as in the proof of Lemma~\ref{lem:ellip}, we directly obtain \eqref{eq:diffs-hi-2}. The other two estimates \eqref{eq:diffs-hi-0} and \eqref{eq:diffs-hi-1} also follow from the same proof by switching the roles of $f, g$ and using the obvious bound
\begin{equation*}
\bb( \sum_{\calC_{k_{1}}(0)}\nrm{P_{\calC_{k_{1}}(0)} f_{k_{1}}}_{L^{2} L^{\infty}}^{2} \bb)^{1/2}
\simeq \nrm{f_{k_{1}}}_{L^{2} L^{\infty}}. \qedhere
\end{equation*}
\end{proof}
%\begin{equation*}
%	\nrm{	\Pi_{s_{0}} \Diff^{S}_{s_{1}}[A_{x}] \psi}_{(N_{s_{0}}^{1/2} \cap L^{2} \dot{H}^{-1})_{cd}} 
%	\aleq \nrm{A_{x}}_{S^{1}_{c}} \nrm{\psi}_{(\tilde{S}^{1/2}_{s_{1}})_{d}}  \ .			
%\end{equation*}
By Lemma~\ref{lem:diffs-hi} and the frequency envelop bounds \eqref{eq:fe-L2}, \eqref{eq:fe-L2L2} and \eqref{eq:fe-L2Linfty-0}, we have
\begin{align*}
	\nrm{P_{k_{0}} Q_{j}^{s}\calN^{\ast}_{+}(A_{k_{1}}, \psi_{k_{2}})}_{N_{s}^{1/2}}
	\aleq & 2^{\frac{1}{2}(k_{1} - j)} 2^{k_{1} - k_{2}} a_{k_{1}} b_{k_{2}} ,	\\
	\nrm{P_{k_{0}} \calN^{\ast}_{+}(A_{k_{1}}, Q_{j}^{s} \psi_{k_{2}})}_{N_{s}^{1/2}} 
	\aleq & 2^{\frac{1}{2}(k_{1} - j)} 2^{k_{1} - k_{2}} a_{k_{1}} b_{k_{2}} , 	\\
	\nrm{P_{k_{0}} \calN^{\ast}_{+}(Q_{j} A_{k_{1}}, \psi_{k_{2}})}_{N_{s}^{1/2}}
	\aleq & 2^{\frac{1}{2}(k_{1} - j)} 2^{\frac{1}{2}(k_{1} - k_{2})} a_{k_{1}} b_{k_{2}} .
\end{align*}
Thanks to the exponential gain in $k_{2} - k_{1}$ (as well as $j - k_{1}$), we may proceed as before (cf. Step~1 of Section~\ref{subsec:bi-a} or \ref{subsec:bi-d-r}) to reduce the proof of \eqref{eq:bi-d-diff-3} to estimating the contribution of
\begin{equation*}
	I = \sum_{s_{1}} P_{k_{0}} Q_{< k_{1} - 10}^{s} \NF^{\ast}_{+}(Q_{< k_{1} - 10}^{s_{1}} A_{k_{1}}, Q_{< k_{1} - 10}^{s} \psi_{k_{2}}).
\end{equation*}
The norm $\nrm{I}_{N_{s}^{1/2}}$ can be bounded by the sum $\sum_{s_{1}} \sum_{j < k_{1} - 10}$ of the terms
\begin{align*}
	\nrm{P_{k_{0}} Q^{s}_{j} \NF^{\ast}_{+}(Q^{s_{1}}_{<j} A_{k_{1}}, Q_{<j}^{s} \psi_{k_{2}})}_{N^{1/2}_{s}} \aleq & 2^{\frac{1}{4} (j - k_{1})} 2^{\frac{1}{2} (k_{1} - k_{2})} a_{k_{1}} b_{k_{2}}, \\ 
	\nrm{P_{k_{0}} Q^{s}_{\leq j} \NF^{\ast}_{+}(Q^{s_{1}}_{j} A_{k_{1}}, Q_{<j}^{s} \psi_{k_{2}})}_{N^{1/2}_{s}} \aleq & 2^{\frac{1}{4} (j - k_{1})} 2^{\frac{1}{2} (k_{1} - k_{2})} a_{k_{1}} b_{k_{2}}, \\
	\nrm{P_{k_{0}} Q^{s}_{\leq j} \NF^{\ast}_{+}(Q^{s_{1}}_{\leq j} A_{k_{1}}, Q_{j}^{s} \psi_{k_{2}})}_{N^{1/2}_{s}} \aleq & 2^{\frac{1}{4} (j - k_{1})} 2^{k_{1} - k_{2}} a_{k_{1}} b_{k_{2}},
\end{align*}
where we used Proposition~\ref{prop:nfs} and the frequency envelope bounds \eqref{eq:fe-L2}, \eqref{eq:fe-L2L2} and \eqref{eq:fe-L2Linfty} to derive the estimates. Observe the crucial exponential gain in $k_{2} - k_{1}$, which arises from the factor $2^{k_{\min} - \min \set{k_{0}, k_{2}}}$ in Proposition~\ref{prop:nfs}. Summing up in $s_{1} \in \set{+, -}$ and $j < k_{1} - 10$, we obtain
\begin{equation*}
	\nrm{I}_{N_{s}^{1/2}} \aleq 2^{\frac{1}{2}(k_{1} - k_{2})} a_{k_{1}} b_{k_{2}} ,
\end{equation*}
which completes the proof of \eqref{eq:bi-d-diff-3}.

\begin{remark} \label{rem:diffr-free}
Repeating Step~2 with $\calN^{\ast}$ replaced by $\calN$ (hence Proposition~\ref{prop:nfs} is replaced by Proposition~\ref{prop:nf}), Lemma~\ref{rem:diffr-free} can be proved. The key differences are the lack of the extra factor $2^{k_{\min} - k_{\max}}$ in Proposition~\ref{prop:nf}, and that $Q_{j} A^{free} = 0$ for any $j \in \bbZ$. We omit the details.
\end{remark}

\subsection{Proof of Proposition~\ref{prop:dirac}, part III: Completion of proof} \label{subsec:bi-d-rest}
We finish the proof of Proposition~\ref{prop:dirac} by establishing the bounds \eqref{eq:ne-himod}--\eqref{eq:ns-z}. Here we do not need to utilize the null structure. Moreover, instead of the normalizing the $(\tilde{S}^{1/2}_{s})_{b}$ norm as in \eqref{eq:fe-n}, we normalize the slightly weaker $(S^{1/2}_{s})_{b}$ norm, i.e., we assume
\begin{equation*}
	\nrm{B}_{Y^{1}_{a}} = \nrm{A}_{S^{1}_{a}} = \nrm{\psi}_{(S^{1/2}_{s})_{b}} = \nrm{\varphi}_{(S^{1/2}_{s'})_{c}} = 1.
\end{equation*}
Note that the bounds \eqref{eq:fe-L2}--\eqref{eq:fe-L2Linfty} and \eqref{eq:fe-Y} still hold.

\subsubsection*{Step~0:~Reduction to dyadic estimates}
Let $f$ denote either $B$ or $A$. Under the normalization \eqref{eq:fe-n}, it clearly suffices to prove the following dyadic bounds:
\begin{align} 
	\nrm{P_{k_{0}} \calL(f_{k_{1}}, \psi_{k_{2}})}_{L^{2} L^{2}} 
	\aleq & 2^{\frac{1}{2} (k_{\min} - k_{\max})} a_{k_{1}} b_{k_{2}},		\label{eq:bi-d-rest-1} \\
	2^{-\frac{3}{2} k_{0}} \nrm{P_{k_{0}} \calL(f_{k_{1}}, \psi_{k_{2}})}_{L^{1} L^{\infty}} 
	\aleq & 2^{\frac{1}{2} (k_{\min} - k_{\max})} a_{k_{1}} b_{k_{2}}.		\label{eq:bi-d-rest-2}
\end{align}

\subsubsection*{Step~1:~Proof of \eqref{eq:bi-d-rest-1}}
We first use Lemma~\ref{lem:ellip} and \eqref{eq:fe-L2} to estimate
\begin{equation*}
	\nrm{P_{k_{0}} \calL(f_{k_{1}}, \psi_{k_{2}})}_{L^{2} L^{2}} 
	\aleq 2^{-\frac{1}{2} k_{2}} \bb( \sum_{\calC_{k_{\min}}(0)} \nrm{P_{\calC_{k_{\min}}(0)} f_{k_{1}}}_{L^{2} L^{\infty}}^{2} \bb)^{1/2} b_{k_{2}}.
\end{equation*}
By Bernstein's inequality, \eqref{eq:fe-L2Linfty-0} and \eqref{eq:fe-Y}, we have
\begin{equation} \label{eq:fe-L2Linfty-AB-0}
\begin{aligned}
	\bb( \sum_{\calC_{k_{\min}}(0)} \nrm{P_{\calC_{k_{\min}}(0)} B_{k_{1}}}_{L^{2} L^{\infty}}^{2} \bb)^{1/2}
	\aleq & 2^{2 k_{\min}} 2^{-\frac{3}{2} k_{1}} a_{k_{1}}, \\
	\bb( \sum_{\calC_{k_{\min}}(0)} \nrm{P_{\calC_{k_{\min}}(0)} A_{k_{1}}}_{L^{2} L^{\infty}}^{2} \bb)^{1/2}
	\aleq & 2^{k_{\min}} 2^{-\frac{1}{2} k_{1}} a_{k_{1}}.
\end{aligned}
\end{equation}
In each case, it can be checked (using Littlewood-Paley trichotomy and dividing into cases $k_{\min} = k_{0}, k_{1}, k_{2}$) that \eqref{eq:bi-d-rest-1} holds.

\subsubsection*{Step~2:~Proof of \eqref{eq:bi-d-rest-2}}
We split into three cases.
\pfstep{Step~2.1:~(high-high) interaction, $k_{0} = k_{\min}$}
Here the factor $2^{-\frac{3}{2} k_{0}}$ on the LHS is detrimental, and we need to perform an orthogonality argument using Lemma~\ref{lem:box-orth-0}. We claim that
\begin{equation} \label{eq:bi-d-rest-2-key}
	\nrm{P_{k_{0}} \calL(f_{k_{1}}, g_{k_{2}})}_{L^{1} L^{\infty}}
	\aleq \bb( \sum_{\calC_{k_{0}}(0)}\nrm{P_{\calC_{k_{0}}(0)} f_{k_{1}}}_{L^{2} L^{\infty}} \bb)^{1/2}
		\bb( \sum_{\calC_{k_{0}}(0)}\nrm{P_{\calC_{k_{0}}(0)} g_{k_{2}}}_{L^{2} L^{\infty}} \bb)^{1/2}
\end{equation}
Once \eqref{eq:bi-d-rest-2-key} is proved, \eqref{eq:bi-d-rest-2} would follow from \eqref{eq:fe-L2Linfty-0} and \eqref{eq:fe-L2Linfty-AB-0}.

To prove the claim, we follow the proof of Lemma~\ref{lem:ellip}. Let $\calC^{0}, \calC^{1}, \calC^{2}$, $I(t)$ and $I_{\calC^{0}, \calC^{1}, \calC^{2}}(t)$ be as in the proof of Lemma~\ref{lem:ellip}, with $g$ replaced by $\psi$. Since there are only finitely many boxes $\calC^{0} = \calC_{k_{0}}(0)$ in $\set{\abs{\xi} \simeq 2^{k_{0}}}$, \eqref{eq:ellip-orth} with $q_{0} = 1$, $q_{1} = q_{2} = \infty$ implies
\begin{equation*}
	\abs{I(t)} \aleq \nrm{h_{k_{0}}(t)}_{L^{1}} \bb( \sum_{\calC^{1}} \nrm{P_{\calC^{1}} f_{k_{1}}(t)}_{L^{\infty}}^{2} \bb)^{1/2} 
		\bb( \sum_{\calC^{2}} \nrm{P_{\calC^{2}} \psi_{k_{2}}(t)}_{L^{\infty}}^{2} \bb)^{1/2}.
\end{equation*}
Then integrating and applying H\"older in $t$ appropriately, the desired claim \eqref{eq:bi-d-rest-2-key} follows by duality.

\pfstep{Steps~2.2~\&~2.3:~(low-high) or (high-low) interaction, $k_{1} = k_{\min}$ or $k_{2} = k_{\min}$}
These cases are easier thanks to the factor $2^{-\frac{3}{2} k_{0}}$ on the LHS, as $k_{0} = k_{\max} + O(1)$ by Littlewood-Paley trichotomy. Indeed, by H\"older's inequality and the frequency envelope bounds \eqref{eq:fe-L2Linfty-0} and \eqref{eq:fe-L2Linfty-AB} we have
\begin{align*}
2^{-\frac{3}{2} k_{0}} \nrm{P_{k_{0}} \calL(f_{k_{1}}, \psi_{k_{2}})}_{L^{1} L^{\infty}} 
\aleq 2^{-\frac{3}{2} k_{0}} \nrm{f_{k_{1}}}_{L^{2} L^{\infty}} \nrm{\psi_{k_{2}}}_{L^{2} L^{\infty}}
\aleq 2^{-\frac{3}{2} k_{\max}} 2^{\frac{1}{2} k_{1}} 2^{k_{2}} a_{k_{1}} b_{k_{2}},
\end{align*}
which is acceptable.

\begin{remark} \label{rem:hi-d-3}
In a general dimension $d \geq 4$, essentially every proof in this section is valid with substitutions as in Remark~\ref{rem:hi-d-1}. The constant $\dlt_{0} > 0$ would change, since \eqref{eq:fe-L1Linfty} must be replaced by
\begin{equation*}
\nrm{Q_{j}^{s} \psi_{k}}_{L^{1} L^{\infty}} \aleq 2^{\frac{5-d}{2} k} 2^{(d+1)(k - j)_{+}} \nrm{\psi_{k}}_{\tilde{Z}^{\frac{d-3}{2}}_{s, k}}.
\end{equation*}

\end{remark}

\section{Proof of the trilinear estimates} \label{sec:tri}
In this section, we establish Proposition~\ref{prop:trilinear}.
In Section~\ref{subsec:tri-bi-ax} and Section~\ref{subsec:tri-bi-a0}, we first decompose the nonlinearity further and treat the part for which the bilinear null structure suffices. We will then be left with a part of the trilinear form
\begin{equation*}
- \lap^{-1} \brk{\Pi_{s_{1}} \varphi^{1}, \mR_{0} \Pi_{s_{2}} \varphi^{2}} \mR_{0} \psi + \Box^{-1} \calP_{i} \brk{\Pi_{s_{1}} \varphi^{1}, \mR_{x} \Pi_{s_{2}} \varphi^{2}} \mR^{i} \psi
\end{equation*}
with certain restriction on the modulation and frequencies of the inputs and the output; for the precise expression, see \eqref{eq:tri-form}. This nonlinearity exhibits the same multilinear null structure as in the case of MKG-CG \cite[Appendix]{KST}. We thus complete the proof of Proposition~\ref{prop:trilinear} in Section~\ref{subsec:tri-tri} by reducing the present case to the multilinear null form estimate in \cite{KST}.

As before, we restrict to the case $d = 4$ for most part of this section. The argument is simpler in the higher dimensional case $d \geq 5$; see Remark~\ref{rem:hi-d-4} below. 
\subsection{Preliminaries: Conventions and definitions}
Fix signs $s_{1}, s_{2}, s \in \set{+, -}$ and let $a, \tilde{a}, b, c, d$ be admissible frequency envelopes. In this section, we normalize the frequency envelope norms of the inputs as follows:
\begin{equation} \label{eq:fe-n-tri}	
\begin{aligned}
	& \nrm{A}_{S^{1}_{a}}
	= \nrm{A}_{Z^{1}_{\tilde{a}}}
	= \nrm{B}_{Y^{1}_{a}}
	= \nrm{B}_{(Z_{ell}^{1})_{\tilde{a}}}
	= 1, \\
	& \nrm{\psi}_{(\tilde{S}^{1/2}_{s})_{b}}
	= \nrm{\varphi^{1}}_{(\tilde{S}^{1/2}_{s_{1}})_{c}} 
	= \nrm{\varphi^{2}}_{(\tilde{S}^{1/2}_{s_{2}})_{d}}
	= 1.
\end{aligned}
\end{equation}
From \eqref{eq:fe-n-tri}, it follows that $A, B, \psi$ obey the frequency envelope bounds \eqref{eq:fe-L2}--\eqref{eq:fe-Y}. Note that also $\psi$ obeys the bound
\begin{equation} \label{eq:fe-L2-ang}
	\sup_{\ell \leq 0} \bb( \sum_{\omg} \nrm{P_{\ell}^{\omg} Q_{<k+2 \ell} \psi}_{L^{\infty} L^{2}}^{2} \bb)^{1/2} \aleq 2^{-\frac{1}{2} k} b_{k}.
\end{equation}
Moreover, $\varphi^{1}, \varphi^{2}$ obey the same estimates with $(s, b_{k})$ replaced by $(s_{1}, c_{k})$ and $(s_{2}, d_{k})$, respectively. The normalizations $\nrm{A}_{Z^{1}_{\tilde{a}}} = 1$ and $\nrm{B}_{(Z^{1}_{ell})_{\tilde{a}}} = 1$ imply
\begin{align}
\sup_{j < k + C} \bb( \sum_{\omg} \nrm{P^{\omg}_{\ell} Q_{j} A_{k}}_{L^{1} L^{\infty}}^{2} \bb)^{\frac{1}{2}} \leq & 2^{- \frac{1}{4} (j - k)} \tilde{a}_{k},  \label{eq:fe-Z} \\
\sup_{j < k + C} \bb( \sum_{\omg} \nrm{P^{\omg}_{\ell} Q_{j} B_{k}}_{L^{1} L^{\infty}}^{2} \bb)^{\frac{1}{2}} \leq & 2^{\frac{1}{4}(j - k)}\tilde{a}_{k}.  \label{eq:fe-Z-ell}
\end{align}

To identify the part that we cannot handle with only bilinear estimates, we borrow some definitions (with minor modification) from \cite{KST}. Given $k \in \bbZ$ and a translation-invariant bilinear operator $\BL$, define 
\begin{align}
\calH_{k} \BL(f, g)
= & \sum_{j < k + C_{2}}  P_{k} Q_{j} \BL(Q_{< j} f, Q_{< j} g), \label{eq:Hk-def} \\
\calH^{\ast}_{k} \BL(f, g)
= & \sum_{j < k + C^{\ast}_{2}} Q_{< j} \BL(P_{k} Q_{j} f, Q_{< j} g). \label{eq:Hsk-def}
\end{align}
Here $C_{2}, C^{\ast}_{2} > 0$ are universal constants such that
\begin{equation} \label{eq:C012}
	\frac{1}{2} C_{0} < C^{\ast}_{2} < C_{1} < C_{2} < C_{0},
\end{equation}
where $C_{0}$ is the constant in Lemma~\ref{lem:geom-cone} and $C_{1}$ is the constant in the definitions \eqref{eq:Z-def}--\eqref{eq:Z-ell-def} of $Z^{r}_{k}$ and $Z^{r}_{ell, k}$ (which is, in fact, chosen at this point to satisfy \eqref{eq:C012}). 

Given signs $s_{1}, s_{2}, s \in \set{+, -}$, we also define  
\begin{align*}
\calH_{s_{1}, s_{2}} \BL(f, g)
= & \sum_{k_{0}, k_{1}, k_{2} : \, k_{0} < k_{2} - C_{2} - 10} \calH_{k_{0}} \BL(T_{s_{1}} f_{k_{1}}, P_{k_{2}} T_{s_{2}} g), \\ 
\calH^{\ast}_{s', s} \BL(f, g)
= & \sum_{k_{0}, k_{1}, k_{2} : \, k_{1} < k_{2} - C^{\ast}_{2} - 10} P_{k_{0}} T_{s'} \calH^{\ast}_{k_{1}} \BL(f, T_{s} g_{k_{2}}).
\end{align*}

\subsection{Further decomposition of $\bfA_{x}$ and $\Diff^{R}$} \label{subsec:tri-bi-ax}
Consider the trilinear operator
\begin{equation}
	\calT^{R}_{s_{1}, s_{2}, s} (\varphi^{1}, \varphi^{2}, \psi) = s s_{2} \calH^{\ast}_{s, s} \big( \calH_{s_{1}, s_{2}} \Box^{-1} \calP_{i} \brk{\Pi_{s_{1}} \varphi^{1}, \mR_{x} \Pi_{s_{2}} \varphi^{2}} \mR^{i} \psi \big),
\end{equation}
where $\Box^{-1}$ denotes the Fourier multiplier\footnote{In general, this `multiplier' is problematic near $\set{ \tau^{2} - \abs{\xi}^{2} = 0}$; however, thanks to the modulation projection $Q_{j}$ in the definition of $\calH_{s_{1}, s_{2}}$, the expression $\calH_{s_{1}, s_{2}} \Box^{-1}$ is well-defined and coincides with $\calH_{s_{1}, s_{2}} K$, where $K f$ denotes the solution $\phi$ to $\Box \phi = f$ with $\phi[0] = 0$.} with symbol $(\tau^{2} - \abs{\xi}^{2})^{-1}$.
Our goal is to show that all of $\Diff^{R}[\bfA(\Pi_{s_{1}} \varphi^{1}, \Pi_{s_{2}} \varphi^{2})] \psi$ except $\calT^{R}_{s_{1}, s_{2}, s}$ can be handled by applying bilinear estimates in tandem. We use the auxiliary $Z^{1}$ norm as an intermediary. 

More precisely, under the normalization \eqref{eq:fe-n-tri} and $f$ as in \eqref{eq:trilinear-fe}, we claim that
\begin{equation} \label{eq:tri-bi-ax}
\nrm{- s \Diff^{R}[\bfA(\Pi_{s_{1}} \varphi^{1}, \Pi_{s_{2}} \varphi^{2})] \psi - \calT^{R}_{s_{1}, s_{2}, s}(\varphi^{1}, \varphi^{2}, \psi)}_{(N_{s}^{1/2})_{f}} 
	\aleq 1.
\end{equation}
\subsubsection*{Step~0:~Reduction to bilinear estimates}
Let $a, b, c, d$ be admissible frequency envelopes. Define $e_{k} = (\sum_{k' < k} a_{k'}) b_{k}$ and $\tilde{e}_{k} = (\sum_{k' < k} \tilde{a}_{k'}) b_{k}$.
We claim that
\begin{align}  
\nrm{(\Id - \calH^{\ast}_{s, s}) \Diff^{R}[A] \psi}_{(N^{1/2}_{s})_{e}}
	\aleq & \nrm{A}_{S^{1}_{a}} \nrm{\psi}_{(\tilde{S}^{1/2}_{s})_{b}} ,  \label{eq:diffr-Hs}\\
\nrm{\calH^{\ast}_{s, s} \Diff^{R}[A] \psi}_{(N^{1/2}_{s})_{\tilde{e}}}
	\aleq & \nrm{A}_{Z^{1}_{\tilde{a}}} \nrm{\psi}_{(\tilde{S}^{1/2}_{s})_{b}},  \label{eq:diffr-Z} \\
\nrm{(I - \calH_{s_{1}, s_{2}}) \bfA^{R}(\varphi^{1}, \varphi^{2})}_{Z^{1}_{cd}} 
	\aleq & \nrm{\varphi^{1}}_{(\tilde{S}^{1/2}_{s_{1}})_{c}} \nrm{\varphi^{2}}_{(\tilde{S}^{1/2}_{s_{2}})_{d}}, \label{eq:axr-Z}\\
\nrm{\bfA_{s_{2}}^{S}(\Pi_{s_{1}} \varphi^{1}, \varphi^{2})}_{Z^{1}_{cd}} 
	\aleq & \nrm{\varphi^{1}}_{(\tilde{S}^{1/2}_{s_{1}})_{c}} \nrm{\varphi^{2}}_{(\tilde{S}^{1/2}_{s_{2}})_{d}}.	\label{eq:axs-Z} 
\end{align}
Assuming these estimates, we first conclude the proof of \eqref{eq:tri-bi-ax}. Assume the normalization \eqref{eq:fe-n-tri}. Note that $P_{k} \Pi_{s}$ is disposable for any $k \in \bbZ$ and $s \in \set{+, -}$. Hence, from the bilinear estimates \eqref{eq:axr}--\eqref{eq:axs} and \eqref{eq:axr-Z}--\eqref{eq:axs-Z}, we obtain
\begin{gather*}
	\nrm{\bfA^{R}(\Pi_{s_{1}} \varphi^{1}, \Pi_{s_{2}} \varphi^{2})}_{S^{1}_{cd}}
	+ \nrm{(1 - \calH_{s_{1}, s_{2}})\bfA^{R}(\Pi_{s_{1}} \varphi^{1}, \Pi_{s_{2}} \varphi^{2})}_{Z^{1}_{cd}} \aleq  1, \\
	\nrm{\bfA_{s_{2}}^{S}(\Pi_{s_{1}} \varphi^{1}, \Pi_{s_{2}} \varphi^{2})}_{(S^{1} \cap Z^{1})_{cd}}
	\aleq 1.
\end{gather*}
Applying \eqref{eq:diffr-Hs} and \eqref{eq:diffr-Z} with $a = \tilde{a} = cd$, $e = \tilde{e} = (\sum_{k' < k} c_{k'} d_{k'}) b_{k}$ and
\begin{equation*}
A = \bfA(\Pi_{s_{1}} \varphi^{1}, \Pi_{s_{2}} \varphi^{2}) = - s_{2} \bfA^{R}(\Pi_{s_{1}} \varphi^{1}, \Pi_{s_{2}} \varphi^{2}) + \bfA^{S}_{s_{2}}(\Pi_{s_{1}} \varphi^{1}, \Pi_{s_{2}} \varphi^{2}),
\end{equation*}
we arrive at
\begin{equation*}
	\nrm{s \Diff^{R}[\bfA(\Pi_{s_{1}} \varphi^{1}, \Pi_{s_{2}} \varphi^{2})] \psi - s \calH^{\ast}_{s, s} \pi^{R}[\calH_{s_{1}, s_{2}} (- s_{2} \bfA^{R})(\Pi_{s_{1}} \varphi^{1}, \Pi_{s_{2}} \varphi^{2})] \psi}_{(N_{s}^{1/2})_{e}} \aleq 1.
\end{equation*}
Recalling the definitions of $\bfA^{R}$ and $\Diff^{R}$ from Section~\ref{subsec:nonlin}, observe that
\begin{equation*}
s_{2} s \calH^{\ast}_{s, s} \pi^{R}[\calH_{s_{1}, s_{2}} \bfA^{R}(\Pi_{s_{1}} \varphi^{1}, \Pi_{s_{2}} \varphi^{2})] \psi
= \calT^{R}_{s_{1}, s_{2}, s}(\varphi^{1}, \varphi^{2}, \psi).
\end{equation*}
Moreover, by Cauchy-Schwarz, the frequency envelope $e$ is dominated by $f$ as in \eqref{eq:trilinear-fe}. The desired estimate \eqref{eq:tri-bi-ax} follows.

\subsubsection*{Step~1:~Proof of \eqref{eq:diffr-Hs}}
Under the normalization \eqref{eq:fe-n-tri} and the condition $k_{1} < k_{2} - C^{\ast}_{2} - 5$, we claim that:
\begin{equation} \label{eq:diffr-Hs-dyadic}
\nrm{P_{k_{0}} \NF(A_{k_{1}}, \psi_{k_{2}}) - P_{k_{0}} T_{s} \calH^{\ast}_{k_{1}} \NF(A, T_{s}\psi_{k_{2}})}_{N^{1/2}_{s}}
	\aleq a_{k_{1}} b_{k_{2}}.
\end{equation}
Since $\Diff^{R}[A] \psi = \sum_{k} \NF(P_{< k - 10} A, \psi_{k})$ by Proposition~\ref{prop:MD-CG-nf}, \eqref{eq:diffr-Hs} clearly follows from summing up \eqref{eq:diffr-Hs-dyadic} for $k_{1} < k_{2} - C^{\ast}_{2} -10$ and \eqref{eq:bi-bal-freq} in Remark~\ref{rem:bi-bal-freq} for $k_{1} \in [k_{2} - C^{\ast}_{2} - 10, k_{2} - 10)$.

To prove \eqref{eq:diffr-Hs-dyadic}, we first split $A = \sum_{j \geq k_{1} + C^{\ast}_{2}} Q_{j} A + Q_{< k_{1} + C^{\ast}_{2}} A$ in the expression $P_{k_{0}} \NF(A_{k_{1}}, \psi_{k_{2}})$.

\pfstep{Step~1.1:~Contribution of $Q_{\geq k_{1} + C^{\ast}_{2}} A$}
Fix $j \geq k_{1} + C^{\ast}_{2}$. By H\"older's inequality, \eqref{eq:q-disp} (for $Q_{j} A_{k_{1}}$) and the frequency envelope bounds \eqref{eq:fe-L2} and \eqref{eq:fe-L2L2}, we have
\begin{align*}
\nrm{P_{k_{0}} Q^{s}_{\geq j - 5} \NF(Q_{j} A_{k_{1}}, \psi_{k_{2}})}_{N_{s}^{1/2}} 
\aleq & 2^{-\frac{1}{2} j} \nrm{A_{k_{1}}}_{L^{2} L^{\infty}} b_{k_{2}}, \\ 
\nrm{P_{k_{0}} Q^{s}_{< j - 5} \NF(Q_{j} A_{k_{1}}, Q_{\geq j - 5} \psi_{k_{2}})}_{N_{s}^{1/2}} 
\aleq & 2^{- \frac{1}{2} j} \nrm{A_{k_{1}}}_{L^{2} L^{\infty}} b_{k_{2}}.
\end{align*}
Using \eqref{eq:fe-L2Linfty-AB} and summing up in $j \geq k_{1} + C^{\ast}_{2}$, it follows that $P_{k_{0}} \NF(Q_{\geq k_{1} + C^{\ast}_{2}} A_{k_{1}}, \psi_{k_{2}})$ is acceptable except:
\begin{equation*}
\sum_{j \geq k_{1} + C^{\ast}_{2}} P_{k_{0}} Q^{s}_{< j - 5} \NF(Q_{j} A_{k_{1}}, Q_{< j - 5}^{s} \psi_{k_{2}}).
\end{equation*}
Splitting $Q_{j} = \sum_{s_{1}} Q_{j}^{s_{1}} T_{s_{1}}$ and applying Lemma~\ref{lem:geom-cone} (where we remind the reader that $C^{\ast}_{2} > \frac{1}{2} C_{0}$), we see that the summand vanishes unless $j = k_{\max} + O(1)$. Proceeding as in \eqref{eq:ellip-1-AB}, we obtain
\begin{equation*}
\sum_{j = k_{\max} + O(1)} \nrm{P_{k_{0}} Q^{s}_{< j - 5} \NF(Q_{j} A_{k_{1}}, Q_{< j - 5} \psi_{k_{2}})}_{N_{s}^{1/2}} 
\aleq 2^{k_{\min}} 2^{\frac{1}{2} k_{0}} \nrm{Q_{j} A_{k_{1}}}_{L^{2} L^{2}} b_{k_{2}}
\end{equation*}
By \eqref{eq:fe-L2L2-AB}, the RHS is bounded by $\aleq 2^{\frac{1}{2} (k_{\min} - k_{\max})} a_{k_{1}} b_{k_{2}}$, which is better than needed.

\pfstep{Step~1.2:~Contribution of $Q_{< k_{1} + C^{\ast}_{2}} A$} 
By Lemma~\ref{lem:diff-hi-mod}, as well as \eqref{eq:q-disp} and \eqref{eq:fe-L2Linfty-AB} for $Q_{< k_{1} + C^{\ast}_{2}} A_{k_{1}}$, we have
\begin{align*}
\nrm{P_{k_{0}} Q^{s}_{\geq k_{1} + C^{\ast}_{2}} \NF(Q_{< k_{1} + C^{\ast}_{2} } A_{k_{1}}, \psi_{k_{2}})}_{N_{s}^{1/2}} 
\aleq & a_{k_{1}} b_{k_{2}} , \\ 
\nrm{P_{k_{0}} Q^{s}_{< k_{1} + C^{\ast}_{2}} \NF(Q_{< k_{1} + C^{\ast}_{2}} A_{k_{1}}, Q_{\geq k_{1} + C^{\ast}_{2}}^{s} \psi_{k_{2}})}_{N_{s}^{1/2}} 
\aleq & a_{k_{1}} b_{k_{2}} .
\end{align*}
It remains to bound the expression
\begin{equation*}
P_{k_{0}} Q^{s}_{< k_{1} + C^{\ast}_{2}} \NF(Q_{< k_{1} + C^{\ast}_{2}} A_{k_{1}}, Q_{< k_{1} + C^{\ast}_{2}}^{s} \psi_{k_{2}})
- P_{k_{0}} T_{s} \calH^{\ast}_{k_{1}} \NF(A, T_{s}\psi_{k_{2}})
= I_{0} + I_{2} + R
\end{equation*}
where $I_{0}$, $I_{2}$ are exactly as in \eqref{eq:nd-I0}, \eqref{eq:nd-I2}, and
\begin{align*}
R = & \sum_{j \in [k_{1} - 10, k_{1} + C^{\ast}_{2})} 
		\bb( P_{k_{0}} Q^{s}_{j} \NF(Q_{\leq j} A_{k_{1}}, Q_{\leq j}^{s} \psi_{k_{2}}) 
			+ P_{k_{0}} Q^{s}_{< j} \NF(Q_{\leq j} A_{k_{1}}, Q_{j}^{s} \psi_{k_{2}}) \bb).
\end{align*}
By Lemma~\ref{lem:diff-hi-mod}, along with \eqref{eq:q-disp} and \eqref{eq:fe-L2Linfty-AB} for $Q_{\leq j} A_{k_{1}}$, we have $\nrm{R}_{N_{s}^{1/2}} \aleq a_{k_{1}} b_{k_{2}}$, which is acceptable.
To complete the proof of \eqref{eq:diffr-Hs-dyadic}, it remains to show that
\begin{equation} \label{eq:diffr-Hs-low-mod}
	\nrm{I_{0}}_{N_{s}^{1/2}}
	+ \nrm{I_{2}}_{N_{s}^{1/2}} \aleq a_{k_{1}} b_{k_{2}}.
\end{equation}
Applying Proposition~\ref{prop:nf} with $(f, g) = (\psi, A)$, as well as the frequency envelope bounds \eqref{eq:fe-L2}, \eqref{eq:fe-L2L2} and \eqref{eq:fe-L2Linfty}, the desired estimate \eqref{eq:diffr-Hs-low-mod} follows.

\subsubsection*{Step~2:~Proof of \eqref{eq:diffr-Z}}
Assuming \eqref{eq:fe-n-tri} and $k_{1} < k_{2} - C^{\ast}_{2} - 5$, we claim:
\begin{equation} \label{eq:diffr-Z-dyadic}
\nrm{P_{k_{0}} T_{s} \calH^{\ast}_{k_{1}} \NF(A, T_{s} \psi_{k_{2}})}_{L^{1} \dot{H}^{1/2}}
	\aleq  \tilde{a}_{k_{1}} b_{k_{2}} . 
\end{equation}
As before, \eqref{eq:diffr-Z} clearly follows from \eqref{eq:diffr-Z-dyadic}.

To prove \eqref{eq:diffr-Z-dyadic}, let $\ell = \frac{1}{2} (j - k_{1})_{-}$ and expand
\begin{align*}
P_{k_{0}} T_{s} \calH^{\ast}_{k_{1}} \NF(A, T_{s} \psi_{k_{2}})
= \sum_{j < k_{1} + C^{\ast}_{2}} \sum_{\omg_{1}, \omg_{2}} P_{k_{0}} Q_{<j}^{s} \NF(P_{\ell}^{\omg_{1}} Q_{j} A_{k_{1}}, P_{\ell}^{\omg_{2}} Q_{<j}^{s} \psi_{k_{2}}).
\end{align*}
Splitting $Q_{j} = Q_{j}^{+} T_{+} + Q_{j}^{-} T_{-}$ and applying Lemma~\ref{lem:geom-cone}, we see that the summand on the RHS vanishes unless
\begin{equation*}
	\abs{\angle(s_{1} \omg_{1}, s \omg_{2})} \aleq 2^{\ell} \qquad \hbox{ for } s_{1} = + \hbox{ or } -.
\end{equation*}
By \eqref{eq:q-LqL2} and Proposition~\ref{prop:nf-basic}, we have
\begin{equation} \label{eq:diffr-Z-atom}
\nrm{P_{k_{0}} Q_{<j}^{s} \NF(P_{\ell}^{\omg_{1}} Q_{j} A_{k_{1}}, P_{\ell}^{\omg_{2}} Q_{<j}^{s} \psi_{k_{2}})}_{L^{1} L^{2}}
\aleq 2^{\ell} \nrm{P_{\ell}^{\omg_{1}} Q_{j} A_{k_{1}}}_{L^{1} L^{\infty}} \nrm{P_{\ell}^{\omg_{2}} Q_{<j}^{s} \psi_{k_{2}}}_{L^{\infty} L^{2}}
\end{equation}
Note that for a fixed $\omg_{1}$, there are only (uniformly) bounded number of $\omg_{2}$ such that the expression is nonvanishing, and vice versa. We may therefore apply Lemma~\ref{lem:CS} and the fact that $k_{0} = k_{2} + O(1)$, to estimate
\begin{align*}
& \hskip-2em
\nrm{P_{k_{0}} T_{s} \calH^{\ast}_{k_{1}} \NF(A, T_{s} \psi_{k_{2}})}_{L^{1} \dot{H}^{1/2}} \\
\aleq & \sum_{j < k_{1} + C_{2}^{\ast}} 2^{\ell} 2^{\frac{1}{2} k_{2}} \bb( \sum_{\omg_{1}} \nrm{P^{\omg_{1}}_{\ell} Q_{j} A_{k_{1}}}_{L^{1} L^{\infty}}^{2} \bb)^{1/2} \bb( \sum_{\omg_{2}} \nrm{P^{\omg_{2}}_{\ell} Q_{<j}^{s} \psi_{k_{2}}}_{L^{\infty} L^{2}}^{2} \bb)^{1/2}.
%\aleq & \sum_{j < k_{1} + C_{2}^{\ast}} 2^{\frac{1}{4}(j - k_{1})} \tilde{a}_{k_{1}} b_{k_{2}} \aleq \tilde{a}_{k_{1}} b_{k_{2}},
\end{align*}
By the frequency envelope bounds \eqref{eq:fe-L2-ang} and \eqref{eq:fe-Z}, the summand on the RHS is bounded by $2^{\frac{1}{2} \ell} \tilde{a}_{k_{1}} b_{k_{2}}$. Summing up in $j < k_{1} + C_{2}^{\ast}$, \eqref{eq:diffr-Z-dyadic} follows.

\subsubsection*{Step~3:~Proof of \eqref{eq:axr-Z}}
For $k_{0} \geq k_{2} - C_{2} - 20$, we claim that
\begin{equation} \label{eq:axr-Z-1}
\nrm{P_{k_{0}} \NF^{\ast}(\varphi^{1}_{k_{1}}, \varphi^{2}_{k_{2}})}_{\Box Z^{1}} 
	\aleq 2^{\dlt_{0} (k_{\max} - k_{\min})} c_{k_{1}} d_{k_{2}}.
\end{equation}
Moreover, for $k_{0} < k_{2} - C_{2} - 5$, we claim that
\begin{equation} \label{eq:axr-Z-2}
\nrm{P_{k_{0}} \NF^{\ast}(\varphi^{1}_{k_{1}}, \varphi^{2}_{k_{2}})
- \calH_{k_{0}} \NF^{\ast}(T_{s_{1}} \varphi^{1}_{k_{1}}, T_{s_{2}} \varphi^{2}_{k_{2}})}_{\Box Z^{1}} 
	\aleq 2^{\dlt_{0} (k_{0} - k_{1})} c_{k_{1}} d_{k_{2}}.
\end{equation}
Since $\Box \bfA^{R} = \NM^{R} =  \NF^{\ast}$ by Proposition~\ref{prop:MD-CG-nf}, \eqref{eq:axr-Z} clearly follows from \eqref{eq:axr-Z-1} and \eqref{eq:axr-Z-2}.

We will simultaneously prove \eqref{eq:axr-Z-1} and \eqref{eq:axr-Z-2}. As in Step~1 above, we start by splitting the output modulation to $\sum_{j \geq k_{\min} + C_{2}} Q_{j} \NF^{\ast}$ and $Q_{<k_{\min} + C_{2}} \NF^{\ast}$.
\pfstep{Step~3.1:~Contribution of $Q_{\geq k_{\min} + C_{2}} \NF^{\ast}$} 
Fix $j \geq k_{\min} + C_{2}$. Applying \eqref{eq:q-LqL2} and the $L^{1} L^{2}$ estimates in Lemma~\ref{lem:ax-hi-mod}, we have
\begin{align*}
	\nrm{P_{k_{0}} Q_{j} \NF^{\ast}(Q_{\geq j - 5}^{s_{1}} \varphi^{1}_{k_{1}}, \varphi^{2}_{k_{2}})}_{L^{1} L^{2}} \aleq & 2^{\frac{1}{2}(k_{\min} - j)} 2^{\frac{1}{2}(k_{\min} - k_{\max})} c_{k_{1}} d_{k_{2}}, \\
	\nrm{P_{k_{0}} Q_{j} \NF^{\ast}(Q_{< j - 5}^{s_{1}} \varphi^{1}_{k_{1}}, Q_{\geq j - 5}^{s_{2}} \varphi^{2}_{k_{2}})}_{L^{1} L^{2}}
	\aleq & 2^{\frac{1}{2}(k_{\min} - j)} 2^{\frac{1}{2}(k_{\min} - k_{\max})} c_{k_{1}} d_{k_{2}}.
\end{align*}
Using \eqref{eq:Z-L1L2} and summing up in $j \geq k_{\min} + C_{2}$, it remains to bound the $\Box Z^{1}$ norm of
\begin{equation*}
	\sum_{j \geq k_{\min} + C_{2}} P_{k_{0}} Q_{j} \NF^{\ast}(Q_{<j-5}^{s_{1}} \varphi^{1}_{k_{1}}, Q_{<j-5}^{s_{2}} \varphi^{2}_{k_{2}}).
\end{equation*}
Since $C_{2} > C_{1}$, the $\Box Z^{1}$ norm of this expression vanishes unless $k_{0} \neq k_{\min}$ (hence $k_{0}= k_{\max} + O(1)$). Moreover, since $C_{2} > \frac{1}{2} C_{0}$, the summand vanishes unless $j = k_{\max} + O(1)$; to see this, split $Q_{j} = Q_{j}^{+} T_{+} + Q_{j}^{-} T_{-}$ and apply Lemma~\ref{lem:geom-cone}. For the nonvanishing terms, we apply \eqref{eq:q-disp} (which is applicable since $j = k_{\max} + O(1)$), H\"older's inequality $L^{2} L^{\infty} \times L^{2} L^{\infty} \to L^{1} L^{\infty}$ and \eqref{eq:fe-L2Linfty-0} to estimate
\begin{equation*}
	\nrm{P_{k_{0}} Q_{j} \NF^{\ast}(Q_{<j-5}^{s_{1}} \varphi^{s_{1}}_{k_{1}}, Q_{<j-5}^{s_{2}} \varphi^{s_{2}}_{k_{2}})}_{L^{1} L^{\infty}}
	\aleq 2^{k_{\min} + k_{\max}} c_{k_{1}} d_{k_{2}}.
\end{equation*}
By \eqref{eq:Z-L1Linfty}, we conclude that
\begin{equation*}
	\nrm{P_{k_{0}} Q_{j} \NF^{\ast}(Q_{<j-5}^{s_{1}} \varphi^{1}_{k_{1}}, Q_{<j-5}^{s_{2}} \varphi^{2}_{k_{2}})}_{\Box Z^{1}}
	\aleq 2^{k_{\min} - k_{\max}} c_{k_{1}} d_{k_{2}},
\end{equation*}
which is acceptable for both \eqref{eq:axr-Z-1} and \eqref{eq:axr-Z-2}.

\pfstep{Step~3.2:~Contribution of $Q_{< k_{\min} + C_{2}} \NF^{\ast}$, dominant input modulation}
We claim that
\begin{equation} \label{eq:axr-Z-I12}
	\nrm{P_{k_{0}} Q_{<k_{\min} + C_{2}} \NF^{\ast}(\varphi^{1}_{k_{1}}, \varphi^{2}_{k_{2}}) - \tilde{I}_{0}}_{\Box Z^{1}} \aleq 2^{\dlt_{0} (k_{\min} - k_{\max})} c_{k_{1}} d_{k_{2}},
\end{equation}
where
\begin{equation*}
	\tilde{I}_{0} = \sum_{j < k_{\min} + C_{2}} P_{k_{0}} Q_{j} \NF^{\ast}(Q_{<j}^{s_{1}} \varphi^{1}_{k_{1}}, Q_{<j}^{s_{2}} \varphi^{2}_{k_{2}}).
\end{equation*}

By \eqref{eq:q-LqL2} and Lemma~\ref{lem:ax-hi-mod}, we have
\begin{align*}
	\nrm{P_{k_{0}} Q_{< k_{\min} + C_{2}} \NF^{\ast}(Q_{\geq k_{\min} + C_{2}}^{s_{1}} \varphi^{1}_{k_{1}}, \varphi^{2})}_{L^{1} L^{2}} \aleq 2^{\frac{1}{2} (k_{\min} - k_{\max})} c_{k_{1}} d_{k_{2}}, \\
	\nrm{P_{k_{0}} Q_{< k_{\min} + C_{2}} \NF^{\ast}(Q_{< k_{\min} + C_{2}}^{s_{1}} \varphi^{1}_{k_{1}}, Q_{\geq k_{\min} + C_{2}}^{s_{2}} \varphi^{2})}_{L^{1} L^{2}} \aleq 2^{\frac{1}{2} (k_{\min} - k_{\max})} c_{k_{1}} d_{k_{2}}.
\end{align*}
It remains to consider the expression
\begin{equation*}
	P_{k_{0}} Q_{< k_{\min} + C_{2}} \NF^{\ast}(Q_{< k_{\min} + C_{2}}^{s_{1}} \varphi^{1}_{k_{1}}, Q_{< k_{\min} + C_{2}}^{s_{2}} \varphi^{2}_{k_{2}}) - \tilde{I}_{0}
	= I_{1} + I_{2} + R_{1} + R_{2},
\end{equation*}
where
\begin{align*}
	R_{1} = & \sum_{j \in [k_{\min} - 10, k_{\min} + C_{2})}  P_{k_{0}} Q_{\leq j} \NF^{\ast}(Q_{j}^{s_{1}} \varphi^{1}_{k_{1}}, Q_{<j}^{s_{2}} \varphi^{2}_{k_{2}}) , \\
	R_{2} = & \sum_{j \in [k_{\min} - 10, k_{\min} + C_{2})} P_{k_{0}} Q_{\leq j} \NF^{\ast}(Q_{\leq j}^{s_{1}} \varphi^{1}_{k_{1}}, Q_{j}^{s_{2}} \varphi^{2}_{k_{2}}) ,
\end{align*}
and $I_{1}$, $I_{2}$ are as in \eqref{eq:nm-I1}, \eqref{eq:nm-I2}, respectively\footnote{Of course, with $(\psi, s) \to (\varphi^{1}, s_{1})$ and $(\varphi, s') \to (\varphi^{2}, s_{2})$.}. 
By \eqref{eq:q-LqL2} and Lemma~\ref{lem:ax-hi-mod}, we have 
\begin{equation*}
\nrm{R_{1}}_{L^{1} L^{2}} + \nrm{R_{2}}_{L^{1} L^{2}} \aleq 2^{\frac{1}{2} (k_{\min} - k_{\max})} c_{k_{1}} d_{k_{2}},
\end{equation*}
which is acceptable for both \eqref{eq:axr-Z-1} and \eqref{eq:axr-Z-2}. Moreover, by \eqref{eq:Z-L1L2} and the argument in Step~2.1--2.2 in Section~\ref{subsec:bi-a} (which makes use of Proposition~\ref{prop:nfs} and the $\tilde{Z}^{1/2}$ norm), we have
\begin{equation} \label{eq:axr-Z-low-mod}
	\nrm{I_{1}}_{\Box Z^{1}} + \nrm{I_{2}}_{\Box Z^{1}} 
	\aleq \nrm{I_{1}}_{L^{1} L^{2}} + \nrm{I_{2}}_{L^{1} L^{2}} \aleq 2^{\dlt_{0} (k_{\min} - k_{\max})} c_{k_{1}} d_{k_{2}},
\end{equation}
which proves \eqref{eq:axr-Z-I12}.

\pfstep{Step~3.3:~Contribution of $Q_{< k_{\min} + C_{2}} \NF^{\ast}$, dominant output modulation}
It remains to handle $\tilde{I}_{0}$. When $k_{0} < k_{2} - C_{2} - 5$, we have
\begin{equation*}
\tilde{I}_{0} = \calH_{k_{0}} \NF^{\ast}(T_{s_{1}} \varphi^{1}_{k_{1}}, T_{s_{2}} \varphi^{2}_{k_{2}}). 
\end{equation*}
Hence \eqref{eq:axr-Z-2} follows from the estimates we have so far. 

To prove \eqref{eq:axr-Z-1}, we need to estimate $\nrm{\tilde{I}_{0}}_{\Box Z^{1}}$. For $k_{0} \geq k_{2} - C_{2} - 20$, we claim:
\begin{equation} \label{eq:diffr-Z-I0}
	\sum_{j < k_{\min} + C_{2}} \nrm{P_{k_{0}} Q_{j} \NF^{\ast}(Q_{<j}^{s_{1}} \varphi^{1}_{k_{1}}, Q_{<j}^{s_{2}} \varphi^{2}_{k_{2}})}_{\Box Z^{1}} 
	\aleq  2^{\frac{1}{2} (k_{\min} - k_{\max})} c_{k_{1}} d_{k_{2}}. 
\end{equation}
Clearly, \eqref{eq:axr-Z-1} would follow from \eqref{eq:diffr-Z-I0}.

Note that $k_{0} \geq k_{2} - C_{2} - 20$ implies $k_{0} = k_{\max} + O(1)$. For concreteness, assume that $k_{2} \leq k_{1}$, so that $k_{1} = k_{\max} + O(1)$ and $k_{2} = k_{\min} + O(1)$; the opposite case can be handled similarly. Define $\ell = \frac{1}{2} (j - k_{\min})_{-}$ and $\ell_{0} = \frac{1}{2}(j-k_{0})_{-}$. We decompose
\begin{align*}
P_{k_{0}} Q_{j} \NF^{\ast}(Q_{<j}^{s_{1}} \varphi^{1}_{k_{1}}, Q_{<j}^{s_{2}} \varphi^{2}_{k_{2}})  
=  \sum_{\omg_{0}, \omg_{1}, \omg_{2}} P_{k_{0}} P_{\ell_{0}}^{-\omg_{0}} Q_{j} \NF^{\ast}(P_{\ell_{0}}^{\omg_{1}} Q_{<j}^{s_{1}} \varphi^{1}_{k_{1}}, P_{\ell}^{\omg_{2}} Q_{<j}^{s_{2}} \varphi^{2}_{k_{2}}).
\end{align*}
Splitting $Q_{j} = Q_{j}^{+} T_{+} + Q_{j}^{-} T_{-}$ and applying Lemma~\ref{lem:geom-cone}, we see that the summand on the RHS vanishes unless
\begin{equation*}
\begin{aligned}
	\abs{\angle(s_{0} \omg_{0}, s_{1} \omg_{1})} \aleq & 2^{\ell} 2^{k_{\min} - \min\set{k_{0}, k_{1}}} + 2^{\ell_{0}} \aleq 2^{\ell_{0}} \\
	\abs{\angle(s_{0} \omg_{0}, s_{2} \omg_{2})} \aleq & 2^{\ell} 2^{k_{\min} - \min \set{k_{0}, k_{2}}} + \max\set{2^{\ell_{0}}, 2^{\ell}} \aleq 2^{\ell}
\end{aligned}
\quad \hbox{ for } s_{0} = + \hbox{ or } -.
\end{equation*}
In this case, Proposition~\ref{prop:nf-basic} implies
\begin{equation}  \label{eq:axr-Z-atom}
\begin{aligned}
& \hskip -4em
\nrm{P_{k_{0}} P_{\ell_{0}}^{-\omg_{0}} Q_{j} \NF^{\ast}(P_{\ell_{0}}^{\omg_{1}} Q_{<j}^{s_{1}} \varphi^{1}_{k_{1}}, P_{\ell}^{\omg_{2}} Q_{<j}^{s_{2}} \varphi^{2}_{k_{2}})}_{L^{1} L^{\infty}}  \\
\aleq & 2^{\ell} \nrm{P_{\ell_{0}}^{\omg_{1}} Q_{<j}^{s_{1}} \varphi^{1}_{k_{1}}}_{L^{2} L^{\infty}} \nrm{P_{\ell}^{\omg_{2}} Q_{<j}^{s_{2}} \varphi^{2}_{k_{2}}}_{L^{2} L^{\infty}}
\end{aligned}
\end{equation}
For a fixed $\omg_{0}$ [resp. $\omg_{1}$], there are only (uniformly) bounded number of $\omg_{1}, \omg_{2}$ [resp. $\omg_{0}, \omg_{2}$] such that the expression is nonvanishing. Summing up first in $\omg_{2}$ (for which there are only finitely many terms) and then applying Lemma~\ref{lem:CS} to the summation in $\omg_{0}, \omg_{1}$ (which is essentially diagonal), we obtain
\begin{align*}
& \hskip-4em
\bb( \sum_{\omg_{0}} \nrm{P_{k_{0}} P_{\ell_{0}}^{-\omg_{0}} Q_{j} \NF^{\ast}(Q_{<j}^{s_{1}} \varphi^{1}_{k_{1}}, Q_{<j}^{s_{2}} \varphi^{2}_{k_{2}})}_{L^{1} L^{\infty}}^{2} \bb)^{1/2} \\
\aleq & 2^{\ell} \bb( \sum_{\omg_{1}} \nrm{P_{\ell_{1}}^{\omg_{1}} Q_{<j}^{s_{1}} \varphi^{1}_{k_{1}}}_{L^{2} L^{\infty}}^{2} \bb)^{1/2} \sup_{\omg_{2}} \nrm{P_{\ell_{2}}^{\omg_{2}} Q_{<j}^{s_{i}} \varphi^{2}_{k_{2}}}_{L^{2} L^{\infty}} .
\end{align*}
By \eqref{eq:Z-L1Linfty} and the frequency envelope bound \eqref{eq:fe-L2Linfty}, it follows that
\begin{align*}
\nrm{P_{k_{0}} Q_{j} \NF^{\ast}(Q_{<j}^{s_{1}} \varphi^{1}_{k_{1}}, Q_{<j}^{s_{2}} \varphi^{s}_{k_{2}})}_{\Box Z^{1}}
%	2^{-2k_{0}} 2^{-\frac{3}{2} \ell_{0}} \bb( \sum_{\omg_{0}} \nrm{P_{k_{0}} P_{\ell_{0}}^{-\omg_{0}} Q_{j} \NF^{\ast}(Q_{<j}^{s_{1}} \varphi^{1}_{k_{1}}, Q_{<j}^{s_{2}} \varphi^{2}_{k_{2}})}_{L^{1} L^{\infty}}^{2} \bb)^{1/2} \\
%	\aleq & 2^{-2k_{0} + k_{1} + k_{2}} 2^{- \frac{3}{2} \ell_{0} + \ell + \frac{1}{2}\ell_{1} + \frac{1}{2} \ell_{2}} c_{k_{1}} d_{k_{2}} \\
	\aleq & 2^{\frac{1}{4}(j - k_{\min})} 2^{\frac{1}{2}(k_{\min} - k_{\max})} c_{k_{1}} d_{k_{2}}.
\end{align*}
Summing up in $j < k_{\min} + C_{2}$, we have proved \eqref{eq:diffr-Z-I0}.

\subsubsection*{Step~4:~Proof of \eqref{eq:axs-Z}}
In this case, recall that $\Box \bfA^{S}_{s_{2}}(\Pi_{s_{1}} \cdot, \cdot) = \NM^{S}_{s_{2}}(\Pi_{s_{1}} \cdot, \cdot) = \NF_{s_{1} s_{2}} (\cdot, \cdot)$ by Proposition~\ref{prop:MD-CG-nf}. Repeating the argument in Step~3, the following analogues of \eqref{eq:axr-Z-1} and \eqref{eq:axr-Z-2} can be proved:
For $k_{0} \geq k_{2} - C_{2} - 20$, we have
\begin{equation} \label{eq:axs-Z-1}
\nrm{P_{k_{0}} \NF_{s_{1}s_{2}}(\varphi^{1}_{k_{1}}, \varphi^{2}_{k_{2}})}_{\Box Z^{1}} 
	\aleq 2^{\dlt_{0} (k_{\min} - k_{\max})} c_{k_{1}} d_{k_{2}}, 
\end{equation}
and for $k_{0} < k_{2} - C_{2} - 5$, we have
\begin{equation} \label{eq:axs-Z-2}
\nrm{P_{k_{0}} \NF_{s_{1}s_{2}}(\varphi^{1}_{k_{1}}, \varphi^{2}_{k_{2}})
- \calH_{k_{0}} \NF_{s_{1}s_{2}}(T_{s_{1}} \varphi^{1}_{k_{1}}, T_{s_{2}} \varphi^{2}_{k_{2}})}_{\Box Z^{1}} 
	\aleq 2^{\dlt_{0} (k_{0} - k_{2})} c_{k_{1}} d_{k_{2}}.
\end{equation}
We omit the straightforward details. 

Under the condition $k_{0} < k_{2} - C_{2} - 5$, we claim furthermore that
\begin{equation} \label{eq:axs-Z-3}
\nrm{\calH_{k_{0}} \NF_{s_{1}s_{2}}(\varphi^{1}_{k_{1}}, \varphi^{2}_{k_{2}})}_{\Box Z^{1}} 
	\aleq 2^{k_{0} - k_{2}} c_{k_{1}} d_{k_{2}}.
\end{equation}
Clearly, \eqref{eq:axs-Z} would follow from \eqref{eq:axs-Z-1}--\eqref{eq:axs-Z-3}.

To prove \eqref{eq:axs-Z-3}, we need to estimate
\begin{equation*}
I = P_{k_{0}} Q_{j} \NF_{s_{1} s_{2}}(Q_{<j}^{s_{1}} \varphi^{1}_{k_{1}}, Q_{<j}^{s_{2}} \varphi^{2}_{k_{2}})
\end{equation*}
in $\Box Z^{1}$. We proceed similarly to the proof of Proposition~\ref{prop:nf} and perform an orthogonality argument using Lemma~\ref{lem:box-orth}. 

Let $j \leq k_{0} +C_{2}$ and $\ell = \frac{1}{2} (j - k_{0})_{-}$. For $i= 0, 1,2$, let $\calC^{i}$ be a rectangular box of the form $\calC_{k_{0}}(\ell)$. We split
\begin{equation*}
I = \sum_{\calC^{0}, \calC^{1}, \calC^{2}} P_{k_{0}} P_{- \calC^{0}} Q_{j} \NF_{s_{1} s_{2}}(P_{\calC^{1}} Q_{<j}^{s_{1}} \varphi^{1}_{k_{1}}, P_{\calC^{2}} Q_{<j}^{s_{2}} \varphi^{2}_{k_{2}}).
\end{equation*}
Splitting $Q_{j} = Q_{j}^{+} T_{+} + Q_{j}^{-} T_{-}$ and applying Lemma~\ref{lem:box-orth}, we see that the summand on the RHS vanishes unless \eqref{eq:box-orth} is satisfied for $s_{0} = +$ or $-$. In particular, by disposability of $P_{k} P_{\calC_{k_{0}}(\ell)} Q_{j} = P_{k} P_{\ell}^{\omg} Q_{j}$ and Proposition~\ref{prop:nf-basic}, it follows that
\begin{equation} \label{eq:axs-Z-atom}
\begin{aligned}
& \hskip-4em
	\nrm{P_{k_{0}} P_{-\calC^{0}} Q_{j} \NF_{s_{1} s_{2}}(P_{\calC^{1}} Q_{<j}^{s_{1}} \varphi^{1}_{k_{1}}, P_{\calC^{2}} Q_{<j}^{s_{2}} \varphi^{2}_{k_{2}})}_{L^{1} L^{\infty}} \\
	\aleq & 2^{\ell} 2^{k_{0} - k_{2}} \nrm{P_{\calC^{1}} Q_{<j}^{s_{1}} \varphi^{1}}_{L^{2} L^{\infty}} \nrm{P_{\calC^{2}} Q_{<j}^{s_{2}} \varphi^{2}}_{L^{2} L^{\infty}} .
\end{aligned}
\end{equation}
Moreover, by Lemma~\ref{lem:box-orth}, note that for a fixed $\calC^{1}$ [resp. $\calC^{2}$], there are only (uniformly) bounded number of $\calC^{0}, \calC^{2}$ [resp. $\calC^{0}, \calC^{1}$] such that \eqref{eq:box-orth} is satisfied with $s_{0} = +$ or $-$. Summing up first in $\calC_{0}$ (for which there are only finitely many terms) and then applying Lemma~\ref{lem:CS} to the summation in $\calC^{1}, \calC^{2}$ (which is essentially diagonal), we obtain
\begin{align*}
& \hskip-2em
\sum_{\calC^{0}} \nrm{P_{k_{0}} P_{-\calC^{0}} Q_{j} \NF_{s_{1}s_{2}}(Q_{<j}^{s_{1}} \varphi^{1}_{k_{1}}, Q_{<j}^{s_{2}} \varphi^{2}_{k_{2}})}_{L^{1} L^{\infty}} \\
\aleq & 2^{\ell} 2^{k_{0} - k_{2}}
		\bb( \sum_{\calC^{1}} \nrm{P_{\calC^{1}} Q_{<j}^{s_{1}} \varphi^{1}_{k_{1}}}_{L^{2} L^{\infty}}^{2} \bb)^{1/2}
		\bb( \sum_{\calC^{2}} \nrm{P_{\calC^{2}} Q_{<j}^{s_{2}} \varphi^{2}_{k_{2}}}_{L^{2} L^{\infty}}^{2} \bb)^{1/2}.
\end{align*}
Recall the convention $P_{k} P_{\calC_{k}(\ell)} = P_{k} P_{\ell}^{\omg}$. By \eqref{eq:Z-L1Linfty} and \eqref{eq:fe-L2Linfty}, we have
\begin{equation*}
	\nrm{P_{k_{0}} Q_{j} \NF_{s_{1}s_{2}}(P_{\calC^{1}} Q_{<j}^{s_{1}} \varphi^{1}_{k_{1}}, P_{\calC^{2}} Q_{<j}^{s_{2}} \varphi^{2}_{k_{2}})}_{\Box Z^{1}}
%	\aleq 2^{-2 k_{0}} 2^{-\frac{3}{2} \ell} 2^{2 \ell} 2^{k_{0} - k_{2}} 2^{k_{0}} 2^{k_{0}} c_{k_{1}} d_{k_{2}}
	\aleq 2^{\frac{1}{4}(j - k_{0})} 2^{k_{0} - k_{2}} c_{k_{1}} d_{k_{2}}.
\end{equation*}
Summing up in $j < k_{0} + C_{2}$, \eqref{eq:axs-Z-3} follows.

\begin{remark}  \label{rem:diff-Hs}
In this subsection, the key places where the null structure is used are \eqref{eq:diffr-Hs-low-mod}, \eqref{eq:diffr-Z-atom}, \eqref{eq:axr-Z-low-mod}, \eqref{eq:axr-Z-atom} and \eqref{eq:axs-Z-atom}.
Many estimates for $\Diff^{E}[A_{0}]$ and $\bfA_{0}$ in the next subsection will be proved by similar arguments, but with modification at the above places.
\end{remark}

\subsection{Further decomposition of $\bfA_{0}$ and $\Diff^{E}$} \label{subsec:tri-bi-a0}

%\begin{equation*}
%	\bfA_{0}(\Pi_{s_{1}} \varphi^{1}, \Pi_{s_{2}}\varphi^{2})
%	= s_{2} \brk{\Pi_{s_{1}} \varphi^{1}, \calR^{0} \Pi_{s_{2}} \varphi^{2}} + s_{2} \brk{\Pi_{s_{1}} \varphi^{1}, \frac{i \rd_{t} \pm \abs{D}}{\abs{D}} \Pi_{s_{2}} \varphi^{2}}
%\end{equation*}
%We claim that all of this term can be treated via a combination of two bilinear estimates, except 
%\begin{equation*}
%	s_{2} s_{3} \sum_{k, k_{3} : k < k_{3}-5} \calH^{\ast}_{k} \bb( \calH_{k} \bfA_{0}^{R}(\Pi_{s_{1}} \varphi^{1}, \Pi_{s_{2}} \varphi^{2}) \calR^{0} \psi_{k_{3}} \bb)
%\end{equation*}
We now deal with the term involving $A_{0} = \bfA_{0}(\varphi^{1}, \varphi^{2})$ in $\Diff^{E}[A_{0}] \psi$. 
Consider the trilinear operator
\begin{equation*}
	\calT^{E}_{s_{1}, s_{2}, s}(\varphi^{1}, \varphi^{2}, \psi) = s_{2} s \calH^{\ast} \big( \calH_{s_{1}, s_{2}} \lap^{-1}  \brk{\Pi_{s_{1}} \varphi^{1}, \mR_{0} \Pi_{s_{2}} \varphi^{2}} \mR^{0} \psi \big).
\end{equation*}
We will show that all of $\Diff^{E}[\bfA_{0}(\Pi_{s_{1}} \varphi^{1}, \Pi_{s_{2}} \varphi^{2})] \psi$ except $\calT^{E}_{s_{1}, s_{2}, s}$ can be handled by bilinear estimates.  The $Z^{1}_{ell}$ norm will be used as an intermediary.

Under the normalization \eqref{eq:fe-n-tri}, we claim that
\begin{equation} \label{eq:tri-bi-a0}
\nrm{\Diff^{E}[\bfA_{0}(\Pi_{s_{1}} \varphi^{1}, \Pi_{s_{2}} \varphi^{2})] \psi - \calT^{E}_{s_{1}, s_{2}, s}(\varphi^{1}, \varphi^{2}, \psi)}_{(N_{s}^{1/2})_{e}} 
	\aleq 1.
\end{equation}

\subsubsection*{Step~0: Reduction to bilinear estimates}
Let $a, b, c, d$ be admissible frequency envelopes. Define $e_{k} = (\sum_{k' < k} a_{k'}) c_{k}$ and $\tilde{e}_{k} = (\sum_{k' < k} \tilde{a}_{k'}) c_{k}$. We claim that
\begin{align} 
	\nrm{(\Id - \calH^{\ast}_{s, s}) \Diff^{E} [A_{0}] \psi}_{(N^{1/2}_{s})_{e}}
	\aleq & \nrm{A_{0}}_{Y^{1}_{a}} \nrm{\psi}_{(\tilde{S}^{1/2}_{s})_{c}} ,  \label{eq:diffe-Hs}\\
	\nrm{\calH^{\ast}_{s, s} \Diff^{E}[A_{0}] (\Id + s \mR^{0}) \psi}_{(N^{1/2}_{s})_{e}}
	\aleq & \nrm{A_{0}}_{Y^{1}_{a}} \nrm{\psi}_{(\tilde{S}^{1/2}_{s})_{c}} ,		\label{eq:diffe-HsR} \\
	\nrm{\calH^{\ast}_{s, s} \Diff^{E}[A_{0}] \mR^{0} \psi}_{(L^{1} \dot{H}^{1/2})_{\tilde{e}}}
	\aleq & \nrm{A_{0}}_{(Z_{ell}^{1})_{\tilde{a}}} \nrm{\psi}_{(\tilde{S}_{s}^{1/2})_{c}} , \label{eq:diffe-Z} \\
	\nrm{(1 - \calH_{s_{1}, s_{2}})\bfA_{0}(\varphi^{1}, \varphi^{2})}_{(Z^{1}_{ell})_{cd}} 
	\aleq & \nrm{\varphi^{1}}_{(\tilde{S}^{1/2}_{s_{1}})_{b}} \nrm{\varphi^{2}}_{(\tilde{S}^{1/2}_{s_{2}})_{c}}, \label{eq:a0-Z} \\
	\nrm{\calH_{s_{1}, s_{2}} (\bfA_{0} + s_{2} \bfA^{R}_{0}) (\varphi^{1}, \varphi^{2})}_{(Z^{1}_{ell})_{cd}} 
	\aleq & \nrm{\varphi^{1}}_{(\tilde{S}^{1/2}_{s_{1}})_{b}} \nrm{\varphi^{2}}_{(\tilde{S}^{1/2}_{s_{2}})_{c}}  . \label{eq:a0-r}  
\end{align}
where
\begin{equation*}
	\bfA_{0}^{R}(\varphi^{1}, \varphi^{2})
	:= \lap^{-1} \brk{\varphi^{1}, \mR_{0} \varphi^{2}} = - \lap^{-1} \brk{\varphi^{1}, \mR^{0} \varphi^{2}}.
\end{equation*}
Assuming these estimates, we now prove \eqref{eq:tri-bi-a0}. Assume the normalization \eqref{eq:fe-n-tri}. By \eqref{eq:a0}, \eqref{eq:a0-Z} and \eqref{eq:a0-r}, as well as disposability of $P_{k} \Pi_{s}$, we have
\begin{equation*}
	\nrm{\bfA_{0}(\Pi_{s_{1}} \varphi^{1}, \Pi_{s_{2}} \varphi^{2})}_{Y^{1}_{cd}}
	+ \nrm{(\bfA_{0} - \calH_{s_{1}, s_{2}} (- s_{2} \bfA_{0}^{R}))(\Pi_{s_{1}} \varphi^{1}, \Pi_{s_{2}} \varphi^{2})}_{(Z^{1}_{ell})_{cd}} \aleq 1.
\end{equation*}
Applying \eqref{eq:diffe-Hs}--\eqref{eq:diffe-Z} with $a = \tilde{a} = cd$, $e = \tilde{e} = (\sum_{k' < k} c_{k'} d_{k'}) b_{k}$ and $A_{0} = \bfA_{0}(\Pi_{s_{1}} \varphi^{1}, \Pi_{s_{2}} \varphi^{2})$, we obtain
\begin{equation*}
	\nrm{\Diff^{E}[\bfA_{0}(\Pi_{s_{1}} \varphi^{1}, \Pi_{s_{2}} \varphi^{2})] \psi + s \calH^{\ast}_{s, s} \Diff^{E}[\calH_{s_{1}, s_{2}} (- s_{2} \bfA_{0}^{R})(\Pi_{s_{1}} \varphi^{1}, \Pi_{s_{2}} \varphi^{2})] \mR^{0} \psi}_{(N_{s}^{1/2})_{e}} \aleq 1.
\end{equation*}
By definition, observe that
\begin{equation*}
s_{2} s \calH^{\ast}_{s, s} \Diff^{E}[\calH_{s_{1}, s_{2}} \bfA^{R}_{0}(\Pi_{s_{1}} \varphi^{1}, \Pi_{s_{2}} \varphi^{2})] \mR^{0} \psi
= \calT^{E}_{s_{1}, s_{2}, s}(\varphi^{1}, \varphi^{2}, \psi).
\end{equation*}
As before, the frequency envelope $e$ is dominated by $f$ as in \eqref{eq:trilinear-fe} by Cauchy-Schwarz; this completes the proof of \eqref{eq:tri-bi-a0}.

\subsubsection*{Step~1:~Proof of \eqref{eq:diffe-Hs}}
Assuming \eqref{eq:fe-n-tri} and $k_{1} < k_{2} - C^{\ast}_{2} - 5$, it suffices to prove:
\begin{equation} \label{eq:diffe-Hs-dyadic}
	\nrm{P_{k_{0}} \calL(B_{k_{1}}, \psi_{k_{2}}) - P_{k_{0}} T_{s} \calH^{\ast}_{k_{1}} \calL(B, T_{s} \psi_{k_{2}})}_{N_{s}^{1/2}} \aleq a_{k_{1}} b_{k_{2}}.
\end{equation}

All of Step~1 in Section~\ref{subsec:tri-bi-ax} applies in this case except for \eqref{eq:diffr-Hs-low-mod} (see Remark~\ref{rem:diff-Hs}), and it only remains to establish the analogue of \eqref{eq:diffr-Hs-low-mod}, where $I_{0}$, $I_{2}$ are as in \eqref{eq:nd-I0}, \eqref{eq:nd-I2}, but with $\NF$ replaced by $\calL$.

We provide a detailed proof for $I_{0}$, leaving the similar proof for $I_{2}$ to the reader. Instead of Proposition~\ref{prop:nf}, we simply apply Proposition~\ref{prop:no-nf} without the gain $2^{\ell} 2^{k_{\min} - \min \set{k_{1}, k_{2}}}$. By \eqref{eq:fe-L2}, we have
\begin{equation} \label{eq:diff-Hs-low-mod-0}
\nrm{P_{k_{0}} Q_{j}^{s} \calL(Q_{\leq j}^{s_{1}} B_{k_{1}}, Q_{\leq j}^{s} \psi)}_{X^{1/2, -1/2}_{s, 1}}
%\aleq 2^{-\frac{1}{2} j} 2^{\frac{1}{2} k_{0}} \bb( \sum_{\calC_{k_{1}}(\ell)} \nrm{P_{\calC_{k_{1}}(\ell)} B_{k_{1}}}_{L^{2} L^{\infty}}^{2} \bb)^{1/2} \nrm{\psi_{k_{2}}}_{L^{\infty} L^{2}} 
\aleq 2^{-\frac{1}{2} j} \bb( \sum_{\calC_{k_{1}}(\ell)} \nrm{P_{\calC_{k_{1}}(\ell)} B_{k_{1}}}_{L^{2} L^{\infty}}^{2} \bb)^{1/2} b_{k_{2}},
\end{equation}
where $\ell = \frac{1}{2} (j - k_{1})$. By Bernstein's inequality and \eqref{eq:fe-Y}, we have
\begin{align*}
\bb( \sum_{\calC_{k_{1}}(\ell)} \nrm{P_{\calC_{k_{1}}(\ell)} B_{k_{1}}}_{L^{2} L^{\infty}}^{2} \bb)^{1/2}
\aleq 2^{2 k_{1}} 2^{\frac{3}{2} \ell} \bb( \sum_{\calC_{k_{1}}(\ell)} \nrm{P_{\calC_{k_{1}}(\ell)} B_{k_{1}}}_{L^{2} L^{2}}^{2} \bb)^{1/2}
\aleq 2^{\frac{1}{2} k_{1}} 2^{\frac{3}{2} \ell} a_{k_{1}}.
\end{align*}
Hence it follows that
\begin{equation*}
	\sum_{s_{1}} \sum_{j < k_{1} - 10} 
	\nrm{P_{k_{0}} Q_{j}^{s} \calL(Q_{\leq j}^{s_{1}} B_{k_{1}}, Q_{\leq j}^{s} \psi)}_{N_{s}^{1/2}}
	\aleq \sum_{j < k_{1} - 10} 2^{\frac{1}{4}(j - k_{1})} a_{k_{1}} b_{k_{2}} \aleq a_{k_{1}} b_{k_{2}}
\end{equation*}
which is acceptable.

\subsubsection*{Step~2:~Proof of \eqref{eq:diffe-HsR}}
Assuming \eqref{eq:fe-n-tri} and $k_{1} < k_{2} - C^{\ast}_{2} - 5$, we claim:
\begin{equation} \label{eq:diffe-HsR-dyadic}
	\nrm{P_{k_{0}} T_{s} \calH^{\ast}_{k_{1}} \calL(B, T_{s} (\Id + s \mR^{0}) \psi_{k_{2}}))}_{N_{s}^{1/2}}
	\aleq 2^{k_{1} - k_{2}} a_{k_{1}} b_{k_{2}}.
\end{equation}
Note that \eqref{eq:diffe-HsR-dyadic} is more than enough to prove \eqref{eq:diffe-HsR} (i.e., the gain $2^{k_{1} - k_{2}}$ is unnecessary).

Fix $j < k_{1} + C^{\ast}_{2}$ and introduce the shorthand $\tilde{\psi} = (\Id + s \mR^{0}) \psi$. By \eqref{eq:q-LqL2}, H\"older's inequality $L^{2} L^{\infty} \times L^{2} L^{2} \to L^{1} L^{2}$, Bernstein's inequality and \eqref{eq:fe-Y}, we have
\begin{align*}
	\nrm{P_{k_{0}} Q_{<j}^{s} \calL(Q_{j} B_{k_{1}}, Q_{<j}^{s} \tilde{\psi}_{k_{2}}))}_{L^{1} \dot{H}^{1/2}}
	\aleq 2^{\frac{1}{2} k_{0}+\frac{1}{2} k_{1}} a_{k_{1}} \nrm{Q_{<j}^{s} \tilde{\psi}_{k_{2}}}_{L^{2} L^{2}}
\end{align*}
By \eqref{eq:commAlpPi:0} and \eqref{eq:fe-L2L2}, we have
\begin{equation} \label{eq:fe-L2L2-comm0}
\begin{aligned}
\nrm{Q_{<j} \tilde{\psi}_{k_{2}}}_{L^{2}L^{2}}
= & \nrm{Q_{<j}\frac{i \rd_{t} + s \abs{D}}{\abs{D}} \psi_{k_{2}}}_{L^{2}L^{2}}  \\
\aleq & \sum_{j' < j} 2^{j' - k_{2}} \nrm{Q_{j'} \psi_{k_{2}}}_{L^{2}L^{2}} \aleq 2^{\frac{1}{2} j} 2^{-\frac{3}{2} k_{2}} b_{k_{2}}.
\end{aligned}
\end{equation}
It follows that
\begin{align*}
	\nrm{P_{k_{0}} Q_{<j}^{s} \calL(Q_{j} B_{k_{1}}, Q_{<j}^{s} (\Id + s \mR^{0}) \psi_{k_{2}}))}_{L^{1} \dot{H}^{1/2}}
	\aleq 2^{\frac{1}{2}(j - k_{1})} 2^{k_{1} - k_{2}} a_{k_{1}} b_{k_{2}}.
\end{align*}
Summing up in $j < k_{1} + C^{\ast}_{2}$, we obtain \eqref{eq:diffe-HsR-dyadic} as desired.

\subsubsection*{Step~3:~Proof of \eqref{eq:diffe-Z}}
Assuming \eqref{eq:fe-n-tri} and $k_{1} < k_{2} - C^{\ast}_{2} - 5$, it suffices to prove:
\begin{equation} \label{eq:diffe-Z-dyadic}
	\nrm{P_{k_{0}} T_{s} \calH^{\ast}_{k_{1}} \calL(B, T_{s} \psi_{k_{2}})}_{L^{1} \dot{H}^{1/2}} \aleq \tilde{a}_{k_{1}} b_{k_{2}}.
\end{equation}
We proceed exactly as in Step~2 in Section~\ref{subsec:tri-bi-ax} with $\NF$ and $A$ replaced by $\calL$ and $B$, respectively, and the null form estimate \eqref{eq:diffr-Z-atom} replaced by
\begin{equation} \label{eq:diffe-Z-atom} 
\nrm{P_{k_{0}} Q_{<j}^{s} \calL(P_{\ell}^{\omg_{1}} Q_{j} B_{k_{1}}, P_{\ell}^{\omg_{2}} Q_{<j}^{s} \psi_{k_{2}})}_{L^{1} L^{2}}
\aleq \nrm{P_{\ell}^{\omg_{1}} Q_{j} B_{k_{1}}}_{L^{1} L^{\infty}} \nrm{P_{\ell}^{\omg_{2}} Q_{<j}^{s} \psi_{k_{2}}}_{L^{\infty} L^{2}}.
\end{equation}
We then arrive at
\begin{align*}
& \hskip-2em
\nrm{P_{k_{0}} T_{s} \calH^{\ast}_{k_{1}} \calL(B, T_{s} \psi_{k_{2}})}_{L^{1} \dot{H}^{1/2}} \\
\aleq & \sum_{j < k_{1} + C_{2}^{\ast}} 2^{\frac{1}{2} k_{2}} \bb( \sum_{\omg_{1}} \nrm{P^{\omg_{1}}_{\ell} Q_{j} B_{k_{1}}}_{L^{1} L^{\infty}}^{2} \bb)^{1/2} \bb( \sum_{\omg_{2}} \nrm{P^{\omg_{2}}_{\ell} Q_{<j}^{s} \psi_{k_{2}}}_{L^{\infty} L^{2}}^{2} \bb)^{1/2}.
\end{align*}
Although we lost the factor $2^{\ell}$ in \eqref{eq:diffe-Z-atom}, we gain it back from the $Z^{1}_{ell}$ norm in \eqref{eq:fe-Z}. By \eqref{eq:fe-L2-ang} and \eqref{eq:fe-Z}, the summand on the RHS is bounded by $2^{\frac{1}{2} \ell} \tilde{a}_{k_{1}} b_{k_{2}}$. Summing up in $j < k_{1} + C_{2}^{\ast}$, \eqref{eq:diffe-Z-dyadic} follows.

\subsubsection*{Step~4:~Proof of \eqref{eq:a0-Z}}
Under the normalization \eqref{eq:fe-n-tri}, it suffices to prove the following dyadic bounds:
For $k_{0} \geq k_{2} - C_{2} - 20$, we claim that
\begin{equation} \label{eq:a0-Z-1}
\nrm{P_{k_{0}} \NF^{\ast}(\varphi^{1}_{k_{1}}, \varphi^{2}_{k_{2}})}_{\lap Z^{1}_{ell}} 
	\aleq 2^{\dlt_{0} (k_{0} - k_{1})} c_{k_{1}} d_{k_{2}},
\end{equation}
and for $k_{0} < k_{2} - 5$, we claim that
\begin{equation} \label{eq:a0-Z-2}
\nrm{P_{k_{0}} \NF^{\ast}(\varphi^{1}_{k_{1}}, \varphi^{2}_{k_{2}})
- \calH_{k_{0}} \NF^{\ast}(T_{s_{1}} \varphi^{1}_{k_{1}}, T_{s_{2}} \varphi^{2}_{k_{2}})}_{\lap Z^{1}_{ell}} 
	\aleq 2^{\dlt_{0} (k_{0} - k_{1})} c_{k_{1}} d_{k_{2}}.
\end{equation}

We proceed as in Step~3 in Section~\ref{subsec:tri-bi-ax} with $\NF$ and $A$ replaced by $\calL$ and $B$. Since there is no use of null structure, the argument in Step~3.1 also applies in this case with \eqref{eq:Z-L1Linfty} and \eqref{eq:Z-L1L2} replaced by \eqref{eq:Zell-L1Linfty} and \eqref{eq:Zell-L1L2}, respectively. For the argument in Step~3.2, we replace \eqref{eq:Z-L1L2} by \eqref{eq:Zell-L1L2}, and \eqref{eq:axr-Z-low-mod} by the estimate: 
\begin{equation} \label{eq:a0-Z-low-mod}
	\nrm{I_{1}}_{\lap Z^{1}_{ell}} + \nrm{I_{2}}_{\lap Z^{1}_{ell}}
	\aleq 2^{\dlt_{0} (k_{\min} - k_{\max})} c_{k_{1}} d_{k_{2}},
\end{equation}
where $I_{1}$, $I_{2}$ are as in \eqref{eq:nm-I1}, \eqref{eq:nm-I2}, but with $\NF, \psi, \varphi$ replaced by $\calL, \varphi^{1}, \varphi^{2}$, respectively. We defer the proof of \eqref{eq:a0-Z-low-mod} for the moment. It can be checked that the rest of the argument in Step~3.2 goes through in the present case. Finally, for the argument in Step~3.3, we replace \eqref{eq:Z-L1Linfty} by \eqref{eq:Zell-L1Linfty}, and \eqref{eq:axr-Z-atom} by
\begin{equation} \label{eq:a0-Z-atom}
\begin{aligned}
& \hskip-4em
	\nrm{P_{k_{0}} P_{\ell_{0}}^{-\omg_{0}} Q_{j} \calL(P_{\ell_{0}}^{\omg_{1}} Q_{<j}^{s_{1}} \varphi^{1}_{k_{1}}, P_{\ell}^{\omg_{2}} Q_{<j}^{s_{2}} \varphi^{2}_{k_{2}})}_{L^{1} L^{\infty}}   \\
\aleq & \nrm{P_{\ell_{0}}^{\omg_{1}} Q_{<j}^{s_{1}} \varphi^{1}_{k_{1}}}_{L^{2} L^{\infty}} \nrm{P_{\ell}^{\omg_{2}} Q_{<j}^{s_{2}} \varphi^{2}_{k_{2}}}_{L^{2} L^{\infty}}
\end{aligned}
\end{equation}
which is an easy consequence of \eqref{eq:L-atom}. Although \eqref{eq:a0-Z-atom} loses $2^{\ell}$, \eqref{eq:Zell-L1Linfty} gains it back, and thus the rest of the argument in Step~3.3 applies. Then \eqref{eq:a0-Z-1} and \eqref{eq:a0-Z-2} follow.

It remains to establish \eqref{eq:a0-Z-low-mod}. We only consider $I_{1}$, since the proof for $I_{2}$ is entirely symmetric. Repeating the argument in Steps~2.1--2.2 in Section~\ref{subsec:bi-a}, but with Proposition~\ref{prop:nfs} replaced by Proposition~\ref{prop:no-nf}, we obtain
\begin{equation} \label{eq:a0-Z-low-mod-key}
	\nrm{P_{k_{0}} Q_{\leq j}^{s_{0}} \calL(Q_{j}^{s_{1}} \varphi^{1}_{k_{1}}, Q_{< j}^{s_{2}} \varphi^{2}_{k_{1}})}_{L^{1} L^{2}} \aleq 2^{-\frac{1}{2}\ell} 2^{\dlt_{0}(k_{\min} - k_{\max})}c_{k_{1}} d_{k_{2}}
\end{equation}
where $\ell = \frac{1}{2} (j - k_{\min})_{-}$. By \eqref{eq:Zell-L1L2-l}, we have
\begin{align*}
	\nrm{P_{k_{0}} Q_{\leq j}^{s_{0}} \calL(Q_{j}^{s_{1}} \varphi^{1}_{k_{1}}, Q_{< j}^{s_{2}} \varphi^{2}_{k_{1}})}_{\lap Z^{1}_{ell}} 
	\aleq & \sum_{j' \leq j} 2^{\frac{1}{2}(j' - k_{0})}	\nrm{P_{k_{0}} Q_{j'}^{s_{0}} \calL(Q_{j}^{s_{1}} \varphi^{1}_{k_{1}}, Q_{< j}^{s_{2}} \varphi^{2}_{k_{1}})}_{L^{1} L^{2}} \\
	\aleq & 2^{\frac{1}{2}\ell} 2^{\dlt_{0}(k_{\min} - k_{\max})}c_{k_{1}} d_{k_{2}}
\end{align*}
Summing up in $s_{0} \in \set{+, -}$ and $j < k_{\min} - 10$, \eqref{eq:a0-Z-low-mod} for $I_{1}$ follows. 

\subsubsection*{Step~5:~Proof of \eqref{eq:a0-r}}
Assuming \eqref{eq:fe-n-tri} and $k_{0} < k_{2} - C_{2} - 5$, it suffices to prove
\begin{equation} \label{eq:a0-r-dyadic}
	\nrm{\calH_{k_{0}} \calL(T_{s_{1}} \varphi^{1}_{k_{1}}, T_{s_{2}}(\Id + s_{2} \mR^{0}) \varphi^{2}_{k_{2}}}_{\lap Z^{1}_{ell}} \aleq 2^{\frac{3}{2} (k_{0} - k_{2})} c_{k_{1}} d_{k_{2}}.
\end{equation}

In order to ensure that the projections $Q_{<j}$ in $\calH_{k_{0}}$ are disposable, we perform an orthogonality argument as in Step~4 in Section~\ref{subsec:tri-bi-ax}. Fix $j < k_{0} +C_{2}$ and introduce the shorthands $\ell = \frac{1}{2} (j - k_{0})_{-}$ and $\tilde{\varphi} := (\Id + s_{2} \mR^{0}) \varphi^{2}$. For $i = 0, 1, 2$, let $\calC^{i}$ be a rectangular box of the form $\calC_{k_{0}}(\ell)$. We expand
\begin{equation*}
P_{k_{0}} Q_{j} \calL(Q_{<j}^{s_{1}} \varphi^{1}_{k_{1}}, Q_{<j}^{s_{2}} \tilde{\varphi}_{k_{2}})
= \sum_{\calC^{0}, \calC^{1}, \calC^{2}} P_{k_{0}} P_{- \calC^{0}} Q_{j} \calL(P_{\calC^{1}} Q_{<j}^{s_{1}} \varphi^{1}_{k_{1}}, P_{\calC^{2}} Q_{<j}^{s_{2}} \tilde{\varphi}_{k_{2}}).
\end{equation*}
By \eqref{eq:L-atom}, we have
\begin{equation*}
\nrm{P_{k_{0}} P_{- \calC^{0}} Q_{j} \calL(P_{\calC^{1}} Q_{<j}^{s_{1}} \varphi^{1}_{k_{1}}, P_{\calC^{2}} Q_{<j}^{s_{2}} \tilde{\varphi}_{k_{2}})}_{L^{1} L^{2}} \aleq \nrm{P_{\calC^{1}} Q_{<j}^{s_{1}} \varphi^{1}_{k_{1}}}_{L^{2} L^{\infty}} \nrm{Q_{<j}^{s_{2}} \tilde{\varphi}}_{L^{2} L^{2}}
\end{equation*}
Moreover, splitting $Q_{j} = Q_{j}^{+} T_{+} + Q_{j}^{-} T_{-}$ and applying Lemma~\ref{lem:box-orth}, we see that the LHS vanishes unless \eqref{eq:box-orth} holds with $s_{0} = +$ or $-$. 
Thus for a fixed $\calC^{1}$ [resp. $\calC^{2}$], there are only (uniformly) bounded number of $\calC^{0}, \calC^{2}$ [resp. $\calC^{0}, \calC^{1}$] such that LHS does not vanish. Summing up first in $\calC_{0}$ and then applying Lemma~\ref{lem:CS} to the (essentially diagonal) summation in $\calC^{1}, \calC^{2}$, we obtain
\begin{align*}
\nrm{P_{k_{0}} Q_{j} \calL(Q_{<j}^{s_{1}} \varphi^{1}_{k_{1}}, Q_{<j}^{s_{2}} \tilde{\varphi}_{k_{2}})}_{L^{1} L^{2}} 
\aleq \bb( \sum_{\calC^{1}} \nrm{P_{\calC^{1}} Q_{<j}^{s_{1}} \varphi^{1}_{k_{1}}}_{L^{2} L^{\infty}}^{2} \bb)^{1/2}
		\nrm{Q_{<j}^{s_{2}} \tilde{\varphi}_{k_{2}}}_{L^{2} L^{2}}
\end{align*}
By \eqref{eq:Zell-L1L2-l}, \eqref{eq:fe-L2Linfty} and \eqref{eq:fe-L2L2-comm0}, we have
\begin{equation*}
	\nrm{P_{k_{0}} Q_{j} \calL(Q_{<j}^{s_{1}} \varphi^{1}_{k_{1}}, Q_{<j}^{s_{2}} \tilde{\varphi}_{k_{2}})}_{\lap Z^{1}_{ell}}
%	\aleq 2^{\frac{3}{2} \ell} 2^{\frac{1}{2} j} 2^{k_{0}}  2^{-\frac{3}{2} k_{2}} c_{k_{1}} d_{k_{2}} 
	\aleq 2^{\frac{5}{2}(j - k_{0})} 2^{\frac{3}{2}(k_{0} - k_{2})} c_{k_{1}} d_{k_{2}}
\end{equation*}
Summing up in $j < k_{0} + C_{2}$, the desired estimate \eqref{eq:a0-r-dyadic} follows.

\subsection{Genuinely multilinear null form estimate} \label{subsec:tri-tri}
To complete the proof of Proposition~\ref{prop:trilinear}, it remains to estimate
\begin{equation} \label{eq:tri-form}
\begin{aligned}
	\calT_{s_{1}, s_{2}, s}(\varphi^{1}, \varphi^{2}, \psi)
	=& \calT^{E}_{s_{1}, s_{2}, s} (\varphi^{1}, \varphi^{2}, \psi) + \calT^{R}_{s_{1}, s_{2}, s} (\varphi^{1}, \varphi^{2}, \psi) \\
	=& s_{2} s \bb( - \calH^{\ast}_{s, s} \big( \calH_{s_{1}, s_{2}} \lap^{-1} \brk{\Pi_{s_{1}} \varphi^{1}, \mR_{0} \Pi_{s_{2}} \varphi^{2}} \mR_{0} \psi \big) \\
	& \phantom{s_{2} s \bb( }
		+ \calH^{\ast}_{s, s} \big( \calH_{s_{1}, s_{2}} \Box^{-1} \calP_{i} \brk{\Pi_{s_{1}} \varphi^{1}, \mR_{x} \Pi_{s_{2}} \varphi^{2}} \mR^{i} \psi \big) \bb).
\end{aligned}
\end{equation}
This part has a multilinear null structure akin to MKG-CG described in \cite[Appendix]{KST}.
In fact, thanks to the way we have set things up, it is possible to directly borrow the relevant estimates in \cite{KST}. We introduce the trilinear operator
\begin{equation*}
\begin{aligned}
\calT^{MKG}_{k, k'}(f^{1}_{k_{1}}, f^{2}_{k_{2}}, f^{3}_{k_{3}})
= &  -\calH^{\ast}_{k} \bb( \calH_{k'} \lap^{-1} \calL(f^{1}_{k_{1}}, \rd_{t} f^{2}_{k_{2}}) \rd_{t} f^{3}_{k_{3}} \bb)  \\
& 	+ \calH^{\ast}_{k} \bb( \calH_{k'} \Box^{-1} \calP_{i} \calL(f^{1}_{k_{1}}, \rd_{x} f^{2}_{k_{2}}) \rd^{i} f^{3}_{k_{3}} \bb),
\end{aligned}
\end{equation*}
where $\calL$ on both lines represent a single bilinear operator. Note that $\calT^{MKG}_{k, k'}$ vanishes unless $\abs{k - k'} < 3$. Moreover, in \cite[{Eq.~(136), (137) and (138); Appendix}]{KST}, the following estimate was proved\footnote{We remark that in \cite{KST}, this estimate is stated with the exponential factor $2^{\dlt(k - k_{\min})}$ instead of $2^{\dlt(k - k_{1})}$. A closer inspection of the proofs of \cite[Eq.~(136), (137) and (138)]{KST}, however, reveals that \eqref{eq:tri-MKG} holds.}:
\begin{proposition} \label{prop:tri-MKG}
For $k < \min\set{k_{0}, k_{1}, k_{2}, k_{3}} - C$ and $\abs{k' -k} < 3$, we have
\begin{equation} \label{eq:tri-MKG}
\begin{aligned}
	\nrm{P_{k_{0}} \calT^{MKG}_{k, k'}(f^{1}, f^{2}, f^{3})}_{N_{k_{0}}} 
	\aleq 2^{\dlt_{0} (k - k_{1})} 2^{\frac{1}{2} k_{0}} 2^{k_{1}} 2^{k_{2}} 2^{k_{3}} \prod_{i=1}^{3}  \nrm{f^{i}_{k_{i}}}_{S_{k_{i}}}.
\end{aligned}
\end{equation}
\end{proposition}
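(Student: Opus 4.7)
This is the trilinear secondary null form estimate of Machedon--Sterbenz for MKG in Coulomb gauge, proved at the critical dyadic level by Krieger--Sterbenz--Tataru. My plan is to recognize $\calT^{MKG}_{k,k'}$ as precisely the trilinear paraproduct analysed in \cite[Sections~11--12; Appendix]{KST} and invoke their estimate. The structural match is clean: the combination
\begin{equation*}
-\lap^{-1}\calL(f^{1}, \partial_{t} f^{2})\,\partial_{t} f^{3} + \Box^{-1}\calP_{i}\calL(f^{1}, \partial_{x} f^{2})\,\partial^{i} f^{3}
\end{equation*}
is precisely the form taken by the MKG-CG nonlinearity $A^{\mu}\partial_{\mu}\phi$ after substituting $A_{0} = \lap^{-1}(\cdots)$ and $A_{j} = \Box^{-1}\calP_{j}(\cdots)$; the projectors $\calH_{k'}$ and $\calH^{\ast}_{k}$ restrict to the fully resonant regime where both the intermediate and the output modulations lie below the corresponding frequencies; and the norms $N_{k_{0}}$ and $S_{k_{i}}$ coincide with those used in \cite{KST}.

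The heart of the argument is an algebraic identity that extracts a hidden $Q_{0}$-type null form between the intermediate wave at frequency $\simeq 2^{k}$ and the third input at frequency $\simeq 2^{k_{3}}$. Heuristically, on the support of $\calH_{k'}\calH^{\ast}_{k}$ both $\tau \approx s\abs{\xi}$ and $\tau_{c}\approx s_{c}\abs{\xi_{c}}$ hold up to modulation error, so the combination of symbols
\begin{equation*}
\frac{\xi\cdot\xi_{c}}{\tau^{2}-\abs{\xi}^{2}} \;+\; \frac{\tau\tau_{c}}{\abs{\xi}^{2}}
\end{equation*}
— which, up to the internal $\calP_{i}$, is what one reads off $\Box^{-1}\partial^{i}(\cdot)\partial_{i}$ and $(-\lap^{-1})\partial_{t}(\cdot)\partial_{t}$ — reorganises into a principal term with numerator $\xi\cdot\xi_{c}-\tau\tau_{c}$, i.e.\ the symbol of a $Q_{0}$ null form gaining the angle $\abs{\angle(s\xi,s_{c}\xi_{c})}$, plus a remainder in which a factor of $\tau^{2}-\abs{\xi}^{2}$ cancels a singular denominator. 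This angular gain is responsible for the off-diagonal exponential factor $2^{\dlt_{0}(k-k_{1})}$ in \eqref{eq:tri-MKG}.

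The rest of the argument in \cite[Appendix]{KST} is a case analysis based on which of the three inputs carries the dominant modulation, and within each case on whether the intermediate wave is null-resonant or transversal to the output. Each subcase is closed using some combination of: the square-summed angular Strichartz components of $S^{\mathrm{str}}_{k_{i}}$, the $X^{0,1/2}_{\infty}$ high-modulation control, and the null-frame bilinear bound \eqref{bilL2est} delivered by the $PW^{\mp}_{\omega}$ and $NE$ components of $S_{k}^{ang}$. All of these are present in our $S_{k_{i}}$ norms (with the extra $S^{box}_{k}$ piece a strict strengthening, cf.\ Remark~\ref{sboxrk}), so the estimates of \cite[Eqs.~(136)--(138)]{KST} apply verbatim and deliver \eqref{eq:tri-MKG}.

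The main obstacle, and the deepest step of the KST argument, is the subcase in which $k$ is far below $k_{1},k_{2},k_{3}$ and the intermediate wave is tightly on-cone: here no derivative can be lost and the trilinear form must be estimated through the null-frame $PW/NE$ pairing rather than through $L^{2}L^{\infty}$ Strichartz. It is precisely for this reason that the angular $S_{k}^{ang}$ component of our solution space was designed to match that of \cite{KST}, so that no new estimate is required on our part.
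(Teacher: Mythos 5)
Your proposal matches the paper exactly: Proposition~\ref{prop:tri-MKG} is not proved here but imported directly from \cite[Eqs.~(136)--(138); Appendix]{KST}, with the same identification of $\calT^{MKG}_{k,k'}$ with the MKG-CG paraproduct and the same appeal to the Machedon--Sterbenz secondary null structure (the $\mathcal{Q}_1,\mathcal{Q}_2,\mathcal{Q}_3$ decomposition of Remark~\ref{rem:tri-null}, with $\mathcal{Q}_1$ handled via the $PW/NE$ bilinear bound \eqref{bilL2est}). The only point you gloss over is that \cite{KST} states the off-diagonal factor as $2^{\dlt(k-k_{\min})}$ rather than $2^{\dlt_0(k-k_1)}$, which the paper addresses only by a footnote asserting that a closer inspection of the KST proofs yields the stated form.
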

\begin{remark}  \label{rem:tri-null}
The proof of \eqref{eq:tri-MKG} exploits a trilinear null structure originally uncovered by Machedon--Sterbenz \cite{MachedonSterbenz}, which is sometimes referred to as the secondary null structure of Maxwell--Klein--Gordon. Roughly speaking, it says that the sum of the elliptic component $-\lap^{-1} \calL(f^{1}, \rd_{t} f^{2}) \rd_{t} f^{3}$ and hyperbolic components $\Box^{-1} \calP_{i} \calL(f^{1}, \rd_{x} f^{2}) \rd^{i} f^{3}$ in $\calT^{MKG}_{k, k'}(f^{1}, f^{2}, f^{3})$ can be rewritten as a combination of  the following three $Q_{0}$-type null forms (we refer to \cite[Appendix]{KST} for details):
\be \label{Q:dec2}
\begin{aligned}
	\mathcal{Q}_1(f^1,f^2,f^3) =& - \Box^{-1}  \calL  (f^1 \pt_{\al} f^2)\cdot \partial^{\al}f^3 , \\
	\mathcal{Q}_2(f^1,f^2,f^3) =& \Delta^{-1} \Box^{-1} \pt_t \pt_{\al} \calL  (f^1\pt_{\al} f^2)\cdot \partial_{t}f^3 , \\
	\mathcal{Q}_3(f^1,f^2,f^3)  =&  \Delta^{-1} \Box^{-1} \pt_{\al} \pt^i  \calL  (f^1 \pt_{i} f^2)\cdot \partial^{\al}f^3 .
\end{aligned}
\ee
The term $ \mathcal{Q}_1(f^1,f^2,f^3) $ is the more delicate one that requires the bilinear $ L^2 $ estimate \eqref{bilL2est}.

\

\end{remark}
Plugging in
\begin{equation*}
	f^{1}_{k_{1}} = \Pi_{s_{1}} Q_{<k_{1} - 3} T_{s_{1}} \varphi^{1}_{k_{1}}, \quad
	f^{2}_{k_{2}} = \frac{1}{i \abs{D}} \Pi_{s_{1}} Q_{<k_{2} - 3} T_{s_{2}} \varphi^{2}_{k_{2}}, \quad
	f^{3}_{k_{3}} = \frac{1}{i \abs{D}} Q_{<k_{3} - 3} T_{s} \psi_{k_{3}},
\end{equation*}
observe that
\begin{equation*}
	P_{k_{0}} \calT_{s_{1}, s_{2}, s}(\varphi^{1}_{k_{1}}, \varphi^{2}_{k_{2}}, \psi_{k_{3}})
	= - s_{2} s \sum_{\substack{k < k_{3} - C^{\ast}_{2} - 10 \\ k' < k_{2} - C_{2} - 10}} P_{k_{0}} \calT^{MKG}_{k, k'}(f^{1}_{k_{1}}, f^{2}_{k_{2}}, f^{3}_{k_{3}}).
\end{equation*}
By Proposition~\ref{prop:tri-MKG} and the facts that $k_{1} = k_{2} + O(1)$, $k_{3} = k_{0} + O(1)$, we have
\begin{equation} \label{eq:}
\nrm{P_{k_{0}} \calT_{s_{1}, s_{2}, s}(\varphi^{1}_{k_{1}}, \varphi^{2}_{k_{2}}, \psi_{k_{3}})}_{N_{s}^{1/2}}
\aleq 2^{\dlt_{0}(\min\set{k_{1}, k_{3}} - k_{1})} c_{k_{1}} d_{k_{2}} b_{k_{3}}.
\end{equation}
Keeping $k_{0}$ fixed and summing up in $k_{1}, k_{2}, k_{3}$, we obtain
\begin{equation*}
\nrm{P_{k_{0}} \calT_{s_{1}, s_{2}, s}(\varphi^{1}, \varphi^{2}, \psi)}_{N_{s}^{1/2}}
\aleq (\sum_{k' < k_{0}} c_{k'}^{2} )^{1/2} (\sum_{k' < k_{0}} d_{k'}^{2} )^{1/2} b_{k_{0}}
\end{equation*}
which completes our proof.

%NOW ADAPT TO OUR SITUATION. REMOVE $T_{s}$, USE THE FACT THAT $N_{k} = N_{+, k} \cap N_{-, k}$ AND $S_{k} = S_{+, k} + S_{-, k}$.
%\Red{Multilinear null form estimate; borrow from KST. In the proof, we gain $2^{\dlt(k-k_{1})}$ instead of $2^{\dlt(k-\min\set{k_{i}})}$.}

\begin{remark} \label{rem:hi-d-4}
In the higher dimensional case $d \geq 5$, all proofs in Section~\ref{subsec:tri-bi-ax} and \ref{subsec:tri-bi-a0} are valid with the substitutions as in Remark~\ref{rem:hi-d-3}, as well as $Z^{1} \to Z^{\frac{d-2}{2}}$ and $Z^{1}_{ell} \to Z^{\frac{d-2}{2}}_{ell}$.
Moreover, the multilinear null form estimate in Proposition~\ref{prop:tri-MKG} is unnecessary. We claim that the following additional estimates hold:
\begin{align}
	\nrm{\calH_{s_{1}, s_{2}} \bfA_{x}^{R}(\varphi^{1}, \varphi^{2})}_{(Z^{\frac{d-2}{2}})_{bc}} 
	\aleq & \nrm{\varphi^{1}}_{(\tilde{S}^{\frac{d-3}{2}}_{s_{1}})_{b}} \nrm{\varphi^{2}}_{(\tilde{S}^{\frac{d-3}{2}}_{s_{2}})_{c}},	\label{eq:hi-d-axr}  \\
	\nrm{\calH_{s_{1}, s_{2}} \bfA_{0}(\varphi^{1}, \varphi^{2})}_{(Z^{\frac{d-2}{2}}_{ell})_{bc}} 
	\aleq & \nrm{\varphi^{1}}_{(\tilde{S}^{\frac{d-3}{2}}_{s_{1}})_{b}} \nrm{\varphi^{2}}_{(\tilde{S}^{\frac{d-3}{2}}_{s_{2}})_{c}},	\label{eq:hi-d-a0} 
\end{align}

where the space $\tilde{S}^{\frac{d-3}{2}}_{s}$ does \emph{not} involve the null frame spaces $PW^{\mp}_{\omg}(l)$ and $NE$; see Section~\ref{sec:fs} for the precise definition. Combined with (the higher dimensional analogues of) \eqref{eq:diffr-Z} and \eqref{eq:diffe-Z}, we obtain an analogue of Proposition~\ref{prop:tri-MKG} without relying on the null structure of $\calT_{s_{1}, s_{2}, s}$ discussed in Remark~\ref{rem:tri-null}.

We provide proofs of \eqref{eq:hi-d-axr} and \eqref{eq:hi-d-a0} in the appendix below.

\end{remark}

\subsection*{Appendix: Trilinear estimates in the case $d \geq 5$}

We exploit the improved gain in the angular dimensions available in this case, captured by the $S^{box}_{k}$ component of the $\tilde{S}^{\frac{d-3}{2}}_{s}$ norm. At the technical level, the arguments below are analogous to that of \eqref{eq:axs-Z-3}.

In the proof, we assume that
\begin{equation} \label{eq:fe-hid}
	\nrm{\varphi^{1}}_{(\tilde{S}_{s_{1}}^{\frac{d-3}{2}})_{b}}
	= \nrm{\varphi^{2}}_{(\tilde{S}_{s_{2}}^{\frac{d-3}{2}})_{c}} 
	= 1
\end{equation}
Then the following analogue of \eqref{eq:fe-L2Linfty} holds for $\varphi^{1}$:
\begin{equation} \label{eq:fe-L2Linfty-hid}
	\bb( \sum_{\calC_{k'}(\ell)} \nrm{P_{\calC_{k'}(\ell)} Q^{s}_{<j} \varphi^{1}}_{L^{2} L^{\infty}}^{2} \bb)^{1/2}
	\aleq 2^{\frac{d-2}{2} k'} 2^{\frac{d-3}{2} \ell} 2^{-\frac{d-4}{2} k} b_{k},
\end{equation}
and the same estimate holds for $\varphi^{2}$ with $(s_{1}, b_{k})$ replaced by $(s_{2}, c_{k})$.

In $d \geq 5$, it turns out that neither the null structure in $\bfA^{R}_{x}$ nor the $2^{\ell}$ improvement in $Z^{\frac{d-2}{2}}_{ell}$ is necessary. To give a unified proof, we begin by estimating
\begin{equation*}
P_{k_{0}} Q_{j} \calL (Q_{<j}^{s_{1}} \varphi^{1}_{k_{1}}, Q_{<j}^{s_{2}} \varphi^{2}_{k_{2}})
\end{equation*}
in $L^{1} L^{\infty}$ for a fixed $j \leq k_{0} + C_{2}$. Let $\ell = \frac{1}{2} (j - k_{0})_{-}$, and for $i= 0, 1,2$, let $\calC^{i}$ be a rectangular box of the form $\calC_{k_{0}}(\ell)$. As before, we split
\begin{equation*}
P_{k_{0}} Q_{j} \calL (Q_{<j}^{s_{1}} \varphi^{1}_{k_{1}}, Q_{<j}^{s_{2}} \varphi^{2}_{k_{2}}) = \sum_{\calC^{0}, \calC^{1}, \calC^{2}} P_{k_{0}} P_{- \calC^{0}} Q_{j} \calL(P_{\calC^{1}} Q_{<j}^{s_{1}} \varphi^{1}_{k_{1}}, P_{\calC^{2}} Q_{<j}^{s_{2}} \varphi^{2}_{k_{2}}).
\end{equation*}
Further splitting $Q_{j} = Q_{j}^{+} T_{+} + Q_{j}^{-} T_{-}$ and applying Lemma~\ref{lem:box-orth}, we see that the summand on the RHS vanishes unless \eqref{eq:box-orth} is satisfied for $s_{0} = +$ or $-$. Moreover, by the same lemma, note that for a fixed $\calC^{1}$ [resp. $\calC^{2}$], there are only (uniformly) bounded number of $\calC^{0}, \calC^{2}$ [resp. $\calC^{0}, \calC^{1}$] such that \eqref{eq:box-orth} is satisfied with $s_{0} = +$ or $-$. Summing up first in $\calC_{0}$ (for which there are only finitely many terms) and then applying Lemma~\ref{lem:CS} to the summation in $\calC^{1}, \calC^{2}$ (which is essentially diagonal), we obtain
\begin{equation*} 
\begin{aligned}
& \hskip-2em
\sum_{\calC^{0}} \nrm{P_{k_{0}} P_{-\calC^{0}} Q_{j} \calL(Q_{<j}^{s_{1}} \varphi^{1}_{k_{1}}, Q_{<j}^{s_{2}} \varphi^{2}_{k_{2}})}_{L^{1} L^{\infty}} \\
\aleq & 	\bb( \sum_{\calC^{1}} \nrm{P_{\calC^{1}} Q_{<j}^{s_{1}} \varphi^{1}_{k_{1}}}_{L^{2} L^{\infty}}^{2} \bb)^{1/2}
		\bb( \sum_{\calC^{2}} \nrm{P_{\calC^{2}} Q_{<j}^{s_{2}} \varphi^{2}_{k_{2}}}_{L^{2} L^{\infty}}^{2} \bb)^{1/2},
\end{aligned}
\end{equation*}
where we used disposability of $P_{k} P_{\calC_{k_{0}}(\ell)} Q_{j} = P_{k} P_{\ell}^{\omg} Q_{j}$ and H\"older's inequality to estimate each summand. Recall the convention $P_{k} P_{\calC_{k}(\ell)} = P_{k} P_{\ell}^{\omg}$. By \eqref{eq:Z-L1Linfty} and \eqref{eq:fe-L2Linfty-hid}, it follows that
\begin{align*}
& \hskip-2em
	\nrm{P_{k_{0}} Q_{j} \calL(P_{\calC^{1}} Q_{<j}^{s_{1}} \varphi^{1}_{k_{1}}, P_{\calC^{2}} Q_{<j}^{s_{2}} \varphi^{2}_{k_{2}})}_{\Box Z^{\frac{d-2}{2}}} \\
	\aleq & 2^{-\frac{3}{2} \ell} 2^{-2 k_{0}} \bb( \sum_{\calC^{1}} \nrm{P_{\calC^{1}} Q_{<j}^{s_{1}} \varphi^{1}_{k_{1}}}_{L^{2} L^{\infty}}^{2} \bb)^{1/2}
		\bb( \sum_{\calC^{2}} \nrm{P_{\calC^{2}} Q_{<j}^{s_{2}} \varphi^{2}_{k_{2}}}_{L^{2} L^{\infty}}^{2} \bb)^{1/2} \\
%	\leq & 2^{- \frac{3}{2}\ell} 2^{-2 k_{0}} 2^{(d-2) k_{0}} 2^{(d-3) \ell} 2^{-(d-4)k_{1}} b_{k_{1}} c_{k_{2}} \\
	\leq & 2^{\frac{2d - 9}{4} (j - k_{0})} 2^{(d-4)(k_{0} - k_{1})} b_{k_{1}} c_{k_{2}}.
\end{align*}
Since $d \geq 5$, note that both $2^{\frac{2d - 9}{4} (j - k_{0})}$ and $2^{(d-4)(k_{0} - k_{1})}$ give exponential gains. Summing up in $j < k_{0} + C_{2}$, \eqref{eq:hi-d-axr} follows. Moreover, since $\Box Z^{\frac{d-2}{2}} \subseteq \lap Z^{\frac{d-2}{2}}_{ell}$ (alternatively, compare \eqref{eq:Zell-L1Linfty} with \eqref{eq:Z-L1Linfty}), \eqref{eq:hi-d-a0} follows as well.

\section{Solvability of paradifferential covariant half-wave equations} \label{sec:para}

The goal of this section is to prove Theorem~\ref{thm:paradiff}. The disclaimer ``for $ \ep>0 $ small enough'' applies to all the statements. Below, we denote by $\dlt > 0$ a small constant which depends only on $\ep$ and $\dlt \to 0$ as $\ep \to 0$.

\subsection{Parametrix}

Suppose $ \Box A^{free}=0 $ with $ \vn{ A^{free}}_{\Hc} \leq \ep $ together with the Coulomb condition $\rd^{\ell} A^{free}_{\ell} =0 $. Without loss of generality we assume $ s=+ $. Define the paradifferential half-wave operators by
\be \phw = i \pt_t + \vm{D} - i  \sum_{k \in \mb{Z}} P_{<k-C} A^{free,j} \frac{\pt_j}{ \vm{D}} P_k  \ee
\be \label{paracovop2} \hw_{A_{<k}}^p = i \pt_t + \vm{D} - i  P_{<k-C} A^{free,j} \frac{\pt_j}{\vm{D}} P_k   \ee
and the paradifferential covariant $ \Box $ operator by 
\be \label{paracovop3} \Box_{A_{<k}}^p=\Box -2i  P_{<k-C} A^{free,j} P_k \pt_j, \ee

Consider the problem
\begin{equation}\label{problem}
\left\{ 
\begin{array}{l}
 \phw \psi=F  \\
 \psi(0)=f.
\end{array} 
\right.
\end{equation}
  
The proof of Theorem~\ref{thm:paradiff} will reduce to the following proposition, whose proof we postpone to Section~\ref{subsecproof}:

\begin{proposition} \label{papp}
For any $ F \in N_{+}^{1/2}\cap L^{2} L^{2} $ and any $ f \in \Hcr $ there exists $ \psi^a \in S_{+}^{1/2} $ such that for any admissible frequency envelope $c$, we have
\begin{equation} 
\begin{aligned}
\vn{\psi^a(0)-f}_{\Hcr_c}+ \vn{\phw \psi^a - F}_{(N_{+}^{1/2}\cap L^{2} L^{2})_c} \\
\leq \delta \lpr  \vn{f}_{\Hcr_c}+ \vn{F}_{(N_{+}^{1/2}\cap L^{2} L^{2})_c} \rpr,
\end{aligned}\end{equation}
\begin{equation} 
	\nrm{\psi^a}_{(S^{1/2}_{+})_{c}} \aleq \nrm{f}_{\dot{H}^{1/2}_{c}} + \nrm{F}_{(N^{1/2}_{+} \cap L^{2} L^{2})_{c}} \ .
\end{equation}
\end{proposition}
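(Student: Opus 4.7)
The plan is to produce $\psi^a$ via a pseudodifferential renormalization that approximately conjugates $\phw$ to the free half-wave operator $i\pt_t + \vm{D}$, adapting the parametrix construction of \cite{KST} for $\Box_A^p$ to the half-wave setting. Concretely, I would use a real phase function $\Psi_+(t,x,\xi)$ depending linearly on the free wave $A^{free}$ and dyadically localized in $\vm{\xi}$ — the same phase used in \cite{KST}, restricted to the $\tau \sim +\vm{\xi}$ characteristic sheet — and show that its left- and right-quantizations satisfy the approximate intertwining identity
\begin{equation*}
\phw\, e^{i\Psi_+}(t,x,D) \;\approx\; e^{i\Psi_+}(t,x,D)\,(i\pt_t + \vm{D}),
\end{equation*}
with error small in $N_{+}^{1/2} \cap L^{2} L^{2}$. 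This is the natural half-wave analogue of the $\Box$-level intertwining described in the introduction and is consistent with the formal factorization $\Box_A^p \approx \hwm \cdot \phw$ modulo lower order.

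With such a $\Psi_+$ in hand, I would define
\begin{equation*}
\psi^a(t) \;:=\; e^{i\Psi_+}(t,x,D)\, e^{it\vm{D}}\, e^{-i\Psi_+}(D,0,y)\, f \;+\; \int_{0}^{t} e^{i\Psi_+}(t,x,D)\, e^{i(t-s)\vm{D}}\, e^{-i\Psi_+}(D,s,y)\, F(s)\, ds,
\end{equation*}
taken dyadically in the output frequency so that $P_k$-localization is preserved up to admissible errors (which yields frequency envelope bounds for free). The proof then splits into three ingredients: (i) $L^{2}$-near-invertibility of $e^{\pm i \Psi_+}$ in the sense that $e^{i\Psi_+}(t,x,D) e^{-i\Psi_+}(D,t,y) = \Id + O(\dlt)$, which gives $\psi^a(0) - f = O(\dlt)$ in $\dot H^{1/2}_c$; (ii) mapping bounds for $e^{\pm i\Psi_+}$ on each component of $S^{1/2}_+$ and $N_+^{1/2}$, which reduce the $(S^{1/2}_{+})_c$ bound for $\psi^a$ to standard free half-wave estimates for $e^{it\vm{D}} e^{-i\Psi_+}(D,0,y) f$; and (iii) the quantitative intertwining error, which delivers the $N_+^{1/2} \cap L^2 L^2$ smallness of $\phw\psi^a - F$. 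All three ingredients are direct specializations of the KST parametrix to the half-wave characteristic cone, and will be encapsulated in Theorem~\ref{covariantparametrix} below.

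The main obstacle is the $S^{\mathrm{box}}_k$ component of $S_k$, which was \emph{not} among the norms controlled in \cite{KST} and whose angular gain of $2^{-\ell'/2}$ over $S^{\mathrm{ang}}_k$ is crucial for the nonlinear estimates in this paper (cf.\ Remark~\ref{sboxrk}). As noted in that remark, this extra angular gain was in fact already produced during the error analysis in Section~11.3 of \cite{KST}, but was not recorded in their main parametrix statement. I would therefore need to revisit that portion of the KST argument, extract the $2^{-\ell'/2}$ improvement explicitly, and fold it into the statement of Theorem~\ref{covariantparametrix} so that it applies to $e^{it\vm{D}} e^{-i\Psi_+}(D,0,y) f$ and the Duhamel iterates. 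Once this refined parametrix is stated, the remaining verification of (i)--(iii) is essentially a specialization of the KST arguments from $\Box_A^p$ to $\phw$, and the frequency envelope version then follows from the fact that every step is diagonal in dyadic frequency up to rapidly decaying tails.
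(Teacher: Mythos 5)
Your overall strategy (renormalize with the KST phase, propagate data and Duhamel term by $e^{it\vm{D}}$ sandwiched between left and right quantizations, get frequency envelopes from dyadic diagonality) is recognizably in the right family, and you correctly flag the $S^{\mathrm{box}}_k$ issue resolved in Remark~\ref{sboxrk}. But there are two concrete gaps. First, you apply the parametrix to \emph{all} of $F$. The renormalized Duhamel construction, and Theorem~\ref{covariantparametrix} itself, only apply when $F$ is localized at modulation $\aleq 2^{k}$; the high-modulation part $Q^{+}_{>k-6}P_kF$ must be handled separately by dividing by the symbol $-\tau+\vm{\xi}$ (the paper's $\psi^2_k$, controlled via Lemma~\ref{lemmadivsymb}). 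This is not a cosmetic omission: it is exactly where the $L^{2}L^{2}$ norm of $F$ in the hypothesis is used, and your proposal never explains why that norm is needed. Second, the intertwining $\phw\, e^{i\Psi_+}(t,x,D)\approx e^{i\Psi_+}(t,x,D)(i\pt_t+\vm{D})$ is asserted as a ``direct specialization'' of KST, but KST only proves the conjugation estimate \eqref{conj} for the second-order operator $\Box^p_{A_{<0}}$; there is no half-wave conjugation estimate to cite, and proving one from scratch means redoing Sections 6--10 of \cite{KST} for a first-order symbol.

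The paper avoids both problems by a different reduction: it applies the KST wave parametrix (with the connection $-A^{free}$ and data $(g,h)$ chosen so that $ih-\vm{D}g=f$, $ih+\vm{D}g=0$) to produce $\phi$, discards the lower-cone component $S^{-}_0$ of the parametrix after estimating it separately, and then sets $\psi^1_k:=\hwm\phi$. The algebraic identity
\begin{equation*}
\hw^{p}_{A_{<k}}\bigl(\hwm\phi\bigr)=\Box^{p}_{-A_{<k}}\phi- i\,A^{free,j}_{<k-C}\tfrac{\pt_j}{\vm{D}}\,\hw P_k\phi
\end{equation*}
converts the known wave conjugation error into the needed half-wave error, with the extra term absorbed using $\nrm{A^{free}_{<k-C}}_{L^2L^{\infty}}\aleq\ep$ and the $X^{0,1/2}_{\infty}$ control of $\phi$. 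If you want to salvage your direct route, you must (a) add the modulation splitting of $F$ and the elliptic inversion on the high-modulation region, and (b) either prove the half-wave intertwining estimate yourself or route through the factorization identity above, which also forces you to deal with the choice of wave data reproducing $f$ and with the lower-cone piece of the KST parametrix.
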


\begin{proof}[Proof of Theorem~\ref{thm:paradiff}] 

Denote by $ \psi^a[f,F] $ the approximate solution obtained in Proposition~\ref{papp}.
We define $ \psi \defeq \lim \psi^{\leq n} $ where 
$$ \psi^{\leq n} \defeq \psi^1+\dots +\psi^n $$ 
and $ \psi^n $ are defined inductively by $ \psi^1 \defeq \psi^a[\psi_0,F] $ and
\be \psi^n \defeq \psi^a [\psi_0- \psi^{\leq n-1}(0), F- \phw \psi^{\leq n-1}], \qquad n\geq 2. \ee
Normalizing $ \vn{\psi_0}_{\Hcr_c}+ \vn{F}_{(N_{+}^{1/2}\cap L^{2} L^{2})_c}=1 $ it follows by induction using Proposition~\ref{papp} that we have
\be  \vn{\psi^{\leq n}(0)-\psi_0}_{\Hcr_c}+ \vn{\phw \psi^{\leq n} - F}_{(N_{+}^{1/2}\cap L^{2} L^{2})_c} \leq \delta^n  , \ee
\be \vn{\psi^n}_{(S_{+}^{1/2})_c} \lesssim \delta^{n-1}  . \ee
Applying these bounds for a frequency envelope $ \tilde{c} $ satisfying
$$ \vn{\tilde{c}}_{\ell^2} \simeq \vn{\psi_0}_{\Hcr}+ \vn{F}_{N_{+}^{1/2}\cap L^{2} L^{2}} $$
yields
\be \label{partialeq} 
\begin{aligned}
\vn{\psi^{\leq n}(0)-\psi_0}_{\Hcr}+ \vn{\phw \psi^{\leq n} - F}_{N_{+}^{1/2}\cap L^{2} L^{2}}  \\
\lesssim \delta^n (\vn{\psi_0}_{\Hcr}+ \vn{F}_{N_{+}^{1/2}\cap L^{2} L^{2}}) 
\end{aligned}\ee
\be \vn{\psi^n}_{S_{+}^{1/2}} \lesssim \delta^{n-1} (\vn{\psi_0}_{\Hcr}+ \vn{F}_{N_{+}^{1/2}\cap L^{2} L^{2}}) . \ee

Thus $ \psi^{\leq n} $ is a Cauchy sequence in $ S_{+}^{1/2} $ and $ \psi $ is well-defined, satisfying \eqref{spsestim} and
\be \vn{\psi}_{(S_{+}^{1/2})_c} \lesssim \sum \delta^k  \lesssim 1.\ee
Furthermore, passing to the limit in \eqref{partialeq} we get that $ \psi $ exactly solves \eqref{problem}. \qedhere
\end{proof}

\subsection{Renormalization and parametrix construction for $\Box^{p}_{A_{<0}}$} \label{subsec:rn-box}

The purpose of this subsection is to review the parametrix construction from \cite[Sections 6--10]{KST}, which proceeds by conjugating the paradifferential covariant d'Alembertian operator $\Box^{p}_{A_{<0}}$ to $\Box$ by (pseudodifferential) suitable renormalization operators
\begin{equation} \label{eq:rn-op}
 e^{-i \Psi_{\pm}}_{<0} (t,x,D) ,\qquad e^{i \Psi_{\pm}}_{<0} (D,y,s) 
\end{equation}
with a real-valued phase $\Psi_{\pm} = \Psi_{\pm}(t,x,\xi)$. Here, $ P(x,D) $ denotes the left quantization, while $ P(D,y) $ denotes the right quantization (only in the space variables). The $ <0 $ subscript represents space-time frequency localization to the region $\set{\abs{(\tau, \xi)} \ll 1 }$.

%We begin by reviewing the framework and notation from \cite[Sections 6--10]{KST}, in which the paradifferential covariant $\Box$ operator $\Box^{p}_{A_{<0}}$ at unit frequency is renormalized by conjugating with
%\begin{equation} \label{eq:rn-op}
% e^{-i \Psi_{\pm}}_{<0} (t,x,D) ,\qquad e^{i \Psi_{\pm}}_{<0} (D,y,s) 
%\end{equation}
%where $ P(x,D) $ denotes the left quantization, while $ P(D,y) $ denotes the right quantization (only in the space variables). The $ <0 $ subscript represents space-time frequency localization to the region $\set{\abs{(\tau, \xi)} \ll 1 }$. This renormalization is a key step to parametrix construction for establishing solvability of $\Box^{p}_{A_{<0}} \phi = F$ in appropriate function spaces.

For some $\sgm > 0$ to be chosen as small as necessary below, the phase $\Psi_{\pm} = \Psi_{\pm}(t, x, \xi)$ is defined as 
\begin{equation} \label{eq:phasefc}
	\Psi_{\pm}(t, x, \xi) = \sum_{k < - C} L^{\omg}_{\pm} \lap^{-1}_{\omg^{\perp}} (\Pi^{\omg}_{> \sgm k} P_{k} (\omg \cdot A_{x})),
\end{equation}
where $\omg = \frac{\xi}{\abs{\xi}}$, $L^{\omg}_{\pm} = \pm \rd_{t} + \omg \cdot \nb_{x}$, $\lap_{\omg} = \lap - (\omg \cdot \nb_{x})^{2}$ and $\Pi^{\omg}_{> \sgm k}$ is a smooth restriction in (spatial) Fourier space to the region $\set{\eta: \abs{\angle(\eta, \pm \omg)} \ageq 2^{\sgm k}}$. Note that $\lap_{\omg^{\perp}}^{-1}$ is singular along the line parallel to $\omg$ in Fourier space, but $\Pi^{\omg}_{> \sgm k}$ vanishes there; hence \eqref{eq:phasefc} is well-defined.

\subsubsection*{Motivation for \eqref{eq:phasefc}} 
Before we proceed any further, some motivation for the formula \eqref{eq:phasefc} is in order. For a more detailed account, see \cite[Sections~7--8]{RT} or \cite[Section~6]{KST}. 

The aim of the renormalization procedure is to conjugate $\Box^{p}_{A_{<0}}$ to $\Box$ up to manageable errors; i.e., by choosing an appropriate real-valued symbol $\Psi$, we wish to achieve
\begin{equation*}
	e^{i \Psi}_{<0} (D, y, s) \Box^{p}_{A_{<0}} e^{- i \Psi}_{<0} (t, x, D)
	=  \Box  + \cdots
\end{equation*}
for inputs frequency-localized in $\set{\abs{\xi} \simeq 1}$, where $(\cdots)$ represent some manageable errors. Then the idea is to construct a parametrix (i.e., an approximate solution) for $\Box^{p}_{A_{<0}}$ using the solution operator for $\Box$.

The main term in the conjugation error takes the form
\begin{align*}
& \hskip-2em
	\left( e^{i \Psi}_{<0} (D, y, s) \Box^{p}_{A_{<0}} e^{- i \Psi}_{<0} (t, x, D)
		-  \Box \right) \phi \\
	= & 2 e^{i \Psi}_{<0} (D, y, s) (P_{<-C} A^{free, j} \xi_{j} - \rd_{t} \Psi \tau + \rd^{j} \Psi \xi_{j}) e^{- i \Psi}_{<0} (t, x, D) \phi + \cdots.
\end{align*}
To proceed with the heuristic discussion, we focus on the task of constructing a parametrix for the homogeneous paradifferential covariant wave equation. Then in order to make $\Box \phi$ vanish, it is reasonable to let $\phi$ be a $\pm$-half wave, so that $\tau = \pm \abs{\xi}$ on the frequency support of $\phi$. Thus, in order to cancel the first term on the RHS, we are motivated to take
\begin{equation*}
	- L^{\omg}_{\mp} \Psi_{\pm} = (\pm \rd_{t} - \omg \cdot \nb_{x}) \Psi_{\pm} ``=" P_{<-C} (\omg \cdot A^{free}_{x}),
\end{equation*}
where $\omg \in \bbS^{3}$ is given by $\omg = \frac{\xi}{\abs{\xi}}$. In particular, we choose a different phase $\Psi_{\pm}$ depending on the adapted characteristic cone of the input.

In general, the symbol of a transport operator, such as $L^{\omg}_{\mp}$, vanishes in a hyperplane (co)normal to the direction of transport. Thus inverting $L^{\omg}_{\mp}$ does not gain any derivative except in the direction of $L^{\omg}_{\mp}$ itself, which is unsatisfactory for estimating $\Psi_{\pm}$. However, since $A^{free}_{x}$ solves the free wave equation, it is possible to invert $L^{\omg}_{\mp}$ in a more advantageous way. Indeed, observe that on the frequency support of $A^{free}_{x}$, which is contained in $\set{(\sgm, \eta) : \abs{\sgm} = \abs{\eta}}$, the symbol of $L^{\omg}_{\mp}$ (which equals $\mp \sgm + \omg \cdot \eta$) vanishes only along the line $\set{(\sgm, \eta) : \sgm = \pm (\omg \cdot \eta), \ \eta \parallel \omg}$. Therefore, $L^{\omg}_{\mp}$ is elliptic on the support of $\Pi_{away}^{\omg} A^{free}$, where $\Pi^{\omg}_{away}$ is a smooth cutoff in spatial Fourier space supported just away from the line $\set{\eta : \eta \parallel \omg}$.

To obtain a concrete formula for $\Psi_{\pm}$ along the ideas just discussed, we make use of the following null frame decomposition of $\Box$, which holds for any fixed $\omg \in \bbS^{3}$:
\begin{equation*}
	\Box = L^{\omg}_{\mp} L^{\omg}_{\pm} + \lap_{\omg^{\perp}}.
\end{equation*}
Thus $\Box A_{x}^{free} = 0$ implies
\begin{equation*}
	- L^{\omg}_{\mp} \left( L^{\omg}_{\pm} \lap_{\omg^{\perp}}^{-1} \Pi^{\omg}_{away} P_{<-C} (\omg \cdot A_{x}^{free}) \right) = P_{<-C} (\omg \cdot A_{x}^{free}),
\end{equation*}
where we note that the symbol of $\lap_{\omg^{\perp}}$ is nonvanishing in the frequency support of $\Pi^{\omg}_{away}$. This formula motivates the definition
\begin{equation*}
	\Psi_{\pm} ``=" L^{\omg}_{\pm} \lap_{\omg^{\perp}}^{-1} \Pi^{\omg}_{away} P_{<-C} (\omg \cdot A_{x}^{free}).
\end{equation*}

It remains to pin down the angular cutoff $\Pi^{\omg}_{away}$. There are two factors to balance here: On the one hand, in order to make the remaining conjugation error $\Pi^{\omg}_{near} P_{<-C} (\omg \cdot A_{x}) = (1 - \Pi^{\omg}_{far}) P_{<-C} (\omg \cdot A_{x})$ small, we wish to cut away only a small angle with $\Pi^{\omg}_{far}$. On the other hand, the smaller the angle cut away, the rougher the $\xi$-dependence of the symbol $\Psi_{\pm}$ and the worse the mapping properties of the renormalization operator $e^{-i\Psi_{\pm}}(t, x, D)$.

In order to ensure summability (in dyadic frequency) of the conjugation error, it seems necessary to cut away smaller angles for lower frequency parts of $A^{free}_{x}$. Thus we are led to the choice
\begin{equation*}
	\Psi_{\pm} = \sum_{k < -C} L^{\omg}_{\pm} \lap_{\omg^{\perp}}^{-1} \left( \Pi^{\omg}_{> \sgm k} P_{k} (\omg \cdot A_{x}) \right)
\end{equation*}
for some $\sgm > 0$, which coincides with the desired formula \eqref{eq:phasefc}. At this point, only the exponent $\sgm > 0$ is left to be determined. Note that a larger $\sgm$ corresponds to a smaller angular cutoff.

Here lies the key difference between the constructions in \cite{RT} and in \cite{KST}. In \cite{RT}, $\sgm$ was chosen after careful balancing of availability of Strichartz estimates for the parametrix (favoring larger angular cutoff) and smallness of the remaining conjugation error (favoring smaller angular cutoff); this limited the validity of the construction to $d \geq 6$. On the other hand, in \cite{KST} it was observed that, for $\sgm$ small enough, $e^{-i \Psi_{\pm}}(t, x, D)$ obeys nice mapping properties in $X^{s, b}$-type spaces. This observation led to improved estimate for the remaining conjugation error (essentially, one then has access to bilinear estimates as in Section~\ref{sec:nf} to treat the conjugation error), and allowed taking $\sgm > 0$ arbitrarily small for every $d \geq 4$, as claimed in \eqref{eq:phasefc}.

%A brief motivation for the formula \eqref{eq:phasefc} is in order; for a more detailed account, see \cite[Sections~7--8]{RT} or \cite[Section~6]{KST}. For any fixed $\omg \in \bbS^{3}$, note that $\Box$ admits a null decomposition $\Box = L^{\omg}_{+} L^{\omg}_{-} + \lap_{\omg^{\perp}}$. Since $\Box A = 0$, we have $\Box (\Pi_{> \sgm k}^{\omg} P_{k} A \cdot \omg) = 0$ and thus 
%\begin{equation*}
%L^{\omg}_{\mp} \Psi_{\pm} = \sum_{k < -C} \Pi^{\omg}_{> \sgm k} P_{k} A \cdot \omg.
%\end{equation*}
%For a plane wave input $\phi = e^{i (\pm t \abs{\xi} + x \cdot \xi)}$, a quick computation shows that
%\begin{equation*}
%	e^{- i \Psi_{\pm}(t, x, \xi)} \Box^{p}_{A_{<0}} e^{i \Psi_{\pm}(t, x, \xi)} \phi
%	= (\Box - 2 i \Pi_{\leq \sgm k}^{\omg} P_{k} A \cdot \nb_{x}) \phi + \cdots
%\end{equation*}
%where $\Pi^{\omg}_{\leq \sgm k} = 1 - \Pi^{\omg}_{> \sgm k}$. Thus conjugation by $e^{i \Psi_{\pm}}$renormalizes the far-angle part of $A$ from $\omg = \frac{\xi}{\abs{\xi}}$. The remaining near-angle part of $A$ turns out to be `perturbative', thanks to the null structure in $A_{x} \cdot \nb_{x} \phi$ under $\rd^{\ell} A_{\ell} = 0$; see \cite[Section~10]{KST} for more details.
\subsubsection*{Properties of the renormalization operator}
The following theorem summarizes the key properties of the operators \eqref{eq:rn-op}:
\begin{theorem} \label{opsummary} 
If $\sgm > 0$ is chosen sufficiently small, the frequency localized renormalization operators have the following properties with $ Z \in \{ N_0,L^2,N^{*}_0 \} $:

\be \label{renbd} e_{<0}^{\pm i \Psi_{\pm}} (t,x,D) : Z \to Z \ee
\be \label{renbdt} \pt_t e_{<0}^{\pm i \Psi_{\pm}} (t,x,D) : Z \to  \ep Z, \qquad \pt_x   e_{<0}^{\pm i \Psi_{\pm}} (t,x,D) : L^2 \to \ep L^2 \ee
\be e_{<0}^{-i \Psi_{\pm}} (t,x,D) e_{<0}^{i \Psi_{\pm}} (D,y,s)-I : Z \to Z \ee
\be \label{conj} e_{<0}^{-i \Psi_{\pm}} (t,x,D) \Box - \Box_{A_{<0}}^p e_{<0}^{-i \Psi_{\pm}} (t,x,D) : (N_{0}^{\pm})^{\ast} \to \ep N_{0}^{\pm}  \ee
\be \label{solnorbd} e_{<0}^{-i \Psi_{\pm}} (t,x,D) : S_{0}^{\#} \to S_0  \ee
\end{theorem}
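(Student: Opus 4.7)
The plan is to follow the parametrix construction in \cite{KST}, treating the five estimates as consequences of careful symbol analysis of $\Psi_{\pm}$. The starting point will be a systematic study of the phase $\Psi_{\pm}(t,x,\xi)$ defined in \eqref{eq:phasefc}. Since $\xi$ enters $\Psi_{\pm}$ only through $\omega = \xi/\abs{\xi}$, differentiating in $\xi$ loses factors of $\abs{\xi}^{-1}$ and of $2^{-\sigma k}$ (from the angular cutoff $\Pi^{\omega}_{>\sigma k}$, whose $\omega$-derivatives grow like $2^{-\sigma k}$). The elliptic inversion $\Delta^{-1}_{\omega^\perp}$ costs an extra $2^{-2\sigma k}$, but the operator $L^{\omega}_{\pm}$ is elliptic away from $\{\eta \parallel \omega\}$ in the dual frame adapted to $\omega$. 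Because $A^{free}$ is a free wave with data of size $\epsilon$, one exploits the null-frame decomposition $\Box = L^{\omega}_{\mp} L^{\omega}_{\pm} + \Delta_{\omega^{\perp}}$ to derive sharp $L^p$, null-frame, and $X^{s,b}$-type bounds for $P_k \Psi_{\pm}$ and its $\xi$-derivatives, each carrying a factor of $\epsilon$.

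Next, I would prove the boundedness statements \eqref{renbd} and the composition estimate. The idea is to expand $e^{\pm i \Psi_{\pm}}_{<0}$ as a convergent Taylor series $\sum_{n \geq 0} \frac{(\pm i)^n}{n!} \Psi_{\pm}^n$. Each factor of $\Psi_{\pm}$ contributes $\epsilon$, so the series converges in operator norm once the single-factor bound is obtained. For $L^2$, boundedness reduces (by Calder\'on--Vaillancourt) to the symbol regularity established in paragraph~1 for $\sigma>0$ sufficiently small. For $N_0$ and $N_0^{\ast}$, one decomposes $\Psi_{\pm}$ into pieces localized in frequency and angle relative to $\omega$, then pairs with the corresponding null-frame and $X^{s,b}$ atoms; the hard part is handling the null-frame components of $N_0^\ast$, where one uses that the angular cutoff $\Pi^\omega_{>\sigma k}$ is compatible with the cap-localizations in the $S_k^{\omega}(\ell)$ norm. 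The composition $e^{-i \Psi_\pm}(t,x,D) e^{i \Psi_\pm}(D,y,s) - I$ is handled by commuting left and right quantizations; the commutator contains an extra $\xi$-derivative of the symbol, which provides the needed $\epsilon$-gain.

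The derivative estimates \eqref{renbdt} follow from the same scheme: each $\partial_t$ or $\partial_x$ landing on the exponential produces an additional factor of $\partial_{t,x} \Psi_{\pm}$, and by the symbol estimates this factor is small (of order $\epsilon$) in the relevant norms. The conjugation identity \eqref{conj} is the heart of the construction and the step I expect to be the main obstacle: expanding $[\Box, e^{-i\Psi_\pm}(t,x,D)]$ in a pseudodifferential calculus, one finds that the principal symbol of the commutator is $-2(\tau \partial_t \Psi_\pm - \xi^j \partial_{x_j} \Psi_\pm)$ acting on the $\xi$-side, while the second term $-2i P_{<-C} A^{free,j} P_0 \partial_j$ contributes $2 A^{free,j}\xi_j$ times the renormalization. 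On half-wave inputs $\tau = \pm \abs{\xi}$, the sum is $2(-L^\omega_\mp \Psi_\pm - P_{<-C}(\omega\cdot A^{free}))\abs{\xi}$, which by the defining equation $L^\omega_\mp \Psi_\pm = -\Pi^\omega_{>\sigma k}P_{<-C}(\omega\cdot A^{free})$ reduces to $2\abs{\xi}\,\Pi^\omega_{\leq \sigma k}P_{<-C}(\omega\cdot A^{free})$, a tiny error of size $\epsilon \cdot 2^{-\sigma'k}$ gain from the angular cap. All lower-order symbol-calculus remainders contain $\xi$-derivatives of $\Psi_{\pm}$ and are likewise of size $\epsilon$ in the desired $(N_0^{\pm})^* \to N_0^{\pm}$ norm.

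Finally, the mapping \eqref{solnorbd} follows by decomposing the target $S_0$ norm into its constituent pieces ($S^{\text{str}}$, $X^{0,1/2}_\infty$, $S^{\text{ang}}$, $S^{\text{box}}$) and checking that the renormalization preserves each; for the Strichartz and $X^{s,b}$ pieces one uses \eqref{renbd}, while for the null-frame and box components one uses the angular compatibility of $\Psi_\pm$ established in paragraph~1 together with \eqref{renbd} on $N_0^\ast$. The delicate point throughout is choosing $\sigma > 0$ small enough that the symbol $\Psi_\pm$ is regular enough for pseudodifferential calculus in $X^{s,b}$-type spaces, yet large enough (specifically, positive) that the residual error $\Pi^\omega_{\leq \sigma k}P_{<-C}(\omega\cdot A^{free})$ is summable in $k$; this balance, which is the key innovation of \cite{KST} over \cite{RT}, is what allows the argument to close in $d=4$.
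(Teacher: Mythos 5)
The paper does not prove this theorem: immediately after the statement it says that the second mapping in \eqref{renbdt} is Eq.~(116) of \cite{KST} and the remaining four properties are Theorem~3 of \cite{KST}, and Section~7.2 is explicitly an expository review of that construction. So there is no internal argument to compare yours against; the relevant benchmark is the construction in \cite{KST}, Sections 6--11.

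Measured against that, your proposal is a reasonable roadmap but not a proof. You correctly identify the defining relation $L^{\omg}_{\mp}\Psi_{\pm} = -\Pi^{\omg}_{>\sgm k}P_{<-C}(\omg\cdot A^{free})$, the source of the conjugation error $\Pi^{\omg}_{\leq \sgm k}P_{<-C}(\omg\cdot A^{free})$, and the tension in the choice of $\sgm$. But each of the five mapping properties is asserted rather than established, and at least two of the asserted reductions would not go through as stated. First, $L^{2}$ boundedness does \emph{not} reduce to Calder\'on--Vaillancourt: the symbol $\Psi_{\pm}(t,x,\xi)$ has only the $(t,x)$-regularity of the free wave $A^{free}\in C_t\dot H^{1}\cap \dot C^1_t L^2$, so it is nowhere near a standard symbol class in the spatial variables; \cite{KST} instead prove $L^2\to L^2$ bounds by a $TT^{\ast}$ argument combined with the notion of \emph{decomposable} symbol estimates, and this machinery (not classical pseudodifferential calculus) is also what controls the lower-order remainders in \eqref{conj}. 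Second, the bounds on $N_0$, $N_0^{\ast}$ and especially $S_0^{\#}\to S_0$ are the technical core of \cite{KST} (including the null-frame and square-summed box components, and the extra $2^{-\ell'/2}$ gain recorded in Remark~3.1 of the present paper); saying that one ``pairs with the corresponding atoms'' and that the cutoff is ``compatible with the cap-localizations'' names the difficulty without resolving it. If your intent is to cite \cite{KST} for these facts, you should do so explicitly, as the paper does; if your intent is to reprove them, essentially all of the analysis remains to be supplied.
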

The second mapping from \eqref{renbdt} is (116) from \cite{KST}, and the rest are from Theorem 3 in \cite{KST}. From now on, we fix the choice of $\sgm >0$ so that Theorem~\ref{opsummary} is valid.

\begin{remark} 
As discussed above, note that the above mapping properties hold (in particular) in $Z = N_{0}, N_{0}^{\ast}$, which contain $X^{s,b}$-type spaces. We remark that \eqref{conj} is the conjugation error estimate, and \eqref{solnorbd} is precisely the dispersive estimates for the parametrix.
\end{remark}

\subsubsection*{Construction of the parametrix}
The parametrix (or approximate solution) constructed in \cite{KST} for the equation\footnote{We use the definition \eqref{paracovop3} for $ \Box_{A_{<0}}^p $ as in  \cite{KST}, but note that we use a different convention for the Minkowski metric.}
\begin{equation} \label{approxcovsol}
\left\{ 
\begin{array}{l}
  \Box_{A_{<0}}^p \phi=F  \\
 \phi[0]=(g,h)
\end{array} 
\right.
\end{equation}
takes the form
\be  \label{phiapp} \phi_{app}=\frac{1}{2} \lpr T^{+}+T^{-}+ S^{+}+S^{-} \rpr  \ee
 where\footnote{Note that if the $ e_{<0}^{\pm i \Psi_{\pm}} $ terms are removed one obtains the solution of the ordinary wave equation $ \Box \phi=F, \ \phi[0]=(g,h) $.    }
\be T^{\pm}=e_{<0}^{-i \Psi_{\pm}} (t,x,D) \frac{1}{\vm{D}} e^{\pm i t \vm{D}} e^{i \Psi_{\pm}}_{<0}(D,y,0)(\vm{D}g\pm i^{-1} h) \ee
\be \label{spm} S^{\pm}=\mp e_{<0}^{-i \Psi_{\pm}} (t,x,D) \frac{1}{\vm{D}} K^{\pm} e^{i \Psi_{\pm}}_{<0}(D,y,s) i^{-1} F \ee
where $ K^{\pm}F $ denotes the solution $ u $ of the equation
\be (\pt_t \mp i \vm{D})u=F, \qquad u(0)=0 \ee
given by the Duhamel formula
$$ K^{\pm}F (t)= \int_0^t e^{\pm i (t-s) \vm{D}} F(s) \dd s. $$ 
More precisely, the result in  \cite{KST} states  

\begin{theorem} \label{covariantparametrix}
Assume that $ F,g,h $ are localized at frequency $ 1 $, and also that $ F $ is localized at modulation $ \lesssim 1 $. Then $  \phi_{app} $ is an approximate solution for \eqref{approxcovsol}, in the sense that
\be \vn{\phi_{app}}_{S_0} \lesssim \vn{g}_{L^2}+\vn{h}_{L^2}+\vn{F}_{N_0}  \ee
and
\be  \vn{\phi_{app}[0]-(g,h)}_{L^2}+ \vn{\Box_{A_{<0}}^p \phi_{app}-F}_{N_0} \leq \dlt (\vn{g}_{L^2}+\vn{h}_{L^2}+\vn{F}_{N_0} ) \ee
\end{theorem}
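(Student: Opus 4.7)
The proof will verify the three estimates in Theorem~\ref{covariantparametrix}---the $S_0$ bound, the initial data discrepancy, and the equation discrepancy---by combining the explicit structure of $T^\pm,S^\pm$ with the five mapping properties assembled in Theorem~\ref{opsummary}.

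For the $S_0$ bound, each of the four pieces has the form $e^{-i\Psi_\pm}(t,x,D)\,u^\pm$ with $u^\pm$ a half-wave. By \eqref{solnorbd} it suffices to control $u^\pm$ in $S_0^{\#}$, which reduces to the standard energy estimates $\vn{e^{\pm it\vm{D}}f}_{S_0^{\#}}\lesssim \vn{f}_{L^2}$ and $\vn{K^{\pm}G}_{S_0^{\#}}\lesssim \vn{G}_{N_0}$, combined with the boundedness of $e^{i\Psi_\pm}(D,y,s)$ on $L^2$ and $N_0$ from \eqref{renbd}.

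For the initial data discrepancy, observe that $S^\pm(0)=0$ by the initial condition on $K^\pm$, so only $T^\pm$ contribute at $t=0$. Plugging $t=0$ into $\tfrac12(T^{+}+T^{-})$ and into its time derivative, the principal contribution in which $e^{-i\Psi_\pm}(0,x,D)\,e^{i\Psi_\pm}(D,y,0)$ is replaced by $I$ reproduces $(g,h)$ through the classical algebraic identity for the free wave propagator $\tfrac12(e^{it\vm{D}}+e^{-it\vm{D}})$, $\tfrac{1}{2i\vm{D}}(e^{it\vm{D}}-e^{-it\vm{D}})$. The remainder is of two types: the near-inverse error $e^{-i\Psi_\pm}(0,x,D)\,e^{i\Psi_\pm}(D,y,0)-I$, which is $\dlt$-small on $L^2$ by \eqref{renbd}, and the terms where the $t$-derivative lands on $e^{-i\Psi_\pm}(t,x,D)$, which are $\ep$-small on $L^2$ by \eqref{renbdt}.

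For the equation discrepancy, the key step is to use \eqref{conj} to commute $\Box^p_{A_{<0}}$ past each $e^{-i\Psi_\pm}(t,x,D)$, producing $e^{-i\Psi_\pm}(t,x,D)\,\Box u^\pm$ plus an $\ep$-error in $N_0$. For $T^\pm$, the free half-waves $e^{\pm it\vm{D}}$ lie in the kernel of $\Box$, so the contribution vanishes. For $S^\pm$, the factorization $\Box=(i\pt_t-\vm{D})(i\pt_t+\vm{D})$ and the identities $(i\pt_t\pm\vm{D})K^\pm G = iG$ give
\[
\Box v^{+}=\vm{D}^{-1}(i\pt_t-\vm{D})\,e^{i\Psi_{+}}(D,y,s)F,\qquad \Box v^{-}=\vm{D}^{-1}(i\pt_t+\vm{D})\,e^{i\Psi_{-}}(D,y,s)F,
\]
so that $-e^{-i\Psi_{+}}\Box v^{+}+e^{-i\Psi_{-}}\Box v^{-}$ reproduces $2F$ once one commutes the half-wave operators past the left quantizations (paying $\ep$ through \eqref{renbdt} for each $\pt_t$ that hits a symbol) and then applies the near-inverse identity $e^{-i\Psi_\pm}(t,x,D)\,e^{i\Psi_\pm}(D,y,s)=I+O(\ep)$ in $N_0$; the algebraic identity $-\tfrac{i\pt_t-\vm{D}}{\vm{D}}+\tfrac{i\pt_t+\vm{D}}{\vm{D}}=2$ closes the argument after the $\tfrac12$ factor in \eqref{phiapp}.

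The main obstacle I expect is the bookkeeping in this last step: the commutators $[i\pt_t\mp\vm{D},\,e^{-i\Psi_\pm}(t,x,D)]$ and the discrepancy between left- and right-quantized symbols must each be absorbed as $O(\ep)$ errors in $N_0$, which is where the frequency-$1$ and modulation-$\lesssim 1$ assumption on $F$ becomes essential---it ensures that the multipliers $(i\pt_t\mp\vm{D})\vm{D}^{-1}$ are bounded on the relevant spectral support so that the errors from Theorem~\ref{opsummary} genuinely close.
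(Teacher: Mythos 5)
The first thing to say is that the paper does not prove this statement at all: Theorem~\ref{covariantparametrix} is quoted verbatim from \cite{KST} (``the result in \cite{KST} states\dots'') and used as a black box, with Remark~\ref{sboxrk} noting only that the slightly stronger $S^{box}_k$ component of the $S_0$ norm used here is extracted from Subsection~11.3 of \cite{KST}. So there is no in-paper proof to compare against; what you have written is a reconstruction of the \cite{KST} argument. As such, your outline of the three verifications is structurally right, but be aware that essentially all of the mathematical content lives inside Theorem~\ref{opsummary} and inside the bound $\vn{e^{\pm it\vm{D}}f}_{S_0^{\#}}\lesssim\vn{f}_{L^2}$, which you dismiss as ``standard energy estimates.'' It is not standard: $S_0$ contains square-summed angular/box-localized $L^2L^\infty$ norms and null-frame norms, and proving that the renormalized half-waves land in this space (via the intermediate space $S_0^{\#}$ and the wave-packet/oscillatory-integral analysis) occupies Sections 9--11 of \cite{KST}. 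Granting Theorem~\ref{opsummary} and the $S_0^{\#}$ estimates, your conjugation computation for the equation error (the factorization $\Box=(i\pt_t-\vm{D})(i\pt_t+\vm{D})$, the Duhamel identities, and the algebraic cancellation producing $F$) is correct.

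There is one concrete gap in your data-matching step. You assert that $S^{\pm}(0)=0$ implies ``only $T^{\pm}$ contribute at $t=0$,'' but $\phi_{app}[0]$ is the pair $(\phi_{app}(0),\pt_t\phi_{app}(0))$, and $\pt_t S^{\pm}(0)=\mp e^{-i\Psi_{\pm}}_{<0}(0,x,D)\tfrac{1}{i\vm{D}}e^{i\Psi_{\pm}}_{<0}(D,y,0)F(0)$ does not vanish (the symbol-derivative term does vanish since it multiplies $K^{\pm}(\cdots)(0)=0$, but the term where $\pt_t$ hits the Duhamel integral survives). These two contributions cancel between the signs only up to near-inverse errors applied to $F(0)$, and $\vn{F(0)}_{L^2}$ is \emph{not} controlled by $\vn{F}_{N_0}$ for a general $L^1L^2$ atom; one must invoke the modulation localization of $F$ (which, combined with the spatial frequency localization, confines the temporal Fourier support to $\abs{\tau}\lesssim 1$ and gives $\vn{F(0)}_{L^2}\lesssim\vn{F}_{N_0}$ by Bernstein in time). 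You correctly flag the modulation hypothesis as essential for the equation error, but it is equally essential here, and the claim that the source terms do not contribute to the initial data needs to be repaired along these lines.
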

The spaces $ S_0 $ and $ N_0 $ are defined in Section \ref{sec:fs} (see also also Remark \ref{sboxrk}).

\subsection{Renormalization for $(i \rd_{t} + \abs{D})^{p}_{A}$} \label{subsec:rn-hw}
Henceforth, our goal is to similarly obtain a parametrix (or approximate solution) for \eqref{problem} in order to prove Proposition~\ref{papp}, using the results in Section~\ref{subsec:rn-box} to renormalize $(i \rd_{t} + \abs{D})^{p}_{A_{<k}}$.

Suppose $ F,g,h $ are localized at frequency $ 1 $, and consider $S^{\pm}, T^{\pm}$ defined in Section~\ref{subsec:rn-box}. If $ F $ has small $ Q^+ $ -modulation, then so do $ S^{+} $ and $ T^{+} $. This also applies to $ S^{-} $, except for a part with Fourier support in the lower characteristic cone. Therefore we decompose 
\be \label{decoms}  S^{-}=Q^{+}_{\leq -2}  S^{-} + S^{-}_0, \qquad S^{-}_0 \defeq e_{<0}^{-i \Psi_{-}} (t,x,D) Q^{-}_{\leq -1} \lpr \frac{1}{\vm{D}} u  \rpr,   \ee
according to the following definitions
\be u  \defeq \frac{1}{i} K^{-}\tilde{F}, \qquad  \  \tilde{F} \defeq e^{i \Psi_{\pm}}_{<0}(D,y,s) F,  \ee
so that $ \hwm u=\tilde{F}, \ u(0)=0 $.
Let us define the function $ v $ such that
\be \label{vfct} \mathcal{F} v (\tau,\xi) \defeq \frac{-1}{\tau+\vm{\xi}} \mathcal{F}(\tilde{F})(\tau,\xi),\hbox{ so } \quad \hwm v=\tilde{F}. \ee
The term $S_{0}^{-}$ can be controlled by $\nrm{F}_{N^{0}}$ as follows.
\begin{lemma}
Suppose $ F $ is localized at frequency $ 1 $ and at $Q^{+}$-modulation $ \leq 1 $. Then for $ S^{-}_0 $ and $ v $ defined by \eqref{decoms} and \eqref{vfct} we have:
\be   \label{vzr} \vn{v(0)}_{L^2}  \lesssim \vn{F}_{N_0} \ee
\be  \label{seq} S^{-}_0  =-e_{<0}^{-i \Psi_{-}} (t,x,D) \frac{1}{\vm{D}}e^{- i t \vm{D}} (v(0)) \ee
 \be  \label{szr}  \vn{\hwm S^{-}_0(0)}_{L^2}  \lesssim \ep \vn{F}_{N_0}. \ee
\end{lemma}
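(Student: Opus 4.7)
The three assertions are linked through the algebraic relation between $u$ and $v$, and my plan is to first establish \eqref{seq} by a modulation-support argument, then deduce \eqref{vzr} from Lemma~\ref{lemmadivsymb}(2), and finally obtain \eqref{szr} from the explicit form \eqref{seq} together with the commutator bounds \eqref{renbdt}. The key modulation observation is that since $F$ is localized at frequency $\simeq 1$ and $Q^+$-modulation $\leq 1$, its Fourier support obeys $\tau\simeq|\xi|\simeq 1$, whence $|\tau+|\xi||\simeq 2$. The renormalization symbol $e^{i\Psi_-}_{<0}$ lives at spacetime frequency $<2^{-C}$, so $\tilde F$ inherits $Q^-_{\leq-1}\tilde F=0$; and since $v$ is obtained from $\tilde F$ by applying a Fourier multiplier, $v$ has Fourier support inside that of $\tilde F$ and likewise satisfies $Q^-_{\leq-1}v=0$. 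Now $u-v$ solves $\hwm(u-v)=0$ with $(u-v)(0)=-v(0)$, so $u(t)=v(t)-e^{-it|D|}v(0)$. Applying $Q^-_{\leq-1}$, the $v$-contribution vanishes while the free $-$-wave $e^{-it|D|}v(0)$, Fourier-supported on $\tau+|\xi|=0$, is left unchanged by $Q^-_{\leq-1}$. Commuting $\frac{1}{|D|}$ past $Q^-_{\leq-1}$ then yields \eqref{seq}.

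For \eqref{vzr}, I apply Lemma~\ref{lemmadivsymb}(2) with $s=-$ and $k=0$: the required $Q^-$-modulation condition on $\tilde F$ has just been verified, and the Fourier identity defining $v$ matches the form \eqref{operdiv}. This yields $\|v\|_{L^\infty L^2}\lesssim\|\tilde F\|_{N_0^-}\lesssim\|\tilde F\|_{N_0}$, and the $N_0\to N_0$ bound \eqref{renbd} for the renormalization operator then controls $\|\tilde F\|_{N_0}$ by $\|F\|_{N_0}$. For \eqref{szr}, I use \eqref{seq} to write $S^-_0=e^{-i\Psi_-}_{<0}(t,x,D)\,w$ with $w(t):=-\frac{1}{|D|}e^{-it|D|}v(0)$ a free $-$-wave (so $\hwm w=0$ and $i\partial_t w=|D|w$). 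Then
\begin{equation*}
\hwm S^-_0 \;=\; [\hwm,\,e^{-i\Psi_-}_{<0}(t,x,D)]\,w \;=\; (i\partial_t e^{-i\Psi_-}_{<0})(t,x,D)\,w \;-\; [|D|,\,e^{-i\Psi_-}_{<0}(t,x,D)]\,w .
\end{equation*}
The first term is bounded $L^2\to\epsilon L^2$ directly by \eqref{renbdt}. The second, after a pseudodifferential-calculus reduction, is controlled by $\tfrac{1}{i}\tfrac{\xi}{|\xi|}\cdot(\partial_x e^{-i\Psi_-}_{<0})(t,x,D)\,w$ plus lower-order errors, and is again $\epsilon$-small in $L^2$ by the spatial half of \eqref{renbdt}. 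Combined with $\|w\|_{L^\infty L^2}\simeq\|v(0)\|_{L^2}\lesssim\|F\|_{N_0}$ from \eqref{vzr}, evaluation at $t=0$ gives \eqref{szr}.

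The principal technical subtlety is the commutator $[|D|,\,e^{-i\Psi_-}_{<0}(t,x,D)]$: unlike the $\partial_t$ commutator, which is handled by a one-line identity, commuting with the nonlocal operator $|D|$ requires genuine pseudodifferential calculus to identify the leading symbol $\tfrac{1}{i}\tfrac{\xi}{|\xi|}\cdot\partial_x e^{-i\Psi_-}_{<0}$, from which the $\epsilon$ smallness of $A^{free}$ in \eqref{eq:phasefc} is ultimately harvested. I expect this to reduce in a routine way to the $L^2\to\epsilon L^2$ boundedness of $\partial_x e^{-i\Psi_-}_{<0}(t,x,D)$ supplied by \eqref{renbdt}, but it is the only step here that is not purely formal.
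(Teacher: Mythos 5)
Your proposal is correct and follows essentially the same route as the paper: \eqref{vzr} via Lemma~\ref{lemmadivsymb} (\eqref{embeasy}) applied to $\tilde F$ together with \eqref{renbd}; \eqref{seq} via the observation that $u-v$ is the free $-$-wave with data $-v(0)$ and that $Q^-_{\leq-1}$ annihilates $v$; and \eqref{szr} by commuting $\hwm$ with $e^{-i\Psi_-}_{<0}(t,x,D)$ and reducing both the $\partial_t$ and $[|D|,\cdot]$ pieces to \eqref{renbdt} (the paper invokes the dual of Lemma 7.2 of \cite{KST} for the latter, which is exactly the pseudodifferential reduction you describe).
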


\begin{proof} 
The proof is divided into three steps.
\pfstep{Step~1: Proof of \eqref{vzr}} Since $ F $ and  $ \tilde{F} $ are localized at $ Q^{-}$-modulation $ \gtrsim 1$   from \eqref{embeasy}  and \eqref{renbd} we have
\be \vn{v(0)}_{L^2} \lesssim \vn{v}_{L^{\infty} L^2} \lesssim \vn{\tilde{F}}_{N_0^{-}} \lesssim \vn{\tilde{F}}_{N_0} \lesssim \vn{F}_{N_0}. \ee

\pfstep{Step~2: Proof of \eqref{seq}} Subtracting $ v $ from $ u $ we get
\be \hwm(u-v)=0, \qquad (u-v)(0)=-v(0). \ee
Thus $ Q^{-}_{\leq -1} u= Q^{-}_{\leq -1} (u-v)=e^{- i t \vm{D}} (-v(0)) $ from which \eqref{seq} follows.

\pfstep{Step~3: Proof of \eqref{szr}}  Using \eqref{vzr}, it suffices to show $ \vn{\hwm S^{-}_0(0)}_{L^2} \lesssim \ep \vn{v(0)}_{L^2}. $
\be \hwm S^{-}_0(0)=i [\pt_t e_{<0}^{- i \Psi_{-}}] (0,x,D)  \lpr \frac{v(0)}{\vm{D}} \rpr + \lpp e_{<0}^{- i \Psi_{-}},\vm{D} \rpp \lpr \frac{v(0)}{\vm{D}} \rpr. \ee

The first term is estimated by \eqref{renbdt}. For the second, we use the dual of Lemma 7.2 in \cite{KST} and \eqref{renbdt} to obtain
$$ \vn{\vm{D} e_{<0}^{-i \Psi_{-}} (0,x,D) -   e_{<0}^{-i \Psi_{-}} (0,x,D) \vm{D} }_{L^2 \to L^2} \lesssim \vn{\pt_x e_{<0}^{-i \Psi_{-}} (0,x,D)}_{L^2\to L^2} \lesssim \ep   \qedhere $$ \end{proof}

The following proposition is essentially a restatement of Theorem~\ref{covariantparametrix} in a convenient form for our application.
\begin{proposition} \label{propmod}
Suppose $ F $ and $ f $ are localized at frequency $ \{ \vm{\xi} \in  [2^{-2},2^{2}] \} $ and $ F $ is also localized at $ Q^+ $-modulation $ \{ \vm{\tau -\vm{\xi}} \leq 2^{-4} \} $.  Then there exists $ \phi $ localized at $ \{ \vm{\xi} \in [2^{-3},2^{+3}], \ \vm{\tau -\vm{\xi}} \leq 2^{-3} \} $     such that
\be \label{einq1} \vn{(i \pt_t -\vm{D})  \phi(0)-f}_{L^2}+ \vn{\Box_{A_{<0}}^p \phi-F}_{N_0}   \leq \delta \lpr \vn{f}_{L^2}+\vn{F}_{N_0} \rpr \ee
\be \label{einq2} \vn{\phi}_{S_0} \lesssim \vn{f}_{L^2}+\vn{F}_{N_0}. \ee
\end{proposition}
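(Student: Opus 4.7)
The plan is to deduce Proposition~\ref{propmod} directly from Theorem~\ref{covariantparametrix} by choosing the data for the $\Box^p_{A_{<0}}$-problem so that the resulting parametrix is naturally concentrated on the $(+)$-cone. Specifically, I would set
$$
g \defeq -\tfrac{1}{2|D|} f, \qquad h \defeq -\tfrac{i}{2} f = i|D| g,
$$
which is frequency-localized in $\{|\xi|\in[2^{-2},2^{2}]\}$ with $\|g\|_{L^2}+\|h\|_{L^2}\lesssim \|f\|_{L^2}$, and which enjoys the two key identities
$$
|D| g + i^{-1} h = -f, \qquad |D| g - i^{-1} h = 0.
$$
The second identity forces $T^-\equiv 0$ in \eqref{phiapp}; the first will eventually produce $f$ at the level of $(i\partial_t-|D|)\phi(0)$.

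Applying Theorem~\ref{covariantparametrix} to this data and source $F$ yields $\phi_{app}$ with $\|\phi_{app}\|_{S_0}\lesssim C_0$, $\|\phi_{app}[0]-(g,h)\|_{L^2\times L^2}\leq \delta C_0$, and $\|\Box^p_{A_{<0}}\phi_{app}-F\|_{N_0}\leq \delta C_0$, where $C_0\defeq \|f\|_{L^2}+\|F\|_{N_0}$. Using \eqref{phiapp}, \eqref{decoms}, and $T^-\equiv 0$, I would decompose
$$
\phi_{app} = \phi^+ + \tfrac12 S_0^-, \qquad \phi^+ \defeq \tfrac12\big(T^+ + S^+ + Q^+_{\leq -2} S^-\big).
$$
Because the renormalization symbol $\Psi_\pm$ has space-time Fourier support at frequencies $\ll 1$, $\phi^+$ is supported in $\{\tau>0,\,|\tau-|\xi||\leq 2^{-3},\,|\xi|\in[2^{-3},2^{3}]\}$, while $S_0^-$ is supported near the $(-)$-cone $\{\tau+|\xi|=O(\ll 1)\}$. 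I would take $\phi\defeq \phi^+$; equivalently, $\phi=\tilde P_0 Q^+_{\leq -3}\phi_{app}$. The bound $\|\phi\|_{S_0}\lesssim C_0$ follows from $\|\phi_{app}\|_{S_0}\lesssim C_0$ combined with $\|S_0^-\|_{S_0}\lesssim \|v(0)\|_{L^2}\lesssim \|F\|_{N_0}$ via \eqref{solnorbd} and \eqref{vzr}.

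For the error bounds in \eqref{einq1}: the equation error splits as
$$
\Box^p_{A_{<0}}\phi - F = \big(\Box^p_{A_{<0}}\phi_{app}-F\big) - \tfrac12\,\Box^p_{A_{<0}} S_0^-,
$$
where the first piece is $O_{N_0}(\delta C_0)$ by Theorem~\ref{covariantparametrix}, and the second is controlled by applying the conjugation identity \eqref{conj} to the free wave $\tfrac{1}{|D|}e^{-it|D|}v(0)$ (which $\Box$ annihilates), giving a size $O_{N_0}(\varepsilon\|v(0)\|_{L^2})=O_{N_0}(\varepsilon C_0)$. For the initial-data bound, $\phi_{app}[0]=(g,h)+O_{L^2}(\delta C_0)$ together with our algebraic choice yields
$$
(i\partial_t-|D|)\phi_{app}(0) \;=\; ih - |D|g + O_{L^2}(\delta C_0) \;=\; f + O_{L^2}(\delta C_0).
$$
Since $(i\partial_t-|D|)e^{-it|D|}=0$, the correction from $S_0^-$ reduces to the commutator $-\tfrac12\big[(i\partial_t-|D|),\,e^{-i\Psi_-}_{<0}(t,x,D)\big]\big|_{t=0}\tfrac{1}{|D|}v(0)$, which by \eqref{renbdt} is bounded in $L^2$ by $\varepsilon\|v(0)\|_{L^2}\lesssim \varepsilon C_0$. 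Summing up gives $\|(i\partial_t-|D|)\phi(0)-f\|_{L^2}\leq \delta C_0$, as required.

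The principal obstacle is the initial-value verification: $\phi$ involves the space-time Fourier multiplier $e^{-i\Psi_\pm}_{<0}(t,x,D)$, so evaluating its time slice and the time slice of $(i\partial_t-|D|)\phi$ is not a matter of manipulating initial data alone. The argument goes through because of three structural features: (i) the algebraic identity $ih-|D|g=f$ forced by the choice of $(g,h)$; (ii) the exact vanishing $T^-\equiv 0$, which isolates the only off-cone contribution to the explicit renormalized free wave $S_0^-$ with $\|v(0)\|_{L^2}\lesssim \|F\|_{N_0}$ by \eqref{vzr}; and (iii) the annihilation $(i\partial_t-|D|)e^{-it|D|}=0$, which converts the $S_0^-$-correction into a phase-operator commutator controlled by the $\varepsilon$-smallness of $A^{free}$ through \eqref{renbdt}.
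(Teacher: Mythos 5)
Your proposal is correct and follows essentially the same route as the paper: the same choice $g=-\tfrac{1}{2|D|}f$, $h=-\tfrac{i}{2}f$ forcing $T^{-}=0$, the same splitting $\phi_{app}=\phi+\tfrac12 S_0^{-}$ with $\phi=\tfrac12(T^{+}+S^{+}+Q^{+}_{\leq-2}S^{-})$, the conjugation bound \eqref{conj} applied to the free wave $\tfrac{1}{|D|}e^{-it|D|}v(0)$ for the equation error, and the commutator/\eqref{renbdt} argument for $(i\partial_t-|D|)S_0^{-}(0)$, which is exactly the content of \eqref{szr}. No gaps.
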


\begin{proof}
Let us choose $ g $ and $ h $  such that 
\be i h+\vm{D}g=0, \qquad ih-\vm{D}g=f \ee
and apply Theorem \ref{covariantparametrix} to $ (F,g,h) $. Then $ T^{-}=0 $ in the definition of $ \phi_{app} $ from  \eqref{phiapp}--\eqref{spm}. From Theorem \ref{covariantparametrix} we have
\be \label{estt1} \vn{\phi_{app}[0]-(g,h)}_{L^2}+ \vn{\Box_{A_{<0}}^p \phi_{app}-F}_{N_0} \ll B, \quad \vn{\phi_{app}}_{S_0} \lesssim B. \ee
where $ B=\vn{g}_{L^2}+\vn{h}_{L^2}+\vn{F}_{N_0} $. Observe that it suffices to bound the LHS of \eqref{einq1} by $ \delta B $.
We define  
\be \phi \defeq \frac{1}{2} \lpr T^{+}+ S^{+}+ Q^{+}_{\leq -2}  S^{-} \rpr \ee
and observe that $ \phi $ has the stated $ Q^+ $-modulation. Furthermore, 
$$  \phi_{app}=\phi+\frac{1}{2} S^{-}_0 $$
where $ S^{-}_0 $ is given by \eqref{decoms}, \eqref{seq}. We write
$$ \Box_{A_{<0}}^p \phi-F=\lpr \Box_{A_{<0}}^p \phi_{app}-F \rpr + \lpr \Box_{A_{<0}}^p e_{<0}^{-i \Psi_{-}} (t,x,D) -e_{<0}^{-i \Psi_{-}} (t,x,D) \Box \rpr \frac{e^{\pm i t \vm{D}}}{\vm{D}} (v(0))  $$
The first term is estimated by \eqref{estt1}, while for the second use \eqref{conj} and \eqref{vzr}. Moreover, 
\be 
\begin{aligned}
\hwm \phi(0)-f= & \lpp \hwm \phi_{app}-(ih-\vm{D}g) \rpp  \\
&+ \lpp (ih-\vm{D}g)-f \rpp - \frac{1}{2} \hwm S^{-}_0(0) 
\end{aligned}
\ee
The first term is estimated by \eqref{estt1}, the second term is zero, and the third term follows from \eqref{szr}. This proves \eqref{einq1}.

The bound \eqref{einq2} follows from \eqref{estt1}, \eqref{solnorbd}, \eqref{seq} and \eqref{vzr}. \qedhere
\end{proof}

We are now ready to construct the key part of our parametrix for \eqref{problem}.
\begin{proposition} \label{phyb}
Suppose $ F $ and $ f $ are localized at frequency $ \{ \vm{\xi} \in [2^{k-2},2^{k+2}] \} $ and $ F $ is also localized at $ Q^+ $ -modulation $ \{ \vm{\tau -\vm{\xi}} \leq 2^{k-4} \} $. Then there exists $ \psi_k^1 $ localized at $ \{ \vm{\xi} \in [2^{k-3},2^{k+3}], \ \vm{\tau -\vm{\xi}} \leq 2^{k-3} \} $ such that
\be \vn{\psi_k^1(0)-f}_{L^2}+ \vn{\hw_{A_{<k}}^p \psi_k^1-F}_{N_k^+}   \leq \delta \lpr \vn{f}_{L^2}+\vn{F}_{N_k^+} \rpr  \ee
\be \label{splusest}  \vn{\psi_k^1}_{S_k^+} \lesssim \vn{f}_{L^2}+\vn{F}_{N_k^+}. \ee
\end{proposition}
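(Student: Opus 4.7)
The plan is to construct $\psi^1_k$ directly by mimicking the structure of the wave parametrix from Proposition~\ref{propmod}, but with the sign of the renormalization phase flipped. Whereas \eqref{conj} asserts $e^{-i\Psi_+}_{<0}(t,x,D)\,\Box \approx \Box^p_{A_{<0}}\, e^{-i\Psi_+}_{<0}(t,x,D)$, the analogous identity for the half-wave is
\begin{equation} \label{eq:hw-conj-plan}
  \hw^p_{A_{<0}}\, e^{+i\Psi_+}_{<0}(t,x,D) \;-\; e^{+i\Psi_+}_{<0}(t,x,D)\, \hw \;:\; (N_0^+)^\ast \to \ep\, N_0^+,
\end{equation}
which follows from a symbol-level cancellation at frequency $\omega := \xi/|\xi|$: the commutator $[\hw, e^{+i\Psi_+}_{<0}(t,x,D)]$ has principal symbol $(L^\omega_-\Psi_+)\, e^{+i\Psi_+} \approx -P_{<-C}(\omega\cdot A^{free}_x)\, e^{+i\Psi_+}$ (using $-L^\omega_-\Psi_+ \approx P_{<-C}(\omega\cdot A^{free}_x)$, the relation underlying \eqref{eq:phasefc}), and this cancels exactly with the principal symbol $+P_{<-C}(\omega\cdot A^{free}_x)\, e^{+i\Psi_+}$ of the paradifferential correction $-iP_{<-C}A^{free,j}\tfrac{\partial_j}{|D|}P_0\, e^{+i\Psi_+}$.

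After the standard scaling reduction to $k=0$, I would define the half-wave parametrix
\[
\psi^1_0(t) \defeq e^{+i\Psi_+}_{<0}(t,x,D)\!\left[ e^{it|D|}\, e^{-i\Psi_+}_{<0}(D,y,0)\, f \;+\; \tfrac{1}{i}\, K^+\!\big( e^{-i\Psi_+}_{<0}(D,y,s)\, F\big)(t) \right],
\]
where $K^+ G(t) := \int_0^t e^{i(t-s)|D|} G(s)\,\mathrm{d}s$ satisfies $\hw(K^+ G / i) = G$. Writing $U(t) := e^{+i\Psi_+}_{<0}(t,x,D)$ and denoting the bracket by $\tilde\phi(t)$, since $\hw$ annihilates the free $+$ half-wave and the Duhamel reproduces its input, one has $\hw\tilde\phi(t) = e^{-i\Psi_+}_{<0}(D,y,t)\, F(t)$. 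Invoking \eqref{eq:hw-conj-plan},
\[
\hw^p_{A_{<0}}\psi^1_0 \;=\; U\hw\tilde\phi + (\hw^p_{A_{<0}} U - U\hw)\tilde\phi \;=\; e^{+i\Psi_+}_{<0}(t,x,D)\, e^{-i\Psi_+}_{<0}(D,y,t)\, F + E \;\approx\; F,
\]
modulo an error controlled by \eqref{eq:hw-conj-plan} and the approximate inverse property from Theorem~\ref{opsummary}. The initial condition $\psi^1_0(0) \approx f$ follows from the same approximate inverse at $t=0$, and the frequency and $Q^+$-modulation localizations of $\psi^1_0$ are inherited from those of $F$ and the free $+$ half-wave, using $\|\Psi_+\|_{L^\infty} \lesssim \ep$ to ensure that $U$ perturbs the modulation by only a small amount.

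The bound \eqref{splusest} would follow by combining \eqref{renbd} and the analog of \eqref{solnorbd} for $e^{+i\Psi_+}$ (whose proof is symmetric under $\Psi_+ \mapsto -\Psi_+$), together with the standard mapping bounds on the free $+$ half-wave propagator and the Duhamel $K^+$ from $L^2$ and $N^+_0$ into the appropriate input space of \eqref{solnorbd}. The principal technical obstacle will be the rigorous proof of \eqref{eq:hw-conj-plan}: one must control subprincipal symbols in $[\hw, e^{+i\Psi_+}]$, handle the angular cutoff errors from $\Pi^\omega_{>\sigma k}$ in \eqref{eq:phasefc}, and justify the formal symbol calculus in the function spaces $(N^+_0)^\ast$ and $N^+_0$. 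This largely parallels the proof of \eqref{conj} for $\Box^p_{A_{<0}}$ given in \cite[Sections~7--10]{KST}, with the simplification that $\hw$ is first-order, so that only one conjugation error term (rather than two, as for $\Box^p$) needs to be tracked.
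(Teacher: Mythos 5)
Your approach is genuinely different from the paper's, and your key structural observation is correct: the left renormalization operator for the half-wave operator must carry the opposite phase sign from the one used for $\Box^p_{A_{<0}}$, since only one of the two cross terms of the second-order conjugation appears. This is consistent with what the paper actually does, namely applying the wave parametrix with the connection $-A^{free}$. However, the routes diverge from there. The paper does \emph{not} build a half-wave parametrix or prove any new conjugation estimate: it sets $\psi_k^1 := \hwm \phi$, where $\phi$ is the output of Proposition~\ref{propmod} applied to $(F, f, -A^{free})$, and uses the algebraic identity
\begin{equation*}
\hw_{A_{<0}}^p \hwm\phi = \Box_{-A_{<0}}^p \phi - i A^{free,\ell}_{<-C}\frac{\partial_\ell}{|D|}\hw P_0\phi ,
\end{equation*}
so that the first term is handled entirely by the black-box wave parametrix of \cite{KST} and the second is perturbative in $L^1L^2$ because $\hw P_0\phi$ gains a full power of the modulation, controlled by the $X^{0,1/2}_\infty$ component of $S_0$. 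The $S_k^+$ bound and the modulation localization then come for free since $\hwm$ acts as a disposable multiplier at low $Q^+$-modulation.

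The gap in your proposal is that its central ingredient, the half-wave conjugation estimate \eqref{eq:hw-conj-plan}, together with the analogue of \eqref{solnorbd} for the composition of $e^{+i\Psi_+}_{<0}(t,x,D)$ with the half-wave propagator and the Duhamel operator $K^+$, is nowhere proved: Theorem~\ref{opsummary} and Theorem~\ref{covariantparametrix} only provide these facts for the second-order operator $\Box^p_{A_{<0}}$. Saying the proof "largely parallels" \cite[Sections~7--10]{KST} is not a reduction to an existing result; it is a promise to redo several sections of delicate pseudodifferential and null-frame analysis for a first-order operator, including the subprincipal terms, the angular cutoff errors from $\Pi^\omega_{>\sigma k}$, and the $S_0\to S_0$ mapping properties — precisely the work the paper's factorization $\hw\hwm = \Box$ is designed to avoid. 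Two smaller points: the frequency/modulation localization of $\psi^1_0$ comes from the space-time frequency truncation $<0$ in the symbol of $e^{\pm i\Psi_\pm}_{<0}$, not from the smallness of $\|\Psi_+\|_{L^\infty}$; and you would still need to convert the resulting $S_0$ bound into an $S_0^+$ bound, which requires tracking the $Q^+$-modulation support as the paper does via $Q^+_{<k+O(1)}S_k\subseteq S_k^+$.
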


\begin{proof}
By scaling invariance, we may assume $ k=0 $.
Define 
$$  \psi_0^1 \defeq \hwm \phi $$
where $ \phi $ is obtained by applying Proposition~\ref{propmod} to $ F, f $ and $ -A^{free} $. At this low $ Q^{+} $-modulation, the norms of $ N_0 $ and $ N_0^{+} $ coincide. Observe that on that space-time frequency region, the symbol of $ \hwm $ is $ \sim 1 $ and behaves as a bump function. Moreover, 
$$ \vn{\psi_0^1}_{S_0^+} \lesssim \vn{\psi_0^1}_{S_0} \lesssim \vn{\phi}_{S_0}   $$
which implies \eqref{splusest}. We write
\be \label{covopreduction}
\begin{aligned}
\hw_{A_{<0}}^p \psi^1_0 &= \Box \phi - i  A^{free, \ell}_{<-C} \frac{\pt_{\ell}}{\vm{D}} P_0 (i\pt_t +\vm{D}-2\vm{D})\phi \\
&=  \Box_{-A_{<0}}^p \phi - i  A^{free, \ell}_{<-C} \frac{\pt_{\ell}}{\vm{D}} \hw P_0 \phi. 
\end{aligned} \ee
Since $ \vn{A^{free}_{<-C}}_{L^2 L^{\infty}} \lesssim \ep $, we estimate
\be
\begin{aligned}
\vn{A^{free, \ell}_{<-C} \frac{\pt_{\ell}}{\vm{D}} \hw P_0 \phi}_{L^1 L^2 } &  \lesssim \ep \sum_{j \leq 0} 2^j \vn{Q^{+}_j P_0 \phi}_{L^{2} L^{2}} \lesssim \ep \vn{\phi}_{X_{\infty}^{0,\frac{1}{2}}} \\
& \lesssim \ep \vn{\phi}_{S_0}  \lesssim \ep ( \vn{f}_{L^2}+\vn{F}_{N_0})
\end{aligned}  \ee
where the last inequality comes from Proposition~\ref{propmod}, which completes the proof. \qedhere
\end{proof}

\subsection{Proof of Proposition \ref{papp}} \label{subsecproof}
We are now ready to prove Proposition~\ref{papp}.

\subsubsection*{The approximate solution $ \psi^a $.}

We define $ \psi^a \defeq \sum_k \psi_k^a $ from its frequency-localized versions 
$$ \psi_k^a \defeq \psi_k^1+\psi_k^2 $$ 
which remain to be defined. 

We decompose $ F=\sum_k P_k F $ and $ P_k F=Q_{<k-6}^+ P_k F + Q_{>k-6}^+ P_k F $. We first define $ \psi_k^2 $ by  
\be \mathcal{F} \psi_k^2 (\tau,\xi) \defeq \frac{1}{-\tau+\vm{\xi}} \mathcal{F}(Q_{>k-6}^+ P_k F)(\tau,\xi) \ee
so that $ \hw \psi_k^2=Q_{>k-6}^+ P_k F $.

Then we apply Proposition~\ref{phyb} to $ Q_{<k-6}^+ P_k F $ and $ P_k f-\psi_k^2(0) $ which defines the function $ \psi_k^1 $.
\newline

\subsubsection*{Reduction to the frequency-localized case.}
By redefining $ \delta $ (taking $ \ep $ smaller), it suffices to show
\begin{equation} 
\begin{aligned}
\vn{P_{k} [\psi^a(0)-f]}_{\dot{H}^{1/2}}+ \vn{P_k [\phw \psi^a - F]}_{N_{+}^{1/2} \cap L^{2} L^{2}}  \\
\lesssim \delta \sum_{k'=k+O(1)} \lpr  \vn{P_{k'}f}_{\dot{H}^{1/2}}+ \vn{P_{k'} F}_{N_{+}^{1/2} \cap L^{2} L^{2}} \rpr ,
\end{aligned}
\end{equation}
\be \label{ltwobd} \vn{P_k \psi^a}_{S_{+}^{1/2}} \lesssim \sum_{k'=k+O(1)} \vn{P_{k'} f}_{\dot{H}^{1/2} }+ \vn{P_{k'} F}_{N_{+}^{1/2} \cap L^{2} L^{2}}. \ee 

Notice that
\be P_{k}[ \phw \psi^a-F]=\sum_{k'=k+O(1)} P_{k} [\phw \psi_{k'}^a-P_{k'}F], \ee
and the analogous summation for $ P_k \psi^a $ and $ P_k [\psi^a(0)-f] $.
By disposing of $ P_k $ it suffices to show the following estimates:
\be \label{apres1} 
\begin{aligned}
\vn{\hw_{A_{<k'}}^p \psi_{k'}^a- P_{k'} F}_{N_{k'}^+} + \vn{\psi_{k'}^a(0)-P_{k'} f}_{L^2} \\
\lesssim \delta \lpr \vn{P_{k'} f}_{L^2}+ \vn{P_{k'}F}_{N_{k'}^{+} \cap L^2 \dot{H}^{-1/2}} \rpr 
\end{aligned}
\ee
\be \label{apres2} 
\begin{aligned}
2^{-k'/2} \vn{\hw_{A_{<k'}}^p \psi_{k'}^a- P_{k'} F}_{L^{2} L^{2}} \lesssim \delta \lpr \vn{P_{k'} f}_{L^2}+ \vn{P_{k'}F}_{N_{k'}^{+} \cap L^2 \dot{H}^{-1/2} } \rpr 
\end{aligned}
\ee
\be \label{apres3} \vn{\psi_{k'}^a}_{S_{k'}^+} \lesssim \vn{P_{k'} f}_{L^2}+ \vn{P_{k'} F}_{N_{k'}^{+} \cap L^2 \dot{H}^{-1/2}} \ee
\be \label{ltwobd}   2^{-\frac{k}{2}} \vn{\hw \psi_{k'}^a}_{L^{2} L^{2}} \lesssim \vn{P_{k'} f}_{L^2}+\vn{P_{k'} F}_{N_{k'}^+ \cap L^{2} \dot{H}^{-1/2}} \ee 
and the following error term, where $ k'',k'''=k'\pm O(1) $:
\be \label{aferr} \vn{A^{free, j}_{k''-c} \frac{\rd_{j}}{\abs{D}} P_{k'''} \psi_{k'}^a}_{N_{k'}^{+} \cap L^2 \dot{H}^{-1/2}} \lesssim \ep  \vn{\psi_{k'}^a}_{S_{k'}^+} \ee

\subsubsection*{Proof of claims \eqref{apres1}--\eqref{aferr}}
It only remains to prove \eqref{apres1}--\eqref{aferr}.
\pfstep{Step~1: Proof of \eqref{apres3}}
For $ \psi_k^2 $ we have, by Lemma \ref{lemmadivsymb}
\be \label{eqbd1} \vn{\psi_k^2}_{S_k^+} \lesssim \vn{Q_{>k-6}^+ P_k F}_{N_k^+ \cap L^2 \dot{H}^{-1/2}}. \ee 
For the function $ \psi_k^1 $, by Proposition~\ref{phyb}, we have
\be \label{eqbd2} \vn{\psi_k^1}_{S_k^+} \lesssim \vn{P_k f-\psi_k^2(0)}_{L^2}+\vn{Q_{<k-6}^+ P_k F}_{N_k^+} \lesssim  \vn{P_k f}_{L^2}+ \vn{P_k F}_{N_k^+ \cap L^2 \dot{H}^{-1/2}}. \ee
We have used \eqref{eqbd1} to bound $ \vn{\psi_k^2(0)}_{L^2}$.

\pfstep{Step~2: Proof of \eqref{apres1}}  By Proposition~\ref{phyb}, we have
\begin{equation} \label{intermedest}
\begin{aligned}
& \hskip-2em \vn{\psi_k^1(0)-[P_k f-\psi_k^2(0)]}_{L^2}+ \vn{\hw_{A_{<k}}^p \psi_k^1-Q_{<k-6}^+ P_k F}_{N_k^+}  \\
\leq & \delta \lpr \vn{P_k f-\psi_k^2(0)}_{L^2}+\vn{Q_{<k-6}^+ P_k F}_{N_k^+}  \rpr \\
\lesssim & \delta ( \vn{P_{k} f}_{L^2}+ \vn{P_{k}F}_{N_{k}^+ \cap L^2 \dot{H}^{-1/2}} ) 
\end{aligned}
\end{equation}
It remains to estimate
\begin{align*}
& \hskip-2em \vn{\hw_{A_{<k}}^p \psi_k^2-Q_{>k-6}^+ P_k F}_{N_k^+} \leq \vn{A_{<k-C}^{free,j} \frac{\pt_j}{\vm{D}} P_k \psi_k^2}_{N_k^+} \\
\lesssim & \vn{A_{<k-C}^{free}}_{L^2 L^{\infty}} \vn{\psi_k^2}_{L^{2} L^{2}} \lesssim (\ep 2^{k/2}) 2^{-k/2} \vn{Q_{>k-6}^+ P_k F}_{N_k^+\cap L^2 \dot{H}^{-1/2}}  \\\lesssim & \ep \vn{P_k F}_{N_k^+\cap L^2 \dot{H}^{-1/2}}
\end{align*}
The first inequality follows from the definition \eqref{paracovop2}. The third inequality follows from \eqref{eqbd1}.

\pfstep{Step~3: Proof of \eqref{apres2}} We estimate 
\be  2^{-\frac{k}{2}} \vn{\hw_{A_{<k}}^p \psi_{k}^1- Q_{<k-6}^+ P_k F}_{L^{2} L^{2}} \lesssim \delta ( \vn{P_{k} f}_{L^2}+ \vn{P_{k}F}_{N_{k}^+ \cap L^2 \dot{H}^{-1/2}} )  \label{ltwoestim} \ee 
using \eqref{simpleembedding} and \eqref{intermedest}. For $  \hw_{A_{<k}}^p \psi_k^2 - Q_{>k-6}^+ P_k F $, using \eqref{eqbd1} we estimate 
$$ \vn{A_{<k-C}^{free,j} \frac{\pt_j}{\vm{D}} P_k \psi_k^2}_{L^{2} L^{2}} \lesssim  \vn{A_{<k-C}^{free}}_{L^2 L^{\infty}} \vn{\psi_k^2}_{L^{\infty} L^2} \lesssim 2^{k/2} \ep \vn{Q_{>k-6}^+ P_k F}_{N_k^+\cap L^2 \dot{H}^{-1/2} }.   $$

\pfstep{Step~4: Proof of \eqref{ltwobd}} We write
\begin{align*}
\hw \psi_k^a =&Q_{>k-6}^+ P_k F+Q_{<k-6}^+ P_k F+ \\
& + (\hw_{A_{<k}}^p \psi_k^1-Q_{<k-6}^+ P_k F)+ A_{<k-C}^{free,j} \frac{\pt_j}{\vm{D}} P_k \psi_k^1.
\end{align*}
We use \eqref{ltwoestim}  and it remains to estimate
\
\begin{align} 
2^{-k/2} \vn{A_{<k-C}^{free,j} \frac{\pt_j}{\vm{D}} P_k \psi_k^1}_{L^2 L^2} 
\lesssim &  2^{-k/2} \vn{A_{<k-C}^{free}}_{L^2 L^{\infty}} \vn{\psi_k^1}_{L^{\infty} L^2} \label{almostfinished} \\ 
\lesssim & \ep (\vn{P_k f}_{L^2}+ \vn{P_k F}_{N_k^+\cap L^2 \dot{H}^{-1/2}}). 
\end{align} 

\pfstep{Step~5: Proof of \eqref{aferr}} The $ N_{k'}^{+} $ bound follows from \eqref{eq:bi-bal-freq}, while the $L^{2} \dot{H}^{-1/2}$ bound follows from the estimate \eqref{almostfinished} with $ k $ replaced by $ k',k'',k'''$.

\begin{remark} \label{rem:hi-d-5}
The construction in \cite[Sections~6--11]{KST} may be generalized to $\bbR^{1+d}$ with $d \geq 5$ without much difficulty. The rest of the argument in this section then goes through with the substitutions as in Remark~\ref{rem:hi-d-1}.
\end{remark}
% ----------------------------------------------------------------
\bibliographystyle{abbrv}
\bibliography{4dMD}

%\bibliography{SJ-library}
% ----------------------------------------------------------------

\end{document}